\documentclass[11pt,reqno]{amsart}
\usepackage{amsbsy,amsfonts,amsmath,amssymb,amscd,amsthm,mathrsfs}
\usepackage{mathtools}
\usepackage{xfrac,float,enumitem,graphicx,epstopdf,pxfonts,srcltx}
\usepackage{dsfont}
\usepackage[mathcal]{euscript}
\usepackage[a4paper,text={6.1in,8in},centering]{geometry}
\usepackage[colorlinks=true,linkcolor=blue,citecolor=green]{hyperref}
\usepackage[margin=20pt,font=small,labelfont=bf,textfont=it]{caption}
\usepackage{subcaption,inputenc,tikz,pgfplots}
\pgfplotsset{compat=1.17}
\hypersetup{linktocpage}
\usepackage{cancel}

\small\normalsize
\setlength{\parskip}{.05in} \setlength{\textheight}{22cm}

\theoremstyle{plain}
\newtheorem{mainthm}{Theorem}
\newtheorem{mainprop}{Proposition}
\newtheorem{maincor}[mainprop]{Corollary}
\newtheorem{thm}{Theorem}[section]
\newtheorem{cor}[thm]{Corollary}
\newtheorem{lem}[thm]{Lemma}
\newtheorem{claim}{Claim}
\numberwithin{claim}{thm}
\newtheorem{prop}[thm]{Proposition}

\newtheorem{defi}[thm]{Definition}

\theoremstyle{definition}

\newtheorem{rem}[thm]{Remark}
\newtheorem{question}{Question}

\numberwithin{equation}{section}

\newcommand{\rmref}[1]{\textup{\hyperref[#1]{\ref*{#1}}}}
 \DeclareSymbolFont{letters}{OML}{cmr}{m}{it}
\DeclareMathSymbol{\oldsigma}{\mathord}{letters}{"1B} 
 \DeclareMathSymbol{\stdtau}{\mathord}{letters}{"1C}
 \renewcommand{\sigma}{\stdtau}

\newcommand{\eqdef}{\stackrel{\scriptscriptstyle\rm def}{=}}
\newcommand{\aep}{$\mathrm{a.e.}$}

\DeclareMathOperator{\lbb}{Leb}



\newcommand{\R}{\mathbb{R}}
\newcommand{\Z}{\mathbb{Z}} 
\newcommand{\N}{\mathbb{N}}
\newcommand{\fu}[3]{#1\colon#2\to#3}    
\newcommand{\pa}[1]{\bigg(#1\bigg)}    
\newcommand{\la}[1]{\big\{#1\big\}} 
\newcommand{\Esp}{\Omega}
\newcommand{\C}{\Esp\times I} 

\newcommand{\s}{\dfrac{1}{n}\sum\limits_{j=0}^{n-1}} 


\newcommand{\g}{\gamma}

\newcommand{\f}{\varphi}

\newcommand{\PP}{\mathbb{P}}
\newcommand{\ind}{\mathds{1}}

\makeatletter
\def\l@part{\@tocline{0}{-2pt}{1pc}{}{}}
\def\l@section{\@tocline{1}{-2pt}{1pc}{4.6em}{}}
\renewcommand{\tocpart}[3]{%
  \indentlabel{\@ifnotempty{#2}{\makebox[2.3em][l]{%
    \ignorespaces#1 #2.\hfill}}}\bf{#3}}
\renewcommand{\tocsection}[3]{%
  \indentlabel{\@ifnotempty{#2}{\hspace*{2.3em}\makebox[2.3em][l]{%
    \ignorespaces#1 #2.\hfill}}}#3}
\makeatother

\setcounter{tocdepth}{2}
\let\oldtocsection=\tocsection
\let\oldtocsubsection=\tocsubsection
\renewcommand{\tocsection}[2]{\hspace{0em}\bf\oldtocsection{#1}{#2}}
\renewcommand{\tocsubsection}[2]{\hspace{4.8em}\oldtocsubsection{#1}{#2}}
\let\oldtocsubsubsection=\tocsubsubsection
\renewcommand{\tocsubsubsection}[2]{\hspace{4.2em}\oldtocsubsubsection{#1}{#2}}

 \makeatletter
\renewcommand\subsubsection{\@startsection{subsubsection}{3}{\z@}%
  {-3.25ex \@plus -1ex \@minus -.2ex}%
  {1.5ex \@plus .2ex}%
  {\normalfont\bfseries\itshape}}
\makeatother

%
%

\begin{document}
~\vspace{-3cm}
\title[Historical behavior \& Arcsine laws]{Historical behavior of skew products and arcsine laws}


\begin{abstract}
We study the occurrence of historical behavior for almost every point in the setting of skew products with one-dimensional fiber dynamics. Under suitable ergodic conditions, we establish that a weak form of the arcsine law leads to the non-convergence of Birkhoff averages along almost every orbit. As an application, we show that this phenomenon occurs for one-step skew product maps over a Bernoulli shift, where the stochastic process induced by the iterates of the fiber maps is conjugate to a random walk.

Furthermore, we revisit known examples of skew products that exhibit historical behavior almost everywhere, verifying that they fulfill the required ergodic and probabilistic conditions. 
Consequently, our results provide a unified and generalized framework that connects such behaviors to the arcsine distribution of the orbits.

~\vspace{-1cm}
\end{abstract}

\author[Barrientos]{Pablo G.~Barrientos}
\address{\centerline{Instituto de Matem\'atica e Estat\'istica, UFF}
    \centerline{Rua M\'ario Santos Braga s/n - Campus Valonguinhos, Niter\'oi,  Brazil}}
\email{pgbarrientos@id.uff.br}
\author[Chavez]{Raul R.~Chavez}
\address{\centerline{Departamento de Matem\'atica PUC-Rio} 
\centerline{Marqu\^es de S\~ao Vicente 225, G\'avea, Rio de Janeiro 22451-900, Brazil}}
\email{r.rodriguezchav@aluno.puc-rio.br
}

\maketitle  
\thispagestyle{empty}

\maketitle


\section{Introduction}

The study of the dynamics of a function $f$ on a compact metric space $X$ often focuses on the long-term statistical behavior of its orbits. Given a continuous observable $\phi \colon X \to \mathbb{R}$, the orbit of $x \in X$ is said to exhibit \emph{historical behavior} if its \emph{time average} (or \emph{Birkhoff average})
\[
\frac{1}{n}\sum_{j=0}^{n-1}\phi(f^j(x))
\]
does not converge. Otherwise, the orbit exhibits \emph{predictable behavior}, a terminology introduced by Ruelle~\cite{Rue2001} and further developed by Takens~\cite{Takens2008}. A particularly interesting case of predictable behavior occurs when there is an {ergodic} invariant probability measure~{$\mu$} whose basin of attraction (the set of points for which the time average converges {to the spatial average $\int \phi \, d\mu$ for every continuous function $\phi$)} has positive measure with respect to the reference measure. Such probabilities are called \emph{physical measures}. It turns out that, by Birkhoff’s ergodic theorem, the set of \emph{irregular points} (i.e., the orbits exhibiting historical behaviors) has zero measure with respect to every invariant probability measure. Nevertheless, this set may still be metrically large. Numerous classes of examples are known in which the set of irregular points is residual~\cite{Takens2008,BKNRS20,CarVaran22}. Additionally, there exist examples where the set of irregular points has full topological entropy and Hausdorff dimension~\cite{BS00,FFW01,BNRR22}, although in most of these examples, this set has zero Lebesgue measure. A paradigmatic dynamical configuration that leads to historical behavior for Lebesgue almost every point is the so-called \emph{Bowen Eye} in~\cite{Takens-eyebowe94}.  
This model had a significant impact on the study of the statistical behavior of dynamical systems. Motivated by this example, Takens proposed in~\cite{Takens2008} the~following~problem: 
\begin{enumerate}[leftmargin=0.8cm,rightmargin=0.8cm]	
	\item[] \textbf{\emph{Takens' last problem}:} Are there persistent classes of smooth dynamical systems that have a set of irregular points with positive Lebesgue measure? 
\end{enumerate}

{The first class of smooth dynamical systems where it is not possible to eliminate historical behavior by discarding negligible sets of surface $C^2$ diffeomorphisms and initial conditions was given by Kiriki--Soma~\cite{KS17}. Their construction, based on the existence of wandering domains, was later extended to higher (analytic) regularity~\cite{BB23}, to higher dimensions ($d\geq 3$) and lower ($C^1$) regularity~\cite{Bar22}, and to flows~\cite{LR17}. More recently, examples with historical behavior for Lebesgue almost every point have also been obtained for families of rational maps of degree $d\geq 2$ on the Riemann sphere~\cite{talebi2022non}, for reparameterizations of linear flows on the two torus~\cite{Andde22}, a class of transitive partially hyperbolic diffeomorphisms on $\mathbb{T}^3$~\cite{Crovisier2020}, and for certain one-dimensional maps~\cite{AarThaZwei05,CoaLuza23,CoaMelTale24}. Such \emph{non-statistical} systems (i.e., with historical behavior almost everywhere) are particularly interesting because they fundamentally lack a physical measure. The absence of a physical measure implies that long-term statistical predictions are impossible for a significant set of initial conditions. 

In this paper, we approach this phenomenon from a different perspective than the classical mechanisms. We establish an abstract framework in which the prevalence of historical behavior almost everywhere is linked directly to {underlying ergodic and}  probabilistic properties. In particular, we relate this behavior to probabilistic mechanisms such as the \emph{arcsine law}. 
Our contribution is to place all the known constructions for skew product systems within a unified abstract setting, which both clarifies the underlying mechanism and extends it to new classes of examples. In this way, we position the arcsine law as a mechanism complementary to the classical ones (based on wandering domains or Bowen-type dynamics) and suggest that probabilistic ideas may provide a new pathway toward a positive resolution of Takens’ last problem.
}

\subsection{Presentation of results and context} We consider the class of \emph{skew products} 
\begin{equation} \label{skew product-principala}
	\fu{F}{\Omega\times M}{\Omega\times M}, \qquad  F(\omega, x) \eqdef (\sigma(\omega), f_{\omega}(x))
\end{equation}
where the base function $\fu{\sigma}{\Omega}{\Omega}$ is an ergodic measure-preserving map on a standard probability space $(\Omega, \mathscr{F},\PP)$, and the fiber maps $\fu{f_\omega}{M}{M}$ are measurable functions on a compact one-dimensional manifold $M$ endowed with the Lebesgue measure $\lbb$.  We denote the compositions of the fiber maps by
\begin{equation} \label{eq:fiber-dynamics}
f_\omega^0 \eqdef \mathrm{id} \quad \text{and} \quad f_\omega^n \eqdef f_{\sigma^{n-1}(\omega)} \circ \dots \circ f_{\sigma(\omega)} \circ f_\omega \quad n\geq1. 
\end{equation}
Note that~\eqref{skew product-principala} includes the class of one-dimensional dynamics by taking the base space $\Omega$ as a singleton. We study conditions under which $F$ exhibits historical behavior for $(\PP\times\lbb)$-almost every point. For brevity, we write \emph{$(\PP\times\lbb)$-a.e.}~(or~\emph{almost everywhere}). Notably, such systems with historical behavior almost everywhere do not admit physical measures with respect to the reference measure $\mathbb{P}\times \lbb$. See Section~\ref{ss:def-hb} for a more precise definition of historical behavior in this context.  

We analyze the connection between historical behavior in Dynamical Systems and the arcsine law in Probability Theory. Recall that a random variable $Y$ on $[0,1]$ is~\mbox{\emph{arcsine~distributed}~if,} 
$$
    \PP(Y \leq \alpha) = \frac{2}{\pi} \arcsin\sqrt{\alpha}, \quad \text{for every $\alpha \in [0,1]$.}
$$
L\'evy~\cite{Lev39} introduced this distribution by demonstrating that the proportion of time during which a one-dimensional Wiener process is positive follows the arcsine distribution. Later, Erdős and Kac~\cite{ErdKac47} formalized an asymptotic version of this result for a sequence of 
independent and identically distributed (i.i.d.)~random variables.  
That is,  $\{Y_n\}_{n\geq 1}$ is \emph{asymptotically arcsine distributed} if, 
$$
\lim_{n\to \infty} \PP(Y_n \leq \alpha) = \frac{2}{\pi} \arcsin \sqrt{\alpha} \quad \text{for every $\alpha \in [0,1]$.}
$$
In particular, a key consequence is that for any $\alpha \in (0,1)$,
$$
{\liminf_{n \to \infty} \mathbb{P}(Y_n \leq \alpha) < 1 \quad \text{and} \quad \liminf_{n \to \infty} \mathbb{P}(Y_n \geq \alpha) < 1.}
$$
{This pair of conditions, which ensures the probability mass does not accumulate at either endpoint,} can be seen as a \emph{{fluctuation law}}.
We identify that this weak form of the arcsine law together with a kind of ergodic condition implies historical behavior almost everywhere for skew products as in~\eqref{skew product-principala}. While these conditions differ slightly between \emph{one-step} and \emph{mild} skew products, they consistently form the structural basis for the observed phenomena. A \emph{mild} skew product is a map as in~\eqref{skew product-principala} where the fiber functions in~\eqref{eq:fiber-dynamics}  depend on the entire~$\omega \in \Omega$. In contrast, by \emph{one-step}, we understand a skew product where the base dynamics is a Bernoulli shift and the fiber dynamics depends only on the zero-coordinate of~$\omega\in\Esp$.  

In the one-step setting, Molinek~\cite{molinek1994asymptotic} utilized the arcsine law of Erdős and Kac to construct examples of skew products exhibiting historical behavior almost everywhere for generalized $(T,T^{-1})$-transformations, where $T$ is a north-south diffeomorphism. Also, Bonifant and Milnor~\cite{Bonifant} sketched a proof using again the arcsine law to show historical behavior almost everywhere for one-step maps with fiber functions having zero Schwarzian derivative. Our first result, Theorem~\ref{thm:B-random-walks}, establishes historical behavior almost everywhere for one-step skew products satisfying a weak form of the arcsine law.
In this result, the ergodic condition arises from the triviality of the tail $\oldsigma$-algebra generated by the random process induced by iterating the fiber functions. A consequence of this result is that, if this random process is conjugate to a {one-dimensional} random walk with zero mean and {positive} finite variance, then $F$ exhibits historical behavior almost everywhere. Using this result, we revisit the examples of Bonifant--Milnor and Molinek, showing that they are indeed conjugate to random walks.  {We also introduce new families of examples to which our theorem applies, such as the $T^\Psi$-transformations, which generalize the $(T,T^{-1})$-transformations, and a novel construction involving a non-trivial coupling of random walks.}

Our second result, Theorem~\ref{thm:ergodic-assumption}, concerns historical behavior almost everywhere for mild skew products $F$, under the ergodicity assumption of the non-invariant reference measure $\mathbb{P}\times \lbb$ and a weak version of the arcsine law. In this context, the earliest examples showing historical behavior almost everywhere are the class of \emph{skew-flows} introduced by Ji and Molinek~\cite{JM00}. A \emph{skew-flow} is a skew product as in~\eqref{skew product-principala}, where $f_\omega(x) = \varphi(\phi(\omega), x)$ with $\varphi:\mathbb{R}\times M \to M$ a flow and $\phi:\Omega \to \mathbb{R}$ a roof function. 
It is noteworthy that a recent example of a partially hyperbolic diffeomorphism on $\mathbb{T}^3$ by Crovisier et al.~\cite{Crovisier2020} also belongs to this class. 
These examples are conjugate to skew-translations, and through this conjugacy, we establish the ergodicity of the non-invariant reference measure $\mathbb{P}\times \lbb$ and the arcsine law for the skew product $F$. {As an application, our result yields new examples of non-statistical diffeomorphisms on $\mathbb{T}^{n+1}$ for $n\geq 2$ and proves a conjecture of Bonifant--Milnor~\cite{Bonifant} for skew-product endomorphisms of the cylinder $\mathbb{T}\times [0,1]$. }

The scope of Theorem~\ref{thm:ergodic-assumption} also includes the one-dimensional Thaler functions~\cite{Thaler80,Thaler83,Thaler02}, which are full branch maps topologically conjugate to uniformly expanding doubling functions. Aaronson et al.~\cite{AarThaZwei05} first observed that these functions exhibit historical behavior almost everywhere. This has been more recently extended to generalized Thaler functions by Coates and Luzzatto~\cite{CoaLuza23} and Coates et al.~\cite{CoaMelTale24}.




Therefore, as a consequence of our main results, we achieve historical behavior almost everywhere, revisiting and unifying all known examples in the literature, as far as we know, of skew product type with one-dimensional fiber dynamics. We also generalize all these examples and construct new classes of systems exhibiting such non-statistical behavior. 



\subsection{Historical behavior from random walks} \label{ss:skew product}

We study skew product maps as in (1.1), where $M$ is the interval $I = [0,1]$, and at the base we consider the Bernoulli shift $\sigma\colon \Esp \to \Esp$ on $(\Esp, \mathscr{F}, \mathbb{P}) = (\mathcal{A}^\N, \mathcal{F}^\mathbb{N}, p^\N)$. Here, $\mathcal{A}$ denotes an at most countable alphabet,  $\mathcal{F}$ is its discrete $\oldsigma$-algebra,  
{and $p$ is a probability measure having full support, i.e., $p(a) > 0$ for every $a \in \mathcal{A}$.} Additionally, we assume that $F$ is \emph{one-step} (or \emph{locally constant}), that is, 
\begin{equation}\label{one-skew product-principal}
	 F(\omega, x) = (\sigma(\omega), f_{\omega_0}(x)), \quad \text{for every $\omega = (\omega_i)_{i \geq 0} \in \Omega$.}
\end{equation}
In this specific instance, the compositions of the fiber maps in \eqref{eq:fiber-dynamics} can be written as follows: 
\begin{equation} \label{eq:fiber-maps-one-step}
    f^0_\omega \eqdef \mathrm{id} \quad \text{and} \quad f^n_\omega \eqdef f_{\omega_{n-1}} \circ \dots \circ f_{\omega_0} \quad \text{for $\omega \in \Esp$ and $n \geq 1$}.
\end{equation}
We also consider the following conditions on the fiber maps: 
\begin{enumerate}[leftmargin=1.25cm,label=(H\arabic*),start=0]
    \item\label{H0} For every $x \in (0,1)$, the sequence $\{X^x_n\}_{n\geq 1}$ of random variables $X^x_n(\omega) = f^n_\omega(x)$ has a trivial tail $\oldsigma$-algebra $\mathcal{T}(\{X^x_n\}_{n \geq 1})$. 
    That is,  $\PP(A) \in \{0,1\}$ for every~\mbox{$A\in\mathcal{T}(\{X^x_n\}_{n \geq 1})$.}
    \item\label{H1} For every $\omega \in \Omega$, $f_\omega\colon (0,1) \to (0,1)$ is a monotonically increasing measurable map.
    \item\label{H2} For every $x \in (0,1)$, there exist $\alpha, \beta \in \Esp$  such that {$f_\alpha(x) < x < f_\beta(x)$.}	 
\end{enumerate}
Condition~\ref{H0} is a strong ergodic assumption (see the definition of tail $\oldsigma$-algebra in Definition~\ref{def:tail-algebra}). As we will show, it can be ensured when $\{X^x_n\}_{n \geq 1}$ is conjugate to a random walk (see Definition~\ref{def:conjugated-G-randomwalk}). The monotonically increasing condition in~\ref{H1} means that if $x,y\in (0,1)$ with $x \leq y$, then $f_\omega(x) \leq f_\omega(y)$. Continuity is not required, and the endpoints 0 and 1 are not necessarily fixed.
Condition~\ref{H2} is a natural and simple hypothesis that forces the interaction between the dynamics at the endpoints of $I$, playing a role analogous to heteroclinic connections between equilibrium points in the classical Bowen eye.


\begin{defi}\label{def:arcsine-law-one-s}
The system $F$ satisfies the 
{\emph{pointwise-fiber fluctuation law}} if there are constants  $\gamma_0,\gamma_1 \in (0,1)$ and points $x_0, x_1\in (0,1)$, such that
\begin{align*}  
    \liminf_{n \to \infty} \PP\bigg( \frac{1}{n}\sum_{j=0}^{n-1} \ind_{I_i(\gamma_i)}(f^j_\omega(x_i)) \leq \alpha \bigg) &< 1, \quad \text{for every $\alpha \in (0,1)$ and $i = 0, 1$},
\end{align*}
where $I_0(\gamma) \eqdef [0, \gamma]$ and $I_1(\gamma) \eqdef [\gamma, 1]$ for any $\gamma \in I$.
\end{defi}


The following theorem establishes historical behavior almost everywhere for one-step skew products that satisfy the {pointwise-fiber fluctuation law}.  

\begin{mainthm}\label{thm:B-random-walks}
    Let $F$ be a one-step skew product as in~\eqref{one-skew product-principal}, satisfying conditions~{\rmref{H0}--\rmref{H2}} and the {pointwise-fiber fluctuation law with constants $\gamma_0<\gamma_1$}. Then, for every $x\in (0,1)$, 
    \begin{equation} \label{eq:liminf-limsup}
             \liminf_{n\to\infty} \frac{1}{n}\sum_{j=0}^{n-1} f^j_\omega(x)\leq \gamma_0 < \gamma_1 \leq \limsup_{n\to\infty} \frac{1}{n}\sum_{j=0}^{n-1} f^j_\omega(x)  \quad \text{for $\mathbb{P}$-a.e.~$\omega\in \Omega$}.
    \end{equation}
    In particular,     
    $F$ exhibits historical behavior for $(\PP \times \lbb)$-almost every point.
\end{mainthm}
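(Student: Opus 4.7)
The plan is to derive the conclusion from the fluctuation law by combining a Kolmogorov-type $0$--$1$ law coming from \rmref{H0}, the monotone coupling from \rmref{H1}, and an accessibility argument from \rmref{H2}. Throughout, write $X^x_n(\omega)\eqdef f^n_\omega(x)$, $Z^{(i)}_n(\omega)\eqdef \frac{1}{n}\sum_{j=0}^{n-1}\ind_{I_i(\gamma_i)}(X^{x_i}_j(\omega))$ for $i=0,1$, and $L(x)\eqdef \liminf_n\frac{1}{n}\sum_{j=0}^{n-1}X^x_j$, $U(x)\eqdef \limsup_n\frac{1}{n}\sum_{j=0}^{n-1}X^x_j$.

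The first move is to upgrade the fluctuation law to an almost-sure statement. Dropping an initial segment of length $k$ changes $Z^{(i)}_n$ by $O(k/n)$, so $\limsup_n Z^{(i)}_n$ is $\mathcal{T}(\{X^{x_i}_n\})$-measurable; by \rmref{H0} it equals an a.s.\ constant. The fluctuation hypothesis $\liminf_n \PP(Z^{(i)}_n\leq\alpha)<1$ gives $\PP(\limsup_n Z^{(i)}_n\geq\alpha)>0$ for every $\alpha\in(0,1)$, which the $0$--$1$ law upgrades to probability $1$, so $\limsup_n Z^{(i)}_n=1$ almost surely. The elementary bounds $\frac{1}{n}\sum_{j=0}^{n-1}f^j_\omega(x_0)\leq 1-(1-\gamma_0)Z^{(0)}_n$ and $\frac{1}{n}\sum_{j=0}^{n-1}f^j_\omega(x_1)\geq \gamma_1 Z^{(1)}_n$ then yield $L(x_0)\leq\gamma_0$ and $U(x_1)\geq\gamma_1$ almost surely. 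The same truncation remark shows that $L$ and $U$ are tail-measurable in $\omega$, so \rmref{H0} makes $L(x)$ and $U(x)$ deterministic constants for each $x$; \rmref{H1} in turn makes them non-decreasing in $x$, so $L(x)\leq\gamma_0$ for all $x\leq x_0$ and $U(x)\geq\gamma_1$ for all $x\geq x_1$.

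For the remaining values of $x$ the crux is the following accessibility claim: for every $x>x_0$ there is a finite word $(a_0,\dots,a_{N-1})\in\mathcal{A}^N$ with $p(a_j)>0$ such that $f_{a_{N-1}}\circ\cdots\circ f_{a_0}(x)\leq x_0$, and symmetrically one can push any $x<x_1$ above $x_1$. Granted this, the cylinder $C\eqdef\{\omega:\omega_j=a_j,\,j<N\}$ has positive $\PP$-measure; on $C$ we have $f^N_\omega(x)\leq x_0$, and the Bernoulli product structure identifies in distribution the forward orbit after time $N$ with $\{f^k_{\tilde\omega}(f^N_\omega(x))\}_k$ for an independent copy $\tilde\omega\sim\PP$. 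Monotonicity \rmref{H1} dominates this pointwise by $\{f^k_{\tilde\omega}(x_0)\}_k$, whose Birkhoff $\liminf$ is $\leq \gamma_0$ by the first step, while the initial $N$ terms contribute $O(N/n)\to 0$ to the Ces\`aro mean. Thus $\PP(L(x)\leq\gamma_0)\geq \PP(C)>0$, and tail-triviality forces the deterministic constant $L(x)$ to satisfy $L(x)\leq\gamma_0$. A symmetric argument gives $U(x)\geq\gamma_1$ for every $x\in(0,1)$.

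The accessibility claim is the main obstacle. Iterating \rmref{H2} from $x$ produces a strictly decreasing sequence $x_{k+1}=f_{a_k}(x_k)<x_k$ with $p(a_k)>0$, but in the absence of continuity of the fibers this sequence can stall at some $x_\infty\in(x_0,x)$, and an infinite word has zero Bernoulli weight. Closing the claim will require exploiting \rmref{H2} at the accumulation point $x_\infty$ jointly with the monotonicity of the chosen $f_\alpha$, so as to secure a uniformly sized push strictly below $x_\infty$ within a bounded number of further steps, at which point an inductive compactness argument closes the descent.
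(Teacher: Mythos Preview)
Your first step is correct and matches the paper: the tail-triviality from \rmref{H0} upgrades the fluctuation law to $\limsup_n Z^{(i)}_n=1$ a.s., from which $L(x_0)\leq\gamma_0$ and $U(x_1)\geq\gamma_1$ follow, and \rmref{H0}--\rmref{H1} make $L,U$ deterministic and monotone in $x$.

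Your accessibility claim, however, is a genuine gap and is unnecessary. You correctly identify the obstacle: with merely measurable fiber maps, iterating the one-step push from \rmref{H2} may stall at some $x_\infty>x_0$, and there is no compactness or continuity available to close the descent. The paper bypasses this entirely. The key observation you are missing is that $L$ and $U$ are invariant along \emph{one-step} orbits: for each fixed letter $a\in\mathcal{A}$ (countably many, all with $p(a)>0$), the Bernoulli structure gives $L(f_a(x))=L(x)$ for every $x$, because the shifted sequence after applying $f_a$ has the same law as a fresh copy and the initial term is negligible in the Ces\`aro mean. Now take the \emph{single} pair $\alpha,\beta$ from \rmref{H2} with $f_\alpha(x)<x<f_\beta(x)$: orbit-invariance gives $L(f_\alpha(x))=L(x)=L(f_\beta(x))$, and monotonicity of $L$ forces $L$ to be constant on the whole interval $(f_\alpha(x),f_\beta(x))\ni x$. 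Since $x\in(0,1)$ is arbitrary, $L$ is locally constant on the connected set $(0,1)$, hence globally constant; the value at $x_0$ then pins $L\equiv L(x_0)\leq\gamma_0$ everywhere. The same argument gives $U\equiv U(x_1)\geq\gamma_1$. No iterated accessibility is needed.
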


To connect the condition~\ref{H0} and the {pointwise-fiber fluctuation law}  with random walks, let~$\{Y_n\}_{n \geq 1}$ be a sequence of i.i.d.~random variables taking values in the additive group~$G$, where $G$ is either $\mathbb{R}$ or $\mathbb{Z}$. A \emph{random walk} starting at $t\in G$ is defined as the sequence $\{{S}_n\}_{n \geq 0}$ of random variables given by
\begin{equation}\label{def:random-walk}
{S}_0 = t \quad \text{and} \quad {S}_{n} = {S}_{n-1} + Y_n, \quad n \geq 1.
\end{equation}
The random variables $Y_n$ are called steps, and their mean and variance are denoted by
\[
\mu = \mathbb{E}[Y_1] \eqdef \int Y_1 \, d\PP \quad \text{and} \quad \oldsigma^2 = \mathbb{E}[(Y_1 - \mu)^2].
\]

For each $x \in I$, consider the sequence $\{X^x_n\}_{n \geq 0}$, where $X^x_n(\omega) = f^n_\omega(x)$. We denote by {
 $\mathcal{O}(x)= \{X^x_n(\omega): \omega\in \Omega, \, n\geq 0\}$ the orbit of $x$ by the semigroup  generated by 
 $\{f_{\omega_0}\}_{\omega_0\in\mathcal{A}}$. }

\begin{defi}\label{def:conjugated-G-randomwalk} The sequence
 $\{X^x_n\}_{n \geq 0}$ is \emph{conjugate to a $G$-valued random walk} if there exists a {strictly monotonic injection}  $h\colon \mathcal{O}(x) \to G$ such that the step random variables
\[
Y_n^t(\omega) = {S^t_n(\omega) - S^t_{n-1}(\omega)}
\quad \text{for $n \geq 1$ and $\omega \in \Esp$},
\]
are independent and identically distributed, where $t = h(x)$ and {$S^t_n(\omega) = (h \circ f^n_\omega \circ h^{-1})(t)$.} {In other words, there exists a random walk $\{S^t_n\}_{n\geq 0}$ on $G$ starting at $t=h(x)$ such that $S^t_n=h(X_n^x)$ for  $n\geq 1$.}
\end{defi}

As a consequence of the Hewitt-Savage zero-one law \cite{HeSavage55-01law}, we show that condition~\ref{H0} is satisfied when $\{X^x_n\}_{n \geq 0}$ is conjugate to a random walk for any $x \in (0,1)$. Moreover, by a result of Erdős and Kac~\cite{ErdKac47}, such random walks satisfy the arcsine law, which, through the conjugation, implies that the {pointwise-fiber fluctuation law} holds for the skew product. This observation leads to the following consequence 
of the Theorem~\ref{thm:B-random-walks}, which we prove in detail in~Section~\ref{ss:Proof-cor-I-II-VI}.

\begin{mainprop}\label{maincor:conjugation-random-walk}
    Let $F$ be a one-step skew product as in~\eqref{one-skew product-principal}. Assume that for every $x \in (0,1)$, the sequence $\{X^x_n\}_{n \geq 0}$, where $X^x_n(\omega) = f^n_\omega(x)$, is {conjugate to a random walk on $\mathbb{Z}$ or $\mathbb{R}$ with mean zero and positive finite variance.} 
    Then, $F$ satisfies~\rmref{H0}, and for every ${\gamma}, x \in (0,1)$, 
    \begin{equation} \label{eq:arcsine-law}
        \lim_{n \to \infty} \PP\bigg(\frac{1}{n}\sum_{j=0}^{n-1} \ind_{{I_i(\gamma)}}(f^j_\omega(x)) \leq \alpha\bigg) = \frac{2}{\pi} \arcsin \sqrt{\alpha}, \quad \text{for every $\alpha \in (0,1)$ and $i = 0, 1$}.
    \end{equation}
    {Moreover, if $F$ also satisfies  conditions~\rmref{H1} and~\rmref{H2}, then~\eqref{eq:liminf-limsup} holds and $F$ exhibits historical behavior for $(\PP \times \lbb)$-almost every point.}
\end{mainprop}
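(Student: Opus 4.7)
The plan is to verify the two hypotheses of Theorem~\ref{thm:B-random-walks} not already assumed---condition~\rmref{H0} and the pointwise-fiber fluctuation law---by transferring them through the strictly monotonic conjugacy $h$ to classical facts about the random walk $\{S^t_n\}$, and then to invoke the theorem. For brevity I treat the case in which $h$ is strictly increasing; the decreasing case is entirely symmetric.

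For~\rmref{H0}, fix $x\in(0,1)$, let $t=h(x)$, and note that because $h$ is a bijection between the countable sets $\mathcal{O}(x)$ and $h(\mathcal{O}(x))$, one has $\oldsigma(X^x_n,X^x_{n+1},\dots)=\oldsigma(S^t_n,S^t_{n+1},\dots)$ for each $n\geq 1$, so the two tail $\oldsigma$-algebras coincide. Writing $S^t_k=t+Y^t_1+\dots+Y^t_k$ with $\{Y^t_k\}$ the i.i.d.\ increments, I would use the identity
\[
\oldsigma(S^t_n,S^t_{n+1},\dots)=\oldsigma(S^t_n,Y^t_{n+1},Y^t_{n+2},\dots)\subseteq\oldsigma(Y^t_1,Y^t_2,\dots),
\]
so every tail event is measurable in $(Y^t_k)_{k\geq 1}$ and is moreover invariant under every finite permutation of these increments---a permutation of $Y^t_1,\dots,Y^t_n$ preserves $S^t_n$ and fixes $Y^t_k$ for $k>n$. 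Hence the tail lies in the exchangeable $\oldsigma$-algebra of $\{Y^t_k\}$, which is $\PP$-trivial by the Hewitt--Savage zero--one law.

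For~\eqref{eq:arcsine-law}, fix $\gamma,x\in(0,1)$ and set $c\eqdef\sup h(\mathcal{O}(x)\cap[0,\gamma])$. Mean zero and positive finite variance force $\limsup S^t_n=+\infty$ and $\liminf S^t_n=-\infty$ almost surely, so $c$ is finite (in $\R$ or $\Z$, according to $G$); combined with the monotonicity of $h$, this yields $f^j_\omega(x)\leq\gamma\iff S^t_j(\omega)\leq c$. The statement to prove therefore reduces to
\[
\lim_{n\to\infty}\PP\bigg(\frac{1}{n}\#\{0\leq j<n:S^t_j\leq c\}\leq\alpha\bigg)=\frac{2}{\pi}\arcsin\sqrt{\alpha},
\]
which is the Erd\H{o}s--Kac arcsine law for mean-zero, finite-variance random walks with arbitrary finite threshold: Donsker's invariance principle $S^t_{[n\,\cdot\,]}/\sqrt{n}\Rightarrow\oldsigma W$ combined with L\'evy's arcsine law for the Brownian occupation functional $\xi\mapsto\int_0^1\ind_{\{\xi_s\leq 0\}}\,ds$ and the continuous mapping theorem gives the limit, the rescaled threshold $c/\sqrt{n}\to 0$ being harmless in the diffusive scaling. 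The case $i=1$ is analogous, using the symmetry of the arcsine distribution under $\alpha\mapsto 1-\alpha$.

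Finally, since $(2/\pi)\arcsin\sqrt{\alpha}<1$ for every $\alpha\in(0,1)$, the arcsine limit just proved verifies the pointwise-fiber fluctuation law at \emph{any} pair $\gamma_0<\gamma_1$ in $(0,1)$ and \emph{any} $x_0,x_1\in(0,1)$, so, under~\rmref{H1} and~\rmref{H2}, Theorem~\ref{thm:B-random-walks} delivers both~\eqref{eq:liminf-limsup} and historical behavior for $(\PP\times\lbb)$-a.e.\ point. I expect the only genuinely delicate step to be the third paragraph, namely the passage from the fiber threshold $\gamma$ to the walk threshold $c$: one must check that $c$ is finite (using unboundedness of a mean-zero walk), that the countability of $\mathcal{O}(x)$ creates no boundary ambiguity for the Borel calculations, and that in the lattice case $G=\Z$ the continuous-mapping step is not upset by the event $\{S^t_j=c\}$ having positive mass.
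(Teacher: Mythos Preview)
Your approach matches the paper's architecture: Hewitt--Savage for \rmref{H0} (the paper's Proposition~\ref{prop:conjugate-H0}), transfer of the arcsine law through the conjugacy $h$ (the paper's Lemma~\ref{cla:conjugated-real-random} and Corollary~\ref{prop:arcsine-to-conjugate}), and then Theorem~\ref{thm:B-random-walks}. The only substantive difference is how you obtain the arcsine law at an arbitrary threshold: you go through Donsker's invariance principle and L\'evy's Brownian arcsine law via the continuous mapping theorem, whereas the paper starts from Erd\H{o}s--Kac at threshold $0$ and then shifts the threshold using its vanishing-occupation-time result (Proposition~\ref{cor:main-cases}, via Lemma~\ref{lem:ocupation-limite}) together with Slutsky's theorem---both routes are standard, yours more self-contained, the paper's reusing machinery it develops elsewhere. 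On your flagged worry about the finiteness of $c$: the paper sidesteps this entirely by working with $\gamma\in\mathcal O(x)$ in Proposition~\ref{prop:arcsine-to-conjugateA}, which is all that is needed for the pointwise-fiber fluctuation law since one may simply take $\gamma_i=x_i$; your justification ``oscillation of the walk forces $c<\infty$'' does not quite close the gap as stated (unboundedness of $h(\mathcal O(x))$ above does not by itself force $\mathcal O(x)\cap(\gamma,1]\neq\emptyset$), so either restrict to $\gamma\in\mathcal O(x)$ as the paper does, or add a line explaining why the particular orbits you care about have points on both sides of $\gamma$.
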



\subsection{Historical behavior from the ergodicity of the reference measure}

In this subsection, we analyze the historical behavior of skew product functions 
\begin{equation}\label{skew product-principal}
    \fu{F}{\C}{\C}, \quad 	 F(\omega, x) = (\sigma(\omega), f_{\omega}(x)),
\end{equation}
where $\sigma\colon \Esp\to\Esp$ is an ergodic, measure-preserving transformation of a standard probability space $(\Omega, \mathscr{F},\PP)$, and $\fu{f_\omega}{I}{I}$ are measurable functions on the interval $I = [0,1]$. 
In a form similar to Definition~\ref{def:arcsine-law-one-s}, we introduce a corresponding {fluctuation law}.

\begin{defi}\label{def:arcsine-law-skew-p}
    The system $F$ satisfies the 
    {\emph{skew-product fluctuation law}} if there exist constants  $\gamma_0, \gamma_1 \in (0,1)$ 
    such that 
    \begin{equation} \label{eq:weak-arcsine}
        \liminf_{n\to\infty} \,( \PP \times \lbb)\bigg(\, {\frac{1}{n}}\sum_{j=0}^{n-1} \ind_{I_i(\gamma_i)}(f^j_\omega( x)) \leq \alpha \,\bigg) < 1, \quad \text{ for every $\alpha \in (0,1)$ and $i = 0,1$.}
    \end{equation}
\end{defi}

On the other hand, to relax condition~\ref{H0}, we {consider} a {generalization} of ergodicity for the reference measure $\mu = \PP \times \lbb$ with respect to $F$. We say that $\mu$ is \emph{ergodic} with respect to $F$ if~$\mu(A) \in \{0, 1\}$ for every measurable set $A$ such that {$F(A) \subset A$}. 
Unlike in the classical setting where the probability measure $\mu$ is assumed to be $F$-invariant, this notion of ergodicity does not require $\mu$ to be $F$-invariant. In most applications, $\mu$ serves as a quasi-invariant measure for the $\mathbb{Z}$-action of $F$, which aligns this definition with the classical notion of ergodicity in this setting~\cite{greschonig2000ergodic,MR3204667}.
\enlargethispage{0.5cm}

Note that if $\mu=\PP\times\lbb$ is an $F$-invariant measure, by Birkhoff ergodic theorem the time average converges $\mu$-a.e.~point. Thus,  $F$ cannot exhibit historical behavior almost everywhere. The following result establishes historical behavior almost everywhere for mild skew products satisfying the {fluctuation law} distribution and assuming that $\mu$ is ergodic with respect to $F$. As a consequence, $\mu$ must be a \emph{non-invariant} measure of $F$.  

\begin{mainthm}\label{thm:ergodic-assumption}
    Let $F$ be a skew product as in~\eqref{skew product-principal}. Assume that
    \begin{enumerate}[label=$\mathrm{(\roman*)}$]
        \item the measure $\PP \times \lbb$ is ergodic with respect to $F$, and
        \item $F$ satisfies the {skew-product fluctuation law} {with $\gamma_0<\gamma_1$}.
    \end{enumerate}
    Then, 
    \begin{align*}
		 \liminf_{n\to\infty}\s f^j_{\omega}(x) {\leq \gamma_0 < \gamma_1 \leq}
        \limsup_{n\to\infty}\s f^j_{\omega}(x)  \quad \text{ for $(\PP\times \lbb)$-a.e.~$(\omega,x)\in\Esp \times I$.}
	\end{align*}
In particular, $F$ exhibits historical behavior for $(\PP \times \lbb)$-almost every point.
\end{mainthm}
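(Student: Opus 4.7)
The plan is to apply the ergodicity hypothesis (i) to the two indicator observables $\ind_{I_i(\gamma_i)}$ for $i=0,1$, reducing the theorem to a constancy argument, and then transfer the resulting fluctuation to the observable $\phi(\omega,x)=x$.

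First, set
\[
Y^i_n(\omega,x)\eqdef \frac{1}{n}\sum_{j=0}^{n-1}\ind_{I_i(\gamma_i)}(f^j_\omega(x)),\qquad L_i(\omega,x)\eqdef \limsup_{n\to\infty}Y^i_n(\omega,x).
\]
A direct calculation, using that Ces\`aro averages are insensitive to a one-term shift, gives $L_i\circ F = L_i$. In particular, for every $c\in\R$ the level set $\{L_i\geq c\}$ satisfies $F(\{L_i\geq c\})\subseteq \{L_i\geq c\}$, so hypothesis (i) forces $\mu(\{L_i\geq c\})\in\{0,1\}$, where $\mu\eqdef\PP\times\lbb$. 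Hence each $L_i$ is $\mu$-a.e.~equal to some constant $c_i\in[0,1]$.

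Next, I would identify $c_i=1$ using the skew-product fluctuation law. Fix $\alpha\in(0,1)$. Hypothesis (ii) reads $\liminf_n\mu(\{Y^i_n\leq\alpha\})<1$, i.e., $\limsup_n\mu(\{Y^i_n>\alpha\})>0$. The reverse Fatou lemma for sets then yields
\[
\mu\bigl(\{Y^i_n>\alpha\text{ infinitely often}\}\bigr)\geq \limsup_{n\to\infty}\mu(\{Y^i_n>\alpha\})>0,
\]
and since $\{Y^i_n>\alpha\text{ i.o.}\}\subseteq \{L_i\geq\alpha\}$, the constancy dichotomy forces $\mu(\{L_i\geq\alpha\})=1$. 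Letting $\alpha\uparrow 1$ and using $L_i\leq 1$ gives $c_i=1$ for $i=0,1$.

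Finally, I would transfer to Birkhoff sums of $\phi(\omega,x)=x$. For $\mu$-a.e.~$(\omega,x)$ there is a subsequence $n_k\to\infty$ along which $Y^0_{n_k}(\omega,x)\to 1$; splitting the Birkhoff sum according to whether $f^j_\omega(x)\in[0,\gamma_0]$ or not gives
\[
\frac{1}{n_k}\sum_{j=0}^{n_k-1}f^j_\omega(x)\leq \gamma_0\,Y^0_{n_k}(\omega,x)+\bigl(1-Y^0_{n_k}(\omega,x)\bigr)\longrightarrow \gamma_0,
\]
so $\liminf_n\frac{1}{n}\sum_{j=0}^{n-1}f^j_\omega(x)\leq\gamma_0$. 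The symmetric computation with $I_1(\gamma_1)=[\gamma_1,1]$ produces a subsequence on which the Birkhoff sum is bounded below by $\gamma_1\,Y^1_{n_k}(\omega,x)$, yielding $\limsup_n\frac{1}{n}\sum_{j=0}^{n-1}f^j_\omega(x)\geq\gamma_1$. Since $\gamma_0<\gamma_1$, historical behavior holds $\mu$-almost everywhere.

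The delicate point---and the reason hypothesis (i) is indispensable---is the passage from the distributional fluctuation in (ii) to a pointwise statement about the functional $L_i$: a priori, the positive-measure sets witnessing $\mu(\{Y^i_n>\alpha\})>\delta$ could shift arbitrarily with $n$, and only the $F$-invariance of $L_i$ combined with the $0/1$ dichotomy from (i) pins down the exceptional set. The remaining steps are routine manipulations of indicators and subsequences.
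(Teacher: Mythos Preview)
Your proof is correct and follows essentially the same route as the paper. The paper factors the argument through an intermediate ``occupational time'' condition (their (OT2)): it first uses Proposition~\ref{prop:skew-arc-fiber-arc-a} to pass from the skew-product fluctuation law to a fiberwise version, then Proposition~\ref{prop:equivalence arcsine law}(i) combines this with ergodicity and the reverse Fatou bound (their Lemma~\ref{lema:apendix-propierties of rv}, which is exactly your $\mu(\{Y^i_n>\alpha\text{ i.o.}\})\geq\limsup_n\mu(\{Y^i_n>\alpha\})$) to obtain $L_i=1$ a.e.; finally Theorem~\ref{thm:theorem-H3b} performs the indicator splitting to reach the Birkhoff averages. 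You collapse these steps into one, working directly with $\mu=\PP\times\lbb$ and the skew-product fluctuation law without the fiberwise detour, which is a mild streamlining but not a different idea.
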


\subsection{Limit points of the sequence of empirical measures} 

In this section, we investigate the asymptotic behavior of the sequence of empirical measures 
\begin{equation*}
    e_n(\omega,x) \eqdef \frac{1}{n} \sum_{j=0}^{n-1} \delta_{f_\omega^j(x)}.    
\end{equation*}
We define the set
\begin{equation}\label{eq:empirical measure}
    \mathcal{L}(\omega,x) \eqdef \big\{ \, \nu \,\colon  \ \ \text{$\nu$ is an accumulation point in the weak$^*$ topology of $e_n(\omega,x)$} \big\}.
\end{equation}  
Both Theorem~\ref{thm:B-random-walks} and Theorem~\ref{thm:ergodic-assumption} show that these sequences of empirical measures 
fail to converge in the weak$^*$ topology for $(\mathbb{P} \times \lbb)$-almost every point. Thus, $\mathcal{L}(\omega,x)$ is not a trivial set.  Moreover, in the one-step case, when the fiber maps are continuous functions, these accumulation points are \emph{stationary measures}, i.e., measures $\nu$ on $I$ that induce $F$-invariant measures of the form $\mathbb{P} \times \nu$.
{However, for mild skew products, the product measure $\mathbb{P} \times \nu$ is not generally $F$-invariant for a given $\nu \in \mathcal{L}(\omega,x)$.}

A distinctive feature of our setting is that no additional continuity assumptions on the fiber maps $f_\omega$ are required. Furthermore, the maps are not assumed to fix the endpoints of the interval $I = [0,1]$. As a result of this flexibility, $F$-invariant measures may not exist in general.  
However, under the additional assumption that the fiber maps fix the endpoints of the interval, specifically, 
\begin{equation} \label{H3}
    f_\omega(0) = 0 \quad \text{and} \quad f_\omega(1) = 1 \quad \text{for $\mathbb{P}$-a.e.~$\omega \in \Omega$},
\end{equation}
we can ensure the existence of trivial $F$-invariant measures $\mu_0 = \mathbb{P} \times \delta_0$ and $\mu_1 = \mathbb{P} \times \delta_1$, where $\delta_0$ and $\delta_1$ are the Dirac measures at the fixed points $0$ and $1$, respectively.

{For many systems exhibiting historical behavior, such as those considered in this article, orbits spend an insignificant fraction of their time inside the interior of the interval. The following result describes the limit set of empirical measures under this vanishing occupation time of the interior of $I$.
}

\begin{mainprop} \label{mainpropo-limitset} 
Let $F$ be a skew product satisfying the assumptions of Theorem~\ref{thm:B-random-walks} or Theorem~\ref{thm:ergodic-assumption}, along with~\eqref{H3}. Assume that 
\begin{enumerate}
    \item[(i)] {for any $0<\epsilon <1/2$  it holds 
    \begin{equation}  \label{OT}
            \lim_{n\to\infty}\frac{1}{n} \sum_{j=0}^{n-1} \ind_{[\epsilon,1-\epsilon]}(f^j_\omega(x))=0  \quad \text{$(\mathbb{P}\times \lbb)$-a.e.~$(\omega,x)\in \Omega\times I$};
    \end{equation}}
    \item[(ii)] the constants $\gamma_0$ and $\gamma_1$ in the definition of the {pointwise-fiber  and skew-product fluctuation laws} can be chosen arbitrarily close to $0$ and $1$, respectively.
\end{enumerate}
 Then, 
\begin{equation} \label{eq:limitpoint}
    \mathcal{L}(\omega,x) = \big\{ \lambda \delta_0 + (1-\lambda) \delta_1 \colon \lambda \in [0,1] \big\} \quad \text{for $(\mathbb{P} \times \lbb)$-a.e.~$(\omega,x) \in \Omega \times I$.}
\end{equation}
\end{mainprop}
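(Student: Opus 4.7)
The plan is to verify both inclusions in~\eqref{eq:limitpoint}. First I restrict to a full-measure set of $(\omega,x)$ on which the conclusion of Theorem~\ref{thm:B-random-walks} or Theorem~\ref{thm:ergodic-assumption} holds simultaneously for a countable sequence $\gamma_0^{(m)} \searrow 0$ and $\gamma_1^{(m)} \nearrow 1$ (afforded by assumption~(ii) of the proposition), and on which the vanishing occupation time~\eqref{OT} holds for every $\epsilon$ in a dense countable subset of $(0,1/2)$. This reduction uses only countable intersections of full-measure sets.

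To see that every $\nu\in\mathcal{L}(\omega,x)$ is supported on $\{0,1\}$, suppose $e_{n_k}(\omega,x) \to \nu$ in the weak-$*$ topology. The Portmanteau theorem applied to the open set $(\epsilon, 1-\epsilon)$, together with the vanishing occupation time, gives
\begin{equation*}
\nu((\epsilon, 1-\epsilon)) \leq \liminf_k e_{n_k}(\omega,x)((\epsilon, 1-\epsilon)) \leq \liminf_k e_{n_k}(\omega,x)([\epsilon, 1-\epsilon]) = 0.
\end{equation*}
Letting $\epsilon \to 0$ along the dense sequence yields $\nu((0,1))=0$, hence $\nu = \lambda\delta_0 + (1-\lambda)\delta_1$ for some $\lambda\in[0,1]$.

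Next I identify $\mathcal{L}(\omega,x)$ with the accumulation set of Cesàro averages of the identity observable. Integrating the continuous function $\phi(x)=x$ against $e_{n_k}\to\nu$ shows that the above $\lambda$ satisfies $1-\lambda = \lim_k a_{n_k}(\omega,x)$, where $a_n(\omega,x)\eqdef \frac{1}{n}\sum_{j=0}^{n-1}f^j_\omega(x)$. Conversely, weak-$*$ compactness of the space of probability measures on $I$ allows any convergent subsequence of $\{a_n\}$ to be refined to one along which $e_{n_k}$ converges; hence the map $\nu \mapsto 1-\int x\,d\nu$ is a bijection between $\mathcal{L}(\omega,x)$ and the accumulation set $\mathcal{A}(\omega,x) \subset [0,1]$ of $\{a_n\}$.

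Finally, by the reduction in the first paragraph and the invoked theorem, $\liminf a_n \leq \gamma_0^{(m)}$ and $\limsup a_n \geq \gamma_1^{(m)}$ for every $m$, and therefore $\liminf a_n=0$ and $\limsup a_n=1$. The elementary estimate $|a_{n+1}-a_n|\leq 2/(n+1)$ implies that $\mathcal{A}(\omega,x)$ is a closed connected subset of $[0,1]$; containing both endpoints, it must equal the whole interval $[0,1]$. Combined with the bijection above, this gives~\eqref{eq:limitpoint}. The main technical point is this connectedness argument, which delivers the full continuum of convex combinations; the remaining ingredients are standard Portmanteau and weak-$*$ compactness bookkeeping, together with the countable reduction in the opening paragraph.
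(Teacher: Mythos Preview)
Your proof is correct and follows essentially the same architecture as the paper's: reduce to a full-measure set, use Portmanteau together with the vanishing interior occupation time to get $\mathcal{L}(\omega,x)\subseteq L=\{\lambda\delta_0+(1-\lambda)\delta_1\}$, use assumption~(ii) with the theorems to force $\liminf a_n=0$ and $\limsup a_n=1$, and then invoke connectedness to obtain the whole segment.

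The one noteworthy difference is in how the connectedness step is handled. The paper (Proposition~\ref{prop-delta-L}) only explicitly shows $\delta_0,\delta_1\in\mathcal{L}(\omega,x)$ and then relies on the standard (but unstated) fact that the weak$^*$ limit set of a sequence of empirical measures is connected. Your route is more self-contained: you pass to the scalar sequence $a_n=\int x\,de_n$, observe that on the segment $L$ the map $\nu\mapsto\int x\,d\nu$ is a bijection onto $[0,1]$, and prove connectedness of the accumulation set of $\{a_n\}$ directly from the elementary estimate $|a_{n+1}-a_n|\le 1/(n+1)$ (your $2/(n+1)$ is of course also fine). This buys you an explicit, one-line connectedness argument at the cost of the small extra bookkeeping of the bijection; the paper's version is terser but leans on a folklore fact. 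A minor cosmetic point: for the Portmanteau step you use the open-set lower-semicontinuity inequality, whereas the paper picks $\epsilon$ so that $[\epsilon,1-\epsilon]$ is a $\nu$-continuity set; both are equally valid.
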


\begin{rem} \label{rem-ppp} In the one-step case, we get that~\eqref{eq:limitpoint} holds for every $x\in (0,1)$ and $\mathbb{P}$-a.e.~$\omega\in \Omega$ {whenever, for every fixed $x\in (0,1)$ and  $0<\epsilon <1/2$, the limit of the occupational time in~\eqref{OT} holds $\mathbb{P}$-almost surely.} 
\end{rem}

{

It is a classical fact that one-dimensional random walks with non-degenerate increments (i.e., non-frozen at a single value) spend asymptotically zero proportion of time in any fixed compact subset of the interior of their state space (see Proposition~\ref{cor:main-cases}). We shall return to this phenomenon in Theorem~\ref{thm:A-unified}, where we prove a vanishing interior occupation time law even with non-i.i.d.~steps. Hence, in the setting of Proposition~\ref{maincor:conjugation-random-walk}, the assumption~\eqref{OT} follows directly whenever the fiber process is conjugate to such a random walk, and from Proposition~\ref{mainpropo-limitset} 
and Remark~\ref{rem-ppp}, we have the following:



}


\begin{maincor} \label{cor:lim-set} Let $F$ be a one-step skew product satisfying the assumptions of Proposition~\ref{maincor:conjugation-random-walk} and~\eqref{H3}. 
Then, for every $x\in (0,1)$,  
\begin{equation*} 
\mathcal{L}(\omega,x)=\big\{\lambda \delta_0 + (1-\lambda) \delta_1\colon \lambda\in [0,1]\big\} \quad \text{for $\mathbb{P}$-a.e.~$\omega\in \Omega$}.
\end{equation*}
\end{maincor}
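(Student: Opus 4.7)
The plan is to apply Proposition~\ref{mainpropo-limitset} in the one-step setting through Remark~\ref{rem-ppp}. Two things must be verified under the hypotheses of Proposition~\ref{maincor:conjugation-random-walk} together with~\eqref{H3}: (i) for every fixed $x\in (0,1)$ and every $0<\epsilon<1/2$, the occupation time of $[\epsilon,1-\epsilon]$ along the orbit $\{f^j_\omega(x)\}_{j\geq 0}$ tends to $0$ $\PP$-almost surely; and (ii) the constants $\gamma_0,\gamma_1$ in the pointwise-fiber fluctuation law can be chosen arbitrarily close to $0$ and $1$, respectively.

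I would dispose of assertion (ii) first, since it is a direct byproduct of Proposition~\ref{maincor:conjugation-random-walk}: identity~\eqref{eq:arcsine-law} says that the empirical frequencies $\frac{1}{n}\sum_{j=0}^{n-1}\ind_{I_i(\gamma)}(f^j_\omega(x))$ are asymptotically arcsine distributed for every $\gamma\in(0,1)$, every $x\in (0,1)$, and $i=0,1$. Since $\frac{2}{\pi}\arcsin\sqrt\alpha<1$ for each $\alpha\in(0,1)$, the fluctuation law of Definition~\ref{def:arcsine-law-one-s} is then available for any pair $\gamma_0,\gamma_1\in(0,1)$, in particular for $\gamma_0$ near $0$ and $\gamma_1$ near $1$.

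For assertion (i), I would fix $x\in(0,1)$ and $0<\epsilon<1/2$, and let $h\colon\mathcal{O}(x)\to G$ be the strictly monotonic injection from Definition~\ref{def:conjugated-G-randomwalk}, so that $S^t_n(\omega)=h(f^n_\omega(x))$ is a mean-zero, positive-variance random walk on $G\in\{\mathbb{R},\mathbb{Z}\}$ starting at $t=h(x)$. Writing $B_\epsilon := h(\mathcal{O}(x)\cap [\epsilon,1-\epsilon])$, the conjugation furnishes the pointwise identity
\[
\frac{1}{n}\sum_{j=0}^{n-1}\ind_{[\epsilon,1-\epsilon]}\bigl(f^j_\omega(x)\bigr) \;=\; \frac{1}{n}\sum_{j=0}^{n-1}\ind_{B_\epsilon}\bigl(S^t_j(\omega)\bigr).
\]
The next step is to argue that $B_\epsilon$ is a bounded subset of $G$: by strict monotonicity, every $h$-value in $B_\epsilon$ is squeezed between $h$-values of points of $\mathcal{O}(x)$ lying strictly to the right of $1-\epsilon$ and strictly to the left of $\epsilon$, and such flanking points must exist because a mean-zero, positive-variance random walk is $\PP$-almost surely unbounded above and below (so the deterministic set $h(\mathcal{O}(x))$ is unbounded in both directions). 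With $B_\epsilon$ bounded, I would then invoke the classical vanishing-occupation-time law for one-dimensional random walks with non-degenerate i.i.d.\ increments---recorded in this paper as Proposition~\ref{cor:main-cases} and subsumed in Theorem~\ref{thm:A-unified}---to conclude that the right-hand side tends to $0$ for $\PP$-a.e.~$\omega\in\Omega$.

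Once (i) and (ii) are secured, the corollary follows by feeding them into Remark~\ref{rem-ppp} and Proposition~\ref{mainpropo-limitset}. The main obstacle in this plan is the boundedness of $B_\epsilon$, which is not entirely automatic because Definition~\ref{def:conjugated-G-randomwalk} requires only strict monotonicity of $h$, not continuity; the sketched monotonicity-plus-unboundedness argument handles the general case, and in the concrete conjugations to random walks arising in the examples of interest (e.g.\ those of Bonifant--Milnor and Molinek), where $h$ is continuous with $h(0^+)=-\infty$ and $h(1^-)=+\infty$, the boundedness is essentially immediate.
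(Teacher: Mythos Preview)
Your plan coincides with the paper's: verify hypotheses (i) and (ii) of Proposition~\ref{mainpropo-limitset} and then invoke Remark~\ref{rem-ppp}. The paper packages step~(i) as Corollary~\ref{cor:main-cases-cor}, whose proof is precisely your conjugation-plus-Proposition~\ref{cor:main-cases} argument, and handles step~(ii) exactly as you do via~\eqref{eq:arcsine-law}.

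One point to tighten: your justification that $\mathcal{O}(x)$ contains flanking points in $(0,\epsilon)$ and $(1-\epsilon,1)$ is not complete as stated. Unboundedness of $h(\mathcal{O}(x))$ in $G$ together with strict monotonicity of $h$ does \emph{not}, on its own, force $\mathcal{O}(x)$ to exit $[\epsilon,1-\epsilon]$; abstractly, a strictly monotone injection defined on a subset of $[\epsilon,1-\epsilon]$ can perfectly well have unbounded range (let $h$ blow up at an interior accumulation point of $\mathcal{O}(x)$). A clean fix is already in your hands via~\eqref{eq:arcsine-law}: if $\mathcal{O}(x)\cap(1-\epsilon,1)$ were empty then, since $f^j_\omega(x)\in(0,1)$ for all $j$ by~\rmref{H1}, the average $\frac{1}{n}\sum_{j}\ind_{I_1(\gamma)}(f^j_\omega(x))$ would vanish identically for any $\gamma\in(1-\epsilon,1)$, forcing the left-hand side of~\eqref{eq:arcsine-law} to equal $1$ for every $\alpha\in(0,1)$ rather than $\tfrac{2}{\pi}\arcsin\sqrt\alpha$. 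The left flanking point follows symmetrically with $i=0$.
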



\subsection{Examples}\label{ss:examples-of-sp-hb}

This section shows how our results unify and extend known examples of skew products that exhibit historical behavior almost everywhere. As previously mentioned,  these examples share structural features: the weak form of the arcsine law and a kind of ergodicity. Thus, we provide a cohesive framework to analyze them by applying~Theorems~\ref{thm:B-random-walks} and~\ref{thm:ergodic-assumption}. {We also construct, as an application of these results, new non-trivial examples.}

{
\subsubsection{{$T^\Psi$-transformations}}

{
Consider the product space $(\Esp, \mathscr{F}, \PP) \eqdef (\mathcal{A}^\mathbb{N}, \mathcal{F}^\mathbb{N},p^\mathbb{N})$, where $\mathcal{A}$ is an at most countable alphabet and $p$ is a probability measure with full support.} Let $M$ be either $\mathbb{T} = \mathbb{R}/\mathbb{Z}$ or $I = [0,1]$, and let $T: M \to M$ be a \emph{Morse-Smale {homeomorphism}  
of period one}. That is, $T$ has a non-empty set of periodic points consisting solely of sinks and sources of period one. {Consider a one-step random variable $\Psi: \Omega \to \mathbb{Z}$, i.e., $\Psi(\omega)=\Psi(\omega_0)$ for $\omega=(\omega_i)_{i\geq 0}\in \Omega$.} The associated one-step skew product    $\fu{F^{}_{\smash{{T^\Psi}}}}{\Esp \times M}{\Esp \times M}$, is defined as
\begin{equation}\label{North-south}
 \quad F^{}_{\smash{{T^\Psi}}}(\omega, x) \eqdef (\sigma(\omega), f^{}_{\omega_0}(x))\quad \text{where} \quad f^{}_{\omega_0}(x) \eqdef T^{{\Psi}(\omega_0)}(x).
\end{equation}
{As we will see in Section~\ref{proof:Proposition-TPsi}, the sequence $\{X_n^x\}_{ n\geq 0}$ of random variables $X^x_n(\omega)=f_\omega^n(x)$ is conjugated to the $\Z$-valued random walk $\{S_n\}_{n\geq 0}$ by $S_0(\omega)=0$ and $S_n(\omega)= \Psi(\omega_0)+\dots+\Psi(\omega_{n-1})$
for $n\geq 1$. Hence, } 
 as a consequence of Proposition~\ref{maincor:conjugation-random-walk} and Corollary~\ref{cor:lim-set}, we obtain the following result:

\begin{mainprop}\label{cor:north-south} 
{If $\mathbb{E}[\Psi]=0$ and $0<\mathbb{E}[\Psi^2]<\infty$}, the skew product {$F^{}_{\smash{T^\Psi}}$} in~\eqref{North-south} satisfies~\eqref{eq:arcsine-law}~for $\lbb$-a.e.~$x \in M$ and exhibits historical behavior for $(\PP \times \lbb)$-a.e.~point. Moreover, for every pair of consecutive fixed points~$p$~and~$q$~of~$T$,  
$$
\mathcal{L}(\omega,x)=\big\{\lambda \delta_p + (1-\lambda) \delta_q\colon \lambda\in [0,1]\big\} \quad \text{for $\mathbb{P}$-a.e.~$\omega\in \Omega$ \ and $x \in {(p,q)}$}.  
$$  
\end{mainprop}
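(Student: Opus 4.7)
The plan is to verify the hypotheses of Proposition~\ref{maincor:conjugation-random-walk} and Corollary~\ref{cor:lim-set} on each invariant subinterval and invoke them directly. Fix a pair of consecutive fixed points $p<q$ of $T$. Because $T$ is a Morse--Smale homeomorphism of period one, every periodic point of $T$ is fixed; the closed interval $[p,q]$ is $T$-invariant and contains no interior fixed point, so $T$ restricts to an orientation-preserving (hence strictly monotonic) homeomorphism of $[p,q]$ with either $T(y)>y$ for all $y\in(p,q)$ or $T(y)<y$ for all $y\in(p,q)$. Consequently $\Omega\times[p,q]$ is $F^{}_{\smash{T^\Psi}}$-invariant, and via the affine identification of $[p,q]$ with $I=[0,1]$ the restricted system becomes a one-step skew product to which Proposition~\ref{maincor:conjugation-random-walk} can be applied once its hypotheses are checked.

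The core step is the conjugation to a random walk. For $x\in(p,q)$ the two-sided orbit $\{T^n(x)\}_{n\in\Z}$ is injective and strictly monotonic in $n$, so the map $h\colon\mathcal{O}(x)\to\Z$ defined by $h(T^n(x))=n$ is a well-defined strictly monotonic injection. Because $F^{}_{\smash{T^\Psi}}$ is one-step with $f_{\omega_0}=T^{\Psi(\omega_0)}$, the cocycle identity yields
\begin{equation*}
    f_\omega^n(x)=T^{\Psi(\omega_{n-1})+\cdots+\Psi(\omega_0)}(x)=T^{S_n(\omega)}(x),
\end{equation*}
so $h(f_\omega^n(x))=S_n(\omega)$, where $\{S_n\}_{n\geq 0}$ is the $\Z$-valued random walk with i.i.d. increments $Y_n(\omega)=\Psi(\omega_{n-1})$. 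The assumptions $\mathbb{E}[\Psi]=0$ and $0<\mathbb{E}[\Psi^2]<\infty$ are exactly the zero-mean, positive-finite-variance requirements of Proposition~\ref{maincor:conjugation-random-walk}.

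It remains to verify \rmref{H1}, \rmref{H2} and \eqref{H3}. Condition~\rmref{H1} is immediate because each $T^{\Psi(\omega_0)}$ is an orientation-preserving homeomorphism of $[p,q]$, and~\eqref{H3} holds since $T$ fixes $p$ and $q$, which become $0$ and $1$ after conjugation. For~\rmref{H2}, the constraints $\mathbb{E}[\Psi]=0$ and $\mathbb{E}[\Psi^2]>0$ force $\Psi$ to attain both strictly positive and strictly negative values with positive probability; picking $k^-<0<k^+$ in its support, the strict monotonicity of $\{T^n(x)\}_{n\in\Z}$ places $T^{k^-}(x)$ and $T^{k^+}(x)$ on opposite sides of $x$ in $(p,q)$, producing $\alpha,\beta\in\Omega$ with $f_\alpha(x)<x<f_\beta(x)$. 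Proposition~\ref{maincor:conjugation-random-walk} now yields~\eqref{eq:arcsine-law} for every $x\in(p,q)$ and historical behavior $(\PP\times\lbb)$-a.e. on $\Omega\times[p,q]$, while Corollary~\ref{cor:lim-set} identifies $\mathcal{L}(\omega,x)$ with $\{\lambda\delta_0+(1-\lambda)\delta_1:\lambda\in[0,1]\}$; pulling back by the affine conjugation replaces the Dirac masses at $0$ and $1$ by those at $p$ and $q$. Taking the union over all pairs of consecutive fixed points covers $\lbb$-a.e.~$x\in M$. The only genuinely delicate ingredient is the strict monotonicity of $h$, which depends precisely on the absence of interior fixed points of $T$ in $(p,q)$ together with the period-one Morse--Smale hypothesis ruling out any periodicity obstruction in $\mathcal{O}(x)$.
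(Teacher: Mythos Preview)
Your proof is correct and follows essentially the same approach as the paper's: restrict to each invariant arc $(p,q)$ between consecutive fixed points, exhibit the conjugation $h(T^n(x))=n$ to the $\Z$-valued random walk $S_n=\Psi(\omega_0)+\cdots+\Psi(\omega_{n-1})$, verify \rmref{H1}--\rmref{H2} from the Morse--Smale/period-one structure and the sign change of $\Psi$, and then invoke Proposition~\ref{maincor:conjugation-random-walk} and Corollary~\ref{cor:lim-set}. Your write-up is slightly more explicit than the paper's in naming the affine identification with $[0,1]$ and in checking the endpoint condition~\eqref{H3} needed for Corollary~\ref{cor:lim-set}, but the argument is otherwise identical.
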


{Taking in our general framework the alphabet $\mathcal{A}=\{-1,1\}$, the symmetric Bernoulli measure  $\PP(\omega_0 = -1) = \PP(\omega_0 = 1) = \frac{1}{2}$, and $\Psi(\omega)=\omega_0$, the skew-product~\eqref{North-south} becomes $$F_T(\omega,x)\eqdef (\tau(\omega),T^{\omega_0}(x)), \qquad \omega=(\omega_i)_{i\geq 0} \in \{-1,1\}^\mathbb{N}, \ \ x\in M.$$ This map is a generalization to the classical $(T,T^{-1})$-transformation~\cite{kalikow1982t}.
Since in this case, we have that $\Psi$ has mean zero and positive finite variance, Proposition~\ref{cor:north-south} applies to $F_T$.}  
Molinek~\cite[Theorem~8]{molinek1994asymptotic} established that if $T$ is a north-south diffeomorphism of $M$, the natural extension of this one-step skew product $F_T$ to $\{-1,1\}^{\mathbb{Z}}
\times M$, exhibits historical behavior almost everywhere. Since $(\Bar{\Esp}, \bar{\PP}, \sigma)=(\mathcal{A}^\mathbb{Z},p^\mathbb{Z},\sigma)$ is a measurable theoretic extension of $(\Esp, \PP, \sigma)$, Proposition~\ref{thm:extended results-ergodic conjugated} allows us to immediately generalize our result to the bi-sequence base space. This generalization includes and extends Molinek's result, as stated in the following corollary:

\begin{maincor}\label{cor:morse-smale}
Let $T$ be a Morse-Smale diffeomorphism of period one on a one-dimensional compact manifold $M$, and consider the one-step skew product 
\[
\Bar{F}_{\smash{{T^\Psi}}}: \Bar{\Esp} \times M \to \Bar{\Esp} \times M, \quad \Bar{F}_{{T^\Psi}}(\omega, x) = (\sigma(\omega), T^{{\Psi}(\omega_0)}(x)).
\]
{If $\mathbb{E}[\Psi]=0$ and $0<\mathbb{E}[\Psi^2]<\infty$}, then $\Bar{F}_{\smash{{T^\Psi}}}$ exhibits historical behavior for $(\bar{\PP} \times \lbb)$-a.e. point, and for every pair of consecutive fixed points $p$ and $q$ of $T$,  
\[
\mathcal{L}(\omega,x) = \left\{ \lambda \delta_p + (1 - \lambda) \delta_q : \lambda \in [0,1] \right\} \quad \text{for $\bar{\PP}$-a.e. $\omega \in \Bar{\Esp}_2$ and $x \in {(p,q)}$}.
\]
\end{maincor}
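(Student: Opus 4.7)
The plan is to reduce the bi-sided statement to the one-sided result Proposition~\ref{cor:north-south} and then lift the conclusions via Proposition~\ref{thm:extended results-ergodic conjugated}.

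First I would observe that $(\bar{\Omega}, \bar{\mathbb{P}}, \sigma) = (\mathcal{A}^{\mathbb{Z}}, p^{\mathbb{Z}}, \sigma)$ is a measure-theoretic extension of $(\Omega, \mathbb{P}, \sigma) = (\mathcal{A}^{\mathbb{N}}, p^{\mathbb{N}}, \sigma)$ through the projection $\pi: \bar{\Omega} \to \Omega$ onto the non-negative coordinates, which satisfies $\sigma \circ \pi = \pi \circ \sigma$ and $\pi_* \bar{\mathbb{P}} = \mathbb{P}$. Because both $F_{T^\Psi}$ and $\bar{F}_{T^\Psi}$ are one-step, their forward fiber iterates depend only on the coordinates $\omega_0, \dots, \omega_{n-1}$; explicitly,
\[
\bar{f}^n_\omega(x) = T^{\Psi(\omega_0) + \cdots + \Psi(\omega_{n-1})}(x) = f^n_{\pi(\omega)}(x), \quad n \geq 0.
\]
Consequently, the Birkhoff averages of the coordinate observable $(\omega,x)\mapsto x$ and the empirical-measure sequence $e_n(\omega,x)$ for $\bar{F}_{T^\Psi}$ at $(\omega,x)$ coincide with those for $F_{T^\Psi}$ at $(\pi(\omega),x)$.

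Next I would apply Proposition~\ref{cor:north-south} to the one-sided skew product $F_{T^\Psi}$. Since the hypotheses $\mathbb{E}[\Psi]=0$ and $0<\mathbb{E}[\Psi^2]<\infty$ are precisely those of that proposition, it supplies both the historical behavior of $F_{T^\Psi}$ for $(\mathbb{P}\times\lbb)$-a.e.~$(\omega,x)\in\Omega\times M$ and, for each pair of consecutive fixed points $p, q$ of $T$, the limit-set identification
\[
\mathcal{L}(\omega,x) = \big\{\lambda \delta_p + (1-\lambda)\delta_q : \lambda\in[0,1]\big\}
\]
for $\mathbb{P}$-a.e.~$\omega\in\Omega$ and every $x\in(p,q)$.

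Finally, Proposition~\ref{thm:extended results-ergodic conjugated} delivers the lifting step: because the orbital quantities of $\bar{F}_{T^\Psi}$ depend on $\omega$ only through $\pi(\omega)$, and because $\pi_*\bar{\mathbb{P}}=\mathbb{P}$, any full-measure statement under $\mathbb{P}\times\lbb$ for $F_{T^\Psi}$ pulls back to a full-measure statement under $\bar{\mathbb{P}}\times\lbb$ for $\bar{F}_{T^\Psi}$. Applied to the two conclusions above, this yields exactly the claims of the corollary. The main (and essentially only) obstacle is conceptual: verifying that Proposition~\ref{thm:extended results-ergodic conjugated} covers not merely the non-convergence of time averages but also the precise identification of the accumulation set of empirical measures. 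Since both properties are invariant under the factor map $\pi\times\mathrm{id}_M$, the transfer is tautological and the proof is complete.
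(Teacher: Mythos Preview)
Your proposal is correct and follows essentially the same approach as the paper: both reduce to the one-sided case via Proposition~\ref{cor:north-south} and then lift the conclusions to the two-sided shift through the natural extension, invoking Proposition~\ref{thm:extended results-ergodic conjugated}. Your additional remark that Proposition~\ref{thm:extended results-ergodic conjugated} literally only addresses historical behavior, while the limit-set identification transfers because the empirical measures depend on $\omega$ only through $\pi(\omega)$, is a useful clarification that the paper leaves implicit.
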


{
\subsubsection{Coupling random walks} 
Consider $(\Esp, \mathscr{F}, \PP)= (\mathcal{A}^\mathbb{N}, \mathcal{F}^\mathbb{N},p^\mathbb{N})$ as in the previous subsection. Let $T: I \to I$ be a {north-south homeomorphism}, i.e., a homeomorphism of the interval $I=[0,1]$ that fixes only the endpoints. Let $\{p_k\}_{k\in \mathbb{Z}} \subset (0,1)$ be strictly increasing ladder points with \(\lim_{k\to-\infty}p_k=0\) and \(\lim_{k\to +\infty}p_k=1\). Denote \(I_k=[p_k,p_{k+1})\) and $d_k=p_{k+1}-p_k$.  
Consider one-step random variables $Z:\Omega \to \mathbb{Z}$ and $\Psi:\Omega\to \mathbb{Z}$ and define the one-step skew product 
\begin{equation} \label{eq:F-coupling}
\fu{F^{}_{\smash{T^\Psi,Z}}}{\Esp \times I}{\Esp \times I}, 
\qquad 
F^{}_{\smash{\smash{T^\Psi,Z}}}(\omega, x) = (\sigma(\omega), f^{}_{\omega_0}(x)),
\end{equation}
where the fiber maps are given, for $x\in I_k$, by 
\begin{equation}\label{eq:coupling-fiber}
          f^{}_{\omega_0}(x) 
          =p^{}_{k+Z(\omega_0)} + d^{}_{k+Z(\omega_0)}\cdot T^{\Psi(\omega_0)}(u) \in I_{k+Z(\omega_0)},
          \qquad    
          u=\frac{x-p_k}{d_k},
\end{equation}
with boundary conditions \(f^{}_{\omega_0}(0)=0,\ f^{}_{\omega_0}(1)=1\) (see Figure~\ref{fig:coupling}).

\begin{figure}[t]
    \centering
    \begin{tikzpicture}

        \node[anchor=south west, inner sep=0] (image) at (0,0) {
            \includegraphics[width=0.45\linewidth]{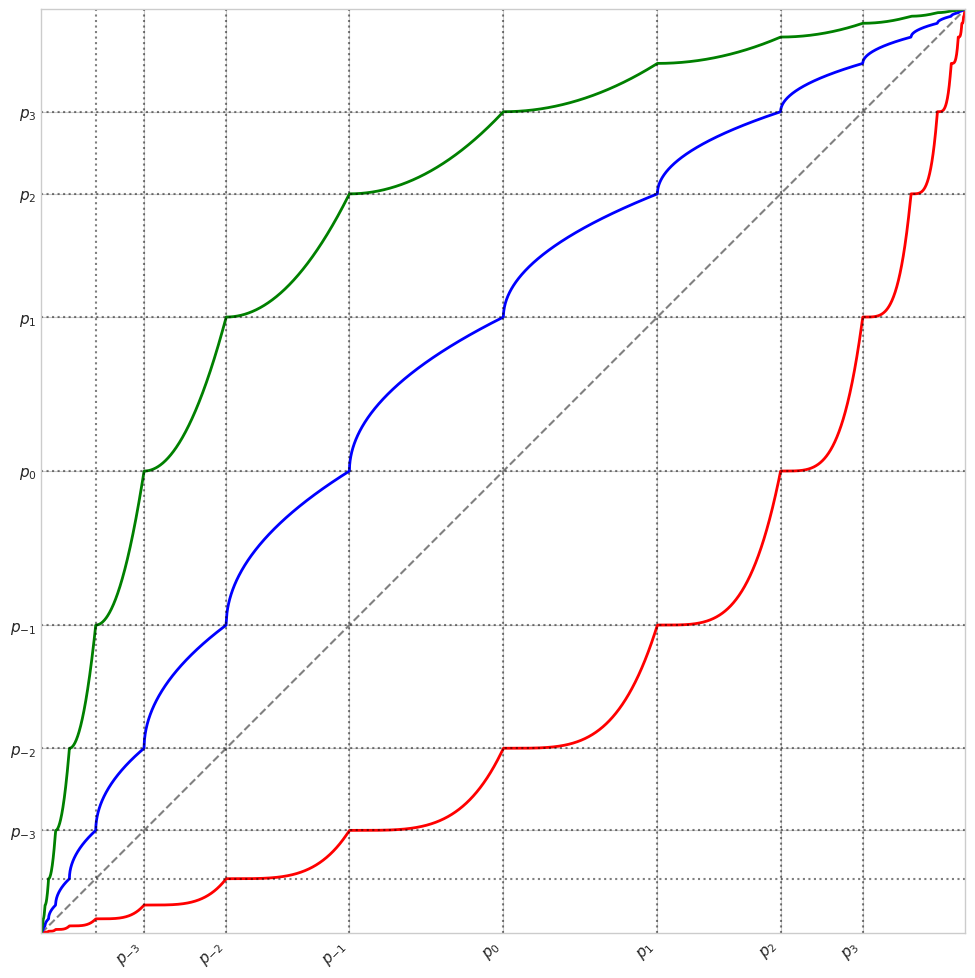}
        };
        
        \begin{scope}[
            x={(image.south east)}, 
            y={(image.north west)},
            font=\large %
        ]
            \node[red] at (0.84, 0.45) {$\smash{ f_{-2}}$};
            \node[blue] at (0.43, 0.7) {$f_{1}$};
            \node[green!50!black] at (0.42, 0.9) {$f_{3}$};
            \end{scope}
    \end{tikzpicture}
   \caption{{Fiber maps of a coupling random walk skew product with $\mathcal{A} = \{-2, 1, 3\}$ and probability vector $p=(\frac{1}{2}, \frac{1}{4}, \frac{1}{4})$. The one-step random variables are $Z(\omega_0) = \omega_0$, $\Psi(-2) = 2$, $\Psi(1) = -1$, and $\Psi(3) = 1$, using a north-south map $T(u) = u^2$ with partition points $p_k = \frac{2^k}{1 + 2^k}$ for $k \in \mathbb{Z}$.}}
    \label{fig:coupling}
\end{figure}

The sequence $X_n^x(\omega)=f^n_{\omega}(x)$, $ n\geq 0$, is not necessarily conjugate to a random walk. However, it can be expressed as
 \begin{equation}\label{eq:coupling-randomwalk}
        X_n^x = p^{}_{Z_n} + d^{}_{Z_n} \cdot u_n,  \quad  n\geq 0,
 \end{equation}
where
\begin{equation}\label{eq:process-definitions}
\begin{aligned}
Z_0(\omega) &= k,         &\quad Z_n(\omega) &= Z_{n-1}(\omega) + Z(\omega_{n-1}),  &\quad n&\geq 1, \\
S_0(\omega) &= 0,         &\quad S_n(\omega) &= S_{n-1}(\omega) + \Psi(\omega_{n-1}), &\quad n&\geq 1, \\
u_0(\omega) &= u,         &\quad u_n(\omega) &= T^{S_n(\omega)}(u_0),                &\quad n&\geq 1.
\end{aligned}
\end{equation}
Since $\{u_n\}_{n\geq 1}$ is conjugate to the random walk $\{S_n\}_{n\ge0}$, the process $\{X_n^x\}_{n\geq 0}$ can be regarded as a coupling of two random walks: a {\emph{macro} walk $\{Z_n\}_{n\ge0}$} and a {\emph{micro} walk $\{S_n\}_{n\ge0}$}, both evolving on the integer lattice \(\mathbb{Z}\).

As a consequence of Theorem~\ref{thm:B-random-walks} and Proposition~\ref{mainpropo-limitset}, we obtain:

\begin{mainprop}\label{maincor:coupling} Let $\Psi$ and $Z$ be one-step random variables with $\mathbb{E}[Z]=0$ and $0<\mathbb{E}[Z^2]<\infty$. Then,
 $F^{}_{\smash{T^\Psi,Z}}$ as in~\eqref{eq:F-coupling} has historical behavior for $(\PP \times \lbb)$-a.e.~point. Moreover, for every~$x\in (0,1)$,  
$$
\mathcal{L}(\omega,x)=\big\{\lambda \delta_0 + (1-\lambda) \delta_1\colon \lambda\in [0,1]\big\} \quad \text{for $\mathbb{P}$-a.e.~$\omega\in \Omega$}.  
$$  
\end{mainprop}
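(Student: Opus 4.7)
The strategy is to apply Theorem~\ref{thm:B-random-walks} together with Proposition~\ref{mainpropo-limitset} (invoked through Remark~\ref{rem-ppp}), reducing everything to statistical properties of the iid $\mathbb{Z}^2$-valued random walk $(Z_n,S_n)$ via the decomposition $X_n^x = p_{Z_n} + d_{Z_n}\cdot T^{S_n}(u_0)$ from~\eqref{eq:coupling-randomwalk}--\eqref{eq:process-definitions}.

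\emph{Verification of \rmref{H1}--\rmref{H2}.} Each fiber map $f_{\omega_0}$ sends $I_k$ bijectively onto $I_{k+Z(\omega_0)}$ by an affine rescaling composed with the increasing homeomorphism $T^{\Psi(\omega_0)}$; since the partition $\{I_k\}_{k\in\mathbb{Z}}$ respects the order of $\mathbb{Z}$, the map $f_{\omega_0}$ is a globally increasing homeomorphism of $[0,1]$, yielding \rmref{H1}. Because $\mathbb{E}[Z]=0$ and $\mathrm{Var}(Z)>0$, $Z$ takes both strictly positive and strictly negative values with positive probability; choosing $\omega_0$ accordingly moves $x\in I_k$ into some $I_{k+Z(\omega_0)}$ lying entirely to the right or left of $I_k$, establishing \rmref{H2}.

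\emph{Verification of \rmref{H0} and the pointwise-fiber fluctuation law.} The tail $\sigma$-algebra of $\{X_n^x\}$ is contained in the tail of the iid-step random walk $\{(Z_n,S_n)\}$ on $\mathbb{Z}^2$; exactly as in the proof of Proposition~\ref{maincor:conjugation-random-walk}, the Hewitt--Savage zero--one law makes this tail trivial, giving \rmref{H0}. For the fluctuation law, set $\gamma_0 = p_1 < p_2 = \gamma_1$ and pick $x_0\in I_0$ and $x_1\in I_1$, so that initially $Z_0=0$ and $Z_0=1$ respectively. The identity $X_n^x\in I_{Z_n}$ converts $\{X_n^{x_0}\leq \gamma_0\}$ into $\{Z_n\leq 0\}$ and $\{X_n^{x_1}\geq \gamma_1\}$ into $\{Z_n\geq 2\}$. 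The Erd\H{o}s--Kac arcsine theorem applied to the mean-zero, finite-variance random walk $Z_n$ then yields that the empirical occupation frequencies of these half-lines converge in distribution to the arcsine law on $[0,1]$, so
$$\liminf_{n\to\infty}\PP\bigg(\tfrac{1}{n}\#\{j<n:Z_j\in A\}\leq \alpha\bigg) = \tfrac{2}{\pi}\arcsin\sqrt{\alpha} < 1$$
for every $\alpha\in(0,1)$ and each of the two half-lines $A$. Theorem~\ref{thm:B-random-walks} then yields historical behavior for $(\PP\times \lbb)$-a.e.~point.

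\emph{Identification of $\mathcal{L}(\omega,x)$.} Condition~\eqref{H3} holds by construction, while condition~(ii) of Proposition~\ref{mainpropo-limitset} is obtained by repeating the preceding fluctuation-law argument with $\gamma_0=p_{-k}$ and $\gamma_1=p_k$, which tend to $0$ and $1$ respectively as $k\to\infty$. The delicate point---and the main obstacle of the proof---is verifying condition~(i) in the $\PP$-almost-sure form demanded by Remark~\ref{rem-ppp} for each fixed $x\in(0,1)$. Since $X_j^x\in [\varepsilon,1-\varepsilon]$ forces $Z_j$ into the finite set $K_\varepsilon = \{k : I_k\cap[\varepsilon,1-\varepsilon]\neq\emptyset\}$, the question reduces to showing that the mean-zero, finite-variance, $\mathbb{Z}$-valued random walk $Z_n$ spends asymptotically zero fraction of time in any finite set; this is a standard consequence of null-recurrence which I will invoke from Theorem~\ref{thm:A-unified}. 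Remark~\ref{rem-ppp} then identifies $\mathcal{L}(\omega,x)$ with the claimed segment of convex combinations of $\delta_0$ and $\delta_1$ for every $x\in(0,1)$ and $\PP$-a.e.~$\omega$.
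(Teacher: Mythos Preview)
Your proposal is correct and follows essentially the same route as the paper: verify \rmref{H1}--\rmref{H2} directly from the definition of the fiber maps, deduce \rmref{H0} from the Hewitt--Savage zero--one law applied to the $\mathbb{Z}^2$-valued random walk $(Z_n,S_n)$, obtain the pointwise-fiber fluctuation law from the Erd\H{o}s--Kac arcsine law for the macro walk $Z_n$, and then invoke Theorem~\ref{thm:B-random-walks} together with Proposition~\ref{mainpropo-limitset} via Remark~\ref{rem-ppp}. The only cosmetic difference is that the paper takes the test points $x_i=p_i$ (ladder points, where $\{X_n^{x_i}\}$ is exactly conjugate to $\{Z_n\}$ and Corollary~\ref{prop:arcsine-to-conjugate} applies directly), whereas you take interior points $x_i\in I_i$ and translate the indicator events into half-line events for $Z_n$; both are equally valid.
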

}

\subsubsection{Skew-flow transformations} \label{ss:skew-flow} 
{Let $(\Omega,\mathbb{P}, \sigma)$ be a measure-preserving dynamical system, where $\sigma:\Omega \to \Omega$ is either
\begin{itemize}
  \item a topologically mixing (one-sided or two-sided) subshift of finite type, or 
  \item the restriction of a $C^1$-diffeomorphism to a topologically mixing hyperbolic basic set. 
\end{itemize}
The probability measure $\mathbb{P}$ is an equilibrium state associated with a H\"older continuous potential, also known as a \emph{H\"older Gibbs measure}~\cite{Bow75}.} 

Consider a one-dimensional compact manifold $M$, which is either $\mathbb{T} = \mathbb{R}/\mathbb{Z}$ or $I = [0,1]$. Let $\varphi:\mathbb{R} \times M \to M$ be a $C^1$ flow on $M$, and let $\phi:\Omega \to \mathbb{R}$ be a {measurable} function. Define the skew product 
\begin{equation} \label{eq:skew-flow}
    F_{\varphi,\phi}(\omega,x) = (\sigma(\omega), f_\omega(x)) \quad \text{where} \quad f_\omega(x) = \varphi(\phi(\omega), x).
\end{equation}
We study $F_{\varphi,\phi}$ via conjugation with the skew translation 
\begin{equation*}
    T_\phi \colon \Esp \times \mathbb{R} \to \Esp \times \mathbb{R}, \quad T_\phi(\omega, x) \eqdef (\sigma(\omega), x + \phi(\omega)).
\end{equation*}
{By Theorem~\ref{thm:A-unified}, $F_{\varphi,\phi}$ satisfies the vanishing occupational time~\eqref{OT}} provided  $\phi$ satisfies the following cohomological condition:
\begin{enumerate}[leftmargin=1.5cm, label=(C\arabic*), start=1]
    \item\label{C1} the equation $\phi = u \circ \sigma - u$ has no solution in $L^2(\mathbb{P})$.
\end{enumerate}  
When such a solution $u$ exists, $\phi$ is called \emph{(additive) coboundary}. Ji and Molinek~\cite{JM00} analyzed the case where $\varphi$ is a north-south flow on $M = \mathbb{T}$, showing that $F_{\varphi,\phi}$ exhibits historical behavior almost everywhere whenever $\phi$ has zero expectation and is a H\"older continuous and is not a coboundary.  Under these same assumptions for $\phi$, we show as a consequence of Theorem~\ref{prop:skew-translation-ergi-arcsine} that $F_{\varphi,\phi}$ satisfies an arcsine law. 
{For a direct application of Theorem~\ref{thm:ergodic-assumption}, ergodicity of reference measure is also required. However, this condition is not readily implied by the additive non-coboundary assumption~\rmref{C1}. Fortunately, the conjugacy between skew-flows and skew-translations provides the key to analyzing their long-term behavior. The decisive feature of the skew-translation is its additive, random-walk-like structure in the fiber, which allows the asymptotic behavior of an orbit to be studied independently of its initial point. This property effectively decouples the random process from the specific starting condition, thereby allowing us to derive the system's asymptotic properties directly from the ergodicity of the base dynamics together  with the fluctuation law and the vanishing interior occupational time. This refined approach yields the following proposition:}

\begin{mainprop} \label{maincor:skew-flow-preserving-orientation}
Let $\varphi$ and $\phi$ be, respectively, a Morse-Smale flow on $M$ and a H\"older continuous function satisfying~\rmref{C1} and $\mathbb{E}[\phi]=0$. Then,  $F_{\varphi,\phi}$ exhibits historical behavior for $(\PP \times \lbb)$-a.e.~point.  Moreover, given $x\in (p,q)$ where $p$ and $q$ are two consecutive equilibrium points of $\varphi$, we have that
$$
\mathcal{L}(\omega,x)=\big\{\lambda \delta_p + (1-\lambda) \delta_q\colon \lambda\in [0,1]\big\} \quad \text{for $\mathbb{P}$-a.e.~$\omega\in \Omega$.}
$$
\end{mainprop}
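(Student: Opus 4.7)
My plan is to reduce to the additive skew-translation and then exploit the resulting decoupling of the fiber dynamics from the initial condition. Since $\varphi$ is Morse-Smale on the one-dimensional compact manifold $M$, its equilibrium set $\mathrm{Eq}(\varphi)$ is finite, $\Omega\times\mathrm{Eq}(\varphi)$ is $(\PP\times\lbb)$-null and $F_{\varphi,\phi}$-invariant, and each maximal arc $(p,q)\subset M\setminus\mathrm{Eq}(\varphi)$ is $F_{\varphi,\phi}$-invariant. Hence it suffices to prove both statements on one such arc, for $\PP$-a.e.~$\omega$ and every $x\in(p,q)$.

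On $(p,q)$ the flow is a free $\mathbb{R}$-action, so integrating the reciprocal of the generating vector field produces a $C^1$ diffeomorphism $h\colon(p,q)\to\mathbb{R}$ with $h\circ\varphi(t,\cdot)=h(\cdot)+\varepsilon\,t$ for a fixed $\varepsilon\in\{\pm 1\}$ that encodes the flow direction between $p$ and $q$. Consequently
\[
f_\omega^n(x)=h^{-1}\bigl(h(x)+\varepsilon\,S_n^\phi(\omega)\bigr),\qquad S_n^\phi(\omega):=\textstyle\sum_{j=0}^{n-1}\phi(\sigma^j\omega),
\]
and $F_{\varphi,\phi}$ on $\Omega\times(p,q)$ is conjugated through $\mathrm{id}\times h$ to the skew-translation $T_{\varepsilon\phi}$ on $\Omega\times\mathbb{R}$. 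In particular, the fiber orbit depends on $x$ only through the deterministic translate $h(x)$, so all asymptotic occupation statistics of $\{f_\omega^n(x)\}_n$ are encoded in the single process $\{S_n^\phi(\omega)\}_n$; this is the decoupling announced in the discussion preceding the statement.

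Next, I feed $\phi$ (which is H\"older, satisfies~\rmref{C1}, and has $\mathbb{E}[\phi]=0$) into Theorem~\ref{thm:A-unified} and Theorem~\ref{prop:skew-translation-ergi-arcsine}. These produce, for $\PP$-a.e.~$\omega$, the vanishing interior occupational time
\[
\tfrac{1}{n}\sum_{j=0}^{n-1}\ind_{[-R,R]}(S_j^\phi(\omega))\xrightarrow[n\to\infty]{}0 \quad \text{for every }R>0,
\]
together with the pointwise fluctuation
\[
\liminf_n\tfrac{1}{n}\#\{j<n:S_j^\phi(\omega)\le c\}=0,\qquad \limsup_n\tfrac{1}{n}\#\{j<n:S_j^\phi(\omega)\le c\}=1 \quad \text{for every }c\in\mathbb{R}.
\]
Transporting through $h^{-1}$, for $\epsilon\in(0,(q-p)/2)$ the proportions $P_n^\epsilon$, $M_n^\epsilon$, $Q_n^\epsilon$ of $j\in\{0,\dots,n-1\}$ with $f_\omega^j(x)$ in $[p,p+\epsilon]$, $[p+\epsilon,q-\epsilon]$, $[q-\epsilon,q]$ respectively satisfy $M_n^\epsilon\to 0$ and $\liminf P_n^\epsilon=\liminf Q_n^\epsilon=0$, $\limsup P_n^\epsilon=\limsup Q_n^\epsilon=1$ almost surely.

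Historical behavior then follows from the sandwich
\[
p\,P_n^\epsilon+(p+\epsilon)\,M_n^\epsilon+(q-\epsilon)\,Q_n^\epsilon \;\le\; \tfrac{1}{n}\sum_{j=0}^{n-1}f_\omega^j(x)\;\le\;(p+\epsilon)\,P_n^\epsilon+(q-\epsilon)\,M_n^\epsilon+q\,Q_n^\epsilon,
\]
evaluated along subsequences where $P_n^\epsilon\to 1$ (upper bound $\to p+\epsilon$) and $Q_n^\epsilon\to 1$ (lower bound $\to q-\epsilon$), followed by $\epsilon\downarrow 0$. For $\mathcal{L}(\omega,x)$, the vanishing interior occupation forces every weak$^*$ cluster $\nu$ to be supported on $\{p,q\}$, so $\nu=\lambda\delta_p+(1-\lambda)\delta_q$; the bound $|P_{n+1}^\epsilon-P_n^\epsilon|\le 1/(n+1)$ combined with $\liminf=0$, $\limsup=1$ makes every $\lambda\in[0,1]$ a subsequential limit of $P_n^\epsilon$, hence of the empirical measures. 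The main obstacle is the \emph{almost-sure} (not merely distributional) fluctuation law above for Birkhoff sums of H\"older cocycles over a Gibbs base, which is what Theorems~\ref{thm:A-unified} and~\ref{prop:skew-translation-ergi-arcsine} are designed to deliver; without an a.s.~statement only the weaker in-probability arcsine conclusion would be available, and one would then be forced back to ergodicity of the reference measure, a hypothesis the surrounding discussion explicitly flags as not readily implied by~\rmref{C1}.
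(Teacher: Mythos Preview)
Your approach is essentially the same as the paper's proof under~\rmref{C1}: reduce to an arc, conjugate to the additive skew-translation, invoke vanishing interior occupation time and a fluctuation law for the Birkhoff sums $S_n^\phi$, and read off both historical behavior and the limit set. The sandwich estimate and the step-by-step argument $|P_{n+1}^\epsilon-P_n^\epsilon|\le 1/(n+1)$ for filling out all of $[0,1]$ are fine (the paper instead appeals to connectivity of $\mathcal{L}(\omega,x)$, but your version works equally well).

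The one genuine gap is the upgrade from the \emph{distributional} fluctuation law to the \emph{almost-sure} one. Theorem~\ref{prop:skew-translation-ergi-arcsine} only gives $\liminf_n \PP(A_n\le\alpha)<1$; it does not directly yield $\limsup_n A_n(\omega)=1$ for $\PP$-a.e.~$\omega$, which is what you then use. You correctly flag this as ``the main obstacle'' but then simply assert that Theorems~\ref{thm:A-unified} and~\ref{prop:skew-translation-ergi-arcsine} ``are designed to deliver'' it, without supplying the argument. The paper fills this gap in Corollary~\ref{cor:lim1}: writing $A_n(\omega)=\tfrac{1}{n}\sum_{j<n}\ind_{(0,\infty)}(S_j(\omega))$ and using that $\phi$ is bounded, one has $|A_n(\sigma\omega)-A_n(\omega)|$ controlled by the occupation time of the compact set $[-\|\phi\|_\infty,\|\phi\|_\infty]$, which vanishes by Theorem~\ref{thm:A-unified}; hence $C(\omega):=\limsup_n A_n(\omega)$ is $\sigma$-invariant, and ergodicity of the base forces $C$ to be constant. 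If that constant were $<1$, then for any $\alpha\in(C,1)$ one would get $\PP(A_n\ge\alpha)\to 0$, contradicting the distributional fluctuation law. This is precisely the ``decoupling'' you allude to, but the $\sigma$-invariance-plus-base-ergodicity mechanism needs to be made explicit; as written, your proposal reads as if the almost-sure statement is a black-box output of the cited theorems, which it is not.
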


Now, consider the skew product~$\tilde{F}_{\varphi,\phi}\colon \Esp \times M \to \Esp \times M$, defined by
\begin{equation*}
    \tilde{F}_{\varphi,\phi}(\omega,x) = (\sigma(\omega), f_\omega(x)), \quad \text{where} \quad f_\omega(x) = -\varphi(\phi(\omega), x) \mod 1.
\end{equation*}
Here, the fiber functions \( f_\omega \) are orientation-reversing, in contrast to~\eqref{eq:skew-flow}, where they are orientation-preserving.

Crovisier et al.~\cite{Crovisier2020} studied the case where the base dynamics $\sigma$ is a $C^2$ volume-preserving Anosov diffeomorphism on $\Omega=\mathbb{T}^2$, and $\varphi$ is a north-south flow on $M=\mathbb{T}$ induced by a $1$-periodic vector field $X$ on $\mathbb{R}$, sufficiently close to zero, such that \mbox{$X(-x)=-X(x)$} and $X(0)=X(1/2)=0$. In particular, $\varphi$ is a \emph{symmetric Morse-Smale} flow. That is, $\varphi$ is a Morse-Smale flow on $M$ and satisfies the symmetric relation 
$$\varphi(t,-x \ \mathrm{mod}\, 1)=-\varphi(t,x) \mod 1 \quad \text{for every  $(t,x)\in \mathbb{R}\times M$.}$$  
This symmetry implies that $x=1/2$ is necessarily an equilibrium point of $\varphi$. They showed that $\tilde{F}_{\varphi,\phi}$ exhibits historical behavior almost everywhere, provided $\phi:\Omega\to \mathbb{R}$ is H\"older, has zero expectation, and satisfies the following cohomological condition:
\begin{enumerate}[leftmargin=1.5cm, label=(C\arabic*), start=2]
     \item\label{C2}     there do not exist 
     \(\lambda\in \mathbb{R}\setminus\{0\}\) and  \(\psi:\Omega\to\mathbb S^1\) in $L^2(\PP)$ such that
\begin{equation} \label{*}
e^{i\lambda\phi(\omega)} \;=\; \frac{\psi(\tau(\omega))}{\psi(\omega)}
\quad\text{for \(\PP\)-a.e.\ \(\omega\in \Omega\).}
\end{equation}
 \end{enumerate}
When  $\lambda\not=0$ and $\psi$ satisfy~\eqref{*}, the function $\phi$ (or more precisely the character $e^{i\lambda \phi}$) is said to be \emph{multiplicative coboundary}. It is not difficult to see that if $\phi$ is an additive coboundary, then $\phi$ is a multiplicative coboundary. Thus, condition~\rmref{C2} implies~\rmref{C1}. But the converse is not necessarily true.  We will prove in Theorem~\ref{thm:ergodicity-equivalence} that for the skew-translation $T_\phi$, the conditions \rmref{C2} and $\mathbb{E}[\phi]=0$ are equivalent to ergodicity with respect to $\mathbb{P}\times \lbb$.
Thus, under condition~\rmref{C2}, we would obtain the requirements for applying Theorem~\ref{thm:ergodic-assumption}.

\begin{rem} \label{rem:C1-C2}
{When the function~$\phi$ is H\"older continuous,} Liv\v{s}ic's theorem~\cite[Theorem~9]{Liv72} (and subsequent work by Parry-Pollicott~\cite[Propositions~4.2 and~4.12]{PP90} and~\cite[Theorem~1 and~2]{PP97}) ensures that if the cohomological equation has a measurable solution (almost everywhere), it must also have a continuous one (everywhere). {
Hence,  if \( \phi \) is an additive (resp.~multiplicative) coboundary, then \( \phi(\alpha) = 0 \) (resp.~$\lambda\phi(\alpha)\in 2\pi \mathbb{Z}$) for every fixed point \( \alpha \in \Esp \) of \( \sigma \). Thus,~\ref{C1} holds whenever $\phi(\alpha)\not=0$ for some fixed point $\alpha$.  
Furthermore, a sufficient condition to satisfy~\ref{C2} is the existence of fixed points \( \alpha, \beta \in \Esp \) such that \( \phi(\alpha) \) and \( \phi(\beta) \) are rationally independent.}
\end{rem}


For symmetric Morse-Smale flows \( \varphi \), the symmetry implies
$\pi(\varphi(t, -x\ \mathrm{mod}\, 1)) = \varphi(t, \pi(x))$ for all \( (t, x) \in \mathbb{R} \times M \),
where
\begin{equation*}
    \pi(x) =
    \begin{cases}
        x, & x \in [0, 1/2], \\
        -x \ \mathrm{mod}\, 1, & x \in [1/2, 1].
    \end{cases}
\end{equation*}
Hence, we have
\begin{equation*}
    \Pi \circ \tilde{F}_{\varphi,\phi} = F_{\varphi,\phi} \circ \Pi, \quad \text{where } \ \ \Pi(\omega, x) = (\omega, \pi(x) ).
\end{equation*}
By setting \( I = [0, 1/2] \), since \( \Pi \colon \Omega \times M \to \Omega \times I \) is a continuous surjection, \( \tilde{F}_{\varphi,\phi} \) is an extension of the restriction of \( F_{\varphi,\phi} \) to \( \Omega \times I \) such that {\( \Pi_*(\mathbb{P} \times \lbb) = \mathbb{P} \times (2\cdot\lbb) \)}. Thus, as a consequence of Proposition~\ref{maincor:skew-flow-preserving-orientation} and Proposition~\ref{thm:extended results-ergodic conjugated}, we extend the result of~\cite{Crovisier2020} as follows:

\begin{maincor}\label{cor:generalized-crovisier}
Let $\varphi$ and $\phi$ be, respectively, a symmetric Morse-Smale flow and a H\"older continuous function satisfying~\rmref{C1} and $\mathbb{E}[\phi]=0$. Then, $\tilde{F}_{\varphi,\phi}$ exhibits historical behavior for $(\PP \times \lbb)$-a.e.~point.  Moreover, given $x\in (p,q)$ where $p$ and $q$ are two consecutive equilibrium points of $\varphi$ in $[1/2,1]$, 
$$
\mathcal{L}(\omega,x)=\bigg\{\lambda \frac{\delta_p +\delta_{\pi(p)}}{2} + (1-\lambda) \frac{\delta_q + \delta_{\pi(q)}}{2}\colon \lambda\in [0,1]\bigg\} 
\quad \text{for $\mathbb{P}$-a.e.~$\omega\in \Omega$}.
$$
\end{maincor}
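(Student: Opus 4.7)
The plan is to deduce Corollary~\ref{cor:generalized-crovisier} from Proposition~\ref{maincor:skew-flow-preserving-orientation} using the semi-conjugacy $\Pi\circ \tilde F_{\varphi,\phi}=F_{\varphi,\phi}\circ\Pi$ established in the paragraph preceding the statement, together with $\Pi_\ast(\PP\times\lbb)=\PP\times 2\lbb$. Because the symmetry of $\varphi$ forces $1/2$ to be an equilibrium, $\varphi$ preserves $[0,1/2]$ and its restriction there is again a Morse-Smale flow. Under the hypotheses $\mathbb{E}[\phi]=0$ and~\rmref{C1}, Proposition~\ref{maincor:skew-flow-preserving-orientation} applied to $F_{\varphi,\phi}\colon\Omega\times [0,1/2]\to\Omega\times[0,1/2]$ yields historical behavior $(\PP\times 2\lbb)$-almost everywhere and
\[
\mathcal{L}(\omega,y)=\big\{\mu\delta_{\pi(q)}+(1-\mu)\delta_{\pi(p)}:\mu\in[0,1]\big\} \quad \text{for every $y\in(\pi(q),\pi(p))$ and $\PP$-a.e.\ $\omega$.}
\]
Proposition~\ref{thm:extended results-ergodic conjugated} then transfers historical behavior from $F_{\varphi,\phi}$ to the extension $\tilde F_{\varphi,\phi}$ for $(\PP\times\lbb)$-a.e.\ point.

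To identify $\mathcal{L}(\omega,x)$ with $x\in(p,q)\subset[1/2,1]$, I would first derive explicit fiber formulas. The orientation-reversal makes $\tilde f_\omega$ interchange $[0,1/2]$ and $[1/2,1]$; iterating and using the symmetry $\varphi(t,1-y)=1-\varphi(t,y)$ one obtains, with $S_n(\omega)=\sum_{j<n}\phi(\sigma^j\omega)$ and $y_j=\tilde f^j_\omega(x)$,
\[
y_{2k}=\varphi(S_{2k}(\omega),x)\in [1/2,1], \qquad y_{2k+1}=1-\varphi(\phi(\sigma^{2k}\omega),y_{2k})\in[0,1/2].
\]
Since $\pi_\ast\tilde e_n(\omega,x)=e_n(\omega,\pi(x))$ and $\pi$ is continuous, every weak-$\ast$ accumulation point $\tilde\nu$ of $\tilde e_n(\omega,x)$ satisfies $\pi_\ast\tilde\nu=\mu\delta_{\pi(q)}+(1-\mu)\delta_{\pi(p)}$ for some $\mu\in[0,1]$, so $\tilde\nu$ is supported on $\{p,\pi(p),q,\pi(q)\}$, and the alternation additionally forces $\tilde\nu([1/2,1])=\tilde\nu([0,1/2])=\tfrac12$.

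The main step, and the most delicate point of the argument, is to show that mass is distributed \emph{equally} within each pair $\{p,\pi(p)\}$ and $\{q,\pi(q)\}$. Combining the identity $y_{2k+1}=1-\varphi(\phi(\sigma^{2k}\omega),y_{2k})$ with the fact that $\varphi(t,p)=p$ for every $t$ and the uniform continuity of $\varphi$ on the compact set $[-\|\phi\|_\infty,\|\phi\|_\infty]\times M$, one finds that for every $\varepsilon>0$ there exists $\delta>0$ such that $y_{2k}\in B(p,\delta)$ implies $y_{2k+1}\in B(\pi(p),\varepsilon)$; the converse implication follows from $\varphi(-t,\varphi(t,\cdot))=\mathrm{id}$. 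Dividing the corresponding cardinalities by $n$ and passing to the weak-$\ast$ limit gives $\tilde\nu(\{p\})=\tilde\nu(\{\pi(p)\})$ and analogously $\tilde\nu(\{q\})=\tilde\nu(\{\pi(q)\})$; setting $\lambda=\tilde\nu(\{p,\pi(p)\})=1-\mu$, this forces $\tilde\nu=\lambda\tfrac{\delta_p+\delta_{\pi(p)}}{2}+(1-\lambda)\tfrac{\delta_q+\delta_{\pi(q)}}{2}$. Conversely, since $\mathcal L(\omega,\pi(x))$ contains every convex combination of $\delta_{\pi(q)}$ and $\delta_{\pi(p)}$, every $\lambda\in[0,1]$ is attained along a suitable subsequence, yielding the stated description of $\mathcal{L}(\omega,x)$. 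The trickiest verification will be the boundary case in which $\pi(p)$ or $\pi(q)$ coincides with $1/2$, where the reflected equilibrium is fixed by $\pi$ and the corresponding pair in the formula collapses to a single atom.
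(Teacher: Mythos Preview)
Your approach is correct and follows the same overall strategy as the paper: transfer historical behavior via the semiconjugacy $\Pi$ and Proposition~\ref{thm:extended results-ergodic conjugated}, then identify $\tilde{\mathcal L}(\omega,x)$ by projecting to $\mathcal L(\omega,\pi(x))$ and establishing symmetry within each fiber pair $\{p,\pi(p)\}$, $\{q,\pi(q)\}$.

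The two proofs differ only in how the equal-mass condition $\tilde\nu(\{p\})=\tilde\nu(\{\pi(p)\})$ is justified. The paper asserts that $\mathbb{P}\times\tilde\nu$ is $\tilde F_{\varphi,\phi}$-invariant for every $\tilde\nu\in\tilde{\mathcal L}(\omega,x)$; since on the four-point support the fiber maps all coincide with the fixed involution $p\leftrightarrow\pi(p)$, $q\leftrightarrow\pi(q)$, invariance forces $\tilde\nu$ to be symmetric. You instead pair consecutive iterates $y_{2k}\leftrightarrow y_{2k+1}$ directly via uniform continuity of $\varphi$ on $[-\|\phi\|_\infty,\|\phi\|_\infty]\times M$ near the equilibria. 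This is in fact the rigorous content behind the paper's invariance assertion --- a claim which, as the paper itself remarks earlier, is not automatic for mild (non-one-step) skew products and holds here precisely because the fiber action is $\omega$-independent on $\{p,\pi(p),q,\pi(q)\}$. Your route is therefore slightly longer but more self-contained; the paper's is more conceptual but leaves that verification implicit. Your treatment of the collapsed case $p=1/2$ (where $\pi(p)=p$ and the paired Diracs coalesce) is appropriate.
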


\begin{rem}\label{rmk:partial-diffeo}
When \( \Omega=\mathbb{T}^n\), $n\geq 2$, and $\sigma:\mathbb{T}^n \to \mathbb{T}^n$ is a \( C^2 \) volume-preserving Anosov diffeomorphism, the skew products \( F_{\varphi,\phi} \) and \( \tilde{F}_{\varphi,\phi} \) can be realized as partially hyperbolic diffeomorphisms of \(  \mathbb{T}^{n+1} \), arbitrarily close to $\tau\times \mathrm{Id}$ and \( \sigma \times (-\mathrm{Id})\) respectively. In this setting, while \( F_{\varphi,\phi} \) is non-transitive, \( \tilde{F}_{\varphi,\phi} \) is transitive if and only if \( \varphi \) is a north-south flow. 
\end{rem}

\subsubsection{Zero Schwarzian derivative} \label{ss:zero-derivative}
Recall that the Schwarzian derivative of every $C^3$ interval diffeomorphism $f$ is defined by
\begin{equation*}
	Sf(x)\eqdef\frac{f'''(x)}{f'(x)}-\frac{3}{2}\pa{\frac{f''(x)}{f'(x)}}^2.
\end{equation*}
Here, we consider a skew product as defined in \eqref{skew product-principal}, where $\tau\colon\Omega \to \Omega$ is a Bernoulli shift on a product space 
$(\Esp,\mathscr{F},\PP)=(\mathcal{A}^{\mathbb{N}},\mathcal{F}^{\mathbb{N}},p^{\mathbb{N}})$  where {$\mathcal{A}$ is at most countable alphabet, $p$ is a probability measure with full support} and the fiber maps $f_\omega$ are $C^3$ interval diffeomorphisms 
satisfying the following conditions:
\begin{enumerate}[leftmargin=1.4cm,label=(S\arabic*),itemsep=0.1cm]
    \item\label{BM1} $f_\omega(0)=0$, $f_\omega(1)=1$, and $f_\omega\not =\mathrm{id}$ for $\PP$-a.e.~$\omega\in\Esp$,
    \item\label{BM2} 
    $Sf_\omega(x)=0$, for~$(\PP\times\lbb)$-a.e.~$(\omega,x)\in\C$,  \item\label{BM3} The Lyapunov exponents $\lambda(\delta_i)\eqdef\int \log(f_\omega'(i)) \, d\PP=0$ for $i=0,1$.
\end{enumerate}
{As indicated in~\cite{Bonifant},} under conditions~\ref{BM1}--\ref{BM3}, the skew product can be written as 
\begin{equation} \label{eq:fracional-linear}
 \fu{F_a}{\Esp \times I}{\Esp \times I},\quad F_a(\omega,x)=(\sigma(\omega), f_\omega(x)) \ \ \ \text{with}	\ \	\ f_\omega(x)=\frac{a(\omega)x}{1+(a(\omega)-1)x} 
	\end{equation}
where $\fu{a}{\Esp}{(0,\infty)}$ is a measurable function such that 
\begin{equation} \label{eq:fracional-linear-b}
		\int \log a(\omega) \, d\PP =0 \qquad  \text{and} \qquad 
a(\omega)\not=1 \quad \text{for every $\omega\in \Esp$}.
	\end{equation}
We also consider the following additional assumptions when required:
\begin{enumerate}[leftmargin=1.4cm,label=(E\arabic*),start=0,itemsep=0.1cm]
    \item\label{E0} the function $a(\omega)$ depends only on the zero-coordinate of $\omega=(\omega_i)_{i\geq 0}\in \Omega$,
    \item\label{BM4} $\int (\log a(\omega))^2 \, d\PP<\infty$.
\end{enumerate}
Under condition~\ref{E0}, the skew product $F_a$ is one-step. 

Bonifant and Milnor claimed in~\cite[Theorem~6.2]{Bonifant} that one-step skew products satisfying~\ref{BM1}--\ref{BM3}
exhibit  historical behavior for $(\mathbb{P}\times \mathrm{Leb})$-a.e.~point. However, they only provided a rough sketch of the proof with several incomplete steps. Here, we provide a complete proof of this result as a consequence of Proposition~\ref{maincor:conjugation-random-walk}, including the missing pieces (the finite moment condition~\ref{BM4} and the at most countability of the alphabet $\mathcal{A}$).

\begin{mainprop}\label{thm:D-Bonifant-Milnor-historical-behavior}
Let $F_a$ be a skew product as in~\eqref{eq:fracional-linear} where $\fu{a}{\Esp}{(0,\infty)}$ satisfies~\eqref{eq:fracional-linear-b},~\ref{E0}, and~\ref{BM4}. Then $F_a$ exhibits historical behavior for $(\PP \times \lbb)$-a.e.~point. Moreover,  
$$
\mathcal{L}(\omega,x)=\{\lambda \delta_0 + (1-\lambda) \delta_1\colon \lambda\in [0,1]\} \quad \text{for  any $x\in (0,1)$ and 
$\mathbb{P}$-a.e.~$\omega\in \Omega$.}
$$
\end{mainprop}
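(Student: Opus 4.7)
The core observation is that the fractional linear fiber maps form a one-parameter semigroup isomorphic to the multiplicative group $(0,\infty)$: a direct computation gives $f_a \circ f_b = f_{ab}$. Consequently, the logit map $h\colon (0,1)\to \mathbb{R}$, $h(x)\eqdef \log\frac{x}{1-x}$, is a strictly increasing bijection conjugating each $f_a$ to the translation $t\mapsto t+\log a$. Hence, for every $x\in(0,1)$, the process $\{X^x_n(\omega)=f^n_\omega(x)\}_{n\geq 0}$ is conjugate via $h$ to the real-valued process $S^t_n(\omega)=t+\sum_{k=0}^{n-1}\log a(\omega_k)$ starting at $t=h(x)$. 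Since $a$ depends only on the zero coordinate by~\ref{E0} and the base is Bernoulli, the increments $\log a(\omega_k)$ are i.i.d., so $\{S^t_n\}$ is a genuine $\mathbb{R}$-valued random walk in the sense of Definition~\ref{def:conjugated-G-randomwalk}.

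The step distribution has mean $\mathbb{E}[\log a]=0$ by~\eqref{eq:fracional-linear-b} and finite second moment by~\ref{BM4}. The variance is strictly positive, since $a(\omega)\neq 1$ everywhere forces $\log a\not\equiv 0$, which combined with zero mean gives $\mathrm{Var}(\log a)>0$. To invoke Proposition~\ref{maincor:conjugation-random-walk}, it remains to check~\ref{H1} and~\ref{H2}. Monotonicity of $f_a$ on $(0,1)$ follows from $f'_a(x)=a/(1+(a-1)x)^2>0$, yielding~\ref{H1}. For~\ref{H2}, one computes $f_a(x)-x=(a-1)x(1-x)/(1+(a-1)x)$, so $f_a(x)>x$ iff $a>1$ and $f_a(x)<x$ iff $a<1$. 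Since $\log a$ has zero mean and is nowhere zero, the at most countable alphabet $\mathcal{A}$ must contain letters $a_+,a_-$ with $a(a_+)>1>a(a_-)$, and the corresponding symbolic sequences $\alpha,\beta\in\Omega$ with $\alpha_0=a_-$, $\beta_0=a_+$ satisfy $f_\alpha(x)<x<f_\beta(x)$ for every $x\in(0,1)$.

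Proposition~\ref{maincor:conjugation-random-walk} then yields historical behavior for $(\PP\times\lbb)$-almost every point. For the description of $\mathcal{L}(\omega,x)$, note that $f_a(0)=0$ and $f_a(1)=1$ for every $a>0$, so condition~\eqref{H3} is automatic, and Corollary~\ref{cor:lim-set} directly produces the stated accumulation set $\{\lambda\delta_0+(1-\lambda)\delta_1:\lambda\in[0,1]\}$ for every $x\in(0,1)$ and $\PP$-a.e.~$\omega$. The whole argument rests on the single observation that the Möbius semigroup generated by $\{f_{a(\omega)}\}$ is abelian and linearises via the logit; there is no substantive obstacle. The two "missing pieces" in the Bonifant--Milnor sketch enter transparently: the finite moment condition~\ref{BM4} is precisely what guarantees finite variance of the conjugated random walk, and the at most countability of $\mathcal{A}$ is what allows the choice of concrete symbols $a_\pm\in\mathcal{A}$ required to verify~\ref{H2}.
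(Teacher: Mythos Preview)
Your proof is correct and follows essentially the same route as the paper: conjugate the fractional-linear fiber maps via the logit $h(x)=\log\frac{x}{1-x}$ to an $\mathbb{R}$-valued random walk with i.i.d.\ steps $\log a(\omega_k)$ of mean zero and positive finite variance, verify \ref{H1}--\ref{H2}, and invoke Proposition~\ref{maincor:conjugation-random-walk} and Corollary~\ref{cor:lim-set}. One small correction to your closing commentary: the at most countability of $\mathcal{A}$ is not what produces the symbols $a_\pm$ for \ref{H2} (their existence follows already from $\mathbb{E}[\log a]=0$ with $a\neq 1$ and full support of $p$); countability enters instead through Proposition~\ref{prop:constant-random orbit} (see Remark~\ref{rmk:contableassumtion}), on which Theorem~\ref{thm:B-random-walks} and hence Proposition~\ref{maincor:conjugation-random-walk} ultimately rest.
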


In the above result, $F_a$ is a one-step skew product. We can extend our results to mild skew products by establishing a connection between $F_a$ and a skew-flow. Namely, let $\phi(\omega) = \log a(\omega)$ and define 
\[
\varphi(t, x) = \frac{e^t x}{1 + (e^t - 1)x}, \quad (t, x) \in \mathbb{R} \times I.
\]
The function $\varphi$ is the solution to the differential equation $\dot{x} = x(1-x)$, which defines a north-south flow on $I = [0, 1]$. 
Then, the fiber maps of $F_a$ in~\eqref{eq:fracional-linear} can be written as
$$
f_\omega(x) = \varphi(\phi(\omega), x) \quad \text{for $(\omega,x)\in \Omega \times I$.}
$$
{Notice that when the alphabet $\mathcal{A}$ is finite, the Bernoulli probability $\mathbb{P}=p^{\mathbb{N}}$ is an equilibrium state for the Hölder continuous potential $\psi: \Omega \to \mathbb{R}$ defined by $\psi(\omega) = \log p({\omega_0})$. 
Thus, $\mathbb{P}$ is a H\"older Gibbs measure for the full shift $\sigma:\Omega\to \Omega$. Assuming additionally} that $\phi$ is H\"older continuous, satisfies~\ref{C1}, and $\mathbb{E}[\phi]=0$, the skew product in~\eqref{eq:fracional-linear} transforms into a skew-flow for which we can apply Proposition~\ref{maincor:skew-flow-preserving-orientation}. This leads to the following result, which solves a conjecture for mild skew products posed by Bonifant and Milnor~\cite[conjecture before Hypothesis~6.1]{Bonifant}.

\begin{maincor}\label{cor:Bonifant-Milnor-general}
Let $F_a$ be a skew product as in~\eqref{eq:fracional-linear}, {where the alphabet $\mathcal{A}$ is finite}, $\phi(\omega)=\log a(\omega)$ is H\"older continuous and satisfies~\eqref{eq:fracional-linear-b} and~\ref{C1}. Then $F_a$ exhibits historical behavior for $(\PP \times \lbb)$-a.e.~point.  Moreover,
$$\mathcal{L}(\omega,x)=\{\lambda \delta_0 + (1-\lambda) \delta_1\colon \lambda\in [0,1]\} \quad \text{for $(\mathbb{P}\times \lbb)$-a.e.~$(\omega,x)\in \Omega\times I$.}
$$
\end{maincor}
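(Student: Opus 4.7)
The plan is to recognize $F_a$ as a skew-flow in the sense of Subsection~\ref{ss:skew-flow} and then invoke Proposition~\ref{maincor:skew-flow-preserving-orientation} directly. The first step is to exhibit the flow $\varphi$ whose time-$\phi(\omega)$ map recovers the fiber functions of $F_a$. As indicated in the excerpt, the ODE $\dot{x} = x(1-x)$ on $I=[0,1]$ has solution
$$
\varphi(t,x) = \frac{e^t x}{1 + (e^t - 1) x},
$$
which is a north–south (hence Morse–Smale) flow on $I$ with source $0$ and sink $1$. Setting $t=\phi(\omega)=\log a(\omega)$ gives $\varphi(\phi(\omega),x) = a(\omega)x/(1+(a(\omega)-1)x) = f_\omega(x)$, so $F_a = F_{\varphi,\phi}$ in the notation of~\eqref{eq:skew-flow}.

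Next, I would verify that the base $(\Omega,\mathscr{F},\mathbb{P},\sigma)$ fits the skew-flow framework. Because $\mathcal{A}$ is finite, $\sigma$ is the full shift on $\mathcal{A}^{\mathbb{N}}$, a topologically mixing subshift of finite type. As recalled in the paragraph preceding the statement, the Bernoulli measure $\mathbb{P}=p^{\mathbb{N}}$ is the equilibrium state of the H\"older continuous potential $\psi(\omega)=\log p(\omega_0)$, hence a H\"older Gibbs measure. Under the standing hypotheses, $\phi=\log a$ is H\"older continuous, has zero mean $\mathbb{E}[\phi]=\int\log a\,d\mathbb{P}=0$ by~\eqref{eq:fracional-linear-b}, and is not an additive coboundary by~\rmref{C1}. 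All hypotheses of Proposition~\ref{maincor:skew-flow-preserving-orientation} applied to $F_{\varphi,\phi}$ are therefore in force.

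Applying Proposition~\ref{maincor:skew-flow-preserving-orientation} yields that $F_a=F_{\varphi,\phi}$ exhibits historical behavior for $(\mathbb{P}\times\lbb)$-a.e.~point, and that for every pair of consecutive equilibria $p<q$ of $\varphi$ and every $x\in(p,q)$,
$$
\mathcal{L}(\omega,x) = \big\{\lambda \delta_p + (1-\lambda)\delta_q : \lambda \in [0,1]\big\}\quad\text{for $\mathbb{P}$-a.e.~$\omega\in\Omega$.}
$$
Since the equilibria of $\varphi$ in $I$ are exactly $0$ and $1$, this reads $\mathcal{L}(\omega,x)=\{\lambda\delta_0+(1-\lambda)\delta_1:\lambda\in[0,1]\}$ for every $x\in(0,1)$ and $\mathbb{P}$-a.e.~$\omega$. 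The $(\mathbb{P}\times\lbb)$-a.e.~formulation then follows by Fubini, since $\lbb(\{0,1\})=0$. The only nontrivial points are the identification of the skew-flow structure and the observation that the finite-alphabet Bernoulli measure is a H\"older Gibbs measure in the precise sense required; once these are in place, the corollary is a direct specialization of Proposition~\ref{maincor:skew-flow-preserving-orientation}.
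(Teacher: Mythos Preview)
Your proposal is correct and follows essentially the same approach as the paper: the paper also identifies $F_a$ with the skew-flow $F_{\varphi,\phi}$ for the north--south flow $\varphi(t,x)=e^tx/(1+(e^t-1)x)$ solving $\dot x=x(1-x)$, notes that the finite-alphabet Bernoulli measure is a H\"older Gibbs measure, and then applies Proposition~\ref{maincor:skew-flow-preserving-orientation} directly. Your additional remark that the ``for every $x\in(0,1)$ and $\mathbb P$-a.e.~$\omega$'' conclusion from Proposition~\ref{maincor:skew-flow-preserving-orientation} yields the $(\mathbb P\times\lbb)$-a.e.~statement via Fubini is a small clarification the paper leaves implicit.
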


{

\begin{rem}[Solution to the Bonifant--Milnor conjecture]
Bonifant and Milnor~\cite{Bonifant} studied skew products on the cylinder $\mathbb{T}\times I$ of the form 
$F(x,y)=(\ell x \bmod 1, f_x(y))$, $\ell\ge2$, with fiber maps $f_x$ of zero Schwarzian derivative. 
To prove the existence of historical behavior, they replaced the base by a Bernoulli shift and assumed $F$ is one-step, conjecturing that the same holds for the expanding map. 
Since the map $x\mapsto \ell x \bmod 1$ is measure-theoretically isomorphic to a Bernoulli shift, 
$F$ is isomorphic to a system $F_a$ satisfying the hypotheses of Corollary~\ref{cor:Bonifant-Milnor-general}, 
provided the dependence $x\mapsto f_x$ is H\"older (in this setting, this H\"older continuity is equivalent to that of $\phi=\log a$).
Thus, in view of Proposition~\ref{thm:extended results-ergodic conjugated}, Corollary~\ref{cor:Bonifant-Milnor-general} confirms that the original Bonifant--Milnor skew product over the expanding map exhibits historical behavior for $(\lbb\times\lbb)$-a.e.~point.
\end{rem}
}

\subsubsection{Interval functions}\label{ss:interval}
If $\Omega$  is a singleton, the skew product~\eqref{skew product-principal} can be interpreted as a measurable function $\fu{f}{I}{I}$. In this setting, Aaronson et al.~\cite[see comments after Theorem~2]{AarThaZwei05} observe that the so-called \emph{Thaler functions} exhibit historical behavior almost everywhere. A function  $f\colon I\to I$ is said to be a   
\emph{Thaler functions} if the following conditions are satisfied: 
there exist $c \in (0,1)$ and $p > 1$ such that
\begin{enumerate}[leftmargin=1.3cm, itemsep=0.1cm, label=(T\arabic*)]
	\item \label{T1} $f$ is \emph{full branch}: the restrictions 
	$\fu{f|_{\smash{(0,c)}}}{(0,c)}{(0,1)}$ and $\fu{f|_{\smash{(c,1)}}}{(c,1)}{(0,1)}$ are increasing, onto, and $C^2$, and admit $C^2$-extensions to the closed intervals~$[0,c]$ and $[c,1]$, respectively;
	\item \label{T2} $f$ is \emph{almost expanding}: $f'(x) > 1$ {for every $x \in (0,c^-]\cup[c^+,1)$}, $f'(0) = f'(1) = 1$, and $f$ is convex and concave in neighborhoods of $0$ and $1$, respectively;
	\item \label{T3} $f(x) - x = h(x) x^{p+1}$ for $x \in (0,c)$, where $h(kx) \sim h(x)$ as $x \to 0$ for every $k \geq 0$;
	\item \label{T4} There exists $a \in (0,\infty)$ such that $(1 - x) - f(1 - x) \sim a^p (f(x) - x)$ as $x \to 0$.
\end{enumerate}
One of the most characteristic and explicit example of a Thaler function is the symmetric Manneville-Pomeau function given by 
\begin{equation}\label{eq:manneville}
	{f(x) = \left\{ \begin{array}{ccl}
		x + 2^p x^{p+1} & \text{if} & x \in [0, \frac{1}{2}), \\ 
		x - 2^p (1 - x)^{p+1} & \text{if} & x \in [\frac{1}{2}, 1],
	\end{array}
	\right.
 \quad p > 1.}
\end{equation}See Figure~\ref{fig:thaler-fu}. In the case { $p=2$} in \eqref{eq:manneville}, it was noted in \cite{Ino00,BlaBunimo03,Keller04} that these maps do not admit physical measures and satisfy the condition of occupational times~(see~condition~\ref{H3b} in Section~\ref{s:weaks-arcsine-laws}).  

This class of functions was introduced by Thaler~\cite{Thaler80,Thaler83}, who proved that any such function is {conservative and exact} with respect to $\lbb$ and admits a $\oldsigma$-finite ergodic measure~$\mu$ equivalent to the Lebesgue measure $\lbb$ {such that, for every $\epsilon > 0$, we have $\mu((\epsilon,1-\epsilon)) < \infty$. A direct consequence is that $\mu$ is ergodic with respect to $f$, and applying the ergodic theorem for infinite measure spaces~(cf.~\cite[Exercise 2.2.1]{AaronBook97}), we also have the condition of vanishing occupation time on~$(\epsilon,1-\epsilon)$ as in \eqref{OT} in Proposition~\ref{mainpropo-limitset}.
Subsequently, Thaler~\cite{Thaler02} established that every such map satisfies 
\begin{equation*}
    \lim_{n\to\infty}\mathrm{Leb}\bigg(\,\s\ind_{I_i(\gamma)}(f^j(x))\leq\alpha\,\bigg) = G_{\alpha, \beta}(x) \quad \text{for every $\gamma\in(0,1)$ and $i \in \{0,1\}$,}
\end{equation*}
where $G_{\alpha, \beta}$ is the distribution function on $(0,1)$, given by
\begin{equation*}
G_{\alpha, \beta}(x) = \frac{1}{\pi\alpha} \left( \operatorname{arccot}\left(\frac{\beta (1-x)^\alpha}{x^\alpha (1-\beta) \sin(\pi\alpha)}\right) + \cot(\pi\alpha) \right)
\end{equation*}
The parameters $\alpha$ and $\beta$ are determined by the properties \rmref{T1}--\rmref{T4}, namely
\begin{equation*}
\alpha = \frac{1}{p} \quad \text{and} \quad \beta = \frac{f'(c^-)}{f'(c^-) + f'(c^+)/a}.
\end{equation*}
In particular, $f$ satisfies the fluctuation law.}
Therefore, the following result is a direct consequence of Theorem~\ref{thm:ergodic-assumption} and Proposition~\ref{mainpropo-limitset}, revisiting the result observed in~\cite{AarThaZwei05}. In this setting,~\eqref{eq:empirical measure} corresponds to the set $\mathcal{L}(x)$ of accumulation points in the weak$^*$ topology of the sequence of empirical measures $\frac{1}{n}(\delta_x + \dots +\delta_{f^{n-1}(x)})$.

\begin{figure}
    \begin{minipage}[t]{0.5\textwidth}
  \hspace{1cm}\begin{tikzpicture}
    \begin{axis}[
      ticklabel style={font=\small},
      axis lines=box,
      xmin=0, xmax=1,
      ymin=0, ymax=1,
      xtick={0,1},
      xticklabels={$0$,$1$},
      ytick={0,1},
      yticklabels={},
      width=7cm, height=7cm,
      grid=major,
      grid style={gray!30},
      major tick length={0},
      major y tick style={draw=none},
      x axis line style={draw=none},
      y axis line style={draw=none},
      fill=white,
      draw=black,
    ]
      \draw (0,0) rectangle (1,1);
      \addplot[domain=0:0.5, samples=100, blue, line width=0.5pt] {x + 2^2 * x^(2+1)};
      \addplot[domain=0.5:1, samples=100, blue, line width=0.5pt] {x - 2^2 * (1-x)^(2+1)};
      \addplot[domain=0:1, samples=100, black, line width=0.5pt, dashed] {x};
      \node[blue] at (0.7, 0.5) {$f$};
      \draw[dashed, thin] (axis cs:0.5,0) -- (axis cs:0.5,1);
    \end{axis}
  \end{tikzpicture}
  \caption{\mbox{Symmetric Manneville-Pomeau}}
        \label{fig:manneville-pomeau}
    \end{minipage}
    \hspace{1.2cm}
    \begin{minipage}[t]{0.4\textwidth}  
        \begin{tikzpicture}
\begin{axis}[
      ticklabel style={font=\small},
      axis lines=box,
      xmin=0, xmax=1,
      ymin=0, ymax=1,
      xtick={0,0.25,1},
      xticklabels={$0$,$c$,$1$},
      ytick={0,1},
      yticklabels={},
      width=7cm, height=7cm,
      grid=major,
      grid style={gray!30},
      major tick length={0},
      major y tick style={draw=none},
      x axis line style={draw=none},
      y axis line style={draw=none},
      fill=white,
      draw=black,
      xticklabel style={text height=1ex}, 
    ]
      \draw (0,0) rectangle (1,1);
      \addplot[domain=0:0.25, samples=100, red, line width=0.5pt] {x + (1-1/4)*(x/(1/4))^(2+1)};
      \addplot[domain=0.25:1, samples=100, red, line width=0.5pt] {x - (1/4)*((1-x)/(1-1/4))^(2+1)};
      \draw[dashed, thin] (axis cs:0.25,0) -- (axis cs:0.25,2);
      \node[red] at (0.7, 0.5) {$f$};
      \addplot[domain=0:1, samples=100, black, line width=0.5pt, dashed] {x};
    \end{axis}
  \end{tikzpicture}
        \caption{Thaler functions}
        \label{fig:thaler-fu}
    \end{minipage}
\end{figure}

\begin{maincor}\label{cor:Thaler-maps}
Every Thaler function $f$ has historical behavior almost everywhere. Moreover, 
$$
\mathcal{L}(x)= \{\lambda \delta_0 + (1-\lambda)\delta_1 \colon \lambda \in [0,1]\},\quad\text{{for $\lbb$-\aep\, $x\in (0,1)$}}.
$$
\end{maincor}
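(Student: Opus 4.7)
The plan is to apply Theorem~\ref{thm:ergodic-assumption} together with Proposition~\ref{mainpropo-limitset} to $f$, viewed as the skew product~\eqref{skew product-principal} in the degenerate case where $\Omega$ is a singleton, so that $F=f$ and the reference measure $\mathbb{P}\times\lbb$ is just $\lbb$ on $I$. Condition~\eqref{H3} is immediate from~\rmref{T1}: each branch of $f$ is $C^2$ on the closed interval and maps onto $(0,1)$, so $f(0)=0$ and $f(1)=1$.

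For hypothesis~(i) of Theorem~\ref{thm:ergodic-assumption}, I would invoke Thaler's foundational work, which produces a $\sigma$-finite $f$-invariant measure $\mu\sim\lbb$ with respect to which $f$ is conservative and exact (hence ergodic). Given a measurable $A$ with $f(A)\subset A$, one has $A\subset f^{-1}(A)$, so the nested sequence $\{f^{-n}(A)\}_{n\geq 0}$ makes $W=f^{-1}(A)\setminus A$ a $\mu$-wandering set; conservativity forces $\mu(W)=0$, whence $A=f^{-1}(A)$ mod $\mu$. Ergodicity then gives either $\mu(A)=0$ or $\mu(A^c)=0$, and the equivalence $\mu\sim\lbb$ yields $\lbb(A)\in\{0,1\}$, which is precisely the ergodicity required by Theorem~\ref{thm:ergodic-assumption}. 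Hypothesis~(ii) follows from Thaler's arcsine-type theorem: for every $\gamma\in(0,1)$ the $\lbb$-distribution of $\tfrac{1}{n}\sum_{j<n}\ind_{I_i(\gamma)}\circ f^j$ converges to the continuous, non-degenerate distribution $G_{\alpha,\beta}$ on $(0,1)$. Hence the liminf in~\eqref{eq:weak-arcsine} is strictly less than $1$ for every threshold $\alpha\in(0,1)$, and the arbitrariness of $\gamma$ allows one to choose $\gamma_0<\gamma_1$ as close to $0$ and $1$ as desired, which simultaneously supplies hypothesis~(ii) of Proposition~\ref{mainpropo-limitset}.

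Finally, the vanishing occupation time~\eqref{OT} is a direct consequence of the Hopf ratio ergodic theorem applied to the pair of observables $(\ind_{[\epsilon,1-\epsilon]},\ind_I)$: since $\mu([\epsilon,1-\epsilon])<\infty$ while $\mu(I)=\infty$ (the density of $\mu$ against $\lbb$ diverges at the endpoints), the ratio $\frac{1}{n}\sum_{j<n}\ind_{[\epsilon,1-\epsilon]}(f^j(x))$ tends to $0$ for $\mu$-, and hence $\lbb$-, a.e.\ $x$. With all the hypotheses in place, Theorem~\ref{thm:ergodic-assumption} delivers historical behavior at $\lbb$-a.e.\ point and Proposition~\ref{mainpropo-limitset} identifies $\mathcal{L}(x)=\{\lambda\delta_0+(1-\lambda)\delta_1:\lambda\in[0,1]\}$ for $\lbb$-a.e.~$x\in(0,1)$. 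The only delicate step is the conversion of Thaler's conservativity-plus-exactness into the forward-invariant notion of ergodicity used in this paper; the remaining pieces are a direct assembly of Thaler's classical results with the abstract machinery developed earlier.
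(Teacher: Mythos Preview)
Your proposal is correct and follows essentially the same route as the paper: both invoke Thaler's conservativity/exactness and the $\sigma$-finite invariant measure $\mu\sim\lbb$ to obtain ergodicity of $\lbb$, Thaler's arcsine-type distributional limit $G_{\alpha,\beta}$ to verify the fluctuation law, and the infinite-measure ergodic theorem (your Hopf-type argument) for the vanishing interior occupation time, then feed everything into Theorem~\ref{thm:ergodic-assumption} and Proposition~\ref{mainpropo-limitset}. Your wandering-set argument for converting ``$f(A)\subset A$'' into ``$A=f^{-1}(A)\pmod\mu$'' is a nice explicit justification of the step the paper leaves implicit; the only cosmetic point is that your phrasing of the Hopf theorem with denominator $\ind_I\notin L^1(\mu)$ is informal, but the intended conclusion (Ces\`aro averages of $L^1(\mu)$ functions vanish a.e.\ for conservative ergodic infinite systems) is exactly what the paper cites from \cite[Exercise~2.2.1]{AaronBook97}.
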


Coates et al.~\cite{Stefano-comu} consider interval functions that are similar to \mbox{Manneville--Pomeau} but may have zero or infinite derivatives at points of discontinuity. They proved that these functions admit a $\oldsigma$-finite ergodic measure equivalent to~$\lbb$. Subsequently, Coates and Luzzatto~\cite{CoaLuza23} demonstrated that such functions exhibit historical behavior almost everywhere by proving a condition on the occupational times which is a consequence of the fluctuation law (see condition~\rmref{H3b} in Section~\ref{s:weaks-arcsine-laws} and Proposition~\ref{prop:equivalence arcsine law}). 
In Theorem~\ref{thm:theorem-H3b}, we show that actually, this condition on the occupational times and 
the ergodicity of the reference measure is enough to get historical behavior almost everywhere. 

More recently, Coates et al.~\cite{CoaMelTale24} proved historical behavior for almost every point for \emph{generalized Thaler} functions {(i.e.~maps with $k$ branches and $k$ neutral fixed points, for~$k \geq 2$)}. 
It follows from the Thaler result~\cite{Thaler80,Thaler83} that $\lbb$ is still ergodic with respect to $f$.  Moreover, Sera and Yano~\cite{SeraYano19} and Sera~\cite{Ser20} concluded that these generalized Thaler functions also satisfy a generalization of the arcsine law (c.f.~\cite[Theorem~2.7]{CoaMelTale24}).
{Therefore, by adapting the fluctuation law to this generalization with several neutral fixed points, one can extend the same ideas of the proof of Theorem \ref{thm:ergodic-assumption} to achieve historical behavior almost everywhere for this family of generalized Thaler maps.}


  

In~\cite{CoaMelTale24}, it is also observed that the generalized Thaler functions have a unique \emph{natural measure}.  
A measure $\nu$ is called \emph{natural} for a dynamical system $f$ if there exists an absolutely continuous probability measure $\lambda$ such that  
$$
    \lim_{n\to\infty}\frac{1}{n}\sum_{j=0}^{n-1}f^j_*\lambda = \nu
$$  
in the weak$^*$ topology. Note that every physical measure is a natural measure, but the~converse does not hold. The existence of this measure in the generalized Thaler functions~raises~a question regarding systems with historical behavior almost everywhere. In particular:

\begin{question}\label{question1}
    Let $F$ be a skew product as in~\eqref{skew product-principala} that exhibits historical behavior almost everywhere, as studied here. Does a natural measure for~$F$ exist?
\end{question}

The results in~\cite{molinek1994asymptotic} provide a partial (positive) answer to Question~\ref{question1} for one-step skew products of the generalized $(T, T^{-1})$-transformation type, as in~\eqref{North-south}. However, this question remains open for mild skew products. In particular, in the context of Remark~\ref{rmk:partial-diffeo}, Question~\ref{question1} is closely related to a problem posed by Misiurewicz in~\cite[Question $9.4$]{book-problems07}, which addresses the existence of natural measures that are not physical measures in smooth dynamical systems.

\subsection{Organization of the paper:}
In Section~\ref{s:historical-skewprod}, we introduce the definition of historical behavior for skew product maps. In Section~\ref{ss:prelimnar}, we summarize the preliminary concepts of probability theory that are useful throughout the paper. 
In Section~\ref{s:weaks-arcsine-laws}, we explore the connections between the fluctuation laws introduced earlier. Furthermore, we show that these distributions govern the asymptotic occupation times of orbits, which are linked to historical behavior. 
Theorems~\ref{thm:B-random-walks} and~\ref{thm:ergodic-assumption} are proven, respectively, in Sections~\ref{Sec:theorema A} and~\ref{Sec:proof-thm-ergodicskew}.
In Section~\ref{ss:limit-set}, we obtain Proposition~\ref{mainpropo-limitset}.
Section~\ref{ss:Proof-cor-I-II-VI} establishes the historical behavior of examples of one-step skew products, proving Propositions~\ref{maincor:conjugation-random-walk},~\ref{cor:north-south},~\ref{maincor:coupling} and~\ref{thm:D-Bonifant-Milnor-historical-behavior}. {In Section~\ref{ss:skew-translation}, the ergodicity (Theorem~\ref{thm:ergodicity-equivalence}), vanishing interior occupational time (Theorem~\ref{thm:A-unified}) and fluctuation law (Theorem~\ref{prop:skew-translation-ergi-arcsine}) of skew-translations. We also provide a more detailed proof of Corollary~\ref{cor:lim-set}.}
Finally, in Section~\ref{ss:proof-cor-skew-flow}, we establish Proposition~\ref{maincor:skew-flow-preserving-orientation} and elaborate some details of the proof of Corollary~\ref{cor:generalized-crovisier}.


\section{Historical behavior on skew product maps}
\label{s:historical-skewprod}

In this section, we define historical behavior for skew products $F$ as in~\eqref{skew product-principala} and show that this behavior also holds for appropriate extensions of $F$.

\subsection{Definition of historical behavior} \label{ss:def-hb}
Recall that $(\Omega,\mathscr{F},\mathbb{P})$ is a standard probability space and that $M$ is a compact manifold. Since $\Omega$ may be non-compact, $\Esp \times M$ is not necessarily compact. Therefore, it is important to proceed carefully when introducing the correct notion of historical behavior in this context. Considering that the skew products under consideration are the deterministic representation of the random dynamics given by the iteration of the fiber maps,  we adopt the following perspective from~\cite[Def.~1]{nakano2017historic}:

\begin{defi}\label{def:historical-behavior}
	Let $\fu{F}{\Esp\times M}{\Esp \times M}$ be a skew product as in~\eqref{skew product-principala}. We say that $F$ exhibits \emph{historical behavior for $(\PP\times \lbb)$-a.e.~point} (or \emph{almost everywhere}) if, for $(\PP\times\lbb)$-almost every $(\omega,x)$, there exists a continuous function $\fu{\varphi}{M}{\R}$ such that the limit
	\begin{equation*}
		\lim_{n \to \infty}  \frac{1}{n}\sum_{j=0}^{n-1} {\varphi\big(f^j_\omega(x)\big)}
	\end{equation*}
	does not exist.
\end{defi}

\begin{rem} \label{rem:randomhist-imply-skewhist} 
Definition~\ref{def:historical-behavior} implies the non-convergence of Birkhoff averages for the skew product $F$. This follows by considering a continuous map $\fu{\phi}{\Esp\times M}{\mathbb{R}}$, defined as $\phi(\omega,x) = \varphi(x)$ for $(\omega,x)\in \Esp\times M$. However, the non-convergence of Birkhoff averages for $F$ is equivalent to the non-convergence, in the weak* topology, of the sequence of \emph{empirical probability measures} given by  
\begin{equation*}
	 \frac{1}{n} \sum_{j=0}^{n-1} \delta_{F^j(\omega, x)} \quad  \text{for $n \geq 1$},
\end{equation*}
only when $\Omega$ is compact.   
\end{rem}

\begin{rem} \label{rem:1}
The non-convergence of empirical measures is another common way to characterize historical behavior. Since the fiber space is the compact manifold $M$, Definition~\ref{def:historical-behavior} ensures this non-convergence of empirical measures  in the context of fiber dynamics. In other words, Definition~\ref{def:historical-behavior} is equivalent to the non-convergence, in the weak* topology, of the sequence of measures  
\begin{equation*}
	 \frac{1}{n} \sum_{j=0}^{n-1} \delta_{f^j_\omega(x)} \quad  \text{for $n \geq 1$}, 
\end{equation*}
for $(\PP\times \lbb)$-a.e.~$(\omega,x)$.
\end{rem}

\begin{rem} \label{rem:2}
In Theorem~\ref{thm:B-random-walks} and~\ref{thm:ergodic-assumption}, as well as in other results on historical behavior for almost every point in this paper, we obtain a more uniform version of this concept than what is presented in Definition~\ref{def:historical-behavior}. Specifically, we demonstrate that Definition~\ref{def:historical-behavior} applies with respect to the same function $\varphi$ for $(\PP\times \lbb)$-a.e.~point.
\end{rem}

\subsection{Historical behavior for extension maps} \label{ss:extension}

{
A canonical example is the extension of a non-invertible map on the base $\Omega$ (e.g., a one-sided shift) to its natural extension, which is invertible (e.g., a two-sided shift). 
In such cases, the preservation of historical behavior is an immediate consequence of our definition. By Remark~\ref{rem:1}, historical behavior is determined by the 
non-convergence of averages along the forward fiber orbits 
$\{f^j_\omega(x)\}_{j\ge 0}$. Since the forward fiber dynamics of an extended point $(\bar{\omega}, x)$ are identical to the dynamics of the original point $({\omega}, x)$, the existence of the limit is unaffected. 
However, the following proposition, which follows from~\cite[Lemma $5.1$]{BNRR22}, addresses a more general scenario. It provides a framework for cases where the extension might involve a change in the fiber space itself and is not necessarily a simple extension of the base dynamics. It establishes that if a system $F$ exhibits historical behavior uniformly for almost all points (as described in Remark~\ref{rem:2}), then any suitable extension $\bar{F}$ will also exhibit historical behavior.
}

Given a continuous function $\varphi:M\to \mathbb{R}$, we define the set $\mathrm{I}(F,\varphi)$ of \emph{$\varphi$-irregular points} of the skew product $F$ as follows
$$
\mathrm{I}(F,\varphi)\eqdef \big\{(\omega,x)\in \Omega\times M\colon
		\lim_{n \to \infty}  \frac{1}{n}\sum_{j=0}^{n-1} {\varphi\big(f^j_\omega(x)\big)} \ \ \text{does not exist}\big\}.
$$

\begin{prop}\label{thm:extended results-ergodic conjugated}
Let $\bar{F}\colon \bar{\Omega}\times \bar{M} \to \bar{\Omega}\times \bar{M}$ and  $F\colon \Omega\times M \to \Omega\times M$ be skew products. Assume that there exists a continuous function $\Pi:\bar{\Omega}\times \bar{M} \to \Omega\times M$ of form 
$$
\Pi(\omega,x)=(\theta(\omega,x),\pi(x))\in  \Omega \times M, \quad  (\omega,x)\in \bar{\Omega}\times \bar{M}.
$$
such that 
$$F\circ \Pi= \Pi \circ \bar{F} \quad  \text{and} \quad  \Pi_*(\bar{\PP}\times \overline{\lbb})=\PP\times \lbb$$ 
where
$\bar{\PP}$, $\PP$ and $\overline{\lbb}$, ${\lbb}$ are reference measures on 
$\bar{\Omega}$, $\Omega$ and  the normalized Lebesgue measures on $\bar{M}$, $M$ respectively. Then, for any continuous maps $\varphi: M\to \mathbb{R}$ it holds that 
$$
  (\mathbb{P}\times \lbb)(\mathrm{I}(F,\varphi)) \leq (\bar\PP\times\overline{\lbb})(\mathrm{I}(\bar{F},\varphi\circ\pi)).
$$
In particular, if $F$ exhibits historical behavior almost everywhere 
as in Definition~\ref{def:historical-behavior} with respect to the same function $\varphi:M\to\mathbb{R}$ for $(\mathbb{P}\times \lbb)$-a.e.~point, then $\bar{F}$ also exhibits historical behavior almost everywhere with respect to  $\varphi\circ \pi$ for $(\bar{\mathbb{P}}\times \overline\lbb)$-a.e.~point.  
\end{prop}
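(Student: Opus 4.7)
The plan is to argue that the set of $\varphi$-irregular points of $F$ pulls back under $\Pi$ to the set of $(\varphi\circ\pi)$-irregular points of $\bar F$, and then to read off the measure inequality from the pushforward identity.

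First, I would iterate the semiconjugacy: since $F\circ\Pi=\Pi\circ\bar F$, induction gives $F^j\circ\Pi=\Pi\circ\bar F^j$ for every $j\geq 0$. Writing this out in coordinates, with $\bar F^j(\omega,x)=(\bar\sigma^j(\omega),\bar f^j_\omega(x))$ and $F^j(\omega,x)=(\sigma^j(\omega),f^j_\omega(x))$, equality of the second coordinates yields the key identity
\begin{equation*}
\pi\big(\bar f^{j}_\omega(x)\big) \;=\; f^{j}_{\theta(\omega,x)}\big(\pi(x)\big) \qquad \text{for every } j\geq 0.
\end{equation*}
Consequently, the Birkhoff averages satisfy
\begin{equation*}
\frac{1}{n}\sum_{j=0}^{n-1}(\varphi\circ\pi)\big(\bar f^{j}_\omega(x)\big)\;=\;\frac{1}{n}\sum_{j=0}^{n-1}\varphi\big(f^{j}_{\theta(\omega,x)}(\pi(x))\big),
\end{equation*}
so the left-hand side fails to converge if and only if the right-hand side does; that is, $(\omega,x)\in \mathrm{I}(\bar F,\varphi\circ\pi)$ if and only if $\Pi(\omega,x)\in \mathrm{I}(F,\varphi)$.

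Next I would note that $\mathrm{I}(F,\varphi)$ is Borel measurable, since it is the set where $\liminf$ and $\limsup$ of the (measurable) sequence of Birkhoff sums of $\varphi$ differ; continuity of $\varphi$ and measurability of the fiber maps ensure the required measurability. By the previous step, $\mathrm{I}(\bar F,\varphi\circ\pi)=\Pi^{-1}(\mathrm{I}(F,\varphi))$, so
\begin{equation*}
(\bar\PP\times\overline\lbb)\big(\mathrm{I}(\bar F,\varphi\circ\pi)\big)\;=\;\Pi_*(\bar\PP\times\overline\lbb)\big(\mathrm{I}(F,\varphi)\big)\;=\;(\PP\times\lbb)\big(\mathrm{I}(F,\varphi)\big),
\end{equation*}
which gives in particular the stated inequality (in fact an equality).

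For the last assertion, if $F$ has historical behavior almost everywhere with respect to a fixed $\varphi:M\to\mathbb{R}$, then $(\PP\times\lbb)(\mathrm{I}(F,\varphi))=1$; the displayed equality then forces $(\bar\PP\times\overline\lbb)(\mathrm{I}(\bar F,\varphi\circ\pi))=1$, proving historical behavior of $\bar F$ with respect to the continuous observable $\varphi\circ\pi$. The argument is essentially mechanical once the iterated semiconjugacy is written coordinatewise; the only subtle point is verifying Borel measurability of $\mathrm{I}(F,\varphi)$ (and hence of $\mathrm{I}(\bar F,\varphi\circ\pi)$), which is the main step to check carefully but is standard given the continuity of $\varphi$ and measurability of the fiber dynamics.
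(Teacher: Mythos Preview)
Your argument is correct and is the natural direct proof; in fact you obtain equality rather than just the stated inequality. The paper itself does not give a proof of this proposition but simply attributes it to \cite[Lemma~5.1]{BNRR22}, so there is no detailed argument in the paper to compare against---your coordinatewise unpacking of the iterated semiconjugacy is exactly what that cited lemma amounts to in this setting.
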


\section{Preliminaries of Random Variables} \label{ss:prelimnar}

In this section, we introduce some notation and definitions from probability theory that are useful for our work. Thereafter, $(\Esp, \mathscr{F}, \PP)$ denotes a probability space, and $\{X_n\}_{n \geq 0}$ is a sequence of real-valued random variables.

\begin{defi}\label{def:tail-algebra}
The \emph{tail $\oldsigma$-algebra} generated by $\{X_n\}_{n\geq 0}$ is the $\oldsigma$-algebra on $\Omega$ given by  
$$
\mathcal{T}(\{X_n\}_{n\geq 0})\eqdef\bigcap_{m=1}^{\infty}\mathcal{F}_m^\infty
$$
where $\mathcal{F}_m^\infty=\oldsigma(X_m,X_{m+1},\dots)$ denotes the $\oldsigma$-algebra generated by $X_m,X_{m+1},\dots$.  
\end{defi}

\begin{lem}[{\cite[\S4]{ShiProba2-19}}] 
\label{lema:apendix-propierties of rv}
For any constant $c>0$, it holds that 
\begin{equation*}
    \PP\bigg(\limsup_{n\to\infty}X_n(\omega)
    \ge
    c\bigg) 
    \geq \limsup_{n\to\infty}\PP
    \bigg( X_n(\omega) 
    \ge
    c\bigg).
\end{equation*}
\end{lem}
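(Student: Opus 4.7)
The plan is to prove this as a direct application of the reverse Fatou lemma for sets, after reducing the pointwise statement about $\limsup X_n$ to a statement about the $\limsup$ of events.

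First, I would fix $c>0$ and introduce the events
\[
A_n \eqdef \{\omega \in \Omega : X_n(\omega) \ge c\}, \qquad n \ge 0,
\]
so that the right-hand side of the desired inequality is $\limsup_{n\to\infty}\PP(A_n)$. The key pointwise observation is the inclusion
\[
\limsup_{n\to\infty} A_n \;=\; \bigcap_{m=1}^{\infty}\bigcup_{n\ge m} A_n \;\subseteq\; \big\{\omega \in \Omega : \limsup_{n\to\infty} X_n(\omega) \ge c\big\},
\]
which holds because, for any $\omega$ belonging to infinitely many of the $A_n$, one has $X_n(\omega)\ge c$ for infinitely many $n$, forcing $\limsup_n X_n(\omega)\ge c$. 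By monotonicity of $\PP$, this inclusion yields
\[
\PP\bigg(\limsup_{n\to\infty} X_n \ge c\bigg) \;\ge\; \PP\bigg(\limsup_{n\to\infty} A_n\bigg).
\]

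Second, I would invoke the standard reverse Fatou inequality for sets in a finite measure space, namely
\[
\PP\bigg(\limsup_{n\to\infty} A_n\bigg) \;\ge\; \limsup_{n\to\infty} \PP(A_n).
\]
This is obtained by setting $B_m \eqdef \bigcup_{n\ge m} A_n$, noting that $\{B_m\}$ is a decreasing sequence with $\PP(B_1)\le 1<\infty$ and $\bigcap_m B_m = \limsup_n A_n$, then applying continuity of measure from above:
\[
\PP\bigg(\limsup_{n\to\infty} A_n\bigg) = \lim_{m\to\infty}\PP(B_m) \ge \lim_{m\to\infty}\sup_{n\ge m}\PP(A_n) = \limsup_{n\to\infty}\PP(A_n).
\]
Chaining the two inequalities yields the claim.

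The argument is essentially textbook, so there is no real obstacle; the only minor subtlety worth flagging is the direction of the inclusion between $\limsup_n A_n$ and $\{\limsup_n X_n \ge c\}$, which in general is strict (for instance, $X_n = c - 1/n$ shows the reverse inclusion fails), but fortunately only one direction is needed. Since the paper cites \cite[\S4]{ShiProba2-19}, I expect the intended proof is precisely this short chain through reverse Fatou, and I would present it as above without further elaboration.
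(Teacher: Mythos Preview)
Your proof is correct and is the standard textbook argument via the reverse Fatou lemma for sets; the paper does not include its own proof but simply cites \cite[\S4]{ShiProba2-19}, where precisely this argument appears. There is nothing to add.
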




\begin{lem}[{\cite[\S4]{ShiProba2-19}}]\label{lema:set-in-the-tail-algebra}
For any constant $b \in \mathbb{R}$ and measurable function $\varphi\colon \R\to \R$, the sets 
\begin{align*}
    \big\{ \limsup_{j\to\infty}\s \varphi(X_j(\omega)) \leq b\big\}  \quad \text{and} \quad 
    \big\{ \limsup_{j\to\infty}\s \varphi(X_j(\omega)) \geq b \big\}
\end{align*}
belong to the tail $\oldsigma$-algebra $\mathcal{T}(\{X_n\}_{n\geq 0})$.
\end{lem}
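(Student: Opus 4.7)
The plan is to invoke the standard observation that, for any fixed $m\geq 1$, truncating the first $m$ terms of a Cesàro average leaves the limsup unchanged. Since the tail $\sigma$-algebra is the intersection $\bigcap_{m\geq 1}\mathcal{F}_m^\infty$, it suffices to show that each of the two sets is $\mathcal{F}_m^\infty$-measurable for every $m\geq 1$.

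First I would fix an arbitrary $m\geq 1$ and split the partial sum as
\[
\frac{1}{n}\sum_{j=0}^{n-1}\varphi(X_j(\omega))
 = \frac{1}{n}\sum_{j=0}^{m-1}\varphi(X_j(\omega))
 + \frac{1}{n}\sum_{j=m}^{n-1}\varphi(X_j(\omega)).
\]
Since $\varphi\colon\mathbb{R}\to\mathbb{R}$ is real-valued, the quantity $\sum_{j=0}^{m-1}\varphi(X_j(\omega))$ is a finite real number for every fixed $\omega\in\Omega$. Consequently,
\[
\lim_{n\to\infty}\frac{1}{n}\sum_{j=0}^{m-1}\varphi(X_j(\omega))=0 \qquad \text{for every } \omega\in\Omega,
\]
so that
\[
\limsup_{n\to\infty}\frac{1}{n}\sum_{j=0}^{n-1}\varphi(X_j(\omega))
 = \limsup_{n\to\infty}\frac{1}{n}\sum_{j=m}^{n-1}\varphi(X_j(\omega)).
\]
The right-hand side is a measurable function of $(X_m,X_{m+1},\dots)$ only, hence $\mathcal{F}_m^\infty$-measurable. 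Therefore the set $\{\limsup_{n\to\infty}\frac{1}{n}\sum_{j=0}^{n-1}\varphi(X_j)\leq b\}$ lies in $\mathcal{F}_m^\infty$, and the same applies to the set with $\geq b$.

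Since $m$ was arbitrary, both sets lie in $\bigcap_{m\geq 1}\mathcal{F}_m^\infty=\mathcal{T}(\{X_n\}_{n\geq 0})$, which is exactly what is required. There is no genuine obstacle here: the only subtle point to verify carefully is that the finite truncation produces a negligible Cesàro contribution, and this is guaranteed by the hypothesis that $\varphi$ takes values in $\mathbb{R}$ (so the truncated sum is finite $\omega$-pointwise). In particular no integrability or boundedness assumption on $\varphi(X_j)$ is needed to conclude the measurability claim.
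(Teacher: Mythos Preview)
Your proof is correct and is the standard argument for this fact. Note that the paper does not actually supply its own proof of this lemma: it is stated with a citation to Shiryaev's textbook and used as a black box, so there is no ``paper's proof'' to compare against beyond observing that your argument is exactly the classical one the citation points to.
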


The following result is a direct consequence of Hewitt-Savage Zero-One Law \cite{HeSavage55-01law}. Further insights can be found in   \cite[\S26, Theorem B]{ProbLoeve} or \cite{TailBerHollan89} .


\begin{prop}\label{thm:Hewitt-0-1-Law}
  Let $\{S_n\}_{n\geq 1}$  be the random variables defined by $S_n = X_0 + \dots + X_{n-1}$, $n\geq 1$. 
    If $X_i$ are i.i.d., then the tail $\oldsigma$-algebra $\mathcal{T}(\{S_n\}_{n\geq 1})$ is trivial.  
\end{prop}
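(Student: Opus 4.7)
The plan is to reduce the statement to the classical Hewitt--Savage Zero-One Law, which asserts that for an i.i.d.~sequence $\{X_n\}_{n\geq 0}$, every event in the \emph{exchangeable} $\sigma$-algebra (i.e.,\ every event in $\sigma(X_0,X_1,\dots)$ invariant under all finite permutations of coordinates) has probability $0$ or $1$. Thus it suffices to show that any tail event of $\{S_n\}_{n\geq 1}$ is exchangeable with respect to $\{X_n\}_{n\geq 0}$.

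First, I would observe the inclusion
\[
\sigma(S_m,S_{m+1},\dots)\subset\sigma(X_0,X_1,\dots),
\]
which is immediate because each $S_n$ is a finite sum of the $X_i$'s. In particular, every $A\in\mathcal{T}(\{S_n\}_{n\geq 1})$ lies in $\sigma(X_0,X_1,\dots)$, so it is meaningful to ask whether $A$ is exchangeable.

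Next, I would verify exchangeability. Fix an arbitrary finite permutation $\pi$ of $\mathbb{N}$, and let $N$ be such that $\pi$ fixes every index $\geq N$. For every $m\geq N$ and every $k\geq 0$,
\[
S_{m+k}=\sum_{i=0}^{m+k-1}X_i=\sum_{i=0}^{m+k-1}X_{\pi(i)},
\]
since the sum over $\{0,1,\dots,N-1\}$ is unchanged under the permutation and the indices $\geq N$ are untouched. Hence the random variables $S_m,S_{m+1},\dots$ are invariant under the change of variables induced by $\pi$, and therefore every event in $\sigma(S_m,S_{m+1},\dots)$ is $\pi$-invariant. Taking any $A\in\mathcal{T}(\{S_n\}_{n\geq 1})$, we have $A\in\sigma(S_m,S_{m+1},\dots)$ for this particular $m\geq N$, so $A$ is $\pi$-invariant. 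Since $\pi$ was arbitrary, $A$ is exchangeable.

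Finally, I would invoke the Hewitt--Savage Zero-One Law \cite{HeSavage55-01law} to conclude $\mathbb{P}(A)\in\{0,1\}$ for every $A\in\mathcal{T}(\{S_n\}_{n\geq 1})$, which is the desired triviality of the tail $\sigma$-algebra. The only nontrivial point is the exchangeability verification above; it is short once one notices that permutations of the first $N$ coordinates leave all partial sums $S_m$ with $m\geq N$ unchanged, so no delicate measure-theoretic argument is required beyond citing Hewitt--Savage as a black box.
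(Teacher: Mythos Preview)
Your proposal is correct and follows exactly the approach the paper indicates: the paper does not give a proof but simply states that the result is a direct consequence of the Hewitt--Savage Zero-One Law \cite{HeSavage55-01law}, with further references to \cite[\S26, Theorem B]{ProbLoeve} and \cite{TailBerHollan89}. You have supplied the standard verification that tail events of $\{S_n\}_{n\geq 1}$ are exchangeable with respect to $\{X_n\}_{n\geq 0}$, which is precisely the detail the paper leaves to those references.
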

%
As a consequence of the above theorem, every random walk on a group has a trivial tail $\oldsigma$-algebra. The following result due to Erd\"os and Kac~\cite{ErdKac47} concludes that the occupation times for random walks are asymptotically arcsine distributed.  

\begin{thm}[Erdős-Kac arcsine law]\label{thm:Lei-arcseno}
Let $\{S_n\}_{n\geq 1}$  be the random variables defined by~{$S_n = X_0 + \dots + X_{n-1}$.} If $\{X_n\}_{n\geq0}$ are i.i.d.~random variables having mean zero and {positive} finite variance, then  
\begin{equation*}
    \lim_{n\to\infty} \PP\bigg( \frac{1}{n}\sum^{{n}}_{j={1}}\ind_{(0,\infty)}(S_j) \leq \alpha\bigg) = \frac{2}{\pi} \arcsin{\sqrt{\alpha}} \qquad   \qquad 0\leq \alpha \leq 1.
\end{equation*}
\end{thm}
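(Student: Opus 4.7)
The plan is to reduce this discrete statement to L\'evy's continuous arcsine law for the occupation time of Brownian motion, via Donsker's invariance principle. Let $\oldsigma^2 \eqdef \mathbb{E}[X_0^2] > 0$ and define the rescaled c\`adl\`ag paths
\begin{equation*}
    W_n(t) \eqdef \frac{1}{\oldsigma\sqrt{n}}\, S_{\lfloor nt \rfloor + 1}, \qquad t\in [0,1].
\end{equation*}
Under the i.i.d., zero-mean, positive finite variance hypotheses, Donsker's theorem yields that $W_n$ converges in distribution to a standard Brownian motion $B=(B_t)_{t\in[0,1]}$ in the Skorokhod space $D[0,1]$.

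The next step is to introduce the occupation time functional $\Phi\colon D[0,1]\to[0,1]$ given by
\begin{equation*}
    \Phi(x) \eqdef \int_0^1 \ind_{(0,\infty)}\big(x(t)\big)\, dt.
\end{equation*}
Since the sign of $S_j$ is preserved under the positive rescaling by $1/(\oldsigma\sqrt{n})$, a direct computation gives $\Phi(W_n)=\frac{1}{n}\sum_{j=1}^{n}\ind_{(0,\infty)}(S_j)$, so the probability in the theorem equals $\PP(\Phi(W_n)\leq \alpha)$. The subtle point is that $\Phi$ is \emph{not} continuous on all of $D[0,1]$: it is continuous at a path $x$ precisely when $\{t\in[0,1]\colon x(t)=0\}$ has Lebesgue measure zero. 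Brownian paths almost surely satisfy this condition, so $\Phi$ is continuous on a set of full Wiener measure. The continuous mapping theorem in its version for functionals whose discontinuity set is negligible under the limit distribution then gives
\begin{equation*}
    \Phi(W_n)\ \xrightarrow{\;d\;}\ \Phi(B) = \int_0^1 \ind_{(0,\infty)}(B_t)\, dt.
\end{equation*}

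To finish, L\'evy's classical arcsine law asserts that $\Phi(B)$ is Arcsine$(0,1)$-distributed, with cumulative distribution function $\alpha\mapsto \frac{2}{\pi}\arcsin\sqrt{\alpha}$ on $[0,1]$. Since this limiting CDF is continuous, weak convergence upgrades to pointwise convergence of the distribution functions at every $\alpha\in[0,1]$, which is exactly the statement of the theorem.

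The main obstacle in this plan is not invoking Donsker or L\'evy, both of which are classical black boxes, but rather the \emph{continuity analysis} of the occupation time functional on the Skorokhod space: one must argue carefully, via dominated convergence, that if $x_n\to x$ in the Skorokhod topology and the zero set of $x$ is Lebesgue-null, then $\ind_{(0,\infty)}(x_n(t))\to \ind_{(0,\infty)}(x(t))$ for Lebesgue-a.e.~$t$, and one must handle the fact that Skorokhod convergence only yields pointwise convergence after an allowable time reparametrisation. Once this technical step is in place, the continuous mapping theorem closes the argument. An alternative route, avoiding the invariance principle, would be to use Sparre Andersen's combinatorial identity together with a local limit or Stirling-type estimate to compute the distribution of $\sum_{j=1}^{n}\ind_{(0,\infty)}(S_j)$ directly and pass to the limit, but this approach tends to be more cumbersome than the functional one sketched above.
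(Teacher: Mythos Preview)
The paper does not prove this theorem; it is stated as a classical result and attributed to Erd\H{o}s and Kac~\cite{ErdKac47} without proof. Your argument via Donsker's invariance principle, the continuous mapping theorem applied to the occupation time functional (with the standard observation that the zero set of Brownian motion is a.s.\ Lebesgue-null), and L\'evy's arcsine law is a correct and by-now standard modern route to the result. One minor remark: your choice $W_n(t)=\frac{1}{\oldsigma\sqrt n}S_{\lfloor nt\rfloor+1}$ differs from the usual Donsker process by $\frac{1}{\oldsigma\sqrt n}X_{\lfloor nt\rfloor}$, whose sup-norm over $[0,1]$ goes to $0$ in probability under the finite second moment assumption, so the conclusion is unaffected; it may be worth a sentence to say so.

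It is also worth noting that the paper itself uses essentially your strategy elsewhere: in the proof of Theorem~\ref{prop:skew-translation-ergi-arcsine}, the fluctuation law for skew-translations over hyperbolic bases is obtained by combining Rudolph's almost sure invariance principle (playing the role of Donsker) with L\'evy's arcsine law for Brownian motion via the same occupation-time functional and continuous-mapping argument. So your approach is entirely in the spirit of the surrounding material.
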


Now, we introduce the definition and some properties of Brownian motion. In this process, the arcsine distribution law also appears.

\begin{defi}\label{def:brownian}
    We say that a real-valued stochastic process~$\{B_t\colon t\geq0\}$ is a \emph{Brownian motion} or \emph{Wiener process} on some probability space~$(\Esp, \mathscr{F}, \PP)$ if the following conditions hold:
\begin{enumerate}[label=$\mathrm{(\roman*)}$]
    \item The process starts at $0$: $\PP(B_0=0\,)=1$.
    \item The increments are independent, i.e., for all times $0\leq t_1\leq\dots\leq t_k$, the increments 
    $$
    B_{t_k}-B_{t_{k-1}},\dots, B_{t_2}-B_{t_1},
    $$
    are independent random variables.
    \item For $0\leq s<t$, the increment $B_t-B(s)$ is normally distributed with mean $0$ and variance~$t-s$:
    $$
    \PP(B_t-B_s<x)=\frac{1}{\sqrt{2\pi(t-s)}}\int_{-\infty}^x e^{-\frac{x^2}{2(t-s)}} \, dx,\quad x\in \R.
    $$
    \item Almost surely, the function $t\mapsto B_t$ is continuous.
\end{enumerate}
\end{defi}
L\'evy~\cite{Lev39}\cite{Lev-Brow2} (see also \cite[Sec.~5.4]{PeYu-book-Brownian}) proved  the following result:
\begin{thm}[L\'evy arcsine law]\label{rem:arcsine-brownian}
The occupation time above zero of a Brownian motion~$\{B_t : t \geq 0\}$,  
$$\chi(B) \eqdef \int_0^1 \ind_{(0,\infty)}(B_t) \, dt,$$ 
is arcsine distributed. That is, $\mathbb{P}(\chi(B)\leq \alpha) = \frac{2}{\pi} \arcsin{\sqrt{\alpha}}$ for any $0\leq \alpha\leq 1$.
\end{thm}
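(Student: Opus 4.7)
The plan is to reduce the continuous-time arcsine law for Brownian motion to the discrete-time Erd\H{o}s--Kac arcsine law (Theorem~\ref{thm:Lei-arcseno}), which has just been established. The bridge is Donsker's invariance principle together with the continuous mapping theorem for occupation-time functionals.

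First, take $\{X_n\}_{n\geq 1}$ i.i.d.\ with $\PP(X_n = 1) = \PP(X_n = -1) = 1/2$, set $S_n = X_1 + \dots + X_n$, and form the diffusively rescaled, piecewise-linearly interpolated paths
\[
B^{(n)}_t \eqdef \frac{1}{\sqrt{n}}\, S_{\lfloor nt \rfloor} + \frac{nt - \lfloor nt \rfloor}{\sqrt{n}}\, X_{\lfloor nt \rfloor + 1}, \qquad t \in [0,1].
\]
By Donsker's theorem, $B^{(n)} \Rightarrow B$ in $C([0,1])$ with the uniform topology. Next, consider the occupation functional
\[
\Phi(f) \eqdef \int_0^1 \ind_{(0,\infty)}(f(t))\, dt, \qquad f \in C([0,1]).
\]
Although $\Phi$ is not continuous on the whole of $C([0,1])$, it is continuous at every $f$ whose zero-set has Lebesgue measure zero. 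Since $B_t$ has a continuous distribution for every $t>0$, Fubini yields $\mathbb{E}\big[\int_0^1 \ind_{\{0\}}(B_t)\, dt\big] = \int_0^1 \PP(B_t = 0)\, dt = 0$, so almost surely the zero set of $B$ has zero Lebesgue measure, and hence $\Phi$ is continuous at $B$ with probability one.

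Between consecutive lattice points, $B^{(n)}$ is monotone with slope $\pm 1/\sqrt{n}$, so its sign agrees with that of $S_{\lfloor nt \rfloor}$ except on the intervals containing a sign change of the walk. Since the number $Z_n$ of such sign changes is $O_{\PP}(\sqrt{n})$, the discrepancy between $\Phi(B^{(n)})$ and $\frac{1}{n}\sum_{j=1}^n \ind_{(0,\infty)}(S_j)$ is of order $Z_n/n = o_{\PP}(1)$. Applying the continuous mapping theorem (in the version requiring only almost-sure continuity of the functional at the limit) together with Slutsky's lemma gives
\[
\frac{1}{n}\sum_{j=1}^n \ind_{(0,\infty)}(S_j) \xrightarrow[n\to\infty]{d} \Phi(B) = \chi(B).
\]
The left-hand side converges in distribution to the arcsine law by Theorem~\ref{thm:Lei-arcseno}, and since the arcsine distribution function is continuous on $[0,1]$, we obtain $\PP(\chi(B) \leq \alpha) = \tfrac{2}{\pi}\arcsin\sqrt{\alpha}$ for every $\alpha \in [0,1]$.

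The main obstacle is the continuity-at-$B$ statement for $\Phi$: one must know that Brownian motion spends zero Lebesgue time at the origin, which is a genuinely path-level property of the sample paths. Once this is in place, the remaining ingredients, namely Donsker's principle, the continuous mapping theorem, and the elementary error analysis of the interpolation, are standard weak-convergence tools.
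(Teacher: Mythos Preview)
The paper does not prove this theorem at all: it is stated with citations to L\'evy's original papers and to \cite[Sec.~5.4]{PeYu-book-Brownian}, and used as a classical black box. Your proposal therefore supplies a proof where the paper gives none, and the route you take---Donsker's invariance principle plus the continuous mapping theorem, together with the discrete Erd\H{o}s--Kac law already recorded as Theorem~\ref{thm:Lei-arcseno}---is correct and standard.

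It is worth noting that the paper's internal logic actually runs in the \emph{opposite} direction. In the proof of Theorem~\ref{prop:skew-translation-ergi-arcsine}, the authors use Rudolph's strong invariance principle (Theorem~\ref{thm:Rudolph}) to transfer the L\'evy arcsine law \emph{from} Brownian motion \emph{to} the Birkhoff sums $S_n$, via exactly the same occupation-time functional $\chi$ and the same ``continuous at paths whose zero set is Lebesgue-null'' observation that you use. So for the skew-flow applications the paper treats Theorem~\ref{rem:arcsine-brownian} as the primitive input, while Erd\H{o}s--Kac is invoked separately for the one-step random-walk applications (Lemma~\ref{cla:conjugated-real-random}). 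Your derivation shows that, within the paper's own toolkit, one could instead take Erd\H{o}s--Kac as primitive and obtain L\'evy as a corollary.

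One minor remark: your discrepancy bound relies on the number of zero-visits of the simple random walk up to time $n$ being $O_{\PP}(\sqrt{n})$. This is classical (for instance $\mathbb{E}[\#\{j\le n: S_j=0\}] = \sum_{j\le n}\PP(S_j=0)\sim\sqrt{2n/\pi}$ by Stirling), and since for the simple walk a sign change can only occur through a visit to $0$, the bound on sign-change intervals follows. A one-line justification along these lines would tighten the argument.
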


The following results due to Rudolff~\cite[Proposition~2]{Rud88} can be seen as an asymptotically {Brownian invariance principle: }

\begin{thm}[Rudolph's invariance principle]\label{thm:Rudolph}
Let $(\Omega,\mathscr{F},\PP, \sigma)$ be a measure-preserving {invertible} system where $\sigma$ is either a subshift of finite type or a $C^1$ diffeomorphism restricted to a hyperbolic basic set, and $\PP$ is a  H\"older Gibbs measure. Let $\phi \colon \Omega \to \mathbb{R}$ be a H\"older continuous function with $\mathbb{E}[\phi]=0$ that is not an additive coboundary, i.e.,satisfiying~\rmref{C1}. Then
$$
    \oldsigma^2 \eqdef \lim_{n\to\infty}\frac{1}{n}\mathbb{E}\big[(S_n)^2\big] > 0 \quad \text{where} \quad S_n \eqdef \sum_{j=0}^{n-1} \phi \circ \sigma^{j},
$$
and { there exists a probability space $(\overline{\Omega}, \overline{\mathscr{F}}, \overline{\mathbb{P}})$  joining  $(\Omega, \mathscr{F}, \PP)$ with a space supporting a standard Brownian motion $\{B_t\}_{t\geq0}$ such that
\begin{equation*}
   \lim_{t\to\infty}\frac{\big|S_{\lfloor t \rfloor} - \oldsigma\cdot B_t\big|}{t^{1/2-\beta}} = 0 \quad \text{$\overline{\PP}$-almost surely, for some $0<\beta<1/2$.}
\end{equation*}}
\end{thm}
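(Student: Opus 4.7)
My plan is to treat this as an almost-sure invariance principle (ASIP) for a H\"older observable over a hyperbolic base, which is most transparently proven by reducing to a one-sided expanding setting and then using a transfer-operator–based martingale approximation together with the Strassen–Philipp–Stout ASIP for bounded martingale differences.

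\textbf{Reduction to a one-sided shift.} First I would reduce both cases (SFT and $C^1$-diffeomorphism restricted to a hyperbolic basic set) to a one-sided mixing SFT. For a hyperbolic basic set, a Markov partition gives a finite-to-one H\"older semiconjugacy with a two-sided SFT sending the H\"older Gibbs measure to a H\"older Gibbs measure for a H\"older potential; pulling $\phi$ back preserves $\mathbb{E}[\phi]=0$ and~\rmref{C1}. Then, on the two-sided SFT, Sinai's lemma says every H\"older function is cohomologous (via a bounded H\"older transfer function $u$) to a H\"older function $\tilde\phi$ depending only on future coordinates. The difference $u-u\circ\sigma$ produces a telescoping bounded error in $S_n$, so it will be absorbed into the $t^{1/2-\beta}$ remainder. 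Crucially, both operations preserve zero mean and non-coboundary, so without loss of generality I may work on a one-sided mixing SFT with the transfer operator $\mathcal{L}$ dual to composition with $\sigma$.

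\textbf{Variance and its positivity.} On a suitable H\"older space, $\mathcal{L}$ has a spectral gap: $1$ is a simple eigenvalue and the remainder of the spectrum lies in a disc of radius $\rho<1$. Consequently one has exponential decay of correlations
\[
\bigl|\mathbb{E}[\phi\cdot(\phi\circ\sigma^n)]\bigr|\ \leq\ C\rho^n\|\phi\|_{\mathrm{H\ddot ol}}^2,
\]
so the Green--Kubo series $\sigma^2=\mathbb{E}[\phi^2]+2\sum_{n\geq1}\mathbb{E}[\phi\cdot(\phi\circ\sigma^n)]$ converges and equals $\lim_n\tfrac1n\mathbb{E}[S_n^2]$. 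For positivity I would argue by contraposition: if $\sigma^2=0$, the classical Leonov/Parry--Pollicott argument shows $\{S_n\}$ is bounded in $L^2$ and produces an $L^2$-solution of $\phi=u\circ\sigma-u$, contradicting~\rmref{C1}.

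\textbf{Martingale approximation.} Let $h=\sum_{n\geq1}\mathcal{L}^n\phi$, which converges geometrically in H\"older norm because $\int\phi\,d\mathbb{P}=0$ places $\phi$ in the hyperbolic part of $\mathcal{L}$. Then $m\eqdef\phi-h+h\circ\sigma$ satisfies $\mathcal{L} m=0$, which by duality means $\mathbb{E}[m\mid\sigma^{-1}\mathscr{F}]=0$. Thus $\{m\circ\sigma^k\}_{k\geq 0}$ is a stationary ergodic bounded martingale difference sequence (for the filtration $\mathcal{F}_n=\sigma^{-n}\mathscr{F}$), and
\[
S_n=\sum_{k=0}^{n-1}m\circ\sigma^k\ +\ (h\circ\sigma^n-h),
\]
with the coboundary term uniformly bounded.

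\textbf{ASIP for the martingale part.} To the martingale $M_n=\sum_{k=0}^{n-1}m\circ\sigma^k$ I would apply the Strassen–Philipp–Stout almost-sure invariance principle for stationary ergodic bounded martingale differences: enlarging the probability space to $(\bar\Omega,\bar{\mathscr{F}},\bar\PP)$ by a Skorokhod embedding one obtains a standard Brownian motion $\{B_t\}_{t\geq0}$ with
\[
\bigl|M_{\lfloor t\rfloor}-\sigma B_t\bigr|=o(t^{1/2-\beta})\quad\text{$\bar\PP$-a.s. for some $\beta\in(0,1/2)$.}
\]
Adding the bounded coboundary term $h\circ\sigma^n-h$ (which is trivially $o(t^{1/2-\beta})$) yields the desired estimate for $S_{\lfloor t\rfloor}$.

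\textbf{Main obstacle.} The genuinely difficult step is the rate: getting an explicit exponent $\beta>0$ rather than just a functional central limit theorem requires the quantitative Skorokhod-embedding scheme of Strassen (and its refinement by Philipp–Stout) for martingale differences, whose use of higher moments is fine here since H\"older observables on compact spaces are bounded. The reduction to a one-sided shift is standard but must be done carefully so that the various coboundaries picked up along the way remain bounded H\"older functions—otherwise the resulting errors would not fit under $t^{1/2-\beta}$. Once these two technical points are handled, the variance identity, martingale approximation and Brownian coupling slot together cleanly.
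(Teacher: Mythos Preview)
The paper does not provide its own proof of this statement: it is quoted as a known result, attributed to Rudolph~\cite[Proposition~2]{Rud88}, and used as a black box in the proof of Theorem~\ref{prop:skew-translation-ergi-arcsine}. So there is no ``paper's proof'' to compare against.

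That said, your outline is a correct and standard route to such an almost-sure invariance principle for H\"older observables over hyperbolic bases: reduce to a one-sided mixing SFT via Markov partitions and Sinai's cohomology lemma (keeping track that all coboundaries introduced are bounded), use the spectral gap of the transfer operator to get exponential decay of correlations and the Green--Kubo variance formula, invoke the Leonov/Parry--Pollicott dichotomy for $\oldsigma^2>0$, perform the Gordin martingale--coboundary decomposition $\phi=m+(h-h\circ\sigma)$ with $\mathcal{L}m=0$, and finally apply the Strassen/Philipp--Stout ASIP for bounded stationary ergodic martingale differences to obtain the Brownian coupling with error $o(t^{1/2-\beta})$. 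This is essentially the strategy behind the results of Rudolph and of Denker--Philipp in this setting, so your proposal is in line with how the cited theorem is actually established.
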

We also recall the notion of convergence in distribution: 
\begin{defi}\label{def:conver-distri}
Suppose $M$ is a metric space and $\mathcal{A}$ the Borel $\oldsigma$-algebra on $M$. Let $\{X_n\}_{n\geq 0}$ and $X$ be $M$-valued random variables. We say that $X_n$ \emph{converge in distribution} to a limit $X$ if
$$
\lim_{n\to\infty}\mathbb{E}(g(X_n))=\mathbb{E}(g(X)) \quad  
\text{for every bounded continuous function $\fu{g}{M}{\R}$.}
$$
\end{defi}

The following theorem presents some equivalent conditions for this type of convergence.

\begin{thm}[Portmanteau Theorem]\label{thm:Portmanteau}
    Let $\{X_n\}_{n\geq 0}$ and $X$ be $M$-valued random variables. The following statements are equivalent:
    \begin{enumerate}[label=$(\mathrm{\alph*})$]
        \item $X_n$ converges in distribution to $X$;
        \item for every bounded measurable function $\fu{\chi}{M}{\R}$ such that \mbox{$\PP(\text{$\chi$ is discontinuous at $X$})=0$},  
        $$
        \lim_{n\to\infty}\mathbb{E}[\chi(X_n)] = \mathbb{E}[\chi(X)].
        $$
    \end{enumerate}
    Moreover, if $M=\mathbb{R}$, then also the above statements are equivalent to
    \begin{enumerate}[resume,label=$(\mathrm{\alph*})$]
        \item $\displaystyle\lim_{n \to \infty} \PP(X_n \leq \alpha) = \PP( X \leq \alpha)$ for any continuity point $\alpha \in \mathbb{R}$ of the map $\alpha \mapsto \PP( X \leq \alpha)$. 
    \end{enumerate}
\end{thm}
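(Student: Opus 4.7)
The plan is to prove the equivalences through the standard cycle $\mathrm{(b)} \Rightarrow \mathrm{(a)}$, $\mathrm{(a)} \Rightarrow \mathrm{(b)}$, and, in the scalar case, $\mathrm{(b)} \Leftrightarrow \mathrm{(c)}$. The first implication is immediate: a bounded continuous function is measurable, bounded, and its set of discontinuities is empty, so it trivially satisfies the hypothesis of $\mathrm{(b)}$, and $\mathrm{(a)}$ is recovered from the definition.

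The substance of the argument is $\mathrm{(a)} \Rightarrow \mathrm{(b)}$, which I would establish in two stages. First, I prove the intermediate characterization that $\mathrm{(a)}$ is equivalent to $\PP(X_n \in A) \to \PP(X \in A)$ for every Borel set $A$ with $\PP(X \in \partial A) = 0$. This uses the standard sandwiching trick: for an open set $U$, the continuous functions $g_k(y) \eqdef \min\{1, k \cdot d(y, U^c)\}$ increase pointwise to $\ind_U$, so $\mathrm{(a)}$ applied to each $g_k$ together with monotone convergence yields $\liminf_n \PP(X_n \in U) \geq \PP(X \in U)$; the analogous upper bound for closed sets follows by complementation, and these combine to give equality on the class of $\PP_X$-continuity sets. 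Second, given a bounded measurable $\chi$ with $|\chi| \leq C$ whose discontinuity set $D_\chi$ satisfies $\PP(X \in D_\chi) = 0$, I partition $[-C, C]$ into intervals $J_1, \ldots, J_N$ of length at most $\e$ whose endpoints are chosen outside the at most countable set $\{t \colon \PP(\chi(X) = t) > 0\}$. For $A_i = \chi^{-1}(J_i)$ one has the inclusion $\partial A_i \subseteq D_\chi \cup \chi^{-1}(\partial J_i)$, and both pieces are $\PP_X$-null by construction. Approximating $\chi$ by a simple function $\sum_i a_i \ind_{A_i}$ with $a_i \in J_i$ gives
\begin{equation*}
\bigl|\mathbb{E}[\chi(X_n)] - \mathbb{E}[\chi(X)]\bigr| \leq 2\e + \sum_{i=1}^N |a_i| \cdot \bigl|\PP(X_n \in A_i) - \PP(X \in A_i)\bigr|,
\end{equation*}
and the intermediate characterization applied to each $A_i$ together with $\e \to 0$ concludes the argument.

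For the scalar refinement, $\mathrm{(b)} \Rightarrow \mathrm{(c)}$ follows by taking $\chi = \ind_{(-\infty, \alpha]}$, whose only discontinuity point is $\alpha$: a continuity point of the distribution function $\alpha \mapsto \PP(X \leq \alpha)$ is precisely a point where $\PP(X = \alpha) = 0$, so the hypothesis of $\mathrm{(b)}$ holds and $\mathbb{E}[\chi(X_n)] = \PP(X_n \leq \alpha) \to \PP(X \leq \alpha)$. Conversely, $\mathrm{(c)} \Rightarrow \mathrm{(a)}$ is the classical Helly-type approximation: bounded continuous $g$ on $\mathbb{R}$ can be uniformly approximated by step functions with jumps in the cocountable dense set of continuity points of the distribution function of $X$, which reduces the convergence to finitely many CDF evaluations already governed by $\mathrm{(c)}$. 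The step I expect to be the most delicate is the bookkeeping in the second half of $\mathrm{(a)} \Rightarrow \mathrm{(b)}$: one must simultaneously avoid the atoms of $\chi(X)$ when choosing partition endpoints and verify the boundary inclusion for the preimages $A_i$, so that the $\PP_X$-null contributions from $D_\chi$ and from $\chi^{-1}(\partial J_i)$ can be combined without double-counting.
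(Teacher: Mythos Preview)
The paper does not prove this theorem: it is stated in Section~3 (Preliminaries of Random Variables) as a classical result without proof, alongside other standard facts from probability theory that are only quoted. Your sketch is the standard textbook argument and is essentially correct.

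One small imprecision worth flagging in the step $\mathrm{(c)} \Rightarrow \mathrm{(a)}$: an arbitrary bounded continuous $g:\mathbb{R}\to\mathbb{R}$ cannot in general be \emph{uniformly} approximated by step functions on all of $\mathbb{R}$ (consider $g(x)=\sin x$). The usual fix is to first use $\mathrm{(c)}$ to show tightness of the family $\{\PP_{X_n}\}$ (pick continuity points $a<b$ with $\PP(X\le a)<\e$ and $\PP(X\le b)>1-\e$, then use $\mathrm{(c)}$ at $a,b$), restrict attention to a compact interval carrying most of the mass, and approximate $g$ uniformly there. With that adjustment the argument goes through.
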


As a consequence, we have the following: 

\begin{cor}  \label{cor:Portmanteau} Let $\{X_n\}_{n\geq 0}$ be a sequence of $M$-valued random variables converging in distribution to a random variable $X$. Let $\chi:M\to \mathbb{R}$ be a bounded measurable function such that $\chi$ is continuous at $X$ almost surely. Then,  the sequence of real-valued random variables 
$\{ \chi(X_n)\}_{n\geq 0}$ converges in distribution to $\chi(X)$. In particular,  
$$
\lim_{n\to \infty } \mathbb{P}(\chi(X_n)\leq \alpha) = \mathbb{P}(\chi(X)\leq \alpha)
$$
for every continuity point $\alpha$ of the accumulative distribution of $X$. 
\end{cor}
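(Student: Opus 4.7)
The plan is to deduce this corollary from the Portmanteau Theorem (Theorem 3.6) applied to the composition $g\circ \chi$ for an arbitrary bounded continuous test function $g:\mathbb{R}\to\mathbb{R}$, and then apply Portmanteau again to the real-valued sequence $\chi(X_n)$.

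First I would fix a bounded continuous function $g:\mathbb{R}\to\mathbb{R}$ and consider the composition $\Phi\eqdef g\circ \chi:M\to\mathbb{R}$. This is bounded (since $g$ is) and measurable (since $\chi$ is measurable and $g$ is continuous, hence Borel). The key observation is that at every point $y\in M$ where $\chi$ is continuous, $\Phi$ is also continuous, because $g$ is continuous everywhere. Hence the set of discontinuity points of $\Phi$ is contained in the set of discontinuity points of $\chi$. By hypothesis, $\mathbb{P}(\chi \text{ is discontinuous at } X)=0$, so also $\mathbb{P}(\Phi \text{ is discontinuous at } X)=0$.

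Next I would invoke the equivalence (a)$\Leftrightarrow$(b) in Theorem~\ref{thm:Portmanteau}: since $X_n\to X$ in distribution and $\Phi$ is a bounded measurable function which is $\mathbb{P}$-almost surely continuous at $X$, we obtain
\begin{equation*}
\lim_{n\to\infty} \mathbb{E}[g(\chi(X_n))] = \lim_{n\to\infty} \mathbb{E}[\Phi(X_n)] = \mathbb{E}[\Phi(X)] = \mathbb{E}[g(\chi(X))].
\end{equation*}
Since $g$ was an arbitrary bounded continuous function on $\mathbb{R}$, Definition~\ref{def:conver-distri} yields that the real-valued random variables $\chi(X_n)$ converge in distribution to $\chi(X)$.

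Finally, for the second assertion I would apply the equivalence (a)$\Leftrightarrow$(c) of Theorem~\ref{thm:Portmanteau} to the real-valued sequence $\chi(X_n)$: this gives $\lim_{n\to\infty}\mathbb{P}(\chi(X_n)\leq \alpha) = \mathbb{P}(\chi(X)\leq \alpha)$ at every continuity point $\alpha$ of the cumulative distribution function of the limit. There is no real obstacle in this argument; the only delicate point is verifying that the almost sure continuity of $\chi$ at $X$ transfers to $g\circ \chi$, which is immediate from the continuity of $g$. The proof is essentially a one-line application of Portmanteau once the composition is identified.
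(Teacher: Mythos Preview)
Your proof is correct and follows essentially the same approach as the paper: compose $\chi$ with an arbitrary bounded continuous $g$, observe that $g\circ\chi$ is bounded measurable and almost surely continuous at $X$, and apply item~(b) of the Portmanteau Theorem. You add a bit more detail (the inclusion of discontinuity sets, and the explicit appeal to item~(c) for the ``in particular'' clause), and you also implicitly correct a small slip in the statement: the continuity point $\alpha$ should be of the cumulative distribution of $\chi(X)$, not of $X$.
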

\begin{proof} For any continuous function $g:\mathbb{R}\to \mathbb{R}$, 
$g\circ \chi$ is a bounded measurable function from $M$ to $\mathbb{R}$ such that $g\circ \chi$ is continuous at $X$ almost surely. Then by item (b) in Theorem~\ref{thm:Portmanteau}, $\mathbb{E}[ g(\chi(X_n))] \to \mathbb{E}[g(\chi(X)]$. This means that $\chi(X_n)$ converges in distribution to $\chi(X)$. 
\end{proof}

The following lemma provides a condition similar to convergence in probability, which is sufficient to demonstrate that two sequences converge to the same distribution~(c.f.~\cite[proof of the Donsker invariance principle]{PeYu-book-Brownian}, or~\cite[Lemma~6.2]{liu2021brownian}).

\begin{lem}\label{lem:convergence-distri}
   Let $\{X_n\}_{n\geq0}$ and $\{Y_n\}_{n\geq0}$ be sequences of random variables taking values in a Polish normed space $(M,\|\cdot\|)$, and let $Y$ be a $M$-valued random variable. Suppose that $\{Y_n\}_{n\geq0}$ and $Y$ are identically distributed, and for every $\epsilon > 0$
   $$
   \lim_{n \to \infty} \PP(\|X_n - Y_n\| > \epsilon) = 0.
   $$
   Then $X_n$ converges in distribution to $Y$.
\end{lem}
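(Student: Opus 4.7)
The plan is to reduce the claim to the standard bounded-Lipschitz characterization of convergence in distribution and then exploit the hypothesis that $\|X_n - Y_n\| \to 0$ in probability together with the fact that each $Y_n$ has the same law as $Y$. Since $(M,\|\cdot\|)$ is a Polish space, it is a classical fact (a consequence of Portmanteau) that $X_n$ converges in distribution to $Y$ if and only if $\mathbb{E}[g(X_n)] \to \mathbb{E}[g(Y)]$ for every bounded Lipschitz function $g \colon M \to \mathbb{R}$. I would take this for granted and focus on verifying this weaker criterion.

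So fix a bounded Lipschitz $g \colon M \to \mathbb{R}$ with $\|g\|_\infty \le K$ and Lipschitz constant $L$. Since $Y_n$ and $Y$ share the same distribution, I would first observe that
\[
|\mathbb{E}[g(X_n)] - \mathbb{E}[g(Y)]| \;=\; |\mathbb{E}[g(X_n)] - \mathbb{E}[g(Y_n)]| \;\le\; \mathbb{E}\bigl[|g(X_n) - g(Y_n)|\bigr].
\]
Then, for an arbitrary $\delta>0$, I would split the expectation according to whether $\|X_n - Y_n\| \le \delta$ or not, using the Lipschitz bound on the first event and the uniform bound $2K$ on the second:
\[
\mathbb{E}\bigl[|g(X_n) - g(Y_n)|\bigr] \;\le\; L\delta + 2K\,\mathbb{P}\bigl(\|X_n - Y_n\| > \delta\bigr).
\]
By the hypothesis, the second term tends to zero as $n \to \infty$ for each fixed $\delta > 0$, so $\limsup_n |\mathbb{E}[g(X_n)] - \mathbb{E}[g(Y)]| \le L\delta$; letting $\delta \downarrow 0$ gives the required convergence, and hence $X_n \to Y$ in distribution.

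There is no real obstacle here: the only subtle point is the reduction to bounded Lipschitz test functions, which I would simply cite as part of the Portmanteau theorem (Theorem~\ref{thm:Portmanteau}) or the well-known bounded-Lipschitz metric characterization of weak convergence on Polish spaces. If one wishes to avoid that reduction and work directly with all bounded continuous $g$, a tightness argument combined with the fact that $g$ is uniformly continuous on compacts would be needed, but since $\{Y_n\}$ is automatically tight (all $Y_n$ have the common law of $Y$), this variant is equally routine.
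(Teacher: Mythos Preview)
Your proof is correct. The paper does not actually supply its own proof of this lemma but merely cites standard references (the proof of Donsker's invariance principle in \cite{PeYu-book-Brownian} and \cite[Lemma~6.2]{liu2021brownian}), and your bounded-Lipschitz argument is precisely the standard proof one finds in such sources.
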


{

We will also use the following lemma: 

\begin{lem}\label{lem:limsup-to-prob}
Let $\{Y_n\}_{n\ge 1}$ be a sequence of $\R$-valued random variables. Assume there exists a constant $C$ such that $\limsup_{n\to\infty} Y_n(\omega) = C$ for $\mathbb{P}$-a.e.\ $\omega\in\Omega$. If $\alpha\in\mathbb{R}$ satisfies $C < \alpha$, then
$$\lim_{n\to\infty} \mathbb{P}(\, Y_n \ge \alpha \,) = 0.$$
\end{lem}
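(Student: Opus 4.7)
The plan is to invoke the reverse Fatou-type inequality already recorded in the excerpt as Lemma~\ref{lema:apendix-propierties of rv}, which states that
$$
\mathbb{P}\bigl(\limsup_{n\to\infty} X_n \ge c\bigr) \;\ge\; \limsup_{n\to\infty} \mathbb{P}(X_n \ge c)
$$
for any constant $c>0$. Applied with $X_n = Y_n$ and $c=\alpha$, this immediately gives
$$
\limsup_{n\to\infty} \mathbb{P}(Y_n \ge \alpha) \;\le\; \mathbb{P}\bigl(\limsup_{n\to\infty} Y_n \ge \alpha\bigr).
$$

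Next, I would use the hypothesis that $\limsup_{n\to\infty} Y_n(\omega) = C$ for $\mathbb{P}$-a.e.\ $\omega$, together with $C<\alpha$, to conclude that
$$
\mathbb{P}\bigl(\limsup_{n\to\infty} Y_n \ge \alpha\bigr) = 0,
$$
since on a set of full measure the value $\limsup_{n\to\infty} Y_n$ equals $C$, which is strictly less than $\alpha$.

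Combining these two displays yields $\limsup_{n\to\infty} \mathbb{P}(Y_n\ge\alpha) \le 0$, and since the quantities $\mathbb{P}(Y_n\ge\alpha)$ are nonnegative, the limit exists and equals $0$. A minor technical caveat is that Lemma~\ref{lema:apendix-propierties of rv} is stated for $c>0$; if $\alpha\le 0$, one first translates by picking any constant $c$ with $C<c<\alpha$ (or adds a positive constant to the $Y_n$) and notes that $\{Y_n\ge\alpha\}\subset\{Y_n\ge c\}$ is not quite what is needed, so instead I would shift the sequence: work with $\tilde Y_n = Y_n - C + 1$, which has $\limsup \tilde Y_n = 1$ a.s., and apply the lemma with $c = \alpha - C + 1 > 1 > 0$. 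There is no real obstacle here; the entire argument is essentially a one-line application of the reverse Fatou inequality for events.
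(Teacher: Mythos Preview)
Your proof is correct and takes a slightly different route from the paper. The paper argues directly: since $\limsup_n Y_n = C < \alpha$ a.e., the event $\{Y_n \ge \alpha\}$ occurs only finitely often for a.e.\ $\omega$, so $\ind_{\{Y_n\ge\alpha\}} \to 0$ pointwise a.e., and dominated convergence (with the constant bound $1$) gives $\mathbb{P}(Y_n\ge\alpha)\to 0$. You instead invoke the reverse Fatou inequality already recorded as Lemma~\ref{lema:apendix-propierties of rv}. Both are one-line arguments and entirely equivalent in strength; your approach has the small advantage of reusing an existing lemma, while the paper's is self-contained. Your handling of the $c>0$ restriction by shifting is fine, though note that the inequality in Lemma~\ref{lema:apendix-propierties of rv} actually holds for all real $c$, so the restriction is inessential and the shift is not really needed.
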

\begin{proof} For $\mathbb P$-a.e.\ $\omega\in\Omega$, by definition of limitsup,  $Y_n(\omega)\ge\alpha>C$ occurs finitely many times. 
Let $E_n=\{Y_n\ge\alpha\}$. Then 
Then $\ind_{E_n}\to0$ pointwise and $|\ind_{E_n}|\le1$, so by dominated convergence $\mathbb P(E_n)\to0$.
\end{proof}

The following result is motivated by condition~\eqref{OT} considered as a random process on the real line. This will be useful later to study random walks and skew-translations. 

 \begin{lem} \label{lem:ocupation-limite} Let $\{S_n\}_{n\geq 0}$ be a sequence of $\mathbb{R}$-valued random variables such that for every compact set $K\subset \R$, it holds that 
 $$
   \lim_{n\to \infty} \frac{1}{n} \sum_{j=0}^{n-1} \ind_{\bar{L}_i(0)}\big(S_j(\omega)\big)=0 \qquad \text{for $\PP$-a.e.~$\omega\in\Omega$}.
 $$
 Then,  for every $\kappa,y\in\mathbb{R}$, it holds 
 $$
  \lim_{n\to\infty}   \left|\frac{1}{n} \sum_{j=0}^{n-1}  \ind_{\bar{L}_{i}(\kappa)}\big(y+S_j(\omega)\big)-\ind_{\bar{L}_i(0)}\big(S_j(\omega)\big)\,\right|=0 \quad \text{for $\PP$-a.e.~$\omega\in\Omega$ and $i=0,1$}
 $$
 where  $\bar{L}_0(s)$ and $\bar{L}_1(s)$ denote, respectively, either $$
 \bar{I}_0(s)=(-\infty,s] \ \ \text{and} \ \ \bar{I}_1(s)=[s,\infty) \ \ \ \text{or} \ \  \bar{J}_0(s)=(-\infty,s) \ \  \text{and} \ \  \bar{J}_1(s)=(s,\infty).$$
 \end{lem}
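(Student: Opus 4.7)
The plan is to bound the expression inside the limit by a Cesàro average of the indicator of a bounded interval, and then invoke the hypothesis with $K$ equal to that interval. The entire content of the lemma is that translating a half-line by $y$ or shifting its endpoint by $\kappa$ modifies its indicator only on a bounded set, which by assumption carries zero asymptotic occupation time.

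First I would pull the absolute value inside the sum via the triangle inequality, reducing the claim to showing
$$
\lim_{n\to\infty}\frac{1}{n}\sum_{j=0}^{n-1}\bigl|\ind_{\bar L_i(\kappa)}(y+S_j(\omega))-\ind_{\bar L_i(0)}(S_j(\omega))\bigr|=0 \quad \text{for $\PP$-a.e.~$\omega\in\Omega$.}
$$
Using the equivalence $y+s\in\bar L_i(\kappa)\iff s\in\bar L_i(\kappa-y)$ together with the elementary identity $|\ind_A-\ind_B|=\ind_{A\triangle B}$, the summand simplifies to $\ind_{A_i}(S_j(\omega))$, where $A_i\eqdef \bar L_i(\kappa-y)\,\triangle\,\bar L_i(0)$.

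Next, a direct case check (over the two possible interpretations of $\bar L_i$ — closed or open half-line — and the sign of $\kappa-y$) shows that $A_i$ is always contained in the compact interval
$$
K\eqdef \bigl[\min\{0,\kappa-y\},\,\max\{0,\kappa-y\}\bigr],
$$
since the symmetric difference of two nested half-lines sharing the same orientation is just the bounded interval between their endpoints (possibly with one endpoint included and the other excluded). Consequently $\ind_{A_i}\le\ind_K$ pointwise, and the Cesàro average is bounded above by $\frac{1}{n}\sum_{j=0}^{n-1}\ind_K(S_j(\omega))$, which tends to $0$ for $\PP$-a.e.~$\omega\in\Omega$ by the hypothesis applied to this $K$. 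Taking the union of the two $\PP$-null sets arising for $i=0,1$ yields the desired conclusion. No step presents a genuine obstacle; everything is a triangle inequality plus the reduction of the symmetric difference of two half-lines to a bounded interval.
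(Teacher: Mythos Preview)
Your proposal is correct and follows essentially the same approach as the paper: both reduce to the pointwise bound $|\ind_{\bar L_i(\kappa-y)}(t)-\ind_{\bar L_i(0)}(t)|\le \ind_{K}(t)$ with $K=[\min\{0,\kappa-y\},\max\{0,\kappa-y\}]$, and then apply the vanishing compact-set occupation-time hypothesis. The only cosmetic difference is that you phrase the bound via the symmetric-difference identity $|\ind_A-\ind_B|=\ind_{A\triangle B}$, whereas the paper states the inequality directly.
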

\begin{proof}
  Fix $i\in\{0,1\}$. Given $s\in\mathbb R$, define
\[
A_n^{s}(\omega)\eqdef\frac{1}{n}\sum_{j=0}^{n-1}\ind_{\bar{L}_i(s)}\big(S_j(\omega)\big).
\]
Let $K_s=[\min\{0,s\},\,\max\{0,s\}]$ be the compact interval between $0$ and $s$. We have that
\[
\big|\ind_{\bar{L}_i(0)}(t)-\ind_{\bar{L}_{i}(s)}(t)\big|
\le \ind_{K_s}(t)\qquad \text{for every $t\in\mathbb R$}.
\]
Then, for every $n\ge1$,
\[
\big|A_n^{\smash{0}}(\omega)-A_n^{s}(\omega)\big|
\le \frac{1}{n} \sum_{j=0}^{n-1} \ind_{K_s}\big(S_j(\omega)\big).
\]
By assumption, the occupational time in any  compact set tends to zero almost surely and~thus
\begin{equation} \label{limite-0s}
\lim_{n\to\infty}|A_n^{0}(\omega)-A_n^{s}(\omega)|=0 \qquad \text{for $\mathbb P$-a.e.\ $\omega\in \Omega$}. 
\end{equation}
Finally, for arbitrary fixed $\kappa,y\in\mathbb R$, we have that $\ind_{\bar{L}_i(\kappa)}(y+S_j(\omega))
=\ind_{\bar{L}_i(\kappa-y)}(S_j(\omega))$ and hence 
\[
\frac{1}{n}\sum_{j=0}^{n-1}\ind_{\bar L_i(\kappa)}\big(y+S_j(\omega)\big)=  A_n^{\kappa-y}(\omega). 
\]
Therefore, the proof of the corollary is completed by taking $s=\kappa-y$ in~\eqref{limite-0s}.
\end{proof}

}

\section{fluctuation laws}
\label{s:weaks-arcsine-laws}

Let $F$ be a skew product as in~\eqref{skew product-principal} with fiber dynamics $f^n_\omega$ given by~\eqref{eq:fiber-dynamics}. Recall the \emph{pointwise-fiber} and \emph{skew-product fluctuation laws} from Definitions~\ref{def:arcsine-law-one-s} and~\ref{def:arcsine-law-skew-p}, respectively. For $\gamma \in (0,1)$, define  
$$
I_0(\gamma) \eqdef [0,\gamma], \quad J_0(\gamma) \eqdef (0,\gamma), \quad 
I_1(\gamma) \eqdef [\gamma,1], \quad J_1(\gamma) \eqdef (\gamma,1).
$$
{We also use the unified notation $L_i(\gamma)$ to denote either $I_i(\gamma)$ or $J_i(\gamma)$ for $i=0,1$.} The following result examines the connection between these two weak arcsine laws.

{

\begin{prop}\label{prop:skew-arc-fiber-arc-a} 
Given $\gamma_0,\gamma_1, \alpha \in (0,1)$ and $i\in\{0,1\}$, we have that
\begin{equation} \label{(i)}
\liminf_{n\to\infty} \,(\PP\times\lbb)\big(\,\s\ind_{I_i(\gamma_i)}(f^j_\omega(x)) \leq \alpha\,\big)<1
\end{equation}
implies that there exist a set $B\subset I$ with positive $\lbb$-measure such that 
\begin{equation} \label{(ii)}
    \liminf_{n\to\infty} \,\PP\big(\,\s\ind_{I_i(\gamma_i)}(f^j_\omega(x_i)) \leq \alpha\,\big)<1 \quad \text{for every $x\in B$}. 
\end{equation}
\end{prop}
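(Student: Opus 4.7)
My plan is to derive (ii) from (i) by combining Fubini's theorem with a reverse Fatou along a subsequence realizing the $\liminf$ in (i). I would begin by abbreviating
$$\phi_n(\omega,x)\eqdef \frac{1}{n}\sum_{j=0}^{n-1}\ind_{I_i(\gamma_i)}(f^j_\omega(x)),$$
which is jointly measurable since $F$ and its iterates are. Hypothesis (i) yields $\delta>0$ and a subsequence $(n_k)_{k\geq1}$ with $(\PP\times\lbb)(\phi_{n_k}>\alpha)\geq\delta$ for every $k$. By Fubini's theorem this is the same as
$$\int_I \PP\bigl(\phi_{n_k}(\cdot,x)>\alpha\bigr)\,d\lbb(x)\geq\delta, \qquad k\geq1.$$

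Since $\PP(\phi_{n_k}(\cdot,x)>\alpha)\in[0,1]$ and $\lbb(I)=1$, reverse Fatou applies and pushes the $\limsup$ inside the integral:
$$\int_I \limsup_{k\to\infty}\PP\bigl(\phi_{n_k}(\cdot,x)>\alpha\bigr)\,d\lbb(x)\geq \limsup_{k\to\infty}\int_I \PP\bigl(\phi_{n_k}(\cdot,x)>\alpha\bigr)\,d\lbb(x)\geq \delta >0.$$
Hence the set $B\eqdef\{x\in I:\limsup_k \PP(\phi_{n_k}(\cdot,x)>\alpha)>0\}$ has positive Lebesgue measure. For any $x\in B$, complementing inside $\PP$ and passing to a subsequence gives
$$\liminf_{n\to\infty}\PP\bigl(\phi_n(\cdot,x)\leq\alpha\bigr)\leq \liminf_{k\to\infty}\PP\bigl(\phi_{n_k}(\cdot,x)\leq\alpha\bigr)=1-\limsup_{k\to\infty}\PP\bigl(\phi_{n_k}(\cdot,x)>\alpha\bigr)<1,$$
which is precisely (ii).

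The whole argument is a single Fubini plus one reverse-Fatou step, so there is no substantive obstacle. The only minor points to keep in mind are the joint measurability of $\phi_n$, which is automatic from the measurability of the skew-product iterates, and that reverse Fatou is legitimate thanks to the uniform bound $\PP(\cdot)\leq 1$ on the probability space $(I,\lbb)$. This transfer is purely measure-theoretic and uses no structural assumption on the fiber maps beyond measurability; the additional hypotheses of the one-step setting (monotonicity, tail triviality, and the interaction at the endpoints) will only be needed later, when one wants to upgrade such a pointwise fiber fluctuation law into almost-sure non-convergence of Birkhoff averages.
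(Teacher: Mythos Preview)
Your proof is correct and follows essentially the same Fubini-plus-Fatou strategy as the paper. The only cosmetic differences are that the paper applies Fatou's lemma directly to $g_n(x)=\PP(\phi_n(\cdot,x)\le\alpha)$ to get $\int\liminf_n g_n\le\liminf_n\int g_n<1$, whereas you pass to the complementary event, extract a subsequence, and use reverse Fatou; the subsequence step is harmless but unnecessary, since reverse Fatou already gives $\int_I\limsup_n\PP(\phi_n(\cdot,x)>\alpha)\,d\lbb\ge\limsup_n\int_I\PP(\phi_n(\cdot,x)>\alpha)\,d\lbb>0$ directly.
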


\begin{proof}
Define
$$
g_n(x) = \PP\big( \frac{1}{n}\sum_{j=0}^{n-1}\ind_{I_i(\gamma_i)}(f^j_\omega(x)) \leq \alpha \big) \quad \text{and} \quad g(x) =\liminf\limits_{n\to\infty} g_n(x).
$$
By Fubini's theorem,~\eqref{(i)} is equivalent to $\liminf_{n\to\infty} \int g_n \,d\lbb < 1$ and~\eqref{(ii)} is equivalent to the set $B=\{x \in I : g(x) < 1\}$ having positive $\lbb$-measure.

Since $\{g_n\}_{n\geq 0}$ are non-negative measurable functions, Fatou's lemma gives
$$
\int g \,d\lbb \leq \liminf_{n\to\infty} \int g_n \,d\lbb < 1.
$$
Suppose by contradiction that~\eqref{(ii)} is false. Then $\lbb(B)=0$. Since $0 \leq g_n \leq 1$, we have $0 \leq g \leq 1$. Therefore, $g(x) = 1$ for $\lbb$-a.e.~$x \in I$, which implies $\int g \,d\lbb = 1$. This contradicts the previous inequality. Hence, the set $B$ must have positive measure (i.e.,~\eqref{(ii)} holds).
\end{proof}

\begin{defi}
The system $F$ satisfies the 
\emph{fiberwise fluctuation law} 
if there are  $\gamma_0,\gamma_1 \in (0,1)$ and subsets $B_0$, $B_1$ in $I$ with positive $\mathrm{Leb}$-measure, such that
\begin{align*}  
    \liminf_{n \to \infty} \PP\bigg( \frac{1}{n}\sum_{j=0}^{n-1} \ind_{I_i(\gamma_i)}(f^j_\omega(x)) \leq \alpha \bigg) &< 1, \quad \text{for every $\alpha \in (0,1)$, $x\in B_i$ and $i = 0, 1$}.
\end{align*}
\end{defi}

\begin{rem} \label{rem:SF-FF} As a consequence of Proposition~\ref{prop:skew-arc-fiber-arc-a},  one has the following relation between the three notions of fluctuation laws: 
$$
\text{skew-product fluctuation} \  \implies \   \text{fiberwise fluctuation} \   \implies \  \text{pointwise-fiber fluctuation}.
$$ 
\end{rem}
}

Now, we show sufficient conditions to get the {fiberwise fluctuation law.} 

\begin{prop} \label{prop:tt}
 Let $\xi:(0,1)\to (0,1)$ be a measurable function such that there exist sets $B_0,B_1 \subset I$ with positive $\lbb$-measure, and constants $\gamma_0, \gamma_1\in(0,1)$ satisfying that 
 $$\xi(x_0) \leq \gamma_0 \quad \text{{and}} \quad  \gamma_1 \leq\xi(x_1) \quad \text{for every $x_0\in B_0$ and $x_1\in B_1$}$$
 and  
\begin{equation*} 
    \liminf_{n\to\infty} \,\PP\bigg(\,\s\ind_{L_i(\xi(x))}(f^j_\omega(x)) \leq \alpha\,\bigg)<1, \ \
\text{for every $\alpha \in (0,1)$, $x\in B_i$ and $i=0,1$,}
\end{equation*} 
Then $F$ satisfies the {fiberwise fluctuation law} with constants $\gamma_0$ and $\gamma_1$.
\end{prop}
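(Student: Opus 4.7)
The plan is to reduce the conclusion to a simple monotonicity argument, exploiting the fact that $L_i(\xi(x))$ is nested inside $I_i(\gamma_i)$ under the given hypotheses on $\xi$. The point is that, at the level of indicator functions, smaller sets yield smaller Birkhoff sums, so smaller Birkhoff sums mean that more orbits satisfy the bound by $\alpha$; this reverses the inclusion of events and gives the desired probability inequality.

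First, I would fix $i=0$ and $x_0 \in B_0$. Since $\xi(x_0) \leq \gamma_0$, the inclusion $L_0(\xi(x_0)) \subseteq [0,\xi(x_0)] \subseteq [0,\gamma_0] = I_0(\gamma_0)$ holds (whether $L_0$ denotes the closed $I_0$ or the open $J_0$ variant). Consequently, $\ind_{L_0(\xi(x_0))}(y) \leq \ind_{I_0(\gamma_0)}(y)$ for every $y \in I$, and summing over $j = 0, \dots, n-1$ yields the pointwise inequality
\begin{equation*}
    \frac{1}{n}\sum_{j=0}^{n-1} \ind_{L_0(\xi(x_0))}(f^j_\omega(x_0)) \;\leq\; \frac{1}{n}\sum_{j=0}^{n-1} \ind_{I_0(\gamma_0)}(f^j_\omega(x_0)).
\end{equation*}
Therefore, for every $\alpha \in (0,1)$, the event $\{\s \ind_{I_0(\gamma_0)}(f^j_\omega(x_0)) \leq \alpha\}$ is contained in the event $\{\s \ind_{L_0(\xi(x_0))}(f^j_\omega(x_0)) \leq \alpha\}$, and hence
\begin{equation*}
    \PP\Big(\s \ind_{I_0(\gamma_0)}(f^j_\omega(x_0)) \leq \alpha\Big) \;\leq\; \PP\Big(\s \ind_{L_0(\xi(x_0))}(f^j_\omega(x_0)) \leq \alpha\Big).
\end{equation*}
Taking $\liminf$ on both sides and using the hypothesis on $\xi$ then gives $\liminf_{n\to\infty} \PP(\s \ind_{I_0(\gamma_0)}(f^j_\omega(x_0)) \leq \alpha) < 1$, as required.

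The case $i=1$ is entirely analogous: for $x_1 \in B_1$ the hypothesis $\gamma_1 \leq \xi(x_1)$ gives $L_1(\xi(x_1)) \subseteq [\xi(x_1),1] \subseteq [\gamma_1,1] = I_1(\gamma_1)$, and the same indicator monotonicity and event inclusion argument transfers the $\liminf < 1$ from $L_1(\xi(x))$ to $I_1(\gamma_1)$. Putting the two cases together produces exactly the fiberwise fluctuation law with constants $\gamma_0$ and $\gamma_1$ on the sets $B_0$ and $B_1$. There is no real obstacle: the content is purely the monotonicity of measure under inclusion, and the only mild care needed is to verify that the inclusion $L_i(\xi(x_i)) \subseteq I_i(\gamma_i)$ holds in both the open and closed versions of $L_i$, which is immediate from the definitions.
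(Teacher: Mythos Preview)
The proposal is correct and follows essentially the same argument as the paper: both use the inclusion $L_i(\xi(x_i))\subseteq I_i(\gamma_i)$ to get $\ind_{L_i(\xi(x_i))}\le\ind_{I_i(\gamma_i)}$, deduce the reverse inclusion of the events $\{\s\ind\le\alpha\}$, and then take $\liminf$. Your write-up simply spells out the event inclusion and the two cases $i=0,1$ more explicitly than the paper does.
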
 
\begin{proof}
Since $\xi(x_0) \leq \gamma_0$ and $\gamma_1 \leq \xi(x_1)$, we have  
$\ind_{L_i(\xi(x_i))} \leq \ind_{I_i(\gamma_i)}$ for $i=0,1$
Then,
$$
\PP\bigg(\,\s\ind_{I_i(\gamma_i)}(f^j_\omega(x_i)) \leq \alpha\,\bigg) \leq 
\PP\bigg(\,\s\ind_{L_i(\xi(x_i))}(f^j_\omega(x_i)) \leq \alpha\,\bigg) \quad i=0,1.
$$
{Taking $\liminf$ and by the assumption, we get that $F$ satisfies the fiberwise fluctuation law.}
\end{proof}

\begin{rem}
  \label{rem:pointwise-fiber}
 {In the above proposition, if we take the sets $B_0=\{x_0\}$ and $B_1=\{x_1\}$, i.e., containing only one single point, then we get that $F$ satisfies the pointwise-fiber fluctuation law with constant $\gamma_0=\xi(x_0)$ and $\gamma_1=\xi(x_1)$.  }
\end{rem}

As a consequence of the previous result, we get the following:

\begin{cor}\label{cor:fiber-implies-weak-arcsine}
    Let $\xi:(0,1)\to (0,1)$ be a continuous non-constant function such
    \begin{equation*} 
    \liminf_{n\to\infty} \,\PP\bigg(\,\s\ind_{L_i(\xi(x))}(f^j_\omega(x)) \leq \alpha\,\bigg)<1, \ \
\text{for every $\alpha \in (0,1)$, $x\in (0,1)$ and $i=0,1$}.
\end{equation*}
Then, {$F$ satisfies the fiberwise fluctuation law with constants $0<\gamma_0<\gamma_1<1$.} 
\end{cor}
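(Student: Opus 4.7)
The plan is to reduce Corollary~\ref{cor:fiber-implies-weak-arcsine} to a direct application of Proposition~\ref{prop:tt}, by exploiting the continuity and non-constancy of $\xi$ to produce the required constants and positive-measure sets.

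First, since $\xi:(0,1)\to(0,1)$ is continuous and non-constant, there exist points $x_0^*, x_1^* \in (0,1)$ with $\xi(x_0^*) \neq \xi(x_1^*)$; relabeling if necessary, assume $\xi(x_0^*) < \xi(x_1^*)$. I would then choose two constants $\gamma_0, \gamma_1$ strictly between these two values, i.e.,
$$\xi(x_0^*) < \gamma_0 < \gamma_1 < \xi(x_1^*),$$
so that in particular $0 < \gamma_0 < \gamma_1 < 1$.

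Next, by continuity of $\xi$, the preimages $B_0 \eqdef \xi^{-1}((0,\gamma_0))$ and $B_1 \eqdef \xi^{-1}((\gamma_1,1))$ are open subsets of $(0,1)$, and both are non-empty since they contain $x_0^*$ and $x_1^*$ respectively. Being non-empty open subsets of the interval, they have positive Lebesgue measure. By construction, $\xi(x_0) < \gamma_0$ for every $x_0 \in B_0$ and $\xi(x_1) > \gamma_1$ for every $x_1 \in B_1$, so the hypotheses of Proposition~\ref{prop:tt} are satisfied for these $B_0, B_1$ and $\gamma_0, \gamma_1$. Applying that proposition immediately yields the fiberwise fluctuation law with the desired constants $0 < \gamma_0 < \gamma_1 < 1$, concluding the proof.

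There is no substantive obstacle here: the argument is essentially a routine topological preparation (splitting the range of $\xi$ by the intermediate value theorem and using continuity to obtain open preimages) followed by invocation of the already-proved Proposition~\ref{prop:tt}. The only point worth care is ensuring the strict inequality $\gamma_0 < \gamma_1$, which is exactly what the non-constancy of $\xi$ provides.
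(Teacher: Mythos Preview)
Your proposal is correct and follows essentially the same approach as the paper's proof: both use non-constancy of $\xi$ to find two points with distinct values, then continuity to produce positive-measure sets $B_0,B_1$ on which $\xi$ stays, respectively, below $\gamma_0$ and above $\gamma_1$, and finish by invoking Proposition~\ref{prop:tt}. The only cosmetic difference is that the paper first fixes closed neighborhoods and then defines $\gamma_0,\gamma_1$ as the max/min of $\xi$ on them, whereas you first fix $\gamma_0<\gamma_1$ and then take open preimages; both are equally valid.
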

\begin{proof} 
Since $\xi$ is non-constant, there is $\tilde{x}_0, \tilde{x}_1 \in (0,1)$ such that $\xi(\tilde{x}_0)<\xi(\tilde{x}_1)$. Moreover, by the continuity of $\xi$, we find closed neighborhoods $B_0,B_1 \subset I$ of $\tilde{x}_0, \tilde{x}_1$ respectively such that  
$$\gamma_0\eqdef \max \{\xi(x): x\in B_0\} < \min \{\xi(x): x\in B_1\}\eqdef \gamma_1.$$ 
Since the sets $B_0$, $B_1$ have positive $\lbb$-measure, by Proposition~\ref{prop:tt}, we conclude that   $F$ satisfies the  {fiberwise fluctuation law}. 
\end{proof}

The next result will be useful for concluding the fiberwise fluctuation law by conjugation.  { For $s\in \R$, set
\[
\bar I_0(s)\eqdef (-\infty,s],\qquad \bar I_1(s)\eqdef [s,\infty) \qquad 
\bar J_0(s)\eqdef (-\infty,s),\qquad \bar J_1(s)\eqdef (s,\infty).
\]
As before, we use the unified notation $\bar L_i(s)$ to denote either $\bar I_i(s)$ or $\bar{J}_i(s)$ for $i=0,1$.

\begin{prop} \label{prop:arcsine-to-conjugateA} 
Let $\mathcal O\subset I$ be forward $f_\omega$-invariant for $\mathbb P$-a.e.\ $\omega\in \Omega$ and let
$h:\mathcal O\to h(\mathcal O)\subset\mathbb R$
be a strictly monotone injection. Define $g_\omega\eqdef h\circ f_\omega\circ h^{-1}$ on $h(\mathcal O)$. Fix $x\in\mathcal O$, $\gamma\in\mathcal O$ and put $t\eqdef h(x)$.  
Then for every $j\ge0$  and $\mathbb{P}$-a.e.~$\omega\in \Omega$,   we have 
\[
\ind_{L_i(\gamma)}(f_\omega^j(x))
=
\begin{cases}
\ind_{\bar L_i(h(\gamma))}(g_\omega^j(t)), & \text{if $h$ is increasing},\\[4pt]
\ind_{\bar L_{1-i}(h(\gamma))}(g_\omega^j(t)), & \text{if $h$ is decreasing},
\end{cases}
\qquad i\in\{0,1\}.
\]
\end{prop}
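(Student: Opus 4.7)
The plan is to prove the identity by first transferring forward orbits through the conjugacy and then using the order-preserving (or order-reversing) character of $h$ to translate membership in $L_i(\gamma)\subset I$ into membership in $\bar L_i(h(\gamma))\subset\mathbb{R}$. Throughout I restrict to the full-measure set of $\omega$'s for which $\mathcal O$ is forward $f_\omega$-invariant, so the fiber compositions $f_\omega^j(x)$ stay in $\mathcal O$ and all expressions $h(f_\omega^j(x))$ are well-defined.

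The first step is the conjugacy on orbits: $g_\omega^j(t)=h(f_\omega^j(x))$ for every $j\ge0$, which I verify by induction. For $j=0$ it is $g_\omega^0(t)=t=h(x)$. For the inductive step, if $g_\omega^{j-1}(t)=h(f_\omega^{j-1}(x))$, then since $\sigma$ shifts $\omega$ appropriately in the definition of $f_\omega^j$ and by $g_\omega=h\circ f_\omega\circ h^{-1}$ (applied fiberwise along the $\sigma$-orbit of $\omega$), one obtains
\[
g_\omega^j(t)=g_{\sigma^{j-1}(\omega)}\bigl(g_\omega^{j-1}(t)\bigr)=h\bigl(f_{\sigma^{j-1}(\omega)}(f_\omega^{j-1}(x))\bigr)=h(f_\omega^j(x)).
\]
Forward invariance of $\mathcal O$ guarantees $f_\omega^{j-1}(x)\in\mathcal O$, so $h^{-1}$ is applied only inside its domain $h(\mathcal O)$.

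The second step is the order translation. Suppose first that $h$ is strictly increasing. Then for any $y\in\mathcal O$, $y\le\gamma\iff h(y)\le h(\gamma)$ and $y\ge\gamma\iff h(y)\ge h(\gamma)$ (and analogously with strict inequalities). Applying this to $y=f_\omega^j(x)\in\mathcal O$ and using Step 1,
\[
\ind_{I_0(\gamma)}(f_\omega^j(x))=\ind_{\{y\le\gamma\}}(f_\omega^j(x))=\ind_{\{s\le h(\gamma)\}}(h(f_\omega^j(x)))=\ind_{\bar I_0(h(\gamma))}(g_\omega^j(t)),
\]
and similarly for $I_1,J_0,J_1$. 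This gives the first case of the stated identity in the unified form $\ind_{L_i(\gamma)}(f_\omega^j(x))=\ind_{\bar L_i(h(\gamma))}(g_\omega^j(t))$.

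If $h$ is strictly decreasing, the inequalities flip: $y\le\gamma\iff h(y)\ge h(\gamma)$ and $y\ge\gamma\iff h(y)\le h(\gamma)$, with the strict versions flipping in the same way. Hence $\ind_{I_0(\gamma)}(f_\omega^j(x))=\ind_{\bar I_1(h(\gamma))}(g_\omega^j(t))$ and $\ind_{I_1(\gamma)}(f_\omega^j(x))=\ind_{\bar I_0(h(\gamma))}(g_\omega^j(t))$, i.e.\ the index swaps from $i$ to $1-i$; the analogous swap $J_i\leftrightarrow\bar J_{1-i}$ happens for open intervals. This yields the second case.

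The only real subtlety is bookkeeping: making sure that the closed/open convention of $L_i$ is matched by the closed/open convention of $\bar L_i$ on the image side (so $I_i\leftrightarrow\bar I_i$ and $J_i\leftrightarrow\bar J_i$, with the index $i\mapsto 1-i$ in the decreasing case), and ensuring that every point of the orbit lies in $\mathcal O$ so $h$ and $h^{-1}$ act where they should. No deeper argument is needed; the forward invariance of $\mathcal O$ plus strict monotonicity of $h$ reduces everything to an elementary case check.
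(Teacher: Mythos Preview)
Your proof is correct and follows essentially the same approach as the paper's: both establish the conjugacy relation $g_\omega^j(t)=h(f_\omega^j(x))$ from forward invariance of $\mathcal O$, then use strict monotonicity of $h$ to translate the inequality defining membership in $L_i(\gamma)$ into the corresponding inequality for $\bar L_i(h(\gamma))$ (or $\bar L_{1-i}(h(\gamma))$ in the decreasing case). You are simply more explicit, spelling out the induction and the indicator-function case analysis that the paper compresses into two sentences.
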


\begin{proof}
Since $\mathcal O$ is forward invariant we have for $\PP$-a.e.\ $\omega\in\Omega$ and every $j\ge0$,  the conjugacy relation
$
g_\omega^j(t)=h(f_\omega^j(x))$. If $h$ is increasing then for each $j$ and each $i\in\{0,1\}$,
we have that $f_\omega^j(x)\in L_i(\gamma)$ if and only if $g_\omega^j(t)=h(f_\omega^j(x))\in\bar L_i(h(\gamma))$.
So $\ind_{L_i(\gamma)}(f_\omega^j(x))=\ind_{\bar L_i(h(\gamma))}(g_\omega^j(t))$. If $h$ is decreasing, the inequalities reverse,
$
f_\omega^j(x)\in L_i(\gamma)$ if and only if $ g_\omega^j(t)=h(f_\omega^j(x))\in\bar L_{1-i}(s)$. 
So $\ind_{L_i(\gamma)}(f_\omega^j(x))=\ind_{\bar L_{1-i}(h(\gamma))}(g_\omega^j(t))$. 
\end{proof}
}





\subsection{Asymptotic occupational times}
\label{sec:essential-condition-H3ab}
Consider the following conditions for $F$:
\begin{enumerate}[itemsep=0.5cm,leftmargin=1.4cm,label=(OT\arabic*)]
	\item\label{H3a}
	There are $x_0,x_1,\gamma_0,\gamma_1 \in (0,1)$  {such that} 
	\begin{equation*}
	\limsup_{n \to \infty} \frac{1}{n} \sum_{j=0}^{n-1} \ind_{I_i(\gamma_i)}(f^j_\omega(x_i)) = 1 \quad \text{for $\PP$-\aep~$\omega \in \Esp$}, \quad i = 0, 1.
\end{equation*}
	\item\label{H3b} 
There exist  sets $B_0, B_1 \subset I$ of positive $\lbb$-measure and constants $\gamma_0,\gamma_1\in (0,1)$ 
such that for every $x_0 \in  B_0$ and $x_1\in B_1$    
\begin{equation*}
	\limsup_{n \to \infty} \frac{1}{n} \sum_{j=0}^{n-1} \ind_{I_i(\gamma_i)}(f^j_\omega(x_i)) = 1 \quad 
    \text{for $\PP$-a.e.~$\omega\in \Esp$}, 
    \quad i = 0, 1.
\end{equation*}
\end{enumerate}

The following proposition shows that these asymptotic occupational times can be obtained when the skew products satisfy any previously introduced fluctuation laws.

\begin{prop}\label{prop:equivalence arcsine law}
Let $F$ be a skew product $F:\Omega \times I \to \Omega \times I$.  
\begin{enumerate}[label=$\mathrm{(\roman*)}$, leftmargin=0.9cm]
    \item If $\PP \times \lbb$ is ergodic and  {$F$ satisfies the fiberwise fluctuation law,  then  \rmref{H3b} holds.} 
    \item If \rmref{H0} holds and {$F$ satisfies the pointwise-fiber fluctuation law, then~\rmref{H3a} holds.}
\end{enumerate}
\end{prop}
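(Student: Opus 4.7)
My plan is to reduce both parts to a common dichotomy for the auxiliary quantity $\Phi_n^i(\omega,x)\eqdef \frac{1}{n}\sum_{j=0}^{n-1}\ind_{I_i(\gamma_i)}(f_\omega^j(x))$: either $L^i(\omega,x)\eqdef\limsup_{n\to\infty}\Phi_n^i(\omega,x)$ is almost surely equal to $1$ (giving the desired occupational time), or it is almost surely bounded by some $C_i<1$, in which case Lemma~\ref{lem:limsup-to-prob} will force $\lim_{n\to\infty}\mathbb{P}(\Phi_n^i(\cdot,x)\leq\alpha)=1$ for every $\alpha\in(C_i,1)$ and the relevant $x$, contradicting the corresponding fluctuation law. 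A preliminary observation needed in both parts is that $L^i$ is $F$-invariant: using the relation $f_{\sigma(\omega)}^j(f_\omega(x))=f_\omega^{j+1}(x)$, a direct calculation gives $\Phi_n^i(F(\omega,x))-\Phi_n^i(\omega,x)=\frac{1}{n}[\ind_{I_i(\gamma_i)}(f_\omega^n(x))-\ind_{I_i(\gamma_i)}(x)]=O(1/n)$, and passing to the $\limsup$ yields $L^i\circ F=L^i$ pointwise, so every level set $\{L^i\leq c\}$ is forward $F$-invariant.

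For part (i), ergodicity of $\mathbb{P}\times\lbb$ with respect to $F$ then forces each level set to have measure $0$ or $1$, so $L^i$ is $(\mathbb{P}\times\lbb)$-a.s. equal to a constant $C_i\in[0,1]$. Suppose $C_i<1$ and pick $\alpha\in(C_i,1)$. Fubini gives that for $\lbb$-a.e. $x$ the equality $\limsup_{n\to\infty}\Phi_n^i(\cdot,x)=C_i$ holds $\mathbb{P}$-almost surely, so Lemma~\ref{lem:limsup-to-prob} yields $\lim_{n\to\infty}\mathbb{P}(\Phi_n^i(\cdot,x)\geq\alpha)=0$, and therefore $\liminf_{n\to\infty}\mathbb{P}(\Phi_n^i(\cdot,x)\leq\alpha)=1$ for $\lbb$-a.e. $x$. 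Since $B_i$ has positive Lebesgue measure, it must meet this full-measure set, contradicting the fiberwise fluctuation law at such a point. Hence $C_i=1$, and because $\Phi_n^i\leq 1$, the set $B_i'\eqdef\{x\in I\colon \mathbb{P}(\limsup_{n\to\infty}\Phi_n^i(\cdot,x)=1)=1\}$ has full $\lbb$-measure, witnessing~\rmref{H3b} with the same $\gamma_i$.

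For part (ii), the distinguished points $x_i$ supplied by the pointwise-fiber fluctuation law let us bypass the Fubini step altogether. Applying Lemma~\ref{lema:set-in-the-tail-algebra} with $\varphi=\ind_{I_i(\gamma_i)}$ and the sequence $\{X_n^{x_i}\}_{n\geq 0}$ places $\{\limsup_{n\to\infty}\Phi_n^i(\cdot,x_i)\leq b\}$ in the tail $\sigma$-algebra $\mathcal{T}(\{X_n^{x_i}\}_{n\geq 1})$ (noting that $X_0^{x_i}=x_i$ is deterministic and hence irrelevant to the tail), which is trivial by~\rmref{H0}. Consequently $\limsup_{n\to\infty}\Phi_n^i(\cdot,x_i)$ is $\mathbb{P}$-a.s. equal to a constant $C_i\in[0,1]$, and the same Lemma~\ref{lem:limsup-to-prob} argument as above rules out $C_i<1$ by contradicting the pointwise-fiber fluctuation law at $x_i$; hence $C_i=1$, which is exactly~\rmref{H3a}. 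The only real bookkeeping subtlety in the whole argument is the clean translation between the almost sure behavior of $\limsup_n\Phi_n^i$ and the asymptotic behavior of $\mathbb{P}(\Phi_n^i\leq\alpha)$, but this is precisely what Lemma~\ref{lem:limsup-to-prob} is designed to handle, so once the $F$-invariance (for (i)) and the tail-triviality (for (ii)) reductions are in place, the final contradiction is immediate.
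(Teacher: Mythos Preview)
Your proof is correct and follows essentially the same strategy as the paper: establish that $\limsup_n \Phi_n^i$ is $F$-invariant (respectively, tail-measurable), use ergodicity (respectively, \ref{H0}) to get a $0$--$1$ law, and then let the fluctuation hypothesis force the limsup to equal $1$. The only tactical difference is in this last step: the paper shows directly, via Lemma~\ref{lema:apendix-propierties of rv}, that $\{\limsup_n \Phi_n^i \ge \alpha\}$ has positive measure (hence full measure) for every $\alpha<1$, whereas you first deduce that $\limsup_n \Phi_n^i$ is almost surely constant and then use Lemma~\ref{lem:limsup-to-prob} to derive a contradiction if that constant were below $1$; these are two sides of the same coin.
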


\begin{proof} 
To prove the first item, assume that $F$ satisfies the fiberwise fluctuation law.  Hence, there are sets $B_0,B_1\subset I$ with positive $\lbb$-measure and constant $\gamma_0,\gamma_1\in (0,1)$ such that 
\begin{equation} \label{eq:pp}
    \liminf_{n\to\infty} \,\PP\bigg(\,\s\ind_{I_i(\gamma_i)}(f^j_\omega(x_i)) \leq \alpha\,\bigg)<1 \quad \text{for every $\alpha\in(0,1)$,  $x_i \in B_i$ and $i=0,1$.}
\end{equation}
Now, fix $\alpha\in(0,1)$ and define the sets 
     \begin{align*}
         A_i(\alpha) &\eqdef\la{
         (\omega,x)\in\Esp \times I\colon \,  \limsup_{n\to\infty}\s\ind_{I_i(\gamma_i)}(f^j_\omega(x))
         > \alpha } \\
B^n_i(\alpha)&\eqdef\la{(\omega,x)\in \Esp \times B_i\colon\s\ind_{I_i(\gamma_i)}(f^j_\omega(x)) >\alpha} = \la{(\omega,x)  \colon x \in B_i \ \text{and} \ \omega \in B^n_i(\alpha,x) } 
   \end{align*}
 where 
 $$
B^n_{i}(\alpha,x)\eqdef{\la{\omega \in \Omega \colon  \s\ind_{I_i(\gamma_i)}(f^j_\omega(x))> \alpha}} \quad \text{for $x\in B_i$ and $i=0,1$.}      
$$
    Note that  $A_i(\alpha)$ is $F$-invariant. 
    By Lemma~\ref{lema:apendix-propierties of rv}, {Reverse Fatou’s lemma} and~\eqref{eq:pp},  
 we have
 \begin{align*} 
     (\PP\times \lbb)(A_{i}(\alpha))
 &\geq \limsup\limits_{n\to\infty}\,(\PP\times \lbb)(B_i^n(\alpha))
= \limsup_{n\to\infty} \int_{B_i} \PP(B^n_{i}(\alpha,x))\,d\mathrm{Leb}  \\ & {\geq} \int_{B_i} \limsup_{n\to\infty} \PP(B^n_{i}(\alpha,x))\,d\mathrm{Leb}  >0.
 \end{align*}
By ergodicity of $\PP\times\lbb$, $(\PP\times\lbb)(A_{i}(\alpha))=1$. Taking $\alpha\to 1$, we conclude that
\begin{equation*} 
	\limsup_{n\to\infty}\s\ind_{I_i(\gamma_i)}(f^j_\omega(x))=1\quad\text{for $(\PP\times \lbb)$-a.e.~$(\omega,x)\in\Esp\times I$},\quad i=0,1.
\end{equation*}
This proves~\rmref{H3b}.

Now, we prove the second item. Assume that $F$ satisfies the pointwise-fiber fluctuation law and condition \rmref{H0}. By Lemma~\ref{lema:set-in-the-tail-algebra}, for each $\alpha\in (0,1)$, 
the set
 $$
 A_{i}(\alpha,x_i)\eqdef{\la{\omega \in \Omega : \,  \limsup_{n\to\infty}\s\ind_{I_i(\gamma_i)}(f^j_\omega(x_i)){\geq} \alpha}} $$
belongs to  the tail $\oldsigma$-algebra $\mathcal{T}(\{f^n_\omega(x_i)\}_{n\geq 0})$  for $i=0,1$. Hence, since by \rmref{H0} this tail is trivial,  we get 
$\PP(A_{i}(\alpha,x_i))\in\{0,1\}$. 
On the other hand, again by Lemma~\ref{lema:apendix-propierties of rv} and
since $F$ satisfies the pointwise-fiber fluctuation law, 
 \begin{equation*}
    \PP(A_{i}(\alpha,x_i))
\geq\limsup\limits_{n\to\infty}
\PP\bigg(\,\s\ind_{I_i(\gamma_i)}(f^j_\omega(x_i))  > \alpha \,\bigg) 
>0.
\end{equation*}
Therefore, $\PP(A_{i}(\alpha,x_i))=1$, and taking $\alpha\to 1$, we obtain~\ref{H3a}.
\end{proof}



\section{Proof of Theorem~\ref{thm:B-random-walks}}\label{Sec:theorema A} 

Given a {continuous function} $\fu{\f}{I}{\mathbb{R}}$, we define the functions
 \begin{equation*}\label{eq:def-Lyapunov}
	\begin{split}
		U_\f(\omega, x)  \eqdef\limsup_{n\to\infty}\s \f(f^j_{\omega}(x))\quad\text{and}\quad
		L_\f(\omega, x)  \eqdef\liminf_{n\to\infty}\s \f(f^j_{\omega}(x)).
	\end{split}
 \end{equation*}
It can be readily observed that $U_\varphi$ and $L_\varphi$ are invariant along the (forward) $F$-orbit of $(\omega,x)$. Functions possessing this property are typically referred to as {\emph{first integral functions}} of $F$. The following results show some properties of $U_\varphi$ and $L_\varphi$ under our settings.

\subsection{First integral functions for one-step skew products}
\label{sec:constant-functions}
 To prove~Theorem~\ref{thm:B-random-walks}, we first show that the {first integral functions} are constant, and then evaluate these constants. In what follows, ${F}$ denotes a skew product as in~\eqref{one-skew product-principal}, being $f^n_\omega$ in~\eqref{eq:fiber-maps-one-step} its fiber dynamics.

\begin{prop}\label{prop:nondepend-omega} Assume that $F$ satisfies~\rmref{H0}. Then, there exist two real-valued functions ${u}$, $\ell$ on~$I$ such that for every $x\in (0,1)$  
	\begin{equation*}
		U_\f(\omega, x)={u} (x)\quad\text{and}\quad L_\f(\omega, x)=\ell(x)\quad\text{for $\PP$-$\mathrm{a.e.}$ $\omega\in\Esp$}.
	\end{equation*}
\end{prop}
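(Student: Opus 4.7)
The plan is to reduce the proposition to the triviality of the tail $\sigma$-algebra in \rmref{H0} combined with the tail-measurability of the Ces\`aro $\limsup$ stated in Lemma~\ref{lema:set-in-the-tail-algebra}.

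Fix $x\in(0,1)$ and the continuous observable $\varphi$. I would consider the sequence of real-valued random variables $X_n^x(\omega)\eqdef f_\omega^n(x)$, $n\ge 0$, so that
\[
U_\varphi(\omega,x)=\limsup_{n\to\infty}\frac{1}{n}\sum_{j=0}^{n-1}\varphi(X_j^x(\omega)),\qquad
L_\varphi(\omega,x)=\liminf_{n\to\infty}\frac{1}{n}\sum_{j=0}^{n-1}\varphi(X_j^x(\omega)).
\]
By Lemma~\ref{lema:set-in-the-tail-algebra}, for every $b\in\mathbb{R}$ the sublevel and superlevel sets
\[
\{\omega:\ U_\varphi(\omega,x)\le b\},\qquad \{\omega:\ U_\varphi(\omega,x)\ge b\}
\]
lie in the tail $\oldsigma$-algebra $\mathcal{T}(\{X_n^x\}_{n\ge 0})$. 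Thus $U_\varphi(\,\cdot\,,x)$ is $\mathcal{T}(\{X_n^x\}_{n\ge 0})$-measurable. Applying the same lemma to $-\varphi$ and using $L_\varphi(\omega,x)=-U_{-\varphi}(\omega,x)$ yields tail-measurability of $L_\varphi(\,\cdot\,,x)$ as well.

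Now I would invoke \rmref{H0}, which asserts that $\mathcal{T}(\{X_n^x\}_{n\ge 0})$ is $\mathbb{P}$-trivial for every $x\in(0,1)$. A real random variable measurable with respect to a trivial $\oldsigma$-algebra is almost surely constant: setting
\[
u(x)\eqdef\inf\bigl\{b\in\mathbb{R}:\ \mathbb{P}\bigl(U_\varphi(\,\cdot\,,x)\le b\bigr)=1\bigr\},
\qquad
\ell(x)\eqdef\sup\bigl\{b\in\mathbb{R}:\ \mathbb{P}\bigl(L_\varphi(\,\cdot\,,x)\ge b\bigr)=1\bigr\},
\]
the zero-one dichotomy forces $\mathbb{P}(U_\varphi(\,\cdot\,,x)=u(x))=1$ and $\mathbb{P}(L_\varphi(\,\cdot\,,x)=\ell(x))=1$, as desired. (Since $\varphi$ is bounded on the compact interval $I$, both $u(x)$ and $\ell(x)$ are finite, so the extraction of the essential constant poses no issue at $\pm\infty$.)

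There is no substantive obstacle here; the only point requiring a bit of care is matching the indexing of the tail $\oldsigma$-algebra used in Lemma~\ref{lema:set-in-the-tail-algebra} with the one appearing in \rmref{H0} (both are generated by $X_m^x,X_{m+1}^x,\dots$ for all sufficiently large $m$, so they coincide), and the observation that the null set on which $U_\varphi(\,\cdot\,,x)\neq u(x)$ may genuinely depend on $x$, which is allowed by the statement of the proposition.
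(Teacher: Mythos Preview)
Your proof is correct and follows essentially the same approach as the paper: invoke Lemma~\ref{lema:set-in-the-tail-algebra} to obtain tail-measurability of $U_\varphi(\,\cdot\,,x)$ and $L_\varphi(\,\cdot\,,x)$, then use the triviality in~\rmref{H0} to conclude almost-sure constancy via the essential value. The paper carries out the extraction of the constant slightly more explicitly via a $\pm 1/n$ argument, whereas you appeal directly to the general fact that a trivial-$\oldsigma$-algebra-measurable variable is a.s.\ constant; this and your observation $L_\varphi=-U_{-\varphi}$ are minor stylistic streamlinings, not a different route.
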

\begin{proof} 
Fix some constant $b\in \mathbb{R}$ and define the set
$
A(b,x)\eqdef\{\omega\in \Esp\colon U_\f(\omega,x)<b\}.
$
By~Lemma~\ref{lema:set-in-the-tail-algebra}, $A(b,x)$ belongs to the tail algebra. Consequently, { according to~\rmref{H0}, the tail $\oldsigma$-algebra $\mathcal{T}(\{f^n_\omega(x)\}_{n\geq 0})$ is trivial. }Then, the probability of $A(b,x)$ is either zero or one. 
Let
$$
u(x)\eqdef\inf\{b\colon \PP(A(b,x))=1\}. 
$$
\begin{claim}\label{cl.barb}
$U_\f(\omega,x)=u(x)$ for $\PP$-$\mathrm{a.e.}$ $\omega\in\Esp$.
\end{claim}

\begin{proof}
{
By the definition of $u(x)$ as an infimum, for any integer $n\geq 1$, we have that the set $\mathcal{C}_n = \{\omega \in \Esp \colon U_\f(\omega, x) < u(x) + 1/n\}$ has full $\PP$-measure for every $n \geq 1$. Since the countable intersection of the sets $\mathcal{C}_n$ also has full measure,  it holds that for $\PP$-a.e.~$\omega\in\Esp$, $U_\f(\omega, x) < u(x) + 1/n$ for all $n$. This implies that $U_\f(\omega, x) \leq u(x)$ for $\PP$-a.e.~$\omega\in\Esp$. 

Conversely, again by the definition of $u(x)$, for any $n\geq 1$, $u(x) - 1/n$ is not an upper bound for the set of real numbers $b$ satisfying $\PP(A(b,x)) = 1$. As the tail $\oldsigma$-algebra is trivial, this necessarily implies that $\mathcal{N}_n=\{\omega \in \Esp \colon U_\f(\omega, x) < u(x) - 1/n\}$ has zero $\mathbb{P}$-measure. Since  the union of $\mathcal{N}_n$ is also a null set, we have that the complementary event,  $U_\f(\omega, x) \geq u(x)-\frac{1}{n}$ for every $n\geq 1$, has full $\mathbb{P}$-measure. This implies that $U_\f(\omega, x) \geq u(x)$ for $\PP$-a.e.~$\omega\in\Esp$.

Combining these two observations, we conclude that $U_\f(\omega, x) = u(x)$ for $\PP$-a.e.~$\omega\in\Esp$.}
\end{proof}
	
 
The above claim provides the function $u$  in the statement of the proposition.   Considering the sets
	$\Bar{A}(b,x)=\{\omega\in \Esp\colon L_\f(\omega,x)>b\}$
	and arguing similarly, we also get that $L_\f(\omega,x)=\ell(x)$ for $\PP$-a.e.~$\omega\in\Esp$ where 
 $\ell(x)=\sup \{b: \PP(\Bar{A}(b,x))=1\}$.
 This concludes the proof.
\end{proof}

\begin{prop}\label{prop:constant-random orbit}
Let ${u}$, $\ell$ be the functions given in Proposition~\ref{prop:nondepend-omega}. Then, for every $x\in (0,1)$,
	\begin{equation*}
		{u}(x)={u}(f^k_\omega(x))\quad\text{and}\quad \ell(x)=\ell(f^k_\omega(x))\quad\text{for $\PP$-a.e~$\omega\in\Omega$ and  $k\geq 0$}.
	\end{equation*}
\end{prop}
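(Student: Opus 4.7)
The plan is to combine three ingredients: (i) $U_\varphi$ and $L_\varphi$ are first integrals of $F$, hence constant along forward $F$-orbits; (ii) Proposition~\ref{prop:nondepend-omega}, applied at the point $y=f^k_\omega(x)\in(0,1)$ (which lies in $(0,1)$ by~\rmref{H1}); and (iii) the independence, under the Bernoulli measure $\PP=p^{\N}$, between the block $(\omega_0,\dots,\omega_{k-1})$ and the tail $\sigma^k(\omega)=(\omega_k,\omega_{k+1},\dots)$.

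I would first fix $x\in(0,1)$ and $k\geq 0$. The first-integral property together with Proposition~\ref{prop:nondepend-omega} gives
$$u(x) \;=\; U_\varphi(\omega,x) \;=\; U_\varphi\bigl(F^k(\omega,x)\bigr) \;=\; U_\varphi\bigl(\sigma^k(\omega),\,f^k_\omega(x)\bigr) \quad \text{for $\PP$-a.e.~$\omega\in\Omega$.}$$
The remaining task is to show that the rightmost quantity equals $u(f^k_\omega(x))$ almost surely. To this end, consider the Borel set
$$A \;\eqdef\; \bigl\{(\omega',y)\in\Omega\times(0,1) : U_\varphi(\omega',y)\neq u(y)\bigr\}.$$
By Proposition~\ref{prop:nondepend-omega}, every vertical slice $A^y$ is $\PP$-null, so Fubini yields $(\PP\times\nu)(A)=0$ for any Borel probability $\nu$ on $(0,1)$. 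Now define $\Phi(\omega)\eqdef(\sigma^k(\omega),\,f^k_\omega(x))$. Because $F$ is one-step, $f^k_\omega(x) = f_{\omega_{k-1}}\circ\cdots\circ f_{\omega_0}(x)$ depends only on $(\omega_0,\dots,\omega_{k-1})$, while $\sigma^k(\omega)$ depends only on the complementary block; since $\PP$ is a product measure, these two blocks are independent. Therefore $\Phi_*\PP = \PP\times\mu_k$, where $\mu_k$ denotes the law of $f^k_\omega(x)$, and hence
$$\PP\bigl(\Phi^{-1}(A)\bigr) \;=\; (\PP\times\mu_k)(A) \;=\; 0.$$
This is exactly the statement that $U_\varphi(\sigma^k(\omega), f^k_\omega(x)) = u(f^k_\omega(x))$ for $\PP$-a.e.~$\omega$, which combined with the displayed identity above produces $u(x)=u(f^k_\omega(x))$ $\PP$-almost surely. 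The argument for $\ell$ is identical, replacing $U_\varphi$ by $L_\varphi$ and $u$ by $\ell$.

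I do not expect a serious obstacle. The measurability of $A$ is routine because $U_\varphi$ is Borel (as a pointwise $\limsup$ of Borel functions on $\Omega\times I$) and $u$ is Borel by its definition through a $\limsup$ and an infimum of measurable quantities. The only place where the one-step hypothesis is genuinely used is to guarantee the independence of $f^k_\omega(x)$ and $\sigma^k(\omega)$; it is precisely this product structure that makes the Fubini step apply and is the crux of the argument.
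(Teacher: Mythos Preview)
Your proof is correct and takes a genuinely different route from the paper's. The paper argues by brute force over the countable alphabet: for each word $\bar w=w_0\dots w_{n-1}\in\mathcal{A}^n$ it pulls back the full-measure set $\Omega_{f_{\bar w}(x)}$ from Proposition~\ref{prop:nondepend-omega} by $\sigma^{-n}$, intersects over all such words to obtain $A_n$, and then sets $\Lambda=\bigcap_{n\ge 0}A_n$. Countability of $\mathcal{A}$ is what makes $\PP(A_n)=1$, and the paper explicitly flags this as essential in Remark~\ref{rmk:contableassumtion}.

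Your argument replaces that combinatorial intersection by a single Fubini/independence step: because $f^k_\omega(x)$ depends only on $(\omega_0,\dots,\omega_{k-1})$ and $\sigma^k(\omega)$ on the complementary coordinates, the pushforward of $\PP$ under $\Phi=(\sigma^k,f^k_\cdot(x))$ is exactly $\PP\times\mu_k$, and the ``bad'' set $A=\{(\omega',y):U_\varphi(\omega',y)\neq u(y)\}$ has measure zero for \emph{any} product $\PP\times\nu$. This is cleaner and strictly more general: it goes through for an arbitrary probability space $(\mathcal{A},\mathcal{F},p)$, not just a countable one, so it actually removes the restriction the paper singles out. The only point to make fully explicit is the measurability of $u$; since $\varphi$ is continuous on the compact interval $I$, $U_\varphi$ is bounded and jointly measurable, whence $u(y)=\int U_\varphi(\omega,y)\,d\PP(\omega)$ is Borel in $y$ by Fubini. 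With that, the set $A$ is measurable and your argument is complete. The paper's approach, in exchange, yields a single explicit full-measure set $\Lambda$ on which the identity holds for all $k$ and all words simultaneously.
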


\begin{proof}
	We only prove the proposition for the function ${u}$. The proof for $\ell$ is similar and hence omitted. By Proposition~\ref{prop:nondepend-omega}, given any $x\in I$,  there exists a set $\Omega_x$ with $\PP(\Omega_x)=1$, such that $U_\f(\omega,x)={u}(x)$ for every $\omega\in\Omega_x$.  
	Consider the set $\mathcal{A}^n$ of all words of size $n\geq 1$.  Now, define 
	\begin{equation}\label{eq:set-sigmafinito}
		A_n\eqdef\bigcap_{\omega\in\Esp}\sigma^{-n}\big(\Omega_{f^{n}_\omega(x)}\big)=\bigcap_{\bar{w}=
  w^{}_0\dots w^{}_{n-1}\in\mathcal{A}^n}\sigma^{-n}\big({\Omega^{}_{f^{}_{w^{}_{n-1}}\circ\dots\circ f^{}_{w^{}_{0}}(x)}}\big).
	\end{equation}
	Since $\mathcal{A}$ is an alphabet at the most countable,   the set $\mathcal{A}^n$ is also countable. 
	Since $\PP(\Omega_{f^{n}_\omega(x)})=1$  and $\PP$ is $\sigma$-invariant, then 
		$\PP(\sigma^{-n}(\Omega_{f^{n}_\omega(x)}))=1$ for all $n\geq 1$.
	Now, as the intersection in \eqref{eq:set-sigmafinito} is  countable,  we have $\PP(A_n)=1$ for every~$n\geq1$. Define the set 
	$$
	\Lambda\eqdef \bigcap_{n\geq0}A_n  \quad \text{where $A_0=\Omega_{x}$}.
	$$
	As $\Lambda$ is a countable intersection of sets with probability one, it follows that~$\PP(\Lambda)=1$.
	This implies that for every word $\bar{w}=w_0\dots w_{k-1}$ of size $k$, the cylinder 
 $$
 \llbracket \bar{w} \rrbracket
 \eqdef \big\{\omega=(\omega_i)_{i\geq 0} \in \Esp \colon \omega_i = w_i, \ i=0,\dots,k-1\big\}$$ 
 satisfies~$\Lambda\cap \llbracket \bar{w}\rrbracket \neq\emptyset$. Now, choosing any $\omega\in\Lambda\cap \llbracket \bar{w} \rrbracket$,
	it holds
	\begin{align*}
		{u}(f^k_\omega(x)) 
		 = U_\f(\sigma^k(\omega),f^{k}_\omega(x)) =U_\f(\omega,x) ={u}(x).
	\end{align*}
	 Since $\bar{w}$ is an arbitrary word, we conclude that ${u}$ is constant along the random orbit of $x$ for 
	every $\omega \in \Lambda$ and hence for
	$\PP$-a.e.~$\omega\in\Esp$.
\end{proof}

\begin{rem}
	In Propositions~\ref{prop:nondepend-omega} and~\ref{prop:constant-random orbit}, one can consider one-step skew products $F$ on $\Omega \times M$, where the fiber space $M$ is any measurable space instead of $I$. This substitution is possible because only the measurability of the fiber maps is used.
\end{rem}

\begin{rem}\label{rmk:contableassumtion}
	The countability assumption for $\mathcal{A}$ is not necessary for Proposition~\ref{prop:nondepend-omega}. This result holds even if $(\mathcal{A},\mathcal{F},p)$ is any probability space. However, in Proposition~\ref{prop:constant-random orbit}, the  countability assumption of~$\mathcal{A}$ is crucial for the existence of the set $A_n$ in~\eqref{eq:set-sigmafinito}. 
\end{rem}

%
%


\begin{prop}\label{prop:constants-limits}
	Assume that $F$ satisfies conditions~\rmref{H0}--\rmref{H2} and consider  a monotonically increasing function $\varphi:I\to \mathbb{R}$. Let ${u}$, $\ell$ be the functions given in Proposition~\ref{prop:nondepend-omega}.     Then there exist  constants $\bar{{u}},\bar{\ell}\in\R$ such that
	\begin{equation*}
		{u}(x)=\bar{{u}}\quad\text{and}\quad \ell(x)=\bar{\ell},\quad\text{for every }x\in(0,1).
	\end{equation*}
\end{prop}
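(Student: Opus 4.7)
The plan is to show that $u$ and $\ell$ are monotonically non-decreasing on $(0,1)$, and then combine monotonicity with Proposition~\ref{prop:constant-random orbit} and hypothesis~\rmref{H2} to force them to be locally constant on the connected interval $(0,1)$, hence globally constant.

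For the monotonicity step, given $x \leq y$ in $(0,1)$, condition~\rmref{H1} and a straightforward induction yield $f^j_\omega(x) \leq f^j_\omega(y)$ for every $j \geq 0$ and every $\omega \in \Omega$. Since $\varphi$ is non-decreasing, the Birkhoff sums $\frac{1}{n}\sum_{j=0}^{n-1}\varphi(f^j_\omega(x))$ are non-decreasing in the base point, so passing to $\limsup$ and $\liminf$ gives
\[
U_\varphi(\omega, x) \leq U_\varphi(\omega, y) \quad \text{and} \quad L_\varphi(\omega, x) \leq L_\varphi(\omega, y) \quad \text{for every } \omega \in \Omega.
\]
Intersecting the two full-$\PP$-measure sets from Proposition~\ref{prop:nondepend-omega} where $U_\varphi(\omega,x) = u(x)$ and $U_\varphi(\omega,y) = u(y)$ (and analogously for $\ell$), we deduce the pointwise inequalities $u(x) \leq u(y)$ and $\ell(x) \leq \ell(y)$.

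For the local constancy, fix $x \in (0,1)$. By~\rmref{H2}, there exist $\alpha, \beta \in \Omega$ with $f_\alpha(x) < x < f_\beta(x)$; since $F$ is one-step, setting $a \eqdef \alpha_0$ and $b \eqdef \beta_0$ produces symbols in $\mathcal{A}$ satisfying $f_a(x) < x < f_b(x)$, with $f_a(x), f_b(x) \in (0,1)$ by~\rmref{H1}. Because $p$ has full support, the cylinders $\llbracket a\rrbracket$ and $\llbracket b\rrbracket$ have positive $\PP$-measure, so the full-measure set $\Lambda$ from Proposition~\ref{prop:constant-random orbit} meets both; choosing $\omega \in \Lambda \cap \llbracket a\rrbracket$ (resp.~$\omega \in \Lambda \cap \llbracket b\rrbracket$) and specializing to $k=1$ yields the deterministic identities
\[
u(f_a(x)) = u(x) = u(f_b(x)).
\]
Combined with the monotonicity of $u$ already established, this forces $u$ to be constant on the open interval $(f_a(x), f_b(x))$ containing $x$. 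The identical argument applies to $\ell$.

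Thus both $u$ and $\ell$ are non-decreasing and locally constant on the connected set $(0,1)$, and therefore each is globally constant; denoting these constants by $\bar u$ and $\bar\ell$ completes the argument. Rather than a deep obstacle, the main care point is quantifier bookkeeping: the identities supplied by Propositions~\ref{prop:nondepend-omega} and~\ref{prop:constant-random orbit} hold on full-measure sets that depend on the chosen base point, and they must be combined with finitely many cylinder events of positive probability so that the pointwise conclusions $u(x) \leq u(y)$ and $u(f_a(x)) = u(x)$ become genuinely deterministic rather than almost sure.
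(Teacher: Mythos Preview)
Your proof is correct and follows essentially the same route as the paper: monotonicity of $u$ and $\ell$ from~\rmref{H1} and the monotonicity of $\varphi$, then the identities $u(f_a(x))=u(x)=u(f_b(x))$ from Proposition~\ref{prop:constant-random orbit} together with~\rmref{H2} to get local constancy, and finally connectedness of $(0,1)$. Your explicit attention to the full support of $p$ and the positive-measure cylinders is exactly the mechanism behind the paper's application of Proposition~\ref{prop:constant-random orbit}, just spelled out more carefully.
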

\begin{proof}
	Again, we only prove the proposition for the function ${u}$ and omitted the details for $\ell$.  By~\rmref{H2}, 
	given any $x\in (0,1)$,
	there are 
	$\alpha,\beta\in\Esp$  such that 
	$f_\alpha(x) < x < f_\beta(x)$.
	
	\begin{claim}
		The function ${u}$ is monotone increasing.
	\end{claim}
	\begin{proof}    
		Consider $x_1,x_2\in (0,1)$ with $x_1<x_2$. Take $\omega\in\Esp$, since $f_\omega$ is monotonically increasing, {i.e., \rmref{H1}} holds, 
		we have that $f^n_\omega(x_1)\leq f^n_\omega(x_2)$ for every $n\geq 1$. 
		As the map $\f$ also is increasing, we have that $\f(f^n_\omega(x_1))\leq \f(f^n_\omega(x_2))$ for every $n\geq0$. Therefore, the average satisfies
		\begin{equation*}
			\s\f(f^j_\omega(x_1))\leq \s\f(f^j_\omega(x_2)).
		\end{equation*}
		Taking upper limits we have 
		$$U_\f(\omega,x_1)=\limsup_{n\to\infty}\s\f(f^j_\omega(x_1))\leq \limsup_{n\to\infty}\s\f(f^j_\omega(x_2))= U_\f(\omega,x_2).
		$$ 
		By Proposition~\ref{prop:nondepend-omega}, we have $U_\f(\omega,x_1)={u}(x_1)$ and $U_\f(\omega,x_2)={u}(x_2)$, then~${u}(x_1)\leq{u}(x_2)$.
		This proves the claim.
	\end{proof}

 By Proposition~\ref{prop:constant-random orbit} we have that 
	${u}(f_\alpha(x))={u}(x)={u}(f_\beta(x))$.
	Then, the claim implies that 
 $u$ is constant on the interval $(f_\alpha(x),f_\beta(x))$. Since $x$ is arbitrary and belongs to this interval, we conclude that  ${u}$ is locally constant on~$(0,1)$. As ${u}$ is monotone increasing and $(0,1)$ is connected, there exists a constant $\bar{{u}} \in \R$ such that ${u}(x) = \bar{{u}}$ for every $x\in (0,1)$.
\end{proof}

\begin{cor}\label{coro:constant-FunctionsLH}
	Let $F$ be a one-step skew product  as  in~\eqref{one-skew product-principal} satisfing conditions 
 \rmref{H0}--\rmref{H2} and consider a monotonically increasing function $\varphi\colon I\to \mathbb{R}$. Then, there exist  constants $\bar{{u}},\bar{\ell}\in\R$ such that for every $x\in(0,1)$ and  $\PP$-a.e.~$\omega\in\Esp$, it holds
		$U_\f(\omega, x)=\bar{{u}}$ and
        $L_\f(\omega, x)=\bar{\ell}$.
\end{cor}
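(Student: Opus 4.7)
The statement is essentially a direct synthesis of the two preceding propositions, so my plan is simply to chain them together while being careful about how the almost-sure sets interact with pointwise statements.

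First, I would invoke Proposition~\ref{prop:nondepend-omega}, which uses only assumption~\rmref{H0}, to obtain functions $u,\ell\colon(0,1)\to\mathbb{R}$ with the property that, for each fixed $x\in(0,1)$, there is a full $\mathbb{P}$-measure set $\Omega_x\subset\Omega$ on which $U_\varphi(\omega,x)=u(x)$ and $L_\varphi(\omega,x)=\ell(x)$. Note that the excluded null set is allowed to depend on $x$; this is enough for the conclusion stated in the corollary, which is pointwise in $x$.

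Next, I would apply Proposition~\ref{prop:constants-limits}, which uses the full strength of \rmref{H0}--\rmref{H2} together with the monotonicity of $\varphi$, to deduce that the functions $u$ and $\ell$ obtained above are in fact constant on $(0,1)$. Setting $\bar u\eqdef u(x)$ and $\bar \ell\eqdef \ell(x)$ (any $x\in(0,1)$ gives the same value), I would then combine this with the previous step: for each $x\in(0,1)$ there is a full-measure set $\Omega_x$ on which $U_\varphi(\omega,x)=\bar u$ and $L_\varphi(\omega,x)=\bar\ell$. This is exactly the claim of the corollary.

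There is essentially no obstacle here: the corollary is a bookkeeping statement that packages Propositions~\ref{prop:nondepend-omega} and~\ref{prop:constants-limits} into a single clean output, with the only subtlety being that one should not try to upgrade the statement to the stronger assertion ``for $\mathbb{P}$-a.e.\ $\omega$, for every $x\in(0,1)$'', which would require a uniform-in-$x$ argument not provided by the previous two propositions (the null set may vary with $x$, and $(0,1)$ is uncountable). The corollary is therefore stated in the correct order of quantifiers, and the proof reduces to explicitly citing these two results.
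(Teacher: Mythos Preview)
Your proposal is correct and matches the paper's own proof essentially verbatim: the paper simply cites Proposition~\ref{prop:nondepend-omega} to get $U_\varphi(\omega,x)=u(x)$ and $L_\varphi(\omega,x)=\ell(x)$ for $\PP$-a.e.\ $\omega$, and then Proposition~\ref{prop:constants-limits} to conclude that $u$ and $\ell$ are constant on $(0,1)$. Your remark about the order of quantifiers is apt and consistent with how the corollary is stated and used later in the paper.
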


\begin{proof}
	By Proposition~\ref{prop:nondepend-omega}, we have that $U_\f(\omega,x)={u}(x)$ and $L_\f(\omega,x)=\ell(x)$ for $\PP$-a.e.~$\omega\in\Esp$. Now, by Proposition~\ref{prop:constants-limits}, ${u}(x)=\bar{{u}}$ and $\ell(x)=\bar{\ell}$ for every $x\in (0,1)$, proving the corollary.
\end{proof}

\subsection{Historical behavior from one-step skew products}
Recall the condition~\rmref{H3a} defined in~\S\ref{sec:essential-condition-H3ab}
and denote by $\mathrm{id}$ the identity function on~$I$. 
\begin{thm}\label{thm:theorem-H3a}
	Let $F$ be a one-step skew product as in~\eqref{one-skew product-principal} satisfying conditions~\mbox{\rmref{H0}--\rmref{H2}}. We also assume that $F$ satisfies~\rmref{H3a} with constant $\gamma_0,\gamma_1 \in (0,1)$ where $\gamma_0 <  \gamma_1$.
	Then 
	\begin{align*}
		L_{\mathrm{id}}(\omega,x) \leq \gamma_0 \quad \text{and} \quad   \gamma_1 \leq  U_\mathrm{id}(\omega,x)
		  \quad \text{for every $x\in (0,1)$ and  $\PP$-a.e.~$\omega\in\Esp$.}
	\end{align*}
	In particular, $F$ has historical behavior for $(\PP\times\lbb)$-a.e.~point.
\end{thm}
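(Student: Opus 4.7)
The plan is to apply Corollary~\ref{coro:constant-FunctionsLH} to the monotone observables $\ind_{I_1(\gamma_1)}$ and $-\ind_{I_0(\gamma_0)}$ (both of which are monotonically increasing on $I$), and then to sandwich the identity between these indicators using the inequality chain
\[
\gamma_1\,\ind_{I_1(\gamma_1)}(y)\;\le\;y\;\le\;1-(1-\gamma_0)\,\ind_{I_0(\gamma_0)}(y),\qquad y\in I.
\]

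Concretely, first I would apply Corollary~\ref{coro:constant-FunctionsLH} with $\varphi=\ind_{I_1(\gamma_1)}$. This gives a constant $\bar u_1\in\R$ such that $U_\varphi(\omega,x)=\bar u_1$ for every $x\in(0,1)$ and $\PP$-a.e.~$\omega$. On the other hand, hypothesis~\rmref{H3a} evaluated at $x=x_1$ yields $U_\varphi(\omega,x_1)=1$ for $\PP$-a.e.~$\omega$, so $\bar u_1=1$. Hence
\[
\limsup_{n\to\infty}\frac{1}{n}\sum_{j=0}^{n-1}\ind_{I_1(\gamma_1)}(f^j_\omega(x))=1\qquad\text{for every } x\in(0,1)\text{ and }\PP\text{-a.e. }\omega.
\]
Running the same argument with $\varphi=-\ind_{I_0(\gamma_0)}$ (still monotonically increasing) and evaluating at $x_0$ via~\rmref{H3a} gives
\[
\limsup_{n\to\infty}\frac{1}{n}\sum_{j=0}^{n-1}\ind_{I_0(\gamma_0)}(f^j_\omega(x))=1\qquad\text{for every } x\in(0,1)\text{ and }\PP\text{-a.e. }\omega.
\]

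Next I would squeeze the identity function. From $y\ge\gamma_1\ind_{I_1(\gamma_1)}(y)$ one gets
\[
\frac{1}{n}\sum_{j=0}^{n-1}f^j_\omega(x)\;\ge\;\gamma_1\cdot\frac{1}{n}\sum_{j=0}^{n-1}\ind_{I_1(\gamma_1)}(f^j_\omega(x)),
\]
and taking $\limsup$ yields $U_{\mathrm{id}}(\omega,x)\ge\gamma_1$. Analogously, from $y\le 1-(1-\gamma_0)\ind_{I_0(\gamma_0)}(y)$ one obtains
\[
\frac{1}{n}\sum_{j=0}^{n-1}f^j_\omega(x)\;\le\;1-(1-\gamma_0)\cdot\frac{1}{n}\sum_{j=0}^{n-1}\ind_{I_0(\gamma_0)}(f^j_\omega(x)),
\]
and taking $\liminf$ on both sides (so $\limsup$ of the subtracted occupational term enters) yields $L_{\mathrm{id}}(\omega,x)\le 1-(1-\gamma_0)\cdot 1=\gamma_0$. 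Combining the two bounds gives the desired inequalities for every $x\in(0,1)$ and $\PP$-a.e.~$\omega$. Since $\gamma_0<\gamma_1$, the Birkhoff averages of the continuous observable $\varphi(y)=y$ do not converge on a full-measure set, so by Fubini and Definition~\ref{def:historical-behavior}, $F$ has historical behavior for $(\PP\times\lbb)$-a.e.~point.

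The main (minor) subtlety will be the second step: combining the corollary, which produces a constant depending on an a.e.~set of $\omega$'s possibly depending on $x$, with hypothesis~\rmref{H3a}, which only gives information at the two special points $x_0, x_1$. The point is that the constants $\bar u_1$ and $\bar u_0$ produced by Corollary~\ref{coro:constant-FunctionsLH} are genuinely independent of $x$, so pinning them down at any single point where they can be computed (here, $x_1$ for $\ind_{I_1(\gamma_1)}$ and $x_0$ for $\ind_{I_0(\gamma_0)}$) determines them everywhere; no continuity hypothesis on the observable is needed, since the corollary only uses monotonicity. Beyond this, the argument is essentially two elementary indicator inequalities applied under $\liminf$/$\limsup$.
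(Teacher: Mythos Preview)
Your proof is correct and uses the same two ingredients as the paper: Corollary~\ref{coro:constant-FunctionsLH} and the pointwise sandwich $\gamma_1\ind_{I_1(\gamma_1)}(y)\le y\le 1-(1-\gamma_0)\ind_{I_0(\gamma_0)}(y)$. The only difference is the order in which they are combined: the paper applies Corollary~\ref{coro:constant-FunctionsLH} to $\varphi=\mathrm{id}$, obtaining that $U_{\mathrm{id}}$ and $L_{\mathrm{id}}$ are constants $\bar u,\bar\ell$, and then uses the sandwich at the two special points $x_0,x_1$ from~\rmref{H3a} to pin down $\bar\ell\le\gamma_0<\gamma_1\le\bar u$; you instead apply the corollary to the monotone indicators $\ind_{I_1(\gamma_1)}$ and $-\ind_{I_0(\gamma_0)}$, which first upgrades~\rmref{H3a} from the two points $x_0,x_1$ to \emph{all} $x\in(0,1)$, and then sandwich at every $x$. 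Your route thus yields the slightly stronger intermediate statement that the occupational-time $\limsup$'s equal~$1$ at every interior point, but the final conclusion and the essential mechanism are the same.
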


\begin{proof}
By Corollary~\ref{coro:constant-FunctionsLH}, the functions $U_\mathrm{id}$ and $L_\mathrm{id}$ are constant. We now evaluate these constants. We first prove the statement for the function $U_\mathrm{id}$.

\begin{lem}\label{l.gkleL}
For every $x\in (0,1)$ it holds $U_\mathrm{id}(\omega,x)  \geq \gamma_1$ for $\PP$-a.e.~$\omega\in\Esp$.
\end{lem}
	
	\begin{proof}By~\rmref{H3a}, there exist $x_1\in(0,1)$ and $\Omega^+ \subset \Omega$ with $\mathbb{P}(\Omega^+)=1$ 
    such that 
		\begin{equation}\label{eq:H3-for-the sequence}
			\limsup_{n\to \infty}\s\ind_{I_1(\gamma_1)}(f^j_\omega(x_1))=1\quad\text{for every $\omega\in\Omega^+$.} 
		\end{equation}
		Since $\ind_{I_1(\gamma_1)}(f^j_\omega(x_1))\cdot\gamma_1\leq f^j_\omega(x_1)$ for every $\omega\in\Omega^+$, applying the upper limit obtain
		\begin{align*}
			\limsup_{n\to\infty}\,\s \ind_{I_1(\gamma_1)}(f^j_\omega(x_1))\cdot\g_1\leq\limsup_{n\to \infty}\s f^j_{\omega}(x_1)=U_{\mathrm{id}}(\omega,x_1).
		\end{align*}
		By \eqref{eq:H3-for-the sequence} it follows that $\gamma_1\leq U_\f(\omega,x_1)$ for every $\omega\in\Omega^+$.

		On the other hand, by Corollary~\ref{coro:constant-FunctionsLH}, there exists a constant $\bar{u}$ satisfying that for every $x\in (0,1)$ there is a set~$\Omega_x\subset\Esp$ with $\PP(\Omega_x)=1$ such  that 
		$U_\mathrm{id}(\omega,x)=\bar{{u}}$ for every $\omega\in \Omega_x$.
		Consider $\Omega_x^+=\Omega^{}_x\cap \Omega^+$. Since $\Omega_x^+$ is an intersection of two sets with probability one, its follows that $\PP(\Omega_x^+)=1$. 
  Moreover, $\gamma_1 \leq U_\mathrm{id}(\omega,x_1)=\bar{u}$ for any $\omega\in \Omega^+_{x_1}$. This implies that $\gamma_1 \leq U_\mathrm{id}(\omega,x)$ for every $\omega\in \Omega_x^+$ 
		proving the lemma.
	\end{proof}
	
	The proof of the statement for the function $L_\mathrm{id}$ is a variation of
	the proof of Lemma~\ref{l.gkleL}.
 
\begin{lem}\label{lm:lower-ergodic}
   For $\lbb$-a.e.~$x\in (0,1)$ it holds $L_{\mathrm{id}}(\omega,x)\leq \gamma_0$ for $\PP$-a.e.~$\omega\in\Esp$. 
\end{lem}

\begin{proof}
By~\rmref{H3a}, there exist $x_0\in(0,1)$ and $\Omega^- \subset \Omega$ with $\mathbb{P}(\Omega^-)=1$ 
    such that 
		\begin{equation}\label{eq:H3-forsecond}
			\limsup_{n\to \infty}\s\ind_{I_0(\gamma_0)}(f^j_\omega(x_0))=1\quad\text{for every $\omega\in\Omega^-$.} 
		\end{equation}
Hence, since $1=\ind_{I_0(\gamma_0)}+\ind_{(\gamma_0,1]}$,  then
\begin{equation}\label{eq:inf-second-ergo}
		\liminf_{n\to\infty}\s\ind_{(\gamma_0,1]}(f^j_\omega(x_0))=0\quad\text{for every $\omega\in\Omega^-$}.
\end{equation}	
Moreover, since
$		f^j_\omega(x_0)\leq\ind_{I_0(\gamma_0)}(f^j_\omega(x_0))\cdot\gamma_0+\ind_{(\gamma_0,1]}
        (f^j_\omega(x_0))$ 
        for every $\omega\in\Omega^-$,
applying the lower limit and having into account that $\liminf_{n\to\infty} (a_n + b_n) \leq \liminf_{n\to\infty} a_n + \limsup_{n\to\infty} b_n$ for any pair of bounded sequences $\{a_n\}_{n\geq 1}$ and $\{b_n\}_{n\geq 1}$,~\eqref{eq:H3-forsecond} and~\eqref{eq:inf-second-ergo}, we obtain
\begin{equation*} \label{eq:inf-cont-ergoc}
    L_\mathrm{id}(\omega,x_0)=\liminf_{n\to\infty}\s f^j_\omega(x_0)
		\leq \gamma_0 
  \quad \text{for every $\omega\in\Omega^-$. 
}
\end{equation*}

On the other hand, by Corollary~\ref{coro:constant-FunctionsLH}, there exists a constant $\bar{\ell}$ satisfying that for every $x\in (0,1)$ there is a set~$\Omega_x\subset\Esp$ with $\PP(\Omega_x)=1$ such  that 
		$L_\mathrm{id}(\omega,x)=\bar{{\ell}}$ for every $\omega\in \Omega_x$.
		Hence the set $\Omega_x^-=\Omega^{}_x\cap \Omega^-$ has probability one and since $\bar{\ell}= L_\mathrm{id}(\omega,x_0)\leq \gamma_0$ for any $\omega\in \Omega^-_{x_0}$, we conclude the proof. 
        \end{proof}

 Lemmas~\ref{l.gkleL} and~\ref{lm:lower-ergodic} show the first part of theorem. In particular, since the lower and the upper Lyapunov function are different, $F$ has historical behavior for $(\PP\times \lbb)$-a.e.~point. 
\end{proof}

\subsection{Proof of Theorem~\ref{thm:B-random-walks}}
As $F$ satisfies the {pointwise-fiber fluctuation law,} by Proposition~\ref{prop:equivalence arcsine law}, the fiber maps of $F$ satisfy condition \rmref{H3a}. Then, by Theorem~\ref{thm:theorem-H3a}, $F$ has historical behavior for~$(\PP\times\lbb)$-a.e.~point.


\section{Proof of Theorem~\ref{thm:ergodic-assumption}}\label{Sec:proof-thm-ergodicskew}

\begin{thm}\label{thm:theorem-H3b}
Let  $F$ be a skew product as in~\eqref{skew product-principal} satisfying that $\PP\times\lbb$ is ergodic  and \rmref{H3b} with constants $\gamma_0,\gamma_1\in(0,1)$ where  $\gamma_0<\gamma_1$. 
Then 
	\begin{align*}
		 \liminf_{n\to\infty}\s f^j_{\omega}(x)
        \leq \gamma_0 \quad \text{and} \quad  \gamma_1 \leq 
        \limsup_{n\to\infty}\s f^j_{\omega}(x)
	\end{align*}
    for $(\PP\times \lbb)$-a.e.~$(\omega,x)\in\Esp \times I$. 
	In particular, $F$ has historical behavior for $(\PP\times\lbb)$-a.e.~point. 
 \end{thm}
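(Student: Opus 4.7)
The strategy parallels the one-step argument of Theorem~\ref{thm:theorem-H3a}, with the ergodicity of $\PP\times\lbb$ with respect to $F$ playing the role that the tail-triviality hypothesis~\rmref{H0} played in that setting. I would begin by introducing the two first integrals
\begin{equation*}
U(\omega,x)\eqdef\limsup_{n\to\infty}\s f^j_\omega(x),\qquad L(\omega,x)\eqdef\liminf_{n\to\infty}\s f^j_\omega(x),
\end{equation*}
and observe that $U\circ F=U$ and $L\circ F=L$ pointwise, since deleting one term from a bounded sequence does not alter its Cesàro upper or lower limit. Consequently, for every $c\in\mathbb R$ the super-level set $\{U\ge c\}$ and the sub-level set $\{L\le c\}$ satisfy $F(A)\subset A$, so by the ergodicity hypothesis each such set has $(\PP\times\lbb)$-measure either $0$ or $1$. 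The theorem then reduces to exhibiting positive measure for $\{U\ge\gamma_1\}$ and $\{L\le\gamma_0\}$.

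The upper bound would come from the pointwise inequality $\gamma_1\ind_{I_1(\gamma_1)}(y)\le y$, valid on $I=[0,1]$: averaging over the fiber orbit of $x_1\in B_1$, taking limsup, and invoking~\rmref{H3b} gives
\begin{equation*}
\gamma_1=\gamma_1\cdot\limsup_{n\to\infty}\s\ind_{I_1(\gamma_1)}(f^j_\omega(x_1))\le U(\omega,x_1)\quad\text{for $\PP$-a.e.~$\omega\in\Omega$.}
\end{equation*}
By Fubini this yields $(\PP\times\lbb)(\{U\ge\gamma_1\})\ge\lbb(B_1)>0$, and so $\{U\ge\gamma_1\}$ has full measure. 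For the lower bound I would use the dual decomposition $y\le \gamma_0\,\ind_{I_0(\gamma_0)}(y)+\ind_{(\gamma_0,1]}(y)=1+(\gamma_0-1)\ind_{I_0(\gamma_0)}(y)$. Applied to $x_0\in B_0$ and combined with~\rmref{H3b} (noting that the sign $\gamma_0-1<0$ converts the resulting limsup of indicator averages into a liminf bound), this produces $L(\omega,x_0)\le 1+(\gamma_0-1)\cdot 1=\gamma_0$ for $\PP$-a.e.~$\omega$; a second Fubini plus ergodicity step then shows $\{L\le\gamma_0\}$ has full measure.

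Intersecting these two full-measure sets and using $\gamma_0<\gamma_1$ yields $L(\omega,x)\le\gamma_0<\gamma_1\le U(\omega,x)$ for $(\PP\times\lbb)$-a.e.~$(\omega,x)$, which is the announced inequality. Since the continuous observable $\varphi(x)=x$ has non-convergent Birkhoff averages on this full-measure set, $F$ exhibits historical behavior almost everywhere in the sense of Definition~\ref{def:historical-behavior}. The only subtle point is the compatibility between the weaker, non-invariant notion of ergodicity used here and the merely forward (rather than bi-invariant) nature of the level sets of $U$ and $L$; fortunately, the identities $U\circ F=U$ and $L\circ F=L$ give exactly the forward invariance $F(\{U\ge c\})\subset\{U\ge c\}$ and $F(\{L\le c\})\subset\{L\le c\}$ that the generalized definition of ergodicity requires, so the dichotomy applies without further modification.
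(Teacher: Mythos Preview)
Your proposal is correct and follows essentially the same route as the paper. The key pointwise inequalities $\gamma_1\ind_{I_1(\gamma_1)}(y)\le y$ and $y\le\gamma_0\ind_{I_0(\gamma_0)}(y)+\ind_{(\gamma_0,1]}(y)$ are identical to those used in the paper; the only organizational difference is that the paper applies ergodicity first to the $F$-invariant sets $\{\limsup_n\frac{1}{n}\sum_j\ind_{I_i(\gamma_i)}(f^j_\omega(x))=1\}$ (upgrading~\rmref{H3b} to hold on a full-measure set $E$) and then reads off both inequalities on $E$, whereas you apply the pointwise inequalities on $B_0,B_1$ first and invoke ergodicity afterwards on the level sets of $U$ and $L$---a harmless reordering.
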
   
\begin{proof}
First, observe that by~\rmref{H3b}, for each $i=0,1$, the set of $(\omega,x)\in \Omega \times I$ for which  
\begin{equation} \label{eq:H3-omega_k}
	\limsup_{n\to \infty}\s\ind_{I_i(\gamma_i)}(f^j_\omega(x))=1     
\end{equation}
has positive $(\PP\times \lbb)$-measure. 
Since these sets are $F$-invariant and $\PP\times \lbb$ is ergodic, \eqref{eq:H3-omega_k} holds for $i=0,1$ and for every $(\omega,x)$ in a set $E\subset \Omega\times I$ with $(\PP\times \lbb)(E)=1$. Moreover, since $1=\ind_{I_0(\gamma_0)}+\ind_{(\gamma_0,1]}$,   we also have that  for every $(\omega,x)\in  E$,
\begin{equation} \label{eq:liminfo}
    \liminf_{n\to\infty}\s\ind_{(\gamma_0,1]}(f^j_\omega(x))=0. 
\end{equation}

Since $ \ind_{I_1(\gamma_1)}(f^j_\omega(x))\cdot\gamma_1\leq f^j_\omega(x)$, taking average and applying the upper limit, we obtain
\begin{align*}
	\limsup_{n\to\infty}\,\s \ind_{I_1(\gamma_1)}(f^j_\omega(x))\cdot\gamma_1\leq\limsup_{n\to \infty}\s f^j_{\omega}(x) \eqdef U_\mathrm{id}(\omega,x).
\end{align*}
By \eqref{eq:H3-omega_k}, it follows that $\gamma_1\leq U_\mathrm{id}(\omega,x)$ for every $(\omega,x)\in E$. 

Similarly,   since
$		f^j_\omega(x)\leq\ind_{I_0(\gamma_0)}(f^j_\omega(x))\cdot\gamma_0+\ind_{(\gamma_0,1]}
        (f^j_\omega(x))$, taking average and 
applying the lower limit,~\eqref{eq:H3-omega_k} and~\eqref{eq:liminfo},
we obtain $L_\mathrm{id}(\omega,x) \leq \gamma_0$ for every $(\omega,x)\in E$. 
This concludes the proof.
\end{proof}

\subsection{Proof of Theorems~\ref{thm:ergodic-assumption}}
Since $F$ satisfies the {skew-product fluctuation law,  by Remark~\ref{rem:SF-FF} and} Proposition~\ref{prop:equivalence arcsine law}, condition~\rmref{H3b} holds. Therefore, by Theorem~\ref{thm:theorem-H3b}, the skew product $F$ has historical behavior for~$(\PP \times \lbb)$-a.e.~point.


\section{Proof of Proposition~\ref{mainpropo-limitset} }\label{ss:limit-set}

{

The first observation is that we can swap the order of the quantifiers in~\eqref{OT} as follows:

\begin{lem}\label{equivalent-OT}
    Condition~\eqref{OT} is equivalent to the following: for $(\mathbb{P}\times \lbb)$-a.e.~$(\omega,x)\in \Omega \times I$,
    \begin{equation}  \label{OTT}
            \lim_{n\to\infty}\frac{1}{n} \sum_{j=0}^{n-1} \ind_{[\epsilon,1-\epsilon]}(f^j_\omega(x))=0  \quad \text{for every \ $0<\epsilon<1/2$}.
    \end{equation}
\end{lem}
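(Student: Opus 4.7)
The plan is to prove the two implications separately. The direction \eqref{OTT} $\Rightarrow$ \eqref{OT} is immediate: if the full-measure set in \eqref{OTT} is $A \subset \Omega \times I$ on which the limit vanishes for every $0 < \epsilon < 1/2$, then in particular, for each fixed such $\epsilon$, the limit vanishes on $A$, giving \eqref{OT}.

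For the non-trivial direction \eqref{OT} $\Rightarrow$ \eqref{OTT}, the main idea is to swap the universal quantifier over $\epsilon$ with the a.e.~quantifier by reducing to a countable set of $\epsilon$'s and exploiting monotonicity of the integrand in $\epsilon$. Specifically, for each integer $k \geq 3$, the hypothesis \eqref{OT} applied with $\epsilon = 1/k$ provides a set $A_k \subset \Omega \times I$ with $(\mathbb{P}\times\lbb)(A_k) = 1$ such that
\[
\lim_{n\to\infty}\frac{1}{n} \sum_{j=0}^{n-1} \ind_{[1/k,1-1/k]}(f^j_\omega(x)) = 0 \quad \text{for every $(\omega,x) \in A_k$.}
\]
Set $A \eqdef \bigcap_{k \geq 3} A_k$, which is a countable intersection of full-measure sets and therefore itself has full $(\mathbb{P}\times\lbb)$-measure.

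Fix $(\omega,x) \in A$ and an arbitrary $0 < \epsilon < 1/2$. Choose an integer $k \geq 3$ large enough so that $1/k < \epsilon$; then $[\epsilon, 1-\epsilon] \subset [1/k, 1-1/k]$, and consequently
\[
0 \leq \frac{1}{n} \sum_{j=0}^{n-1} \ind_{[\epsilon,1-\epsilon]}(f^j_\omega(x)) \leq \frac{1}{n} \sum_{j=0}^{n-1} \ind_{[1/k,1-1/k]}(f^j_\omega(x)) \quad \text{for every $n \geq 1$.}
\]
The right-hand side tends to zero as $n \to \infty$ since $(\omega,x) \in A_k$, so by the squeeze principle the left-hand side tends to zero as well. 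Since $\epsilon$ was arbitrary, \eqref{OTT} holds on $A$, which completes the proof. No real obstacle arises here; the argument is a standard quantifier-swap via a countable exhaustion, and the only point to verify is the monotonicity $[\epsilon,1-\epsilon] \subset [1/k,1-1/k]$ whenever $1/k < \epsilon$, which is immediate.
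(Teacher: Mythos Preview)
Your proof is correct and follows essentially the same approach as the paper: both argue the trivial direction first, then for the converse take a countable intersection of the full-measure sets (the paper uses rational $\epsilon \in \mathbb{Q}\cap(0,1/2)$, you use $\epsilon = 1/k$ for $k\ge 3$) and exploit the monotonicity $[\epsilon,1-\epsilon]\subset[\epsilon',1-\epsilon']$ for $\epsilon'<\epsilon$ to pass to arbitrary $\epsilon$.
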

\begin{proof} It clear that~\eqref{OTT} implies~\eqref{OT}. Let us show the converse.   For each rational number $\epsilon \in \mathbb{Q} \cap (0, 1/2)$, let $S_\epsilon$ be the full-measure set where the limit~\eqref{OT} is zero. The intersection $S_0 = \bigcap_{\epsilon \in \mathbb{Q} \cap (0,1/2)} S_\epsilon$, being a countable intersection of full-measure sets, also has $(\mathbb{P}\times\lbb)$-measure.  Let $(\omega,x) \in S$. We must show that the limit condition~\eqref{OT} holds for all (not just rational) $\epsilon \in (0,1/2)$. To do this, fix $\epsilon \in (0,1/2)$. Since rationals are dense in reals, we can choose a rational number $q$ such that $ 0<q < \epsilon$. This implies $[\epsilon, 1-\epsilon] \subset [q, 1-q]$, and thus $\ind_{[\epsilon,1-\epsilon]} \le \ind_{[q,1-q]}$. Then
\[
0 \le \frac{1}{n}\sum_{j=0}^{n-1} \ind_{[\epsilon,1-\epsilon]}(f^j_\omega(x)) \le \frac{1}{n}\sum_{j=0}^{n-1} \ind_{[q,1-q]}(f^j_\omega(x)).
\]
Since $(\omega,x) \in S_0 \subset S_q$, taking the limit as $n\to\infty$ shows that the right-hand side converges to 0 and therefore the limit of the left-hand side is also 0. As $\epsilon$ was arbitrary, condition~\eqref{OTT}.
\end{proof}

\begin{prop} \label{prop:limit_set_from_KD}
Let $F$ be a skew product as in~\eqref{skew product-principal} and consider a point $(\omega,x)\in \Omega \times I$ for which~\eqref{OTT} holds. Then 
$
\mathcal L(\omega,x)\;\subseteq\; \{\lambda\delta_1+(1-\lambda)\delta_0:\lambda\in[0,1]\}$. 

\end{prop}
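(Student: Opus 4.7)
The plan is to fix any $(\omega,x)$ for which the pointwise vanishing interior occupational time~\eqref{OTT} holds and show that every weak$^*$ accumulation point of the empirical measures $e_n(\omega,x)$ is concentrated on the two-point set $\{0,1\}$. Since $I=[0,1]$ is compact, $\{e_n(\omega,x)\}_{n\geq 1}$ is automatically tight, so $\mathcal{L}(\omega,x)\neq\emptyset$. Take $\nu\in\mathcal{L}(\omega,x)$ and a subsequence $\{n_k\}$ such that $e_{n_k}(\omega,x)\to\nu$ in the weak$^*$ topology.

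The key step is to test $\nu$ against open sets of the form $(\epsilon,1-\epsilon)$ with $\epsilon\in(0,1/2)$. By the Portmanteau theorem (item (b) of Theorem~\ref{thm:Portmanteau}, applied to the open set $(\epsilon,1-\epsilon)$ and its indicator, or its standard formulation for open sets), we have
\[
\nu\big((\epsilon,1-\epsilon)\big)\;\leq\;\liminf_{k\to\infty}e_{n_k}(\omega,x)\big((\epsilon,1-\epsilon)\big)\;\leq\;\liminf_{k\to\infty}\frac{1}{n_k}\sum_{j=0}^{n_k-1}\ind_{[\epsilon,1-\epsilon]}\big(f^j_\omega(x)\big).
\]
By hypothesis~\eqref{OTT} applied at $(\omega,x)$, the right-hand side equals $0$. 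Therefore $\nu\big((\epsilon,1-\epsilon)\big)=0$ for every $\epsilon\in(0,1/2)$.

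To finish, I would use the continuity of measures from below: since $(0,1)=\bigcup_{\epsilon\in(0,1/2)}(\epsilon,1-\epsilon)$ and the intervals are nested,
\[
\nu\big((0,1)\big)=\lim_{\epsilon\to 0^{+}}\nu\big((\epsilon,1-\epsilon)\big)=0.
\]
Hence $\nu$ is supported on $\{0,1\}$, so $\nu=\lambda\delta_1+(1-\lambda)\delta_0$ with $\lambda=\nu(\{1\})\in[0,1]$, and $\mathcal{L}(\omega,x)$ is contained in the claimed family. I do not foresee a serious obstacle in this direction: the proof is essentially a one-shot application of Portmanteau plus regularity of the measure. The only point requiring a pinch of care is that Portmanteau is usually stated for open sets, whereas~\eqref{OTT} is phrased with the closed set $[\epsilon,1-\epsilon]$; this is bridged by the trivial inclusion $(\epsilon,1-\epsilon)\subset[\epsilon,1-\epsilon]$ used above. (The genuine work in Proposition~\ref{mainpropo-limitset} itself will be the reverse inclusion, namely realizing every $\lambda\in[0,1]$, which is where the fluctuation laws and the freedom to take $\gamma_0,\gamma_1$ close to $0,1$ come into play.)
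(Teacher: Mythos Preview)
Your proof is correct and follows essentially the same approach as the paper's: both use Portmanteau together with~\eqref{OTT} to show $\nu((0,1))=0$, hence $\nu$ is supported on $\{0,1\}$. The only cosmetic difference is that you invoke the open-set form of Portmanteau (lower semicontinuity on $(\epsilon,1-\epsilon)$) while the paper selects $\epsilon$ so that $[\epsilon,1-\epsilon]$ is a $\nu$-continuity set and applies the continuity-set form; your choice is arguably cleaner since it sidesteps the countable-atoms argument.
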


\begin{proof}
Let $\nu$ be an accumulation point in $\mathcal{L}(\omega,x)$, so that for some subsequence $\{n_k\}$, the empirical measures $e_{n_k}(\omega, x)$ converge to $\nu$ in the weak* topology. We aim to show that the support of $\nu$ is contained in $\{0,1\}$, which is equivalent to showing $\nu((0,1))=0$.

Let $K$ be an arbitrary compact subset of the open interval $(0,1)$. A probability measure on $\R$ has at most a countable number of atoms. This implies that the set of $0<\epsilon <1/2$ for which the interval $C_\epsilon = [\epsilon, 1-\epsilon]$ is not a $\nu$-continuity set (i.e., $\nu(\{\epsilon\})+\nu(\{1-\epsilon\}) > 0$) is at most countable. Thus, since $K$ is a compact in $(0,1)$, we can choose an $0<\epsilon < 1/2$ such that $C_\epsilon$ is a $\nu$-continuity set and $K  \subseteq C_\epsilon$.
By the Portmanteau Theorem, the weak* convergence implies $\nu(C_\epsilon) = \lim_{k\to\infty} e_{n_k}(\omega,x)(C_\epsilon)$. From~\eqref{OTT}, this limit is zero, i.e.,
\[
 \nu(C_\epsilon) = \lim_{k\to\infty} \frac{1}{n_k} \sum_{j=0}^{n_k-1} \ind_{[\epsilon,1-\epsilon]}(f^j_\omega(x)) = 0.
\]
By the monotonicity of measure, since $K \subseteq C_\epsilon$, we have $\nu(K)=0$. As $K$ was an arbitrary compact subset of $(0,1)$, and since the open interval $(0,1)$ is a countable union of such compact sets, it follows by $\oldsigma$-additivity that $\nu((0,1)) = 0$. This shows that $\nu$ is a convex combination of the Dirac measures $\delta_0$ and $\delta_1$ and  
the proposition is proven.
\end{proof}

\begin{prop} \label{mainpropo-step1} 
Let $F$ be a skew product satisfying the assumptions of Theorem~\ref{thm:B-random-walks} or Theorem~\ref{thm:ergodic-assumption}, along with~\eqref{H3}. Assume that the constants $\gamma_0$ and $\gamma_1$ in the definition of the fluctuation laws can be chosen arbitrarily close to $0$ and $1$, respectively.  Then, for $(\PP\times \lbb)$-a.e.~$(\omega,x)$,
  \begin{equation} \label{eq:01}
		 L_{\mathrm{id}}(\omega, x)\eqdef \liminf_{n\to\infty}\s f^j_{\omega}(x)= 
         0 \quad
		 \text{and} \quad  
        U_{\mathrm{id}}(\omega,x)\eqdef \limsup_{n\to\infty}\s f^j_{\omega}(x)=
        1
	\end{equation}
 Moreover, in the one-step case, \eqref{eq:01} holds for every $x\in (0,1)$ and $\mathbb{P}$-a.e.~$\omega\in\Omega$. 
\end{prop}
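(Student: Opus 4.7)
The plan is to apply either Theorem~\ref{thm:B-random-walks} (one-step setting) or Theorem~\ref{thm:ergodic-assumption} (mild setting) repeatedly along a sequence of admissible constants converging to the endpoints, and then take a countable intersection of the resulting full-measure sets. The hypothesis that the constants in the relevant fluctuation law can be chosen arbitrarily close to $0$ and $1$ is precisely what makes such a diagonal argument possible.

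Concretely, I would first fix, for each $k \geq 1$, a pair $(\gamma_0^{(k)},\gamma_1^{(k)})$ for which the corresponding fluctuation law holds and such that $\gamma_0^{(k)} < \gamma_1^{(k)}$ with $\gamma_0^{(k)} \searrow 0$ and $\gamma_1^{(k)} \nearrow 1$ as $k \to \infty$. Then:
In the mild setting, Theorem~\ref{thm:ergodic-assumption} yields a set $E_k \subset \Omega \times I$ of full $(\PP\times\lbb)$-measure on which $L_{\mathrm{id}}(\omega,x) \leq \gamma_0^{(k)}$ and $U_{\mathrm{id}}(\omega,x) \geq \gamma_1^{(k)}$. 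In the one-step setting, Theorem~\ref{thm:B-random-walks} gives, for each fixed $x \in (0,1)$, a set $\Omega_k(x) \subset \Omega$ with $\PP(\Omega_k(x))=1$ on which the same inequalities hold.

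Next, I would set $E_\infty \eqdef \bigcap_{k \geq 1} E_k$ (resp.\ $\Omega_\infty(x) \eqdef \bigcap_{k \geq 1} \Omega_k(x)$). As a countable intersection of full-measure sets, $E_\infty$ still has full $(\PP\times\lbb)$-measure (resp.\ $\Omega_\infty(x)$ has full $\PP$-measure for each $x \in (0,1)$). For any $(\omega,x) \in E_\infty$, one has $L_{\mathrm{id}}(\omega,x) \leq \gamma_0^{(k)}$ for every $k$, hence $L_{\mathrm{id}}(\omega,x) \leq \inf_{k} \gamma_0^{(k)} = 0$; similarly, $U_{\mathrm{id}}(\omega,x) \geq \sup_{k} \gamma_1^{(k)} = 1$. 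The obvious inequalities $0 \leq \tfrac{1}{n}\sum_{j=0}^{n-1} f^j_\omega(x) \leq 1$ give $0 \leq L_{\mathrm{id}} \leq U_{\mathrm{id}} \leq 1$, so both extremes are attained, yielding $L_{\mathrm{id}}(\omega,x) = 0$ and $U_{\mathrm{id}}(\omega,x) = 1$ on $E_\infty$ (resp.\ on $\Omega_\infty(x)$ for each $x \in (0,1)$, which is the ``moreover'' statement).

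There is no real obstacle here: the step is a direct diagonal extraction that exploits the flexibility built into the hypothesis on $\gamma_0,\gamma_1$. The only minor care is to ensure the same version of the theorem (one-step versus mild) is used uniformly across $k$, so that the exceptional sets line up properly for the countable intersection. Hypothesis~\eqref{H3} plays no role in this step itself; it is inherited from the statement of Proposition~\ref{mainpropo-limitset} and will only be needed in the subsequent step that combines~\eqref{eq:01} with the vanishing-occupation-time condition~\eqref{OT} via Proposition~\ref{prop:limit_set_from_KD} to characterize the limit set $\mathcal L(\omega,x)$.
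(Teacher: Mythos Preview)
Your proof is correct and follows the same core idea as the paper's: exploit the freedom to let $\gamma_0 \to 0$ and $\gamma_1 \to 1$ and combine with the bounds coming from Theorem~\ref{thm:B-random-walks} or Theorem~\ref{thm:ergodic-assumption}. The execution differs slightly. You take a countable intersection of the full-measure sets $E_k$ (resp.\ $\Omega_k(x)$) produced by applying the theorem with each pair $(\gamma_0^{(k)},\gamma_1^{(k)})$. The paper instead first shows that $L_{\mathrm{id}}$ and $U_{\mathrm{id}}$ are almost surely \emph{constant}---via Corollary~\ref{coro:constant-FunctionsLH} in the one-step case and via a separate lemma (Lemma~\ref{prop:functions-U-L-constant}, proved there from ergodicity of $\PP\times\lbb$) in the mild case---and then observes that these constants $\bar\ell,\bar u$ satisfy $\bar\ell\le\gamma_0$ and $\bar u\ge\gamma_1$ for every admissible pair, so a single limit on the constants suffices. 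Your route is arguably more elementary since it bypasses the constancy lemmas entirely; the paper's route is a touch cleaner once constancy is available, because no countable intersection is needed. Both arrive at the same conclusion, and your remark that \eqref{H3} is not used in this step is accurate.
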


}

\begin{proof} Under the assumption of Theorem~\ref{thm:B-random-walks}, according to Corollary~\ref{coro:constant-FunctionsLH}, there exists constants $\bar{u},\bar{\ell} \in \mathbb{R}$ such that 
$L_{\mathrm{id}}(\omega,x)=\bar{\ell}$ and $U_{\mathrm{id}}(\omega,x)=\bar{u}$ for $\PP$-a.e.~$\omega\in\Omega$. Moreover, from Theorem~\ref{thm:theorem-H3a} we get that $\bar{\ell}\leq \gamma_0<\gamma_1\leq \bar{u}$. Since by assumption we can take $\gamma_0 \to 0$ and $\gamma_1\to 1$, we get that for every $x\in (0,1)$, \eqref{eq:01} holds for $\PP$-a.e.~$\omega\in\Esp$. 

We can arrive to similar conclusion under the assumption of Theorem~\ref{thm:ergodic-assumption}. To see this, we need the following lemma.

\begin{lem}\label{prop:functions-U-L-constant}
    If $\PP\times \lbb$ is ergodic with respect to $F$, then, there are constants $\bar{u},\bar{\ell}$ such that
    \begin{equation*}
        U_{\mathrm{id}}(\omega, x)=\bar{u}\quad\text{and} \quad L_{\mathrm{id}}(\omega, x)=\bar{\ell}\quad\text{for $(\PP\times \lbb)$-a.e.~$(\omega,x)\in\Esp\times I.$}
    \end{equation*}
\end{lem}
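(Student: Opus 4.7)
The plan is to exploit the forward $F$-invariance of the first integral functions $U_{\mathrm{id}}$ and $L_{\mathrm{id}}$ together with the assumed ergodicity of $\mu \eqdef \PP \times \lbb$ with respect to $F$. The key observation is that the paper's notion of ergodicity (every measurable $A$ with $F(A) \subset A$ has measure $0$ or $1$) is perfectly tailored to apply to superlevel sets of forward-invariant functions.

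First I would fix $c \in \mathbb{R}$ and consider the superlevel set $A_c \eqdef \{(\omega,x) \in \Omega \times I : U_{\mathrm{id}}(\omega,x) > c\}$. Since the excerpt already observes that $U_{\mathrm{id}}$ is constant along forward $F$-orbits, we have $U_{\mathrm{id}} \circ F = U_{\mathrm{id}}$, so if $(\omega,x) \in A_c$ then $U_{\mathrm{id}}(F(\omega,x)) = U_{\mathrm{id}}(\omega,x) > c$, that is, $F(A_c) \subset A_c$. By the ergodicity hypothesis, $\mu(A_c) \in \{0,1\}$ for every $c$.

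Next I would convert this zero-one law into an almost-everywhere constant value. Since $0 \le f^j_\omega(x) \le 1$, we have $0 \le U_{\mathrm{id}} \le 1$, so the map $c \mapsto \mu(A_c)$ is non-increasing, equals $1$ for $c < 0$, and equals $0$ for $c \ge 1$. Setting
\begin{equation*}
\bar{u} \eqdef \sup\{c \in \mathbb{R} : \mu(A_c) = 1\} \in [0,1],
\end{equation*}
the standard argument via countable unions/intersections along $c = \bar{u} \pm 1/n$ gives $\mu(\{U_{\mathrm{id}} \ge \bar{u}\}) = 1$ and $\mu(\{U_{\mathrm{id}} > \bar{u}\}) = 0$, so $U_{\mathrm{id}}(\omega,x) = \bar{u}$ for $(\PP \times \lbb)$-a.e.~$(\omega,x)$. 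The analogous argument applied to the sets $\{L_{\mathrm{id}} > c\}$ (also forward $F$-invariant since $L_{\mathrm{id}} \circ F = L_{\mathrm{id}}$) yields the constant $\bar{\ell}$ with $L_{\mathrm{id}} = \bar{\ell}$ almost everywhere.

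There is no real obstacle here: the proof is purely measure-theoretic and reduces to applying the hypothesis to forward-invariant superlevel sets, exactly as one would do with the classical Birkhoff setup for $F$-invariant measures. The only subtlety worth flagging is that although $\mu$ is not assumed $F$-invariant, the definition of ergodicity adopted in Section~1.3 is the right one for the forward-invariant structure coming from $U_{\mathrm{id}} \circ F = U_{\mathrm{id}}$, and this is precisely what makes the argument go through.
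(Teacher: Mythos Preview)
Your proof is correct and follows essentially the same approach as the paper: both use the forward $F$-invariance of level sets of $U_{\mathrm{id}}$ (the paper uses sublevel sets $\{U_{\mathrm{id}}<u\}$ and defines $\bar u$ as an infimum, you use superlevel sets $\{U_{\mathrm{id}}>c\}$ and define $\bar u$ as a supremum), apply the ergodicity hypothesis to get a zero-one law, and conclude via a countable $\bar u\pm 1/n$ argument. Your explicit use of the bound $0\le U_{\mathrm{id}}\le 1$ to locate $\bar u$ and your direct verification that $F(A_c)\subset A_c$ matches the paper's definition of ergodicity are clean touches, but the argument is otherwise identical in spirit.
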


\begin{proof}
    Given a constant $u\in \mathbb{R}$ define the set
	$A(u)\eqdef\{(\omega,x)\in \Esp\colon U_{\mathrm{id}}(\omega,x)<u\}$.
	Since $A(u)$ is an $F$-invariant set, by ergodicity of $\PP\times \lbb$, we have $(\PP\times\lbb)(A(u))\in\{0,1\}$. 
 Let 
 $$\bar{u}\eqdef\inf\{u\colon (\PP\times\lbb)(A(u))=1\}.
 $$
	\begin{claim}
		\label{cl.barb1}
		$U_{\mathrm{id}}(\omega,x)=\bar{u}$ for $(\PP\times\lbb)$-a.e.~$(\omega,x)\in\Esp\times I$.
	\end{claim}

\begin{proof} 
 If $(\PP\times\lbb)(A(\bar{u}))=0$, then $U_{\mathrm{id}}(\omega,x)\geq \bar{u}$ for $(\mathbb{P}\times \lbb)$-a.e.~$(\omega,x)$. 
 Moreover, by definition of $\bar{u}$, we have that  $U_{{\mathrm{id}}}(\omega,x)<\bar{u}+1/n$ for every $n\geq 1$ and $(\mathbb{P}\times \lbb)$-a.e.~$(\omega,x)$. Hence, by taking $n\to \infty$, we find that  $U_{\mathrm{id}}(\omega,x)=\bar{u}$ for $(\mathbb{P}\times \lbb)$-a.e.~$(\omega,x)$ proving the claim in this case. 
 
 To conclude the proof, we need to analyze also the case when $(\PP\times\lbb)(A(\bar{u}))=1$.  Suppose, by contradiction, that there exists a set $B \subset \Omega\times I$ with $(\mathbb{P}\times \lbb)(B)>0$ such that $U_{\mathrm{id}}(\omega,x)\neq{\bar{u}}$ for every $(\omega,x)\in B$. By definition of  ${\bar{u}}$, we have that $(\mathbb{P}\times \lbb)(A({\bar{u}}-\frac{1}{n}))=0$ for all $n\geq 1$. Consider the monotonically increasing sequence of sets 
		\begin{equation*}
			B_n\eqdef A\big({\bar{u}}-\frac{1}{n}\big)\cap B, \quad \text{for $n\geq 1$},
		\end{equation*}
		and note that $(\mathbb{P}\times \lbb)(B_n)=0$ and therefore
		$\lim\limits_{n\to\infty}(\mathbb{P}\times \lbb)(B_n)=0$. However, by the monotonicity and since $(\mathbb{P}\times \lbb)(A(\bar{u}))=1$, 
$$			\lim\limits_{n\to\infty}(\mathbb{P}\times \lbb)(B_n)=(\mathbb{P}\times \lbb)(A({\bar{u}})\cap B)=(\mathbb{P}\times \lbb)(B)>0,$$
		which leads to a contradiction.
		Therefore, $U_{\mathrm{id}}(\omega,x)={\bar{u}}$ for $(\mathbb{P}\times \lbb)$-a.e.~point, proving the~claim.
	\end{proof}

	The above claim provides $\bar{u}$ as in the statement of the proposition.  Considering the sets
	$\bar{A}(\ell)=\{\omega\in \Esp\colon L_{\mathrm{id}}(\omega,x)>\ell\}$
	and arguing similarly, we also get that $L_{\mathrm{id}}(\omega,x)=\bar{\ell}$ for \mbox{$(\mathbb{P}\times \lbb)$-a.e.}~$(\omega,x)$ where 
 $\bar{\ell}=\sup \{\ell: (\mathbb{P}\times \lbb)(\bar{A}(\ell))=1\}$.
 This concludes the~proof. 
\end{proof}

Now, Lemma~\ref{prop:functions-U-L-constant}, Theorem~\ref{thm:theorem-H3b} and again the assumption that $\gamma_0$ and $\gamma_1$ can be taken arbitrarily close to 0 and 1 respectively, we get~\eqref{eq:01} for $(\PP\times \lbb)$-a.e.~point.  
\end{proof}

{
\begin{prop} \label{prop-delta-L}
Let $(\omega,x)$ be in $\Omega \times I$ satisfying~\eqref{eq:01}. Then, $\mathcal{L}(\omega,x)   \supseteq \{ \lambda \delta_0 + (1-\lambda) \delta_1 \colon \lambda \in [0,1] \}$.
\end{prop}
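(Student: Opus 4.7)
My plan is to show that for every $c\in[0,1]$, the measure $\nu_c\eqdef(1-c)\delta_0+c\delta_1$ appears as a weak$^*$ accumulation point of $\{e_n(\omega,x)\}_{n\ge1}$. The starting observation is to introduce the first moments $a_n\eqdef\int y\,de_n(\omega,x)(y)=\frac{1}{n}\sum_{j=0}^{n-1}f^j_\omega(x)$. Since $f^j_\omega(x)\in[0,1]$, a direct calculation yields $|a_{n+1}-a_n|\le 2/(n+1)\to0$, while hypothesis~\eqref{eq:01} provides $\liminf_n a_n=0$ and $\limsup_n a_n=1$. A classical discrete intermediate-value argument for bounded sequences with vanishing successive differences then gives that $[0,1]$ is the full accumulation set of $\{a_n\}$: given $c\in(0,1)$ and $\epsilon>0$, fix $N$ with $|a_{n+1}-a_n|<\epsilon$ for $n\ge N$, choose $m,M\ge N$ with $a_m<c<a_M$ (available from liminf/limsup), and observe that the $\epsilon$-sized steps from $a_m$ to $a_M$ cannot jump over the $\epsilon$-neighborhood of $c$. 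This produces, for every $c\in[0,1]$, a subsequence $n_k\uparrow\infty$ with $a_{n_k}\to c$.

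Next, by the weak$^*$ compactness of the space of Borel probability measures on the compact interval $I$ (Prokhorov), I pass to a further subsequence (still denoted $n_k$) along which $e_{n_k}(\omega,x)\to\nu$ weakly for some probability $\nu$ on $I$. Since $y\mapsto y$ is bounded and continuous, weak convergence gives $\int y\,d\nu=\lim_k a_{n_k}=c$. To identify $\nu$ with $\nu_c$, I would invoke the companion Proposition~\ref{prop:limit_set_from_KD}: inside the composite setting of Proposition~\ref{mainpropo-limitset}, assumption (i) together with Lemma~\ref{equivalent-OT} supplies the pointwise occupation-time condition~\eqref{OTT} at $(\omega,x)$, forcing every element of $\mathcal{L}(\omega,x)$ to be supported in $\{0,1\}$. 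Writing $\nu=\lambda\delta_0+(1-\lambda)\delta_1$ and equating first moments then gives $\lambda=1-c$, so $\nu=\nu_c\in\mathcal{L}(\omega,x)$.

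The main obstacle, and the reason the identification step requires more than~\eqref{eq:01}, is that~\eqref{eq:01} on its own does \emph{not} constrain the supports of accumulation measures of $e_n(\omega,x)$: one can cook up block-patterned sequences in $[0,1]$ with long stretches at $0$, $\tfrac12$, and $1$ that satisfy $\liminf a_n=0$ and $\limsup a_n=1$ while producing $\delta_{1/2}$ as an accumulation measure. Therefore Proposition~\ref{prop-delta-L} should be read inside the composite hypothesis framework of Proposition~\ref{mainpropo-limitset}, where the occupation-time condition~\eqref{OT} and Proposition~\ref{prop:limit_set_from_KD} jointly rule out such interior-supported accumulation points and so close the argument. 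With that support control in hand, the remaining work is the soft combination of discrete IVT, Prokhorov compactness, and integration against the single test function $y$.
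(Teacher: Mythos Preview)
Your diagnosis that \eqref{eq:01} alone does not force the full segment inclusion is correct; your block example with values $0,\tfrac12,1$ can be arranged so that the accumulation set is the broken line $[\delta_0,\delta_{1/2}]\cup[\delta_{1/2},\delta_1]$, which misses $\tfrac12\delta_0+\tfrac12\delta_1$. So Proposition~\ref{prop-delta-L} as literally stated is too strong, and the paper's own proof does not establish it in full either: it stops after showing $\delta_0,\delta_1\in\mathcal L(\omega,x)$.

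That said, the paper's argument for the endpoints is simpler than your route and needs no appeal to the OT condition. It takes $n_k$ with $a_{n_k}\to0$, passes to a weak$^*$ limit $\mu$ of $e_{n_k}$, and observes that $\int y\,d\mu=0$ with $y\ge0$ on $[0,1]$ forces $\mu=\delta_0$. This is exactly your step for $c=0$, where the moment constraint is rigid at the extreme value; the OT hypothesis is superfluous there. The test function $1-y$ handles $c=1$ the same way.

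For intermediate $c$, rather than your discrete IVT together with support identification via Proposition~\ref{prop:limit_set_from_KD}, there is a shorter route (used explicitly by the paper in \S\ref{ss:proof-cor-skew-flow}): the set $\mathcal L(\omega,x)$ is always closed and \emph{connected}, because $\|e_{n+1}-e_n\|_{\mathrm{TV}}\le 2/(n+1)\to0$ and a sequence in a compact metric space with vanishing successive distances has connected limit set. Inside the proof of Proposition~\ref{mainpropo-limitset} one also has $\mathcal L(\omega,x)\subset L$ from Proposition~\ref{prop:limit_set_from_KD}; a closed connected subset of the one-dimensional arc $L$ containing both endpoints must equal $L$. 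Your IVT argument on the first moments $a_n$ is a projection of this connectedness, but it then forces you to re-identify each limit measure, whereas the connectedness argument finishes immediately once the endpoints are in hand.
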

}

\begin{proof}
Given $(\omega,x)\in \Omega\times I$ for which~\eqref{eq:01} holds, let $\{n_k\}_{k\ge1}$ be such that 
$$
\lim_{k\to\infty} \frac{1}{n_k}\sum_{j=0}^{n_k-1} f^j_\omega(x)=0. 
$$
Let $\mu$ be an accumulation point in the weak$^*$ topology of the subsequence of empirical measure $e_{n_k}(\omega,x)$. For notational simplicity, we assume that $e_{n_k} \to \mu$ as $k\to\infty$.  Then, 
$$
    0=\lim_{k\to\infty} \frac{1}{n_k}\sum_{j=0}^{n_k-1} f^j_\omega(x) =\lim_{k\to\infty}\int \mathrm{id} \, de_{n_k}  \to \int \mathrm{id}\, d\mu. 
$$
This implies that $\mu$ is the Dirac measure $\delta_0$. By an analogous argument, we have that $\delta_1$ is an accumulation point of $e_n(\omega,x)$. 
\end{proof}

{
\subsection{Proof of Proposition~\ref{mainpropo-limitset}}
Let $S_0$ and $S_1$ be the full-measure sets where, respectively,~\eqref{OTT} and~\eqref{eq:01} hold. Hence, $S=S_0 \cap S_1$ has full $(\mathbb{P}\times \lbb)$-measure and, for any $(\omega,x)\in S$ we can apply Proposition~\ref{prop:limit_set_from_KD} to conclude that
\[
    \mathcal{L}(\omega,x) \subseteq L \coloneqq \{ \lambda \delta_0 + (1-\lambda) \delta_1 \colon \lambda \in [0,1] \}.
\]
On the other hand, since $(\omega,x) \in S_1$, it satisfies condition~\eqref{eq:01} and, by Proposition~\ref{prop-delta-L},  $\mathcal{L}(\omega,x) \supseteq L$. 
Since both inclusions hold for any point in the full-measure set $S$, the proof of Proposition~\ref{mainpropo-limitset} is complete. 

\begin{rem} Consider the one-step case assuming instead~\eqref{OT} that for every fixed $x\in (0,1)$ and  $0<\epsilon <1/2$, the limit in~\eqref{OT} holds $\mathbb{P}$-almost surely. Thus, the set $S_\epsilon$ in the proof of Lemma~\ref{equivalent-OT} is a bundle over $(0,1)$ with fiber sets of full $\mathbb{P}$-measure and, consequently, so is $S_0$. Similarly, in this one-step case, from Proposition~\ref{mainpropo-step1}, we have that $S_1$ is a similar bundle over $(0,1)$. Thus, $S=S_0\cap S_1$ is also a bundle over $(0,1)$ with fiber sets of full $\mathbb{P}$-measure and the observation in Remark~\ref{rem-ppp} follows. 
\end{rem}

}


\section{Proof of Propositions~\ref{maincor:conjugation-random-walk}, \ref{cor:north-south},
\ref{maincor:coupling}
 {and~\ref{thm:D-Bonifant-Milnor-historical-behavior}}} 
\label{ss:Proof-cor-I-II-VI}

{
 Let $F$ be a one-step skew product as in~\eqref{one-skew product-principal} and fix $x\in (0,1)$. In this section, we study the historical behavior of $F$ when the Markov chain  $\{X^x_n\}_{n\geq 0}$ with $X^x_n(\omega)=f^n_\omega(x)$ is conjugate to a random walk on the  additive group $G=\mathbb{Z}$ or $\mathbb{R}$. Recall that according to Definition~\ref{def:conjugated-G-randomwalk}, this means that  there is a strict monotonic injection $h\colon \mathcal{O}(x) \to G$ such that the step random variables
$Y_n^t = S^t_n - S_{\smash{n-1}}^t$, $n \geq 1$, 
are i.i.d., where $t = h(x)$ and $S^t_n(\omega) = h \circ f^n_\omega \circ h^{-1}(t)$. Here, the set $\mathcal{O}(x)$ denotes the orbit $\{X_n^x(\omega)\colon \omega \in \Omega,\, n\geq0\}$. Now, define the random walk
\begin{equation}\label{eq-sum-realwalk}
    S_0 = 0, \qquad S_n(\omega) \eqdef \sum_{j=1}^{n} Y_j^t(\omega) = S_n^t(\omega) - t \quad \text{for } n \geq 1.
\end{equation}
Notice that 
\begin{equation}\label{eq:multiple-notation}
S_n^t(\omega)=t+S_n(\omega)=g^n_\omega(t) \quad \text{where} \ \ g_\omega=h\circ f_\omega \circ h^{-1}.
\end{equation}
}






\begin{lem}
\label{cla:conjugated-real-random} 
Let $\{S_n\}_{n\ge 0}$ be a $G$-valued random walk starting $S_0=0$ with mean zero and positive finite variance. Then, for every $\kappa,t\in \mathbb{R}$, 
			\begin{equation}\label{eq:arcsin-G-ranwalk}
		\lim_{n\to\infty}\PP\big(\,\frac{1}{n} \sum_{j=0}^{n-1}\ind_{{\Bar{I}_i(\kappa)}}(t+S_j(\omega))\leq\alpha\,\big)=\frac{2}{\pi}\arcsin{\sqrt{\alpha}}  \qquad \text{for every $\alpha \in (0,1)$ and $i=0,1,$}
			\end{equation}
where $\Bar{I}_0(\kappa)=(-\infty,\kappa]$ and $\Bar{I}_1(\kappa)=[\kappa,\infty)$.
\end{lem}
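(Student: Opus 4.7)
The strategy is to reduce the statement to the classical Erd\H os--Kac arcsine law (Theorem~\ref{thm:Lei-arcseno}) by means of two approximations: a spatial shift of the indicator from $[\kappa,\infty)$ to $[0,\infty)$, and a bookkeeping adjustment between strict and non-strict inequalities at a single point. First I would verify the hypothesis of Lemma~\ref{lem:ocupation-limite}, namely that $\{S_n\}_{n\ge 0}$ spends asymptotically zero proportion of time in any compact set of $\R$. This is a classical fact for mean-zero, positive finite variance random walks on $\Z$ or $\R$: by the CLT the number of visits to any bounded set up to time $n$ is of order $\sqrt{n}$ (this is invoked in the paper as Proposition~\ref{cor:main-cases}). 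Applying Lemma~\ref{lem:ocupation-limite} with $y=t$ then yields, for both $\bar L_i\in\{\bar I_i,\bar J_i\}$,
$$\bigg|\frac{1}{n}\sum_{j=0}^{n-1}\ind_{\bar L_i(\kappa)}(t+S_j(\omega))-\frac{1}{n}\sum_{j=0}^{n-1}\ind_{\bar L_i(0)}(S_j(\omega))\bigg|\xrightarrow[n\to\infty]{}0\quad\PP\text{-a.s.}$$

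For $i=1$, the decomposition $\ind_{[0,\infty)}=\ind_{(0,\infty)}+\ind_{\{0\}}$ together with the vanishing occupation at the compact set $\{0\}$, and the $O(1/n)$ cost of shifting the summation range from $\{0,\dots,n-1\}$ to $\{1,\dots,n\}$, gives
$$\bigg|\frac{1}{n}\sum_{j=0}^{n-1}\ind_{[0,\infty)}(S_j)-\frac{1}{n}\sum_{j=1}^{n}\ind_{(0,\infty)}(S_j)\bigg|\xrightarrow[n\to\infty]{}0\quad\text{a.s.}$$
By Theorem~\ref{thm:Lei-arcseno} the latter average converges in distribution to an arcsine random variable $Y$. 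A standard Slutsky-type argument (using the almost sure vanishing of the two differences above together with the continuity of the arcsine CDF on $(0,1)$) then transfers this convergence, at every $\alpha\in(0,1)$, to $\frac{1}{n}\sum_{j=0}^{n-1}\ind_{\bar I_1(\kappa)}(t+S_j)$, establishing~\eqref{eq:arcsin-G-ranwalk} for $i=1$.

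For $i=0$, I would complement and invoke the symmetry of the arcsine law. Writing
$$\frac{1}{n}\sum_{j=0}^{n-1}\ind_{\bar I_0(\kappa)}(t+S_j)=1-\frac{1}{n}\sum_{j=0}^{n-1}\ind_{(\kappa,\infty)}(t+S_j)+\frac{1}{n}\sum_{j=0}^{n-1}\ind_{\{\kappa\}}(t+S_j),$$
the last average vanishes almost surely (compact case $K=\{\kappa-t\}$), and by the $\bar J_1$-variant of the preceding step the middle average converges in distribution to $Y$. Hence the whole expression converges in distribution to $1-Y$, which is again arcsine distributed thanks to the identity $\arcsin\sqrt{1-\alpha}=\pi/2-\arcsin\sqrt{\alpha}$. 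The only delicate point is the careful tracking of open vs.\ closed endpoints at both $\kappa$ and $0$, but this is uniformly absorbed by the vanishing of occupation at any single point, so no substantive obstacle arises beyond this bookkeeping.
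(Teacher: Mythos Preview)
Your proposal is correct and follows essentially the same approach as the paper: reduce to the Erd\H os--Kac arcsine law via the vanishing occupation time in compact sets (Proposition~\ref{cor:main-cases}), invoke Lemma~\ref{lem:ocupation-limite} for the spatial shift, and use Slutsky to transfer convergence across the open/closed endpoint corrections. The only cosmetic difference is that the paper dismisses the case $i=0$ as ``analogous'' rather than deriving it by complementation and the symmetry $Y\mapsto 1-Y$ of the arcsine law as you do.
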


\begin{proof} 
We prove~\eqref{eq:arcsin-G-ranwalk} for $i=1$; the case $i=0$ follows analogously. 
Define  
\begin{align}\label{eq:N-indacator-real-rw}
    N_n(\omega) &\eqdef  \#\{j \in \{0, \dots, n-1\} : S_j(\omega) > 0\} = \sum_{j=0}^{n-1} \ind_{(0,\infty)}(S_j(\omega)).
\end{align}
By assumption, the steps variables $Y_n=S_{n}-S_{n-1}$, $n\geq 1$, are i.i.d.~with zero mean and positive finite variance. Applying the arcsine law (Theorem~\ref{thm:Lei-arcseno}) to the sequence $\{Y_n\}_{n \geq 1}$ and using~\eqref{eq:N-indacator-real-rw}, we obtain~\eqref{eq:arcsin-G-ranwalk} {for the random process starting at $t=0$ with target set $\Bar{J}_1(0)=(0,\infty)$ instead $\Bar{I}_1(0)=[0,\infty)$. The difference between the time averages for the closed and open intervals is the occupation time of the point $\{0\}$. Since the random walk has mean zero, it is recurrent. A classical result for one-dimensional recurrent random walks is that the occupational time spent at any compact set vanishes in the limit. Proposition~\ref{cor:main-cases} provides a new direct proof of this classic fact. Hence, since the time average over the open interval converges in distribution to the arcsine law, and the difference between the two averages converges almost surely (and thus in probability) to zero, Slutsky's theorem implies that the time average over the closed interval converges to the same arcsine distribution. This establishes that $A^0_n$ converges in distribution to the arcsine law, where 
$$
A^s_n=\frac{1}{n} \sum_{j=0}^{n-1}\ind_{[s,\infty)}(S_j) \quad  \text{for $s\in \mathbb{R}$ and $n\geq 1$}.
$$
Once again,  using the vanishing occupational time in compact sets for non-frozen random walks (see Proposition~\ref{cor:main-cases}), Lemma~\ref{lem:ocupation-limite} applies to get that $A_n^0-A^{\kappa-t}_n \to 0$ almost surely (in particular in probability) and thus $A^{\kappa-t}_n$ also converges in distribution to the arcsine law. This proves~\eqref{eq:arcsin-G-ranwalk} and completes the proof.}
\end{proof}

In view of~\eqref{eq:multiple-notation}, as an immediate consequence of the previous lemma and Proposition~\ref{prop:arcsine-to-conjugateA}, we have the following:

\begin{cor}\label{prop:arcsine-to-conjugate}
Let $F$ be a one-step skew product as in~\eqref{one-skew product-principal}  such that the sequence of random variables $\{X^x_n\}_{n\geq 0}$ is 
 conjugate to a $G$-valued random walk {with mean zero and positive finite variance}.  Then, for every $\gamma\in (0,1)$ 
 		\begin{align*} 	\lim_{n\to\infty}\PP\big(\,\s\ind_{{I_i(\gamma)}}(f^j_\omega(x))\leq \alpha\,\big)=\frac{2}{\pi}\arcsin\sqrt{\alpha} \qquad \text{ for every $\alpha\in(0,1)$ and $i=0,1$.}
 		\end{align*}
 \end{cor}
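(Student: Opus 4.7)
The plan is to combine the conjugation formula from Proposition~\ref{prop:arcsine-to-conjugateA} with the arcsine law for the auxiliary random walk supplied by Lemma~\ref{cla:conjugated-real-random}, using identity~\eqref{eq:multiple-notation} as the bridge between the fiber dynamics and the walk.

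First I would fix $x,\gamma\in(0,1)$, $i\in\{0,1\}$, and $\alpha\in(0,1)$, and set $t\eqdef h(x)$, $\kappa\eqdef h(\gamma)$. Note that the orbit $\mathcal{O}(x)$ is forward $f_\omega$-invariant for $\mathbb P$-a.e.\ $\omega$, so Proposition~\ref{prop:arcsine-to-conjugateA} applies with $L_i=I_i$. It yields a full-measure set of $\omega$ on which, for every $j\ge 0$,
\[
\ind_{I_i(\gamma)}(f_\omega^j(x))=\ind_{\bar I_{i'}(\kappa)}(g_\omega^j(t)),
\]
where $i'=i$ when $h$ is increasing and $i'=1-i$ when $h$ is decreasing.

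Next, using~\eqref{eq:multiple-notation}, which reads $g_\omega^j(t)=t+S_j(\omega)$ for the random walk $\{S_n\}_{n\ge 0}$ in~\eqref{eq-sum-realwalk} with $S_0=0$, mean zero, and positive finite variance, I would rewrite the empirical average as
\[
\frac{1}{n}\sum_{j=0}^{n-1}\ind_{I_i(\gamma)}(f_\omega^j(x))=\frac{1}{n}\sum_{j=0}^{n-1}\ind_{\bar I_{i'}(\kappa)}\bigl(t+S_j(\omega)\bigr)\quad\text{$\mathbb P$-a.s.}
\]
Applying Lemma~\ref{cla:conjugated-real-random} with this choice of $t$ and $\kappa$ then gives
\[
\lim_{n\to\infty}\mathbb P\Bigl(\frac{1}{n}\sum_{j=0}^{n-1}\ind_{\bar I_{i'}(\kappa)}(t+S_j)\le\alpha\Bigr)=\frac{2}{\pi}\arcsin\sqrt{\alpha}.
\]
Since the arcsine limit on the right is the same for both $i'=0$ and $i'=1$, the conclusion holds regardless of whether the conjugation $h$ is increasing or decreasing. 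This yields the claimed limit for every $\alpha\in(0,1)$ and $i=0,1$.

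There is no genuine obstacle here: the content has been distilled into Lemma~\ref{cla:conjugated-real-random} and Proposition~\ref{prop:arcsine-to-conjugateA}. The only small care required is to ensure that the conjugacy identity and the equality $g^j_\omega(t)=t+S_j(\omega)$ hold on the same full $\mathbb P$-measure set, which is automatic since the intersection of two full-measure sets is again of full measure, and to absorb the orientation case of $h$ into the trivial symmetry $i\leftrightarrow 1-i$ in the arcsine law.
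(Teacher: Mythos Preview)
Your proof is correct and follows exactly the paper's approach, which simply states that the corollary is an immediate consequence of Lemma~\ref{cla:conjugated-real-random} and Proposition~\ref{prop:arcsine-to-conjugateA} via~\eqref{eq:multiple-notation}. One small point: writing $\kappa=h(\gamma)$ presumes $\gamma\in\mathcal O(x)$ (as Proposition~\ref{prop:arcsine-to-conjugateA} requires); for arbitrary $\gamma\in(0,1)$ you can either extend the strictly monotone $h$ to all of $(0,1)$ or choose $\kappa$ in the appropriate gap of $h(\mathcal O(x))$, after which the argument proceeds unchanged.
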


In the next proposition, we prove that the fiber maps satisfy~\rmref{H0}.

\begin{prop}\label{prop:conjugate-H0}
Let $F$ be a one-step skew product as in~\eqref{one-skew product-principal}  such that 
the sequence of random variables $\{X^x_n\}_{n\geq 0}$ is 
conjugate to a $G$-valued random walk.
{Then the tail $\oldsigma$-algebra $\mathcal{T}(\{X^x_n\}_{n\geq 1})$
is trivial. In particular, if the conjugation is established for all $x\in (0,1)$, then $F$ satisfies~\rmref{H0}.}
\end{prop}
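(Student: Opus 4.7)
The plan is to identify the tail $\sigma$-algebra of the fiber process with that of a random walk in the usual sense, and then invoke the Hewitt--Savage zero-one law (Proposition~\ref{thm:Hewitt-0-1-Law}). By hypothesis there is a strictly monotonic injection $h\colon \mathcal{O}(x)\to G$ such that $S^t_n(\omega)=h(X^x_n(\omega))$ is a random walk starting at $t=h(x)$ with i.i.d.~increments $Y^t_n=S^t_n-S^t_{n-1}$, $n\geq 1$.

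First I would check that the conjugation $h$ preserves the tail. Since $h$ is strictly monotonic on $\mathcal{O}(x)\subset\mathbb R$, both $h\colon \mathcal{O}(x)\to h(\mathcal{O}(x))$ and its inverse $h^{-1}\colon h(\mathcal{O}(x))\to\mathcal{O}(x)$ are Borel measurable. The identities $S^t_n=h(X^x_n)$ and $X^x_n=h^{-1}(S^t_n)$ then imply, for every $m\geq 1$, the equality of $\sigma$-algebras
\begin{equation*}
\sigma(X^x_m,X^x_{m+1},\dots)=\sigma(S^t_m,S^t_{m+1},\dots).
\end{equation*}
Intersecting over $m$ yields $\mathcal{T}(\{X^x_n\}_{n\geq 1})=\mathcal{T}(\{S^t_n\}_{n\geq 1})$.

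Next I would remove the deterministic initial shift. Writing $S^t_n=t+\tilde S_n$ with $\tilde S_n\eqdef Y^t_1+\cdots+Y^t_n$, and recalling that $t$ is a constant, one gets $\sigma(S^t_m,S^t_{m+1},\dots)=\sigma(\tilde S_m,\tilde S_{m+1},\dots)$ for every $m$, so that $\mathcal{T}(\{S^t_n\}_{n\geq 1})=\mathcal{T}(\{\tilde S_n\}_{n\geq 1})$. Since $\{Y^t_n\}_{n\geq 1}$ are i.i.d., Proposition~\ref{thm:Hewitt-0-1-Law} applies directly and gives that $\mathcal{T}(\{\tilde S_n\}_{n\geq 1})$ is trivial. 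Chaining these identifications produces the triviality of $\mathcal{T}(\{X^x_n\}_{n\geq 1})$. The final clause about~\rmref{H0} follows by definition: if the conjugation is available at every $x\in(0,1)$, each corresponding tail is trivial, which is exactly condition~\rmref{H0}.

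I do not foresee a serious obstacle here; the only point demanding a little care is the Borel measurability of $h^{-1}$ on the (possibly irregular) set $h(\mathcal O(x))$, which is nevertheless standard because strict monotonicity on a subset of $\mathbb R$ automatically forces Borel measurability of both $h$ and $h^{-1}$.
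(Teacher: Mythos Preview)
Your proposal is correct and follows essentially the same approach as the paper: identify $\mathcal{T}(\{X^x_n\})$ with $\mathcal{T}(\{S^t_n\})$ via the measurable bijection $h$, pass to the centered walk $\tilde S_n=S^t_n-t$ using translation invariance of tail $\sigma$-algebras, and then apply the Hewitt--Savage zero-one law (Proposition~\ref{thm:Hewitt-0-1-Law}). The paper runs these three steps in the opposite order but the content is identical.
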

{
\begin{proof} 
Consider again the random walk $\{S_n\}_{n\geq 0}$ in~\eqref{eq-sum-realwalk}. 
By Proposition~\ref{thm:Hewitt-0-1-Law}, the tail $\oldsigma$-algebra $\mathcal{T}(\{S_n\}_{n \geq 1})$ is trivial. 
Since tail $\oldsigma$-algebras are invariant under translations, 
we have
\[
\mathcal{T}(\{S_n\}_{n \geq 1}) =\mathcal{T}(\{S_n^t - t\}_{n \geq 1}) = \mathcal{T}(\{S^t_n\}_{n \geq 1}).
\]
Thus, $\mathcal{T}(\{S_n^t\}_{n \geq 1})$ is trivial. Finally, as the conjugation $h$ is a measurable bijection, the $\oldsigma$-algebra generated by $\{X_n^x\}_{n \geq 1}$ equals that generated by $\{S_n^t\}_{n \geq 1}$ where $t=h(x)$. Consequently, $\mathcal{T}(\{X^x_n\}_{n \geq 1})$ is trivial, which completes the proof.
\end{proof}}

\subsection{Proof of Proposition~\ref{maincor:conjugation-random-walk}} 
By assumption,  $\{X^x_n\}_{n \geq 1}$ is conjugate to a $G$-valued random walk for all $x\in (0,1)$. Hence, by Corollary~\ref{prop:arcsine-to-conjugate},  it follows that $F$ satisfies the arcsine law. Also Proposition~\ref{prop:conjugate-H0} shows that condition~\rmref{H0} is satisfied. Consequently, assuming additionally that $F$ also satisfies conditions~\rmref{H1} and~\rmref{H2}, Theorem~\ref{thm:B-random-walks} implies that $F$ has historical behavior for $(\PP \times \lbb)$-a.e.~point.

\subsection{Proof of Proposition~\ref{cor:north-south}} \label{proof:Proposition-TPsi} 
{
Recall that $\Psi$ is a $\mathbb{Z}$-valued one-step random variable
and  $T$ is a Morse-Smale homeomorphism of period one. Hence, it follows that the maps $f_\omega=T^{\Psi(\omega)}$ are monotone and increasing on each open interval $J$ between consecutive fixed points $p$ and $q$. Consequently, condition \rmref{H1}} 
{is established. On the other hand, since by assumption $\mathbb{E}[\Psi] = 0$ and $\Psi\not =0$ because of $0 < \mathbb{E}[\Psi^2] < \infty$, we have $\alpha,\beta\in\Omega$ such that $\Psi(\alpha)\Psi(\beta)<0$. Hence, again, since $T$ is a Morse-Smale of period one, we have $T^{\Psi(\alpha)}<\mathrm{id} < T^{\Psi(\beta)}$ or viceversa on $J$. This implies the verification of condition~\rmref{H2}. 
}

For any $x \in J$, we define the monotonically increasing measurable bijection
\begin{equation*}
h \colon \mathcal{O}_T(x) \to \mathbb{Z}, \quad h(T^t(x)) \eqdef t, \quad \text{{for every} } t \in \mathbb{Z},    
\end{equation*}
where $\mathcal{O}_T(x) = \{T^t(x) : t \in \mathbb{Z}\}$ is the full orbit of $x$. Observe that $h^{-1}(t) = T^t(x)$. Then,
\[
g_\omega(t) = h \circ f_\omega \circ h^{-1}(t) = h(f_\omega(T^t(x))) = h(T^{t + \Psi(\omega_0)}(x)) = t + \Psi(\omega_0),
\]
{which defines a $\mathbb{Z}$-valued random walk driven by $\Psi$. 
In particular, we have that the sequence of random variables $X^x_n(\omega)=f^n_\omega(x)=T^{\Psi(\omega)}(x)$ is conjugate to a random walk on $\mathbb{Z}$ with mean zero and positive finite variance.} 
Therefore, by Proposition~\ref{maincor:conjugation-random-walk}, $F_{T^\Psi}|_{\Omega \times J}$ exhibits historical behavior for $(\PP \times \lbb)$-almost every point in $\Omega \times J$. Since $M$ is the union of finitely many such intervals $J$, it follows that $F_{T^\Psi}$ exhibits historical behavior for $(\mathbb{P} \times \lbb)$-almost every point. Finally, by Corollary~\ref{cor:lim-set}, for any $x \in J = (p, q)$,  the limit set is given by $\mathcal{L}(\omega, x) = \{\lambda \delta_p + (1 - \lambda) \delta_q : \lambda \in [0, 1]\}$. This concludes the proof of Proposition~\ref{cor:north-south}.


{
\subsection{Proof of Proposition~\ref{maincor:coupling}}


In this subsection, we study the skew product $F_{\smash{T^\Psi,Z}}$.  

\begin{prop}\label{lem:H1_H2_coupling}
The skew product $F_{\smash{T^\Psi,Z}}$ in~\eqref{eq:F-coupling} satisfies conditions \rmref{H1} and \rmref{H2}.
\end{prop}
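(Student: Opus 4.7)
The plan is to verify \rmref{H1} and \rmref{H2} directly from the explicit formula~\eqref{eq:coupling-fiber}, exploiting the fact that on each building block $I_k$ the fiber map is a composition of strictly increasing maps, and that $Z$ changes the macroscopic address $k$ independently of the action of $T^{\Psi(\omega_0)}$ on the microscopic coordinate $u$.

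For \rmref{H1}, I would first record that a north--south homeomorphism of $I=[0,1]$ whose only fixed points are $0$ and $1$ must be orientation-preserving, hence strictly increasing; otherwise Brouwer would force an interior fixed point. Consequently $T^{\Psi(\omega_0)}$ is strictly increasing on $[0,1]$, it sends $(0,1)$ into $(0,1)$, and it sends each endpoint to itself. Restricted to a fixed $I_k$, the coupling formula is the composition of the affine rescaling $x\mapsto u$, the increasing map $T^{\Psi(\omega_0)}$, and the orientation-preserving affine map $v\mapsto p_{k+Z(\omega_0)}+d_{k+Z(\omega_0)}v$, so $f_{\omega_0}|_{I_k}$ is strictly increasing with image contained in $[p_{k+Z(\omega_0)},\,p_{k+1+Z(\omega_0)})$ (the openness on the right uses $T^{\Psi}(u)<1$ for $u<1$, which is guaranteed because $T$ fixes only the endpoints). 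Global monotonicity on $(0,1)$ then follows because the consecutive images $I_{k+Z(\omega_0)}\prec I_{k+1+Z(\omega_0)}\prec\cdots$ are ordered in the same way as $I_k\prec I_{k+1}\prec\cdots$, so the function is monotone across ladder points as well. Measurability of $f_{\omega_0}$ is immediate from the explicit piecewise-smooth formula.

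For \rmref{H2}, the zero mean and positive variance of $Z$ (which takes values in $\mathbb{Z}$) force $Z$ to attain at least one strictly positive and one strictly negative integer value. Pick $\alpha,\beta\in\Omega$ with $Z(\alpha_0)\le -1$ and $Z(\beta_0)\ge 1$. Given any $x\in(0,1)$, let $k\in\mathbb{Z}$ with $x\in I_k=[p_k,p_{k+1})$. By the image inclusion established above,
\[
f_\alpha(x)\in[p_{k+Z(\alpha_0)},\,p_{k+1+Z(\alpha_0)}) \subseteq [p_{k-1},p_k]\cup\cdots,
\]
so $f_\alpha(x)\le p_{k+1+Z(\alpha_0)}\le p_k\le x$; the boundary case $x=p_k$ is handled by observing that then $u=0$ and $f_\alpha(p_k)=p_{k+Z(\alpha_0)}<p_k$, while the case $x>p_k$ gives strict inequality automatically. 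An entirely symmetric argument using $\beta$ yields $x<f_\beta(x)$.

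There is no serious obstacle: the only subtlety is checking that no boundary collision occurs at a ladder point (so the inclusion $f_{\omega_0}(I_k)\subset I_{k+Z(\omega_0)}$ is preserved without overlap into the next slot), and this is exactly what the north--south property of $T$ guarantees via $T^{\Psi}((0,1))\subset(0,1)$ and $T^{\Psi}(\{0,1\})=\{0,1\}$.
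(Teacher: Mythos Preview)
Your proposal is correct and follows essentially the same approach as the paper: for \rmref{H1}, decompose into the within-block case (composition of increasing affine maps with $T^{\Psi(\omega_0)}$) and the across-block case (ordered images $I_{k+Z(\omega_0)}$); for \rmref{H2}, use $\mathbb{E}[Z]=0$ and $0<\mathbb{E}[Z^2]<\infty$ to find letters with $Z$ strictly positive and strictly negative, which shift $x$ into a higher or lower slot. Your additional remarks (that a north--south homeomorphism is necessarily orientation-preserving, and the openness on the right of the image interval) are a bit more explicit than the paper's writeup, but the argument is the same.
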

\begin{proof}
To establish condition \rmref{H1}, we must show that \(f_{\omega_0}\) is monotone increasing on $(0,1)$.  Let \(x, y \in (0,1)\) with \(x < y\). We consider two cases.

\emph{Case 1:} \(x, y \in I_k\) for some \(k \in \mathbb{Z}\). On the interval \(I_k = [p_k, p_{k+1})\), according to~\eqref{eq:coupling-fiber}, $f_{\omega_0}$ is the composition \(f_{\omega_0}(z) = v \circ T^{\smash{\Psi(\omega_0)}} \circ u(z)\), where \(u(z)=(z-p_k)/d_k\) and \(v(z)=p_{k+Z(\omega_0)} + d_{k+Z(\omega_0)}z\). The maps \(u\) and \(v\) are strictly increasing affine functions (since \(d_k, d_{k+Z(\omega_0)} > 0\)), and the north-south homeomorphism \(T^{\smash{\Psi(\omega_0)}}\) is strictly increasing by definition. As a composition of strictly increasing functions, \(f_{\omega_0}\) is strictly increasing on \(I_k\). Thus, \(f_{\omega_0}(x) < f_{\omega_0}(y)\).

\emph{Case 2:} \(x \in I_k\) and \(y \in I_j\) for \(k < j\). By definition of the partition, this implies \(x < p_{k+1} \le p_j < y\). The map \(f_{\omega_0}\) sends the entire interval \(I_k\) to the interval \(I_{k'}\) where \(k' = k + Z(\omega_0)\), and similarly sends \(I_j\) to \(I_{j'}\) where \(j' = j + Z(\omega_0)\). Since \(k < j\), we have \(k' < j'\). The partition is ordered, so the interval \(I_{k'}\) lies entirely to the left of \(I_{j'}\), i.e., \(\sup(I_{k'}) = p_{k'+1} \le p_{j'} = \inf(I_{j'})\). As \(f_{\omega_0}(x) \in I_{k'}\) and \(f_{\omega_0}(y) \in I_{j'}\), it follows that \(f_{\omega_0}(x) < p_{k'+1} \le p_{j'} \le f_{\omega_0}(y)\). Thus, \(f_{\omega_0}(x) < f_{\omega_0}(y)\).

In both cases the map \(f_{\omega_0}\) is strictly increasing proving~\rmref{H1}. We now verify condition~\rmref{H2}. By assumption, $\mathbb{E}[Z]=0$ with $0 < \mathbb{E}[Z^2] < \infty$, so $Z$ is non-degenerate and takes both positive and negative values. Then, for every $k \in \mathbb{Z}$ and $x \in I_k$, there exist $\alpha \in \Omega$ with $Z(\alpha) > 0$ such that $f_{\alpha}$ maps $x$ to $I_{k+Z(\alpha)}$, moving toward 1 and therefore $f_{\alpha}(x) > x$; and $\beta \in \Omega$ with $Z(\beta) < 0$ such that $f_{\beta}$ maps $x$ to $I_{k+Z(\beta)}$, moving toward 0 and therefore $f_{\beta}(x) < x$. This completes the verification of \rmref{H2}.
\end{proof}

Recall that $F_{\smash{T^\Psi,Z}}$ can be written according to~\eqref{eq:coupling-randomwalk}  as a coupling
$X_n^x = p_{Z_n} + d_{Z_n} \cdot u_n$ where  $u_n=T^{S_n}(u_0)$,  
of a macro random walk $\{Z_n\}_{n\geq0}$ and a micro walk $\{S_n\}_{n\geq0}$ given in~\eqref{eq:process-definitions}.  

\begin{prop}\label{lem:H0_coupling}
The skew product $F_{\smash{T^\Psi,Z}}$ satisfies condition \rmref{H0}, i.e., for every $x \in (0,1)$, the sequence of random variables $\{X_n^x\}_{n\geq 1}$ has a trivial tail $\oldsigma$-algebra.
\end{prop}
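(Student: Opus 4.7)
The plan is to exhibit $\{X_n^x\}_{n\ge 1}$ as a deterministic function of a single $\mathbb{Z}^2$-valued random walk and then deduce triviality of its tail from the Hewitt--Savage zero-one law (Proposition~\ref{thm:Hewitt-0-1-Law}).

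First, I would use the explicit formula in~\eqref{eq:coupling-randomwalk} to observe that, for the fixed initial point $x\in I_k$ and the associated $u_0=(x-p_k)/d_k$,
$$
X_n^x(\omega)\;=\;p_{Z_n(\omega)}+d_{Z_n(\omega)}\,T^{S_n(\omega)}(u_0)\;=\;G(Z_n(\omega),S_n(\omega))
$$
for the deterministic map $G:\mathbb{Z}^2\to[0,1]$, $G(\ell,s)=p_\ell+d_\ell\,T^s(u_0)$. In particular, each $X_n^x$ is $\sigma(Z_n,S_n)$-measurable, so
$$
\sigma(X_m^x,X_{m+1}^x,\ldots)\;\subseteq\;\sigma\bigl((Z_m,S_m),(Z_{m+1},S_{m+1}),\ldots\bigr)\qquad\text{for every }m\ge 1,
$$
and intersecting over $m$ yields $\mathcal{T}(\{X_n^x\}_{n\ge1})\subseteq\mathcal{T}(\{(Z_n,S_n)\}_{n\ge1})$.

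Second, since $Z$ and $\Psi$ are one-step random variables and the coordinates $(\omega_j)_{j\ge 0}$ are i.i.d.~under $\mathbb{P}$, the vectors $(Z(\omega_j),\Psi(\omega_j))_{j\ge 0}$ form an i.i.d.~family in $\mathbb{Z}^2$. The recursions in~\eqref{eq:process-definitions} give
$$
(Z_n,S_n)\;=\;(k,0)+\sum_{j=0}^{n-1}\bigl(Z(\omega_j),\Psi(\omega_j)\bigr),
$$
so, up to a deterministic translation, $\{(Z_n,S_n)\}_{n\ge 0}$ is a $\mathbb{Z}^2$-valued random walk with i.i.d.~increments. Proposition~\ref{thm:Hewitt-0-1-Law} applies in this vector-valued setting (its proof via Hewitt--Savage depends only on the exchangeability of i.i.d.~increments in an arbitrary measurable space), yielding that $\mathcal{T}(\{(Z_n,S_n)\}_{n\ge 1})$ is trivial. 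Combined with the inclusion from the previous paragraph, this gives triviality of $\mathcal{T}(\{X_n^x\}_{n\ge 1})$, proving~\ref{H0}.

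The only conceptual subtlety — and the step where care is needed — is the observation in the first paragraph that the coupled dynamics is captured by a single random walk on $\mathbb{Z}^2$ rather than by two separate walks on $\mathbb{Z}$; importantly, no independence between the macro walk $\{Z_n\}$ and the micro walk $\{S_n\}$ is required, since both coordinates are driven by the same i.i.d.~sequence $(\omega_j)$ and the Hewitt--Savage argument treats them jointly.
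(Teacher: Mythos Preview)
Your proof is correct and follows essentially the same approach as the paper: both arguments recognize that the pair $(Z_n,S_n)$ is a $\mathbb{Z}^2$-valued random walk with i.i.d.\ increments $(Z(\omega_j),\Psi(\omega_j))$, apply Hewitt--Savage to obtain triviality of its tail, and deduce triviality of $\mathcal{T}(\{X_n^x\})$. Your version is in fact slightly more economical: the paper works to establish the \emph{equality} $\mathcal{T}(\{X_n^x\})=\mathcal{T}(\{(Z_n,S_n)\})$ by exhibiting measurable inverses $\kappa,\phi$ recovering $(Z_n,u_n)$ from $X_n^x$, whereas you observe that the \emph{inclusion} $\mathcal{T}(\{X_n^x\})\subseteq\mathcal{T}(\{(Z_n,S_n)\})$ already suffices, since triviality passes to sub-$\sigma$-algebras.
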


\begin{proof}
First, note that if $x = p_k$ for some $k \in \mathbb{Z}$, then $\{X_n^x\}_{n \geq 0}$ is conjugate to the random walk $\{Z_n\}_{n \geq 0}$. Therefore, Proposition~\ref{prop:conjugate-H0} implies that $\{X_n^x\}_{n \geq 0}$ has a trivial tail $\oldsigma$-algebra. Now consider $x \in (0,1) \setminus \{p_k : k \in \mathbb{Z}\}$.
Define $\phi \colon (0,1) \to (0,1)$ and $\kappa \colon (0,1) \to \mathbb{Z}$ by
\[
\kappa(t) \eqdef \text{the unique integer } k \text{ such that } t \in I_k, \quad \text{and} \quad \phi(t) \eqdef \frac{t - p_{\kappa(t)}}{d_{\kappa(t)}}.
\]
These functions characterize the $\oldsigma$-algebra of $\{X_n^x\}_{n\geq 0}$ in terms of the tail $\oldsigma$-algebras of $\{Z_n\}_{n\geq 0}$ and $\{u_n\}_{n\geq 0}$. Recall that for a sequence of random variables $\{X_n^x\}_{n\geq 0}$, the generated $\oldsigma$-algebra is defined by
\[
\oldsigma(X_n^x : n \geq 0) \eqdef \oldsigma\big( \bigcup_{n \geq 0} \oldsigma(X_n^x)\big).
\]

\begin{lem} \label{lem:recover}
Fix $k \in \mathbb{Z}$ and $x \in I_k\setminus \{p_k\}$. 
Then the following hold:
\begin{enumerate}[label=$\mathrm{(\roman*)}$]
    \item For every $n \ge 0$, $u_n \in (0,1)$, $X_n^x \in I_{Z_n}$, and $X^x_n \notin \{p_k : k \in \mathbb{Z}\}$.
          Moreover, the functions $\kappa$ and $\phi$ are Borel measurable and satisfy
          \[
          Z_n = \kappa(X^x_n) \quad \text{and} \quad u_n = \phi(X^x_n) \quad \text{for every } n \geq 0.
          \]
    \item For every $m \ge 1$, $\oldsigma(X_n^x : n \ge m) = \oldsigma\big( (Z_n, u_n) : n \ge m \big), $
          and consequently,
          \begin{equation}\label{eq:tail-X-equals-tail-Zu}
          \mathcal{T}(\{X_n^x\}_{n\geq 1}) = \mathcal{T}(\{(Z_n,u_n)\}_{n\geq 1}).
          \end{equation}
\end{enumerate}
\end{lem}

\begin{proof}
To prove the first item, note that  $x \in I_k\setminus \{p_k\}$ implies $u_0=(x-p_k)/d_k \in (0,1)$. Since $T$ is a north-south homeomorphism, $T^n((0,1)) \subset (0,1)$ for all $n \in \mathbb{Z}$, and thus $u_n = T^{S_n}(u_0) \in (0,1)$ for every $n \geq 0$. Therefore, for each $n \geq 0$, $X_n^x = p_{Z_n} + d_{Z_n} \cdot u_n \in (p_{Z_n}, p_{Z_n+1}) = I_{Z_n}$. The uniqueness of the interval containing $X_n^x$ ensures that $\kappa$ is well-defined at $t = X_n^x$ and gives $Z_n = \kappa(X_n^x)$. It follows that
\[
u_n = \frac{X_n^x - p_{Z_n}}{d_{Z_n}} = \frac{X_n^x - p_{\kappa(X_n^x)}}{d_{\kappa(X_n^x)}} = \phi(X_n^x).
\]
The Borel measurability of $\kappa$ and $\phi$ follows from the fact that $\kappa$ is piecewise constant on the measurable partition $\{I_k\}_{k \in \mathbb{Z}}$, and $\phi$ is piecewise affine on the same partition.

Now, we prove the second item. Fix $m \geq 1$ and note that $(Z_n, u_n) = (\kappa(X_n^x), \phi(X_n^x))$ for every $n \geq m$. Since $\kappa$ and $\phi$ are Borel measurables, the pair $(Z_n, u_n)$ is a measurable function of $X_n^x$. This implies 
$
\oldsigma(Z_n, u_n) \subset \oldsigma(X_n^x).
$
Conversely, for every $n \geq m$, the identity $X_n^x = p_{Z_n} + d_{Z_n} \cdot u_n$ shows that $X_n^x$ is a Borel function of $(Z_n, u_n)$, and hence
$
\oldsigma(X_n^x) \subset \oldsigma(Z_n, u_n).
$
Therefore, $\oldsigma(X_n^x) = \oldsigma(Z_n, u_n)$ for every $n \geq m$. Taking the $\oldsigma$-algebra generated by the family~$\{X_n^x : n \ge m\}$, we obtain
\[
\oldsigma(X_n^x : n \ge m) = \oldsigma\big( \bigcup_{n \geq m} \oldsigma(X_n^x) \big) = \oldsigma\big( \bigcup_{n \geq m} \oldsigma(Z_n, u_n) \big) = \oldsigma\big( (Z_n, u_n) : n \ge m \big),
\]
where the second equality follows from the identity $\oldsigma(X_n^x) = \oldsigma(Z_n, u_n)$ for each $n$. Taking the intersection over all $m \ge 1$ yields $\mathcal{T}(\{X_n^x\}_{n\geq 1}) = \mathcal{T}(\{(Z_n,u_n)\}_{n\geq 1})$, which completes the proof of the lemma.
\end{proof}
Now, as mentioned, $\{u_n\}_{n\geq0}$ is conjugate to a random walk $\{S_n\}_{n\geq0}$. That is, there is a  monotonic bijection $h \colon \mathcal{O}_T(u_0) \to \mathbb{Z}$ such that $S_n=h(u_n)$ where $\mathcal{O}_T(u_0)=\{T^t(u_0):t\in\mathbb{Z}\}$. See~\S\ref{proof:Proposition-TPsi} for more details.  Define the homeomorphism $H \eqdef \mathrm{id} \times h$ from $\mathbb{Z}\times \mathcal{O}_T(u_0)$ to~$\mathbb{Z}^2$. This map  conjugates $(Z_n, u_n)$ to $(Z_n, S_n)$, i.e., $(Z_n,S_n)=H(Z_n,u_n)$. By~\eqref{eq:process-definitions}, $(Z_n, S_n)$ is driven by the one-step random variables $Z$ and $\Psi$. This means that the step increments are
$$\Delta_n(\omega)=(Z_n(\omega)-Z_{n-1}(\omega), \, S_{n}(\omega)-S_{n-1}(\omega))=(Z(\omega_{n-1}),\Psi(\omega_{n-1})).$$ 
Notice that $\Delta_n(\omega)=\chi(\omega_{n-1})$ where $\chi(a)=(Z(a),\Psi(a))$ is a measurable function on $\mathcal{A}$. Since the background process $\{\omega_n\}_{n\ge0}$ is i.d.d., we also get that $\Delta_n$ is i.d.d. and thus, the joint process $(Z_n,S_n)$ is a random walk on the abelian group $\mathbb{Z}^2$. Hence, by Proposition~\ref{thm:Hewitt-0-1-Law}, the tail $\oldsigma$-algebra $\mathcal{T}(\{(Z_n,S_n)\}_{n\geq 0})$ is trivial. Therefore, by  Lemma~\ref{lem:recover} and since $H$ is a homeomorphism, it follows that  $\mathcal{T}(\{X_n^x\}_{n\geq 0})=\mathcal{T}(\{(Z_n,u_n)\}_{n\geq 0}) = \mathcal{T}(\{(Z_n,S_n)\}_{n\geq 0})$ is trivial, which completes the proof of the proposition.
\end{proof}

\begin{lem}\label{lem:weak_arcsine_coupling} If the random variable $Z$ has mean zero and finite positive variance, then the skew product $F_{T^\Psi,Z}$ satisfies the pointwise fiber fluctuation law with constants $\gamma_0<\gamma_1$.
\end{lem}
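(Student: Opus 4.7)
The plan is to reduce the fluctuation law on the fiber interval to the Erd\H{o}s--Kac arcsine behavior of the integer-valued macro random walk $\{Z_n\}_{n\geq 0}$, by choosing the thresholds $\gamma_0,\gamma_1$ to coincide with ladder points of the partition $\{p_k\}_{k\in\mathbb{Z}}$. This choice makes the identification between fiber indicators and half-line indicators on $\{Z_n\}$ exact.

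First, I would fix any $x\in(0,1)\setminus\{p_k:k\in\mathbb{Z}\}$, say $x\in I_k\setminus\{p_k\}$, and take $x_0=x_1=x$. Then pick integers $m_0<m_1$ and set $\gamma_0\eqdef p_{m_0}$ and $\gamma_1\eqdef p_{m_1}$, so that $\gamma_0<\gamma_1$ since $\{p_k\}$ is strictly increasing. By Lemma~\ref{lem:recover}(i), $u_n\in(0,1)$ for every $n\geq 0$, which forces
\begin{equation*}
    X_n^x \;=\; p_{Z_n}+d_{Z_n}u_n \;\in\; (p_{Z_n},\,p_{Z_n+1}) \qquad\text{for every }n\geq 0.
\end{equation*}
Matching endpoints against the ladder, one immediately deduces
\begin{equation*}
    \ind_{I_0(\gamma_0)}(f^n_\omega(x)) \;=\; \ind_{(-\infty,\,m_0-1]}(Z_n),\qquad \ind_{I_1(\gamma_1)}(f^n_\omega(x)) \;=\; \ind_{[m_1,\,\infty)}(Z_n),
\end{equation*}
so that the fiber occupational averages agree with the corresponding half-line averages of the integer-valued process $\{Z_n\}_{n\geq 0}$.

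Second, the process $\{Z_n\}_{n\geq 0}$ is a $\mathbb{Z}$-valued random walk starting at $Z_0=k$ with i.i.d.\ increments $Z(\omega_{n-1})$ of mean zero and positive finite variance, by the hypothesis on $Z$. Writing $Z_n=k+S_n$ for the centered random walk $\{S_n\}_{n\geq 0}$ starting at $0$, Lemma~\ref{cla:conjugated-real-random} applied with $t=k$ and $\kappa\in\{m_0-1,\,m_1\}$ yields
\begin{equation*}
    \lim_{n\to\infty}\PP\bigg(\frac{1}{n}\sum_{j=0}^{n-1}\ind_{I_i(\gamma_i)}(f^j_\omega(x))\leq \alpha\bigg)\;=\;\frac{2}{\pi}\arcsin\sqrt{\alpha} \;<\;1
\end{equation*}
for every $\alpha\in(0,1)$ and $i=0,1$. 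The $\liminf$ is therefore strictly less than $1$, which is exactly the pointwise-fiber fluctuation law with $\gamma_0<\gamma_1$.

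There is no genuine obstacle once the ladder-point choice is made: the identification of $\ind_{I_i(\gamma_i)}(X_n^x)$ with a half-line indicator on $\{Z_n\}$ is exact because $u_n$ never vanishes, so no boundary corrections or approximation arguments are needed, and Lemma~\ref{cla:conjugated-real-random} applies verbatim. The only subtlety worth flagging is excluding $x$ from the countable set $\{p_k\}$ to guarantee $u_0\in(0,1)$, and hence $u_n\in(0,1)$ for all $n$ via the invariance of $(0,1)$ under the north-south homeomorphism $T$.
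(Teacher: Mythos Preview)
Your proof is correct and reaches the same conclusion as the paper, via a minor variation of the same idea. The paper chooses the initial points $x_i=p_i$ to be ladder points: since $T$ fixes $0$, the micro coordinate $u_n$ stays at $0$, so $X_n^{p_i}=p_{Z_n}$ and the process is literally conjugate to the macro random walk $\{Z_n\}$; Corollary~\ref{prop:arcsine-to-conjugate} then gives the arcsine law for every $\gamma\in(0,1)$. You instead keep $x$ generic (non-ladder) and place the thresholds $\gamma_i=p_{m_i}$ at ladder points, so that the strict containment $X_n^x\in(p_{Z_n},p_{Z_n+1})$ from Lemma~\ref{lem:recover} turns the fiber indicators into exact half-line indicators on $\{Z_n\}$; Lemma~\ref{cla:conjugated-real-random} then finishes. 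Both routes collapse the problem to the Erd\H{o}s--Kac arcsine law for the macro walk; the paper's choice is slightly slicker (it invokes the existing conjugacy framework directly and works for all $\gamma$), while yours has the mild advantage of showing the fluctuation law at an arbitrary non-ladder initial point.
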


\begin{proof} For $i\in \{0,1\}$, set $x_i=p_i$. Since the chain $\{X^{x_i}_n\}_{n\geq 0}$ is conjugate to the $\mathbb{Z}$-valued random walk $\{Z_n\}_{n\geq 0}$, which has step increments $Z_n(\omega)-Z_{n-1}(\omega)=Z(\omega_{n-1})$ with mean zero and positive finite variance, Corollary~\ref{prop:arcsine-to-conjugate} implies that 
\begin{align*} 	
\lim_{n\to\infty}\PP\big(\,\s\ind_{{I_i(\gamma)}}(f^j_\omega(x_i))\leq \alpha\,\big)=\frac{2}{\pi}\arcsin\sqrt{\alpha} \qquad \text{ for every $\alpha,\gamma\in(0,1)$ and $i=0,1$}
 		\end{align*}
concluding the proposition.        
\end{proof}

By Lemmas~\ref{lem:H0_coupling},~\ref{lem:H1_H2_coupling} and~\ref{lem:weak_arcsine_coupling},  $F_{\smash{T^\Psi,Z}}$ satisfies \rmref{H0}--\rmref{H2} and  pointwise-fiber fluctuation law. Therefore, by Theorem~\ref{thm:B-random-walks}, the skew product $F_{\smash{T^\Psi,Z}}$ exhibits historical behavior for $(\PP \times \lbb)$-almost every point. This completes the proof of~Proposition~\ref{maincor:coupling}.
}

\subsection{Proof of Proposition~\ref{thm:D-Bonifant-Milnor-historical-behavior}}

Let $F_a$ be the skew product given in~\eqref{eq:fracional-linear} where $a:\Omega\to (0,\infty)$  satisfies
~\eqref{eq:fracional-linear-b},~\rmref{E0} and~\rmref{BM4}, i.e., $a(\omega)=a(\omega_0)$ for every $\omega=(\omega_i)_{i\ge0}\in \Omega$, and
\begin{equation*} 
		\int \log a(\omega) \, d\PP =0, \quad 
 \int (\log a(\omega))^2 \, d\PP<\infty  \quad  \text{and} \quad 
a(\omega)\not=1 \quad \text{for every $\omega\in \Esp$}.
	\end{equation*}
We consider the preserving-orientation homeomorphism 
 \begin{equation*}
		\fu{h}{(0,1)}{\R}, \quad h(x)=\log{\pa{\frac{x}{1-x}}}.
\end{equation*}
Then,  since  for any $x\in (0,1)$
		\begin{align*}
			h\circ f_\omega (x) 
				 = \log\left(\frac{a(\omega_0)x}{1-x}\right)= 	  h(x) + \log(a(\omega_0))	
		\end{align*}
it follows that 
$$
g_\omega(t) \eqdef h\circ f_\omega\circ h^{-1}(t) = t +\log(a(\omega_0)) \quad \text{where \ $t=h(x)$.} 
$$	
From this and taking into account that 
\begin{equation*}
    S_n^t(\omega) \eqdef g_{\omega_{n-1}} \circ \dots \circ g_{\omega_{0}}(t) = t + \log(a(\omega_{0})) + \dots + \log(a(\omega_{n-1})), \quad n \geq 1,
\end{equation*}
we obtain that the sequence of step random variables
\begin{align*}
    Y_{n}^t(\omega) \eqdef   S_{n}^t(\omega)- S_{n-1}^t(\omega)=\log(a(\omega_{n-1})) , \qquad n \geq 1,
\end{align*}
is independent and identically distributed. We also have 
that 
\begin{align*}
    \mu &=\mathbb{E}[Y^t_{1}]=\int \log(a(\omega))\,d\PP =0 \ \ \text{and} \ \ 
    \oldsigma^2 = \mathbb{E}[(Y^t_1-\mu)^2]=\int (\log a(\omega))^2 \, d\PP\in(0,\infty). 
\end{align*}
Thus, we conclude that for every $x\in (0,1)$, the sequence $\{X^x_n\}_{n\geq 1}$ of random variables $X_n^x(\omega)=f^n_\omega(x)$ is conjugate to a random walk on $\mathbb{R}$ with mean zero and finite variance. 

To prove Proposition~\ref{thm:D-Bonifant-Milnor-historical-behavior} remains to show that $F_a$ satisfies conditions~\rmref{H1} and \rmref{H2}. Clearly~\rmref{H1} holds since the maps $f_\omega$ are interval diffeomorphisms with $f(0)=0$ and $f(1)=1$.

We begin by showing that $F$ satisfies the condition~\rmref{H2}.
Define
	$$
	\Omega_- \eqdef \{\omega=(\omega_i)_{i\geq 0} \in\Esp \colon a(\omega_0)<1\} \quad \text{and} \quad  
	\Omega_+ \eqdef \{\omega=(\omega_i)_{i\geq 0} \in\Esp \colon a(\omega_0)>1\}.$$ 
	Note that since $a(\omega_0) \ne 1$ for every $\omega=(\omega_i)_{i \geq 0}\in \Omega$, 
	we have $\PP (\Omega_-)+\PP (\Omega_+)=1$.
	
\begin{lem}\label{cl.positiveprobability}
		$\PP (\Omega_-)>0$ and 
		$\PP (\Omega_+)>0$.
	\end{lem}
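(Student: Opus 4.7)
The plan is a short proof by contradiction based on the two hypotheses on $a$: the mean-zero condition $\int \log a(\omega)\, d\PP = 0$ and the non-degeneracy $a(\omega)\neq 1$ for every $\omega\in\Omega$. The first observation to record is that non-degeneracy gives $\log a(\omega)\neq 0$ for every $\omega$, so $\Omega$ splits as the disjoint union $\Omega_-\sqcup\Omega_+$ with $\PP(\Omega_-)+\PP(\Omega_+)=1$. Hence it suffices to rule out the extremal cases $\PP(\Omega_-)=0$ and $\PP(\Omega_+)=0$.

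Suppose, aiming at a contradiction, that $\PP(\Omega_+)=0$. Then $a(\omega)<1$, i.e.~$\log a(\omega)<0$, for $\PP$-a.e.\ $\omega$. To convert this pointwise strict negativity into strict negativity of the integral, I would stratify $\Omega_-$ by the sets
$$
A_k\eqdef\bigl\{\omega\in\Omega\colon \log a(\omega)\le -1/k\bigr\},\qquad k\ge 1,
$$
so that $\Omega_-=\bigcup_{k\ge 1}A_k$. Since the union has full $\PP$-measure, continuity of measure from below yields some $k_0$ with $\PP(A_{k_0})>0$, and then
$$
\int \log a\, d\PP \le \int_{A_{k_0}}\log a\, d\PP \le -\frac{1}{k_0}\,\PP(A_{k_0})<0,
$$
contradicting $\int \log a\, d\PP=0$. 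The symmetric argument, using $B_k\eqdef\{\log a\ge 1/k\}$, rules out $\PP(\Omega_-)=0$. This proves both $\PP(\Omega_-)>0$ and $\PP(\Omega_+)>0$.

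There is no genuine obstacle here; the content of the lemma is the elementary fact that a mean-zero random variable that never vanishes must take values of both signs on sets of positive measure. Integrability of $\log a$, needed for the inequalities above to make sense, is automatic from \eqref{eq:fracional-linear-b}, and is in any case reinforced by the finite-second-moment assumption \rmref{BM4} via Cauchy--Schwarz. The only stylistic choice is whether to argue via the positive/negative parts of $\log a$ or via the stratification above; I have chosen the latter because it makes the use of strict inequality $\log a\neq 0$ most transparent.
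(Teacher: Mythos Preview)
Your proof is correct and follows essentially the same approach as the paper. Both argue by contradiction that if $\PP$ were concentrated on one side, the integral $\int \log a\, d\PP$ would be strictly signed, contradicting the mean-zero condition; the paper phrases this via the Lyapunov exponent $\lambda(\delta_0)=\int\log f'_\omega(0)\,d\PP=\int\log a\,d\PP$ and condition~\rmref{BM3}, while you use~\eqref{eq:fracional-linear-b} directly and supply a more explicit stratification to justify the strict inequality.
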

	
	\begin{proof}
		By contradiction, suppose that 
		$\PP (\Omega_1)=1$. Hence,  
		\begin{align*}
			\lambda(\delta_0) &\eqdef\int\log(f'_\omega(0))\,d\PP = \int_{\Omega_-} \log(f'_\omega(0))\,d\PP
			 = \int_{\Omega_-} \log(a(\omega_0))\,d\PP <0.
		\end{align*}
        Since, as mentioned in the introduction, the representation of the fiber maps $f_\omega$ of $F_a$ is equivalent to conditions~\ref{BM1}--\ref{BM3} we arrive to a contradiction with
	 $\lambda(\delta_0)=0$. A similar contradiction arises assuming that
		$\PP (\Omega_+)=1$. This proves the lemma.
	\end{proof}
	
Note that  $f_\omega(x) < x$ for every $x\in (0,1)$ and $\omega \in \Omega_-$. 
	Similarly,  $f_\omega(x) > x$ for every $x\in (0,1)$ and $\omega \in \Omega_+$. Hence, for every $x\in (0,1)$, we have that 
 $\Omega_- \subset \{\omega\in\Omega \colon f_\omega(x)<x \}$ and 
 $\Omega_+ \subset \{\omega\in\Omega \colon f_\omega(x)<x \}$. 
  Now, Lemma~\ref{cl.positiveprobability} implies 
	both sets have positive probability, proving the first part of the proposition. From this, it immediately follows that the skew product $F_a$ satisfies the condition~\rmref{H2}.

Consequently, by Proposition~\ref{maincor:conjugation-random-walk} and Corollary~\ref{cor:lim-set}, $F_a$ exhibits historical behavior for $(\PP \times \mathrm{Leb})$-a.e.~point. Moreover, for every $x \in (0,1)$, $
\mathcal{L}(\omega,x) = \{\lambda \delta_0 + (1-\lambda) \delta_1\colon \lambda \in [0,1]\}$ for $\mathbb{P}$-a.e.~point, completing the proof of Proposition~\ref{thm:D-Bonifant-Milnor-historical-behavior}.

{
\section{Skew-translations} \label{ss:skew-translation}

Let \((\Omega,\mathcal F,\mathbb P)\) be a standard probability space and
\(\sigma:\Omega\to\Omega\) an ergodic, measure-preserving transformation.
Let \(\phi:\Omega\to\mathbb R\) be a measurable function  and
consider the skew-translation
\begin{equation}\label{skew-translation0}
    T_\phi:\Omega\times\mathbb R\to\Omega\times\mathbb R,\qquad T_\phi(\omega,y)=(\sigma(\omega),y+\phi(\omega)).
\end{equation}
Observe that the infinite product measure \(\mu=\PP\times\lbb\) on
\(\Omega\times\mathbb R\) is \(T_\phi\)-invariant.

\subsection{Vanishing interior occupational time for skew-translations}

\begin{thm} \label{thm:A-unified}
Let $T_\phi$ be a skew-translation as in~\eqref{skew-translation0}. Assume one of the following conditions:
\begin{enumerate}[leftmargin=0.75cm]
    \item[\textnormal{(1)}] $\phi: \Omega \to \R$ is not a multiplicative coboundary, i.e.,~\rmref{C2} holds;
    \item[\textnormal{(2)}] $(\Omega,\sigma)$ is a (one-sided or two-sided) subshift of finite type or a hyperbolic basic set of a $C^1$ diffeomorphism and $\mathbb{P}$ is a $\sigma$-invariant H\"older Gibb measure. Moreover, $\phi:\Omega \to \mathbb{R}$ is H\"older and is not an additive coboundary, i.e.,~\rmref{C1} holds; 
    \item[\textnormal{(3)}] $(\Omega, \mathbb{P},\sigma)$  is a (one-sided or two-sided) Bernoulli shift and $\phi:\Omega \to \R$ is a non-zero one-step~map. 
\end{enumerate}
Then for every fixed initial point $y \in \mathbb{R}$ and every compact set $K \subset \mathbb{R}$,
\[
\lim_{n\to\infty}\frac{1}{n}\sum_{j=0}^{n-1}\ind_{K}(y+S_j(\omega))=0 \quad \text{for $\mathbb{P}$-a.e.\ $\omega\in\Omega$,}
\]
where $S_0=0$ and $S_j(\omega)\eqdef\sum\limits_{i=0}^{j-1}\phi(\sigma^i(\omega))$ for $j\geq1$.
\end{thm}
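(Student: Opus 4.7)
The plan is to apply Hopf's ratio ergodic theorem for infinite measure preserving systems to the skew-translation $(T_\phi,\mu)$ with $\mu=\PP\times\lbb$, and then to upgrade the resulting $\mu$-almost-everywhere statement to a pointwise-in-$y$ statement by a translation-invariance argument on the corresponding bad set.

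First, the case $\mathbb{E}[\phi]\ne 0$ is trivial: Birkhoff applied to $(\sigma,\PP)$ yields $S_n(\omega)/n\to\mathbb{E}[\phi]\ne 0$, so $|S_n(\omega)|\to\infty$ and $\ind_K(y+S_j(\omega))=0$ eventually. Assuming henceforth $\mathbb{E}[\phi]=0$, in each of the three cases one verifies condition~\rmref{C2} and hence, via Theorem~\ref{thm:ergodicity-equivalence}, obtains the ergodicity of $T_\phi$ on $(\Omega\times\R,\mu)$: case (1) is the hypothesis itself; case (2) follows because H\"older cohomology together with positive asymptotic variance under~\rmref{C1} (from Theorem~\ref{thm:Rudolph}) precludes multiplicative coboundaries; case (3) follows because the Bernoulli structure forces any $\psi$ solving $e^{i\lambda\phi}=\psi\circ\sigma/\psi$ to reduce to a function of a single coordinate, contradicting non-triviality of $\phi$. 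Conservativity of $T_\phi$ follows from Atkinson's recurrence theorem: $\mathbb{E}[\phi]=0$ and ergodicity of $\sigma$ give $\liminf_n|S_n|=0$ $\PP$-a.s., hence $T_\phi$-recurrence. Hopf's theorem then asserts that for every $f\in L^1(\mu)$ with $f\ge 0$, $\frac{1}{n}\sum_{j<n}f\circ T_\phi^j\to 0$ $\mu$-a.e. Applied to $f_m=\ind_{\Omega\times K_m}$ with $K_m=[-m,m]$ and intersected over $m\in\N$, for $\mu$-a.e.\ $(\omega,y)$ the Ces\`aro average $\frac{1}{n}\sum_{j<n}\ind_{K_m}(y+S_j(\omega))$ vanishes for every $m$.

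To upgrade this to every $y\in\R$, let
\[
B\eqdef\Big\{(\omega,y)\in\Omega\times\R:\limsup_{n\to\infty}\tfrac{1}{n}\sum_{j=0}^{n-1}\ind_{K_m}(y+S_j(\omega))>0\text{ for some }m\in\N\Big\}.
\]
Then $\mu(B)=0$ by the preceding step, and $B$ is invariant under the translations $(\omega,y)\mapsto(\omega,y+c)$: the inclusion $K_m-c\subset K_{m'}$ with $m'=m+\lceil|c|\rceil$ yields $\ind_{K_m}(y+c+S_j)\le\ind_{K_{m'}}(y+S_j)$, so a positive $\limsup$ at $y+c$ via $K_m$ transfers to a positive $\limsup$ at $y$ via $K_{m'}$. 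Hence by Fubini, for $\PP$-a.e.\ $\omega$ the slice $B_\omega\subset\R$ has $\lbb(B_\omega)=0$; being $\R$-translation invariant, $B_\omega$ is either empty or all of $\R$, so it must be empty. Since every compact $K\subset\R$ is contained in some $K_m$, the conclusion of the theorem follows.

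The principal obstacle is establishing both ergodicity (via the preceding Theorem~\ref{thm:ergodicity-equivalence}) and conservativity of $T_\phi$ under each of the hypotheses, particularly in case (2) where one must show that~\rmref{C1} combined with Rudolph's invariance principle upgrades to~\rmref{C2}; this rests on the observation that a nontrivial multiplicative coboundary would force $e^{i\lambda S_n}$ to stay bounded, while the diffusive approximation $S_n\sim\oldsigma B_n$ makes this quantity equidistribute on the unit circle. The translation-invariance upgrade of the final paragraph is the clean device that converts Hopf's $\mu$-a.e.\ conclusion into the pointwise-in-$y$ statement demanded by the theorem.
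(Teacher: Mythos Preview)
Your reduction of all three cases to condition~\rmref{C2} (and hence to ergodicity of $T_\phi$ on $\Omega\times\R$ via Theorem~\ref{thm:ergodicity-equivalence}) does not go through in case~(3). Take $\phi$ one-step, integer-valued, with $\mathbb{E}[\phi]=0$, for instance $\phi(\omega)=\omega_0$ on $\{-1,0,1\}^{\mathbb N}$ with $p(-1)=p(1)=1/4$, $p(0)=1/2$. For $\lambda=2\pi$ one has $e^{i\lambda\phi}\equiv 1$, so $\psi\equiv 1$ solves the multiplicative cohomological equation and \rmref{C2} fails. Correspondingly $g(\omega,y)=e^{2\pi i y}$ is a non-constant bounded $T_\phi$-invariant function, so $T_\phi$ is \emph{not} ergodic on $(\Omega\times\R,\mu)$ and Hopf's theorem is unavailable. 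Your proposed justification (``the Bernoulli structure forces $\psi$ to reduce to a function of a single coordinate'') is simply incorrect. The paper sidesteps this obstruction by projecting to circles $\mathbb T_L=\R/L\Z$ and showing (Proposition~\ref{prop:ergodicity}) that the quotient skew-product $F_L$ is ergodic for all but countably many $L$; Birkhoff on $\mathbb T_L$ plus equicontinuity then gives the pointwise-in-$y$ statement, and letting $L\to\infty$ along an admissible sequence finishes. This compact-quotient approach is what makes case~(3) work for lattice-valued $\phi$.

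Your argument for case~(2) reaches a correct conclusion --- under $\mathbb{E}[\phi]=0$, \rmref{C1} together with the H\"older/hyperbolic hypotheses \emph{does} imply \rmref{C2} --- but the justification you give is not right. The claim that a multiplicative coboundary ``forces $e^{i\lambda S_n}$ to stay bounded'' is vacuous (it lies on $\mathbb S^1$), and equidistribution of $e^{i\lambda S_n}$ on the circle is not obviously obstructed by $e^{i\lambda S_n}=\psi\circ\sigma^n/\psi$ (for $\psi$ uniform and $\sigma$ mixing this ratio \emph{does} equidistribute). The correct route, which the paper uses, is Liv\v{s}ic-type regularity \cite{PP97}: a measurable solution $\psi$ must be H\"older, then lifting gives $\lambda\phi=\chi\circ\sigma-\chi+2\pi p$ with $p$ locally constant; transitivity forces $p$ constant, integrating against $\PP$ gives $0=2\pi p$, hence $p=0$ and $\phi$ is an additive coboundary, contradicting~\rmref{C1}.

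That said, where \rmref{C2} \emph{is} available (case~(1), and case~(2) with $\mathbb{E}[\phi]=0$ once the Liv\v{s}ic argument is supplied), your infinite-measure strategy via conservativity plus Hopf is a clean alternative to the paper's compact-quotient method, and your translation-invariance upgrade of the $\mu$-a.e.\ statement to every $y$ is correct and elegant: the exceptional set $B$ is invariant under all real translations in the fiber, so its $\omega$-slices, being null and translation-invariant subsets of $\R$, are empty.
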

\begin{proof} The proof proceeds by projecting the dynamics to a compact space. Fix $L>0$ and denote by $\mathbb{T}_L=\mathbb{R}/(L\mathbb{Z})$ the circle of length $L$, equipped with Lebesgue measure $\lbb_L$.  
Define
\[
F_L:\Omega\times\mathbb{T}_L\to\Omega\times\mathbb{T}_L,\qquad 
F_L(\omega,y)\eqdef(\sigma\omega,\ y+\phi(\omega)\bmod L),
\]
which preserves $\mu_L\eqdef\mathbb{P}\times\lbb_L$.  Our proof relies on the following proposition, which is of independent interest.

\begin{prop} \label{prop:ergodicity} Under the assumption of Theorem~\ref{thm:A-unified}, the set of lengths $L$ for which $F_L$ is not ergodic with respect to $\mu_L$ is at most countable. In particular, there exists a sequence $\{L_m\}_{m\geq 1}$ with $L_m\to \infty$ such that $F_{L_m}$ is ergodic with respect to $\mu_{L_m}$.  
\end{prop}

Before proving the above proposition, let us conclude the proof of the theorem.   If $F_{L}$ is ergodic with respect to $\mu_{L}$,  Birkhoff Ergodic Theorem implies that the orbits are uniformly distributed for $\mu_{L}$-a.e.~$(\omega, y)$.  This means that  for any continuous function $g:\mathbb{T}_{L} \to \R$,
\begin{equation} \label{eq:Bir}
\lim_{n\to\infty}\frac{1}{n}\sum_{j=0}^{n-1} g(y+S_j(\omega)\bmod L) = \int g\,d\lbb_{L} 
\end{equation}
for  $\mu_L$-a.e.~$(\omega,y)$. Actually, we can strengthen this consequence:

\begin{claim} For any starting point $y\in \mathbb{R}$, there is $\Omega_{L,y} \subset \Omega$ with $\mathbb{P}(\Omega_{L,y})=1$ such that~\eqref{eq:Bir} holds for every $\omega \in \Omega_{L,y}$. 
\end{claim}

\begin{proof}
Fix a continuous function $g:\mathbb{T}_L\to\mathbb{R}$ and  $\omega\in \Omega$. Define
\[
A^\omega_n(y)\eqdef\frac{1}{n}\sum_{j=0}^{n-1} g\big(y+S_j(\omega) \bmod L \big)\qquad y\in\mathbb{T}_L.
\]
Since $g$ is continuous on the compact space $\mathbb{T}_L$, it is uniformly continuous. Fix $\varepsilon>0$ and choose $\delta>0$ such that $d_{\mathbb{T}_L}(u,v)<\delta$ implies that $|g(u)-g(v)|<\varepsilon/2$. If $y,y'\in\mathbb{T}_L$ satisfy $d_{\mathbb{T}_L}(y,y')<\delta$, then for every $j\ge0$,
$
d_{\mathbb{T}_L}(y+S_j(\omega),\,y'+S_j(\omega))=d_{\mathbb{T}_L}(y,y')<\delta$,
hence $|g(y+S_j(\omega)\bmod L)-g(y'+S_j(\omega)\bmod L)|<\varepsilon/2$. Consequently, for every $n\ge1$,
\[
|A^\omega_n(y)-A^\omega_n(y')|
\le \frac{1}{n}\sum_{j=0}^{n-1}|g(y+S_j(\omega)\bmod L)-g(y'+S_j(\omega)\bmod L)|<\frac{\varepsilon}{2}.
\]
Thus the family $(A^\omega_n)_{n\ge1}$ is equicontinuous. 

Now, using Fubini, from~\eqref{eq:Bir}, we have that for $\mathbb{P}$-a.e.~$\omega\in \Omega$, there exists $\mathcal{T}_\omega \subset \mathbb{T}_L$ with  $\lbb_L(\mathcal{T}_\omega)=1$ such that $A^\omega_n(y) \to \ell\eqdef\int g \, d\lbb_L$ for all $y\in \mathcal{T}_\omega$ as $n\to \infty$. 
Fix arbitrary $y_0\in\mathbb{T}_L$. Since $\mathcal{T}_\omega$ has full measure, it is dense on $\mathbb{T}_L$, and thus we can pick $y\in \mathcal{T}_\omega$ with $d_{\mathbb{T}_L}(y_0,y)<\delta$. By hypothesis $A^\omega_n(y)\to \ell$, so there exists $N\geq 1$ with $|A^\omega_n(y)-\ell|<\varepsilon/2$ for all $n\ge N$. For such $n$ we obtain
\[
|A^\omega_n(y_0)-\ell|\le |A^\omega_n(y_0)-A^\omega_n(y)|+|A^\omega_n(y)-\ell|<\frac{\varepsilon}{2}+\frac{\varepsilon}{2}=\varepsilon,
\]
where the first term is $<\varepsilon/2$ by equicontinuity and the second is $<\varepsilon/2$ by choice of $N$. As $\varepsilon>0$ was arbitrary, we get that $A^\omega_n(y_0)\to \ell$ as $n\to \infty$. Since $y_0$ was arbitrary, the convergence holds for every $y_0\in\mathbb{T}_L$.
\end{proof}

Fix $y\in\mathbb{R}$ and a compact set $K\subset\mathbb{R}$.  Let $\{L_m\}_{m\geq 1}$ be the sequence of lengths given by 
Proposition~\ref{prop:ergodicity} that can be assumed to be 
$\lbb(K)<L_m$ for all $m\geq 1$. 
Let $K_m\subset\mathbb{T}_{L_m}$ be the projection of $K$.  
For each $\varepsilon>0$, choose a continuous $g_{\varepsilon,m}:\mathbb{T}_{L_m}\to[0,1]$ such that 
\[
\ind_{K_m}\le g_{\varepsilon,m}
\quad\text{and}\quad 
\int g_{\varepsilon,m}\,d\lbb_{L_m}\le \frac{\lbb(K)}{L_m}+\varepsilon.
\]
By the claim, for each $m$ there exists a full $\mathbb{P}$-measure set $\Omega_{L_m,y}$ such that for every $\omega\in\Omega_{L_m,y}$,
\[
\limsup_{n\to\infty}\frac{1}{n}\sum_{j=0}^{n-1}\ind_K(y+S_j(\omega))
\le \lim_{n\to\infty}\frac{1}{n}\sum_{j=0}^{n-1} g_{\varepsilon,m}(y+S_j(\omega))
=\int g_{\varepsilon,m}\,d\lbb_{L_m}
\le \frac{\lbb(K)}{L_m}+\varepsilon.
\]
Let $\Omega_y\eqdef\bigcap_{m\ge1}\Omega_{L_m,y}$, which still has full $\mathbb{P}$-measure.  
Since $L_m\to\infty$, it follows 
\[
\limsup_{n\to\infty}\frac{1}{n}\sum_{j=0}^{n-1}\ind_K(y+S_j(\omega))=0 \qquad \text{for every $\omega\in\Omega_y$}.
\]
This concludes the proof.
\end{proof}

\subsubsection{Proof of Proposition~\ref{prop:ergodicity}}

We show that under the assumptions of Theorem~\ref{thm:A-unified}, the map $F_{L}$ is ergodic (for an appropriate sequence of lengths $L \to \infty$). Let $f\in L^2(\mu_{L})$ be a $F_{L}$-invariant function, i.e., $f \circ F_{L} = f$. Expanding in a Fourier series in the fiber variable, 
$$f(\omega,y)=\sum_{k\in\mathbb Z} c_k(\omega)e^{2\pi i k y/L}.$$
By the $F_{L}$-invariance of $f$, for each $k \in \Z$, it holds that
\begin{equation}\label{eq:Fourier-cocycle}
c_k\circ\sigma \;=\; e^{-i\lambda\phi}\,c_k\qquad\text{$\mathbb P$-a.e.,\quad where }\ \lambda=\frac{2\pi k}{L}.
\end{equation}
For $k=0$, we have $c_0\circ\sigma=c_0$, hence $c_0$ is constant almost everywhere by ergodicity of $\sigma$. For $k\neq 0$ we must show $c_k\equiv 0$ (provided $L$ is appropriately chosen).

Assume $c_k\not\equiv 0$ for some $k\neq 0$, and set $B=\{\omega:\,c_k(\omega)\neq 0\}$. From \eqref{eq:Fourier-cocycle}, $\tau^{-1}(B)\subseteq B$ and, hence $\mathbb P(B)\in\{0,1\}$. If $\mathbb P(B)=0$ we are done; otherwise $\mathbb P(B)=1$. Taking moduli in \eqref{eq:Fourier-cocycle} gives $|c_k|\circ\tau=|c_k|$ almost everywhere, so again by ergodicity there is $r>0$ with $|c_k(\omega)|=r$  for $\mathbb{P}$-a.e.~$\omega\in B$. Define
\[
\psi:\Omega\to\mathbb S^1,\ \ \  \psi(\omega)=\frac{c_k(\omega)}{r}  \ \ \ \text{for $\mathbb{P}$-a.e.~$\omega\in B$ and arbitrarily on the null complementary}.
\]
Then dividing \eqref{eq:Fourier-cocycle} by $r$, 
we get 
\begin{equation} \label{eq:multiplicative_cohom}
      e^{i\lambda \phi} = \frac{\psi \circ \tau}{\psi} \quad \text{$\mathbb P$-a.e.}  \qquad  \text{with $\lambda=\frac{2\pi k}{L}$.} 
\end{equation}

\smallskip
\noindent\textbf{Case (1):} \emph{$\phi: \Omega \to \R$ is not a multiplicative coboundary, i.e.,~\rmref{C2} holds.} Equation~\eqref{eq:multiplicative_cohom} yields for $t=-\lambda$, a nonzero solution $\psi$ of the cohomological equation, which contradicts the hypothesis that $\phi$ satisfies~\rmref{C2}. Thus, $c_k \equiv 0$ for all $k \neq 0$. This holds for any choice of $L > 0$ and thus, in this case, $F_L$ is ergodic with respect to $\mu_L$ for all $L>0$.

\smallskip
\noindent\textbf{Case (2):} \emph{$(\Omega, \mathbb{P},\sigma)$ is a subshift of finite type or a hyperbolic basic set preserving a H\"older Gibb measure. Moreover, $\phi:\Omega \to \mathbb{R}$ is H\"older and is not an additive coboundary.}  Since $\phi$ is a H\"older continuous function, then $e^{i\lambda \phi}$ is also H\"older. According to~\cite[Theorems~1 and~2]{PP97}, this regularity, the base assumptions, and equation~\eqref{eq:multiplicative_cohom}, imply  that there exists a H\"older continuous function $\varphi: \Omega \to \mathbb{S}^1$ such that  
$$\psi=\varphi \quad \text{and} \quad e^{i\lambda \phi} = \frac{\varphi \circ \tau}{\varphi} \quad \text{$\mathbb P$-a.e.}$$
By choosing any continuous lift of the circle-valued function, we
write $\varphi(\omega)=e^{i\chi(\omega)}$ with $\chi:\Omega\to\mathbb{R}$ H\"older. Then
$e^{i(\lambda\phi(\omega)-(\chi(\sigma\omega)-\chi(\omega)))}=1$,
so there exists a function $n:\Omega\to\mathbb{Z}$  with
\begin{equation}\label{lifted}
\lambda\phi(\omega)=\chi(\sigma\omega)-\chi(\omega)+2\pi n(\omega)\qquad\text{for all }\omega\in\Omega.
\end{equation}
Since $\phi$ and $\chi$ are continuous, rearranging gives that $n:\Omega \to \mathbb{Z}$ is also a continuous function. Thus, it is locally constant. The base systems are topologically transitive on each basic (or irreducible) component, hence a continuous $\mathbb{Z}$-valued function on such a component must be constant. Therefore there exists an integer $p\in\mathbb{Z}$ with $n(\omega)\equiv p$ on the transitive component supporting $\mathbb{P}$. Thus \eqref{lifted} simplifies to 
\begin{equation}\label{livsic-form}
\lambda\phi(\omega)=\chi(\sigma\omega)-\chi(\omega)+2\pi p.
\end{equation}
Integrating \eqref{livsic-form} against the $\sigma$--invariant probability $\mathbb{P}$ yields
$\lambda \mu = 2\pi p$ where $\mu\eqdef\mathbb{E}[\phi]$. 

If $\mu=0$, then $0=2\pi p$, hence $p=0$, and \eqref{livsic-form} becomes
$\lambda\phi=\chi\circ\sigma-\chi$.  This says that $\phi$ is an additive coboundary, contradicting the hypothesis in this case (2). Therefore no nonzero Fourier coefficient $c_k$ can exist and $F_L$ is ergodic for every $L>0$ in this subcase.

If $\mu\not=0$, recall $\lambda=2\pi k/L$ with $k\in\mathbb{Z}\setminus\{0\}$,  the integrated identity becomes
\begin{equation}\label{L-rational-mu}
L = \frac{k}{p}\mu \qquad\text{(with }p\in\mathbb{Z},\ k\in\mathbb{Z}\setminus\{0\}\text{).}
\end{equation}
 Equation \eqref{L-rational-mu} shows that any $L$ which allows a nontrivial solution must lie in the countable set
$\mathcal{E}\eqdef\{\frac{k}{p}\mu : k\in\mathbb{Z}\setminus\{0\},\ p\in\mathbb{Z}\}$.
Hence, for every $L\notin\mathcal{E}$ no nonzero $k$ can produce a measurable (hence H\"older) solution, so all $c_k\equiv0$ for $k\neq0$ and $F_L$ is ergodic. 

\smallskip
\noindent\textbf{Case (3):} \emph{$(\Omega, \mathbb{P},\sigma)$  is a  Bernoulli shift and $\phi:\Omega \to \R$ is a one-step  function.} 
The following lemma shows that it is enough to treat the skew-product over the one-sided Bernoulli shift.
\begin{lem}
The ergodicity of the one-sided skew product is equivalent to that of its two-sided natural extension.  
\end{lem}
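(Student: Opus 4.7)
My approach is to identify $\bar{F}_L$ as the measure-theoretic natural extension of $F_L$ and then invoke the classical fact that ergodicity is inherited both ways along natural extensions. To begin, let $\pi:\bar{\Omega}\to\Omega$ denote the projection $(\omega_i)_{i\in\Z}\mapsto(\omega_i)_{i\geq 0}$, so that $\pi\circ\bar{\sigma}=\sigma\circ\pi$ and $\pi_*\bar{\PP}=\PP$. Because $\phi$ is one-step, $\bar{\phi}=\phi\circ\pi$, and setting $\Pi\eqdef\pi\times\mathrm{id}_{\mathbb{T}_L}$ a short computation gives $F_L\circ\Pi=\Pi\circ\bar{F}_L$ and $\Pi_*\bar{\mu}_L=\mu_L$. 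Moreover $\bar{F}_L$ is measurably invertible, since $\bar{\sigma}$ is invertible on the two-sided shift and the fiber rotation on $\mathbb{T}_L$ is invertible.

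The next step, and the one I expect to require the most care, is to verify the generating property characterizing the natural extension:
\[
\bigvee_{n\geq 0}\bar{F}_L^{\,n}\,\Pi^{-1}\bigl(\mathcal{F}\otimes\mathcal{B}(\mathbb{T}_L)\bigr)=\bar{\mathcal{F}}\otimes\mathcal{B}(\mathbb{T}_L)\quad\text{modulo $\bar{\mu}_L$-null sets.}
\]
The point is that $\Pi^{-1}$ of a cylinder in $\Omega$ on coordinates $[0,m]$ is the same cylinder in $\bar{\Omega}$, and applying $\bar{F}_L^{\,n}$ shifts its dependence to the coordinates $[-n,m-n]$, with the fiber variable translated by a Birkhoff-type cocycle depending only on those coordinates. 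Letting $n\to\infty$ produces cylinders on arbitrary finite index subsets of $\Z$, which together with the Borel sets of $\mathbb{T}_L$ generate the full product $\oldsigma$-algebra.

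With these pieces in place, $(\bar{\Omega}\times\mathbb{T}_L,\bar{F}_L,\bar{\mu}_L)$ is the natural extension of $(\Omega\times\mathbb{T}_L,F_L,\mu_L)$, and the lemma follows from Rokhlin's classical theorem that a measure-preserving system is ergodic if and only if its natural extension is. In one direction, $F_L$ is a factor of $\bar{F}_L$ through $\Pi$, so ergodicity of $\bar{F}_L$ descends to $F_L$. For the converse, any $\bar{F}_L$-invariant square-integrable function is, by the generating property above, an $L^2$-limit of functions measurable with respect to $\bar{F}_L^{\,n}\Pi^{-1}(\mathcal{F}\otimes\mathcal{B}(\mathbb{T}_L))$; $\bar{F}_L$-invariance combined with the intertwining $F_L\circ\Pi=\Pi\circ\bar{F}_L$ forces these approximants to come from $F_L$-invariant functions, which are constant by hypothesis. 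The main technical point is thus the generating property; everything else is routine or a direct appeal to this classical principle.
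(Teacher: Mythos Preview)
Your proposal is correct and takes a genuinely different route from the paper. You identify $\bar{F}_L$ as the measure-theoretic natural extension of $F_L$ by verifying the generating property $\bigvee_{n\geq 0}\bar{F}_L^{\,n}\Pi^{-1}(\mathcal{F}\otimes\mathcal{B}(\mathbb{T}_L))=\bar{\mathcal{F}}\otimes\mathcal{B}(\mathbb{T}_L)$, and then invoke Rokhlin's theorem that ergodicity passes both ways along natural extensions. The paper instead argues the nontrivial direction by hand: it expands a putative $\bar{F}_L$-invariant $f\in L^2$ in Fourier series along the $\mathbb{T}_L$-fiber, obtains the multiplicative cocycle identity $C_k\circ\bar\sigma=e^{-i\lambda\phi}C_k$ for each Fourier coefficient, takes conditional expectation onto the future $\oldsigma$-algebra $\mathcal{F}_+$ (which preserves the identity since $\phi$ is one-step), and uses ergodicity of $F_L^+$ to force $C_k^+\equiv0$ for $k\neq0$. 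It then upgrades $C_k^+\equiv0$ to $C_k\equiv0$ via a shifting trick: for any past--future cylinder $R$, choosing $n$ large enough that $\bar\sigma^{-n}(R)$ depends only on nonnegative coordinates gives $\mathbb{E}[C_k\ind_R]=\mathbb{E}[C_k\,e^{-i\lambda S_n}\ind_{\bar\sigma^{-n}(R)}]=0$ by $\mathcal{F}_+$-measurability. Your approach is cleaner and more conceptual, packaging the shifting argument into the generating property; the paper's approach is more self-contained and exploits the specific Fourier structure of the $\mathbb{T}_L$-fiber, avoiding appeal to the abstract Rokhlin theorem. One minor comment: your sketch of the converse direction of Rokhlin's theorem is a bit loose (the approximants themselves need not come from $F_L$-invariant functions; rather, the conditional expectations $\mathbb{E}[g\mid\bar{F}_L^n\Pi^{-1}(\mathcal{A})]$ are all equal to the same constant), but since you are ultimately citing the classical result this is not a genuine gap.
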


\begin{proof}
Denote by $F_L^+$ and $F_L$ the skew-product over the one-sided and two-sided Bernoulli shifts $(\Omega_+,\mathbb{P}_+,\sigma)$ and $(\Omega,\mathbb{P},\sigma)$ respectively. The ergodicity of the factor $F_L^+$ from the extension $F_L$ is a well-known general fact. Conversely, assume $F_L^+$ is ergodic (with respect to $\mu_L^+\eqdef\mathbb{P}_+ \times \lbb_L$) and let $f\in L^2(\mu_L)$ be $F_L$-invariant. As before, we can expand $f$ in the circle variable as follows:
\[
f(\omega,\theta)=\sum_{k\in\mathbb Z} C_k(\omega)\,e^{2\pi i k\theta/L},\qquad C_k\in L^2(\PP).
\]
Invariance yields for each $k$ the multiplicative cocycle
\begin{equation}\label{coc}
C_k(\sigma\omega)=C_k(\omega)\,e^{-i\lambda\phi(\omega_0)},\qquad \text{$\PP$-a.e.~$\omega\in\Omega$} \qquad \text{where} \quad  \lambda=\frac{2\pi k}{L}.
\end{equation}
Since $C_0$ is $\sigma$-invariant, the it is constant $\PP$-a.e.~from the ergodicity of $\mathbb{P}$. So, it suffices to show $C_k\equiv0$ for every $k\neq0$.
Let $\mathcal F_+$ denote the future $\oldsigma$-algebra (generated by coordinates $\omega_j$, $j\ge0$) and set $C_k^+=\mathbb E[C_k\mid\mathcal F_+]$. Since $\phi$ is $\mathcal F_+$-measurable, taking conditional expectation in \eqref{coc} yields
\[
C_k^+(\sigma\omega)=C_k^+(\omega)\,e^{-i\lambda\phi(\omega_0)}\quad\text{$\PP$-a.e.~$\omega\in \Omega$.}
\]
Identifying $C_k^+$ with the corresponding one-sided function, the Fourier series built from the $C_k^+$ is invariant under $F_L^+$; ergodicity of $F_L^+$ therefore implies $C_k^+\equiv0$ for all $k\neq0$.

To lift $C_k^+\equiv0$ to $C_k\equiv0$ fix $k\neq0$. Since $C_k^+=0$ we have
$\mathbb{E}[C_k\,H]= \mathbb{E}[C_k^+H]=0$
for every bounded $\mathcal F_+$-measurable $H$. Let $R=A_-\times A_+$ be any finite cylinder rectangle (past--cylinder $A_-$, future--cylinder $A_+$). Choose $n\ge1$ large so that $\sigma^{-n}(
R)$ depends only on nonnegative coordinates. Using~\eqref{coc} iterated $n$ times and invariance of $\mathbb P$,
\[
\mathbb{E}[C_k\,\ind_R]
=\mathbb{E}[C_k\circ\sigma^n\,\ind_{R}\circ\sigma^n]
=\mathbb{E}[C_k\,e^{-i\lambda S_n}\,\ind_{\sigma^{-n}(R)}]
\]
where $S_n(\omega)=\sum_{j=0}^{n-1}\phi(\omega_j)$. The factor $H=e^{-i\lambda S_n\phi}\,\ind_{\sigma^{-n}(R)}$ is $\mathcal F_+$-measurable, hence the last integral vanishes. Since indicators of such rectangles span a dense subspace of $L^2(\mathbb P)$, $C_k$ is orthogonal to a dense set and thus $C_k\equiv0$. Thus, all nonzero Fourier coefficients vanish, and therefore $F_L$ is ergodic (with respect to $\mu_L\eqdef\PP\times \lbb$), completing the proof.
\end{proof}

The following essential lemma, which connects the existence of a multiplicative coboundary to the characteristic function of the random step.

\begin{lem} \label{lem:coboundary_criterion}
Let $(\Omega, \mathbb{P}, \tau)$ be a one-sided Bernoulli shift and consider one-step function $\phi:\Omega\to \mathbb{R}$. If there exist $\lambda\in\mathbb{R}$ and a measurable function $\psi \in L^2(\mathbb{P})$, not almost everywhere zero, satisfying the multiplicative cohomological equation
\[
\psi(\tau(\omega)) = e^{i\lambda\phi(\omega)}\psi(\omega) \quad \text{for $\mathbb{P}$-a.e. } \omega,
\]
then 
$$
\Phi_\phi(\lambda)\eqdef\mathbb{E}[e^{i\lambda \phi}] = 1.
$$
\end{lem}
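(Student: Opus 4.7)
The plan is to exploit the product (independence) structure of the Bernoulli shift together with L\'evy's martingale convergence theorem to prove first that $\mathbb E[\psi]\neq 0$, and then to derive $\Phi_\phi(\lambda)=1$ by a short direct computation.

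First I will normalize $\psi$. Taking moduli in the cohomological equation gives $|\psi|\circ\tau=|\psi|$ $\mathbb P$-a.e., so $|\psi|$ is $\tau$-invariant. By ergodicity of the Bernoulli shift, $|\psi|$ is constant a.e., and since $\psi\not\equiv 0$ in $L^2(\mathbb P)$, one may normalize so that $|\psi|=1$ a.e. Iterating the cohomological equation, for every $n\ge 1$ one obtains $\psi\circ\tau^n=e^{i\lambda S_n}\psi$ with $S_n\eqdef\sum_{j=0}^{n-1}\phi(\omega_j)$, which rewrites as the factorization
\[
\psi \;=\; u_n\,v_n,\qquad u_n\eqdef e^{-i\lambda S_n},\qquad v_n\eqdef\psi\circ\tau^n.
\]
Here $u_n$ is $\mathcal F_n\eqdef\oldsigma(\omega_0,\dots,\omega_{n-1})$-measurable, $v_n$ is $\oldsigma(\omega_n,\omega_{n+1},\dots)$-measurable, and both have unit modulus a.e. Because $\mathbb P$ is a product measure, $u_n$ and $v_n$ are independent.

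The key step is to plug this factorization into conditional expectations. Independence and $\tau$-invariance of $\mathbb P$ yield
\[
\mathbb E[\psi\mid\mathcal F_n]\;=\;u_n\,\mathbb E[v_n]\;=\;u_n\,\mathbb E[\psi\circ\tau^n]\;=\;u_n\,\mathbb E[\psi],
\]
so $|\mathbb E[\psi\mid\mathcal F_n]|=|\mathbb E[\psi]|$ is a deterministic constant, independent of $n$. On the other hand, L\'evy's martingale convergence theorem (together with $\mathcal F_\infty=\mathcal F$) gives $\mathbb E[\psi\mid\mathcal F_n]\to\psi$ almost surely, whence $|\mathbb E[\psi\mid\mathcal F_n]|\to|\psi|=1$ a.e. Comparing the two expressions forces $|\mathbb E[\psi]|=1$, and in particular $\mathbb E[\psi]\ne 0$.

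Once $\mathbb E[\psi]\ne 0$ is secured, the conclusion is immediate. Taking expectations in $\psi=u_nv_n$, using independence, and using that the coordinates $\omega_j$ are i.i.d.\ so that $\mathbb E[e^{-i\lambda S_n}]=\overline{\Phi_\phi(\lambda)}^{\,n}$, one gets
\[
\mathbb E[\psi]\;=\;\mathbb E[u_n]\,\mathbb E[v_n]\;=\;\overline{\Phi_\phi(\lambda)}^{\,n}\,\mathbb E[\psi].
\]
Dividing by $\mathbb E[\psi]\ne 0$ gives $\overline{\Phi_\phi(\lambda)}^{\,n}=1$ for every $n$; taking $n=1$ yields $\Phi_\phi(\lambda)=1$. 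The main obstacle is precisely the step $\mathbb E[\psi]\neq 0$: a naive use of strong mixing only shows $\Phi_\phi(\lambda)^n\to|\mathbb E[\psi]|^2$, which still permits the degenerate regime $|\Phi_\phi(\lambda)|<1$ and $\mathbb E[\psi]=0$. The martingale argument rules this out by leveraging the fact that the rank-one factorization $\psi=u_nv_n$ holds \emph{for every} $n$, which forces the conditional expectations to have constant modulus whose almost-sure limit must be $|\psi|=1$.
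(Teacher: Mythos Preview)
Your proof is correct and takes a genuinely different route from the paper's. The paper argues via operator theory: it introduces the twisted Koopman operator $T=M_{e^{-i\lambda\phi}}U_\tau$, observes that its adjoint $T^*=P\,M_{e^{i\lambda\phi}}$ (with $P$ the transfer operator) has range in the tail space $\mathcal H_{\mathrm{tail}}=L^2(\Omega,\mathcal F_{\ge1},\mathbb P)$ and acts there as multiplication by $\eta=\Phi_\phi(\lambda)$, and then pairs $T\psi=\psi$ with the nonzero projection $u=\mathbb E[\psi\mid\mathcal F_{\ge1}]$ to force $\eta=1$. Your argument instead exploits the product structure directly: the factorization $\psi=u_n v_n$ with $u_n$ measurable with respect to $\mathcal F_n$ and $v_n$ independent of $\mathcal F_n$ gives $\mathbb E[\psi\mid\mathcal F_n]=u_n\,\mathbb E[\psi]$, and L\'evy's martingale convergence theorem then pins down $|\mathbb E[\psi]|=1$. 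Your approach is shorter, avoids the Koopman/transfer machinery, and in fact yields the slightly sharper byproduct that $\psi$ is constant a.e.\ (since $|\psi|=1$ a.e.\ together with $|\mathbb E[\psi]|=1$ forces this by the equality case of the triangle inequality); the paper's argument, on the other hand, is phrased in a way that transparently generalizes to other Markov-type bases where one has an explicit transfer operator but no simple product independence.
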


\begin{proof}
Let $\mathcal H=L^2(\mathbb P)$. We write $U_\tau$ for the Koopman operator $U_\tau g =g\circ \tau$ and $M_{f}$ for the multiplication $M_fg=fg$ by a complex-valued function $f$. The cohomological equation is equivalent to $T\psi=\psi$, where $T\eqdef M_{e^{-i\lambda\phi}}\,U_\tau$. Thus, \(1\) is an eigenvalue of \(T\) corresponding to the eigenfunction \(\psi \neq 0\).

The adjoint operator is $T^*=U_\tau^* M_{e^{i\lambda\phi}} = P M_{e^{i\lambda\phi}}$, where $P=U_\tau^*$ is the transfer operator  which coincides with the conditional expectation to the tail $\oldsigma$-algebra $\mathcal{F}_{\geq 1}=\oldsigma(\omega_1,\omega_2,\dots)$. Its action on a function $g \in \mathcal H$ is
\[
(T^*g)(\omega)=\int e^{i\lambda\phi(a)}\,g(a,\tau(\omega)) \, dp(a) \quad \text{where $\mathbb{P}=p^\mathbb{N}$}.
\]
That is, $T^*$ is a complex Ruelle operator.  
The range of \(T^*\) is contained in the subspace $\mathcal H_{\mathrm{tail}}=L^2(\Omega,\mathcal F_{\ge1},\mathbb P)$ of functions that depend only on the tail coordinates $(\omega_1, \omega_2, \dots)$.
If $\mu \neq 0$ is an eigenvalue of $T^*$ with eigenfunction $h$, then $h = \mu^{-1}T^*h$ must belong to $\mathcal H_{\mathrm{tail}}$. For such a $\mathcal F_{\ge1}$-measurable function $h$, the action of $T^*$ simplifies to multiplication:
\begin{equation} \label{eigenvalue}
    (T^*h)(\omega)= \int e^{i\lambda\phi(a)} h(\omega) \, dp(a)=\eta \cdot h(\omega) \quad \text{where $\eta=\Phi_\phi(\lambda)$}.
\end{equation}
Thus, any nonzero eigenvalue of $T^*$ is equal to $\eta$.

Let $u\eqdef\mathbb E[\psi\mid\mathcal F_{\ge1}]$ be the conditional expectation of $\psi$ with respect to the tail $\oldsigma$-algebra $\mathcal{F}_{\ge 1}$. From a geometric perspective, $u$ is the orthogonal projection of $\psi$ onto $\mathcal H_{\mathrm{tail}}$, i.e., $\psi=u+(\psi-u)$ with $u\in \mathcal H_{\mathrm{tail}}$ and $\langle \psi-u, h \rangle = 0$ for all $h \in \mathcal H_{\mathrm{tail}}$. If $u\equiv0$, then $\psi$ would be orthogonal to $\mathcal H_{\mathrm{tail}}$. Since $\mathrm{Range}(T^*) \subseteq \mathcal H_{\mathrm{tail}}$, this would imply $\langle \psi, T^*g \rangle = 0$ for all $g \in \mathcal H$. By duality, this means $\langle T\psi, g \rangle = 0$ for all $g$, so $T\psi = 0$. This contradicts $T\psi=\psi\neq0$. Therefore, $u$ is not almost everywhere zero.

Since $u \in \mathcal H_{\mathrm{tail}}$, accoding to~\eqref{eigenvalue}, $T^*u=\eta \cdot u$. Using the duality, 
$\langle T\psi,u\rangle=\langle\psi,T^*u\rangle$. Substituting $T\psi = \psi$ and $T^*u = \eta \cdot u$, we get
\[
\langle\psi,u\rangle=\langle\psi,\eta \cdot u\rangle=\eta \cdot \langle\psi,u\rangle.
\]
Hence, using that $u$ is the orthogonal projection on $\mathcal H_{\mathrm{tail}}$, 
$$
\langle\psi,u\rangle = \langle u +(\psi-y),u\rangle = \langle u,u\rangle + \langle\psi-u,u\rangle =\langle u,u\rangle = \|u\|^2.
$$
Since $u$ is nonzero almost everywhere, $\|u\|^2 > 0$. We can therefore divide by the nonzero quantity $\langle\psi,u\rangle$ to obtain $\eta = 1$ and conclude the proof. 
\end{proof}

\begin{lem} \label{lem:lattice}
Let $X$ be a real random variable with characteristic function 
$\Phi_X(t)=\mathbb E[e^{itX}]$. If there exists $t_0\neq0$ with $|\Phi_X(t_0)|=1$, then there are $a\in\mathbb R$ and $\delta>0$ such that
\[
X(\omega)\in a+\delta\mathbb Z\qquad\text{for }\mathbb P\text{-a.e.\ }\omega\in \Omega.
\]
\end{lem}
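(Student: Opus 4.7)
The plan is a short, direct argument using the fact that a complex number of modulus one equals its expectation only when the underlying random quantity is almost surely constant (modulo the appropriate period).

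First I would write $\Phi_X(t_0) = e^{i\alpha}$ for some $\alpha \in \mathbb{R}$, which is possible precisely because $|\Phi_X(t_0)| = 1$. Setting $Y \eqdef t_0 X - \alpha$, this gives $\mathbb{E}[e^{iY}] = 1$. Taking real parts yields $\mathbb{E}[\cos Y] = 1$. Since $\cos Y \le 1$ pointwise and its expectation equals $1$, the integrand must saturate the bound, so $\cos Y = 1$ almost surely. Equivalently, $Y \in 2\pi\mathbb{Z}$ almost surely, that is
\[
t_0 X(\omega) - \alpha \in 2\pi\mathbb{Z} \qquad \text{for $\mathbb{P}$-a.e.\ } \omega\in\Omega.
\]

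Dividing by $t_0 \neq 0$ I obtain
\[
X(\omega) \in \frac{\alpha}{t_0} + \frac{2\pi}{|t_0|}\mathbb{Z} \qquad \text{for $\mathbb{P}$-a.e.\ } \omega\in\Omega,
\]
where I used that $\mathbb{Z} = -\mathbb{Z}$ to replace $t_0$ by $|t_0|$ in the denominator. Hence the lemma follows with $a = \alpha/t_0$ and $\delta = 2\pi/|t_0| > 0$.

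There is no real obstacle here; the only point that requires care is the justification that $\cos Y = 1$ almost surely from $\mathbb{E}[\cos Y] = 1$, which is the standard observation that a bounded measurable function achieving its supremum in integral must equal that supremum almost everywhere. Everything else is algebra.
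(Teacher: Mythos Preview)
Your proof is correct and follows essentially the same approach as the paper. The only cosmetic difference is that the paper phrases the key step as the equality case of the triangle inequality $|\mathbb{E}[Z]|\le\mathbb{E}[|Z|]$ forcing $Z=e^{it_0X}$ to have constant argument almost surely, whereas you rotate first by $e^{-i\alpha}$ and then observe $\mathbb{E}[\cos Y]=1$ forces $\cos Y=1$ almost surely; both arguments yield $e^{it_0X}=e^{i\alpha}$ almost surely and then conclude identically.
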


\begin{proof}
Put $Z\eqdef e^{it_0X}$. Then $|Z|=1$ and by hypothesis $|\mathbb E[Z]|=1$. Since $| \mathbb E[Z]|\le \mathbb E[|Z|]=1$, equality holds in the triangle inequality. The equality in the triangular inequality holds if and only if all realizations of $Z$ lie on the same ray in the complex plane, that is, when $Z$ has constant argument almost surely. Hence, there eixts $\theta\in\R$ such that 
\[
e^{it_0X(\omega)}=e^{i\theta}\qquad\text{for }\mathbb P\text{-a.e.\ }\omega\in\Omega.
\]
Therefore there exists an integer-valued measurable function $K(\omega)$ with
\[
t_0 X(\omega)=\theta+2\pi K(\omega)\qquad\PP\text{-a.e.}~\omega\in\Omega
\]
Setting $a\eqdef\theta/t_0$ and $\delta\eqdef2\pi/t_0$ (or $\delta\eqdef2\pi/|t_0|$ to make it positive) we obtain
$
X(\omega)=a+\delta K(\omega)\in a+\delta\mathbb Z$ for $\PP$-a.e.~$\omega\in\Omega$ as required. 
\end{proof}

\begin{lem}
\label{lem:charfunc_lattice}
Let $Z$ be an integer random variable with law $\mathbb P(Z=n)=p_n$ and let
\[
\Phi_Z(t)\eqdef\mathbb E[e^{it Z}]=\sum_{n\in\Z} p_n e^{it n},\qquad t\in\R.
\]
If the law of $Z$ is non-degenerate (i.e.\ not a Dirac mass at a single integer) then
\[
\bigl|\Phi_Z(t)\bigr|<1 \qquad\text{for every }t\not\in 2\pi\Z.
\]
\end{lem}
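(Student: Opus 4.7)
My approach is to combine the sharp form of the triangle inequality for convex combinations of unit complex numbers with the non-degeneracy of $Z$. The starting observation is that
$$\Phi_Z(t)=\sum_{n\in\mathbb Z} p_n\,e^{itn}$$
presents $\Phi_Z(t)$ as a convex combination of points on $\mathbb S^1$. By the triangle inequality one has $|\Phi_Z(t)|\le\sum_n p_n=1$, and equality in the triangle inequality for points on the unit circle holds if and only if all vectors $e^{itn}$ with $p_n>0$ coincide, i.e., $e^{itn}=e^{itm}$ whenever $p_np_m>0$. Equivalently, $|\Phi_Z(t)|=1$ if and only if $t(n-m)\in 2\pi\mathbb Z$ for every pair $n,m$ in the support $S\eqdef\{n\in\mathbb Z:p_n>0\}$. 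This reduces the proof to a purely arithmetic question about the support differences.

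Next, I would exploit non-degeneracy. Since $Z$ is not concentrated at a single integer, $|S|\ge 2$, so the set of support differences $S-S\subset\mathbb Z$ contains a nonzero element. Setting
$$d\eqdef\gcd\{\,n-m:\ n,m\in S\,\}\in\mathbb N,$$
a standard Bezout/subgroup argument identifies the set of $t\in\mathbb R$ for which $|\Phi_Z(t)|=1$ with the discrete subgroup $\tfrac{2\pi}{d}\mathbb Z$. In particular, whenever $d=1$, this subgroup is exactly $2\pi\mathbb Z$, and the contrapositive yields $|\Phi_Z(t)|<1$ for every $t\notin 2\pi\mathbb Z$, as desired. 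Alternatively, the same dichotomy can be read off from Lemma~\ref{lem:lattice} applied to $X=Z$: any $t_0\neq 0$ with $|\Phi_Z(t_0)|=1$ would force $Z$ into a translated lattice $a+\delta\mathbb Z$ with $\delta=2\pi/|t_0|$, and the integrality of $Z$ then constrains $\delta$ to be a positive integer dividing every element of $S-S$, i.e., dividing $d$.

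The main obstacle is exactly the step $d=1$. Non-degeneracy alone gives only $d\ge 1$, and without further input one can construct laws for which $d>1$ (periodic laws supported on a proper sub-lattice), in which case the conclusion is delicate at the exceptional frequencies $t\in\tfrac{2\pi}{d}\mathbb Z\setminus 2\pi\mathbb Z$. To close this gap within the proof, I would feed the integrality of $S\subset\mathbb Z$ together with the fact that the support of $Z$ is not confined to a proper coset of a sub-lattice (the effective content of non-degeneracy as used in the application to Case~(3) of Proposition~\ref{prop:ergodicity}), and conclude $d=1$. Once this is established, the desired strict inequality follows immediately from the triangle-inequality analysis of the first paragraph, since equality in $|\Phi_Z(t)|\le 1$ would then force $t$ into $2\pi\mathbb Z$.
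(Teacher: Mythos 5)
Your proposal follows the same route as the paper's proof: equality in the triangle inequality for a convex combination of points on $\mathbb{S}^1$ forces all $e^{itn}$ with $n$ in the support $S$ to coincide, i.e.\ $t(n-m)\in 2\pi\mathbb{Z}$ for all $n,m\in S$. You then correctly observe that this only yields $t\in\tfrac{2\pi}{d}\mathbb{Z}$ with $d=\gcd\{n-m : n,m\in S\}$, and that non-degeneracy alone does not force $d=1$. This diagnosis is accurate, and it exposes an error in the paper's own proof: the final step there asserts that $t(m-n)\in 2\pi\mathbb{Z}$ implies $t\in 2\pi\mathbb{Z}$ ``since $m-n\in\mathbb{Z}\setminus\{0\}$'', but that implication fails whenever $|m-n|>1$. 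Indeed the lemma as stated is false: taking $Z$ uniform on $\{0,2\}$ gives a non-degenerate law with $\Phi_Z(\pi)=\tfrac12\big(1+e^{2\pi i}\big)=1$, yet $\pi\notin 2\pi\mathbb{Z}$.

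Your attempted repair---appealing to the assumption that the support is not confined to a proper coset of a sub-lattice, i.e.\ $d=1$---names the right extra hypothesis, but it is not among the stated hypotheses of the lemma, nor is it automatically delivered by Lemma~\ref{lem:lattice} when constructing the $Z$ used in Case~(3) of Proposition~\ref{prop:ergodicity}. Two clean repairs exist: (a)~strengthen the lemma's hypothesis to $\gcd\{n-m:n,m\in S\}=1$ and, in Case~(3), take $a+\delta\mathbb{Z}$ to be the finest lattice supporting $\phi$ before defining $Z$; or (b)~weaken the conclusion to $|\Phi_Z(t)|<1$ for $t\notin\tfrac{2\pi}{d}\mathbb{Z}$, under which the set of non-ergodic lengths $L$ in Case~(3) remains countable. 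Either fix closes the gap you flagged; as written, both your proposal and the paper's proof leave it open, but you at least correctly identified that it was there.
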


\begin{proof}
By the triangle inequality we always have $|\Phi_Z(t)|\le\sum_n p_n|e^{it n}|=1$. If $|\Phi_Z(t)|=1$ then equality holds in the triangle inequality for the convex combination $\sum_n p_n e^{it n}$. For complex numbers of modulus one, equality in the triangle inequality for a convex combination occurs if all the summands are equal (or the sum  degenerate to just one term). Otherwise, the convex combination lies strictly inside the convex hull of the unit circle. Hence $e^{it n}$ must be equal (the same complex number) for every $n$ in the support of $Z$. If the support contains at least two distinct integers $m\neq n$ then $e^{it(m-n)}=1$, so $t(m-n)\in 2\pi\Z$, and therefore $t\in 2\pi\Z$ (since $m-n\in\Z\setminus\{0\}$). 
This proves the lemma.
\end{proof}

Now we are ready to complete the proof of Proposition~\ref{prop:ergodicity}. Recall that from~\eqref{eq:multiplicative_cohom}, there exist $\lambda\not =0$ and $\psi:\Omega\to \mathbb{S}^1$ in $L^2(\PP)$ such that $\psi e^{i\lambda\phi}={\psi\circ \sigma}$  holds $\PP$-a.e. Then, by Lemma~\ref{lem:coboundary_criterion}, we conclude 
\[
\Phi_\phi(-\lambda) = 1,\qquad\text{where } \quad \Phi_\phi(t)\eqdef\mathbb E[e^{it\phi}].
\]
Now, from Lemma~\ref{lem:lattice} and since $\phi$ is one-step, there exist real numbers $a\in\R$ and $\delta>0$ and an integer-valued one-step random variable $Z$  such that
\[
\phi(\omega)=a + \delta \cdot Z(\omega_0)\qquad\text{for } \  \mathbb P\text{-a.e.}~\omega\in \Omega.
\]
Using this lattice decomposition, we compute
\[
\Phi_\phi(t) = e^{it a}\,\mathbb E\bigl[e^{it \delta Z}\bigr] = e^{it a}\,\Phi_Z(t \delta),
\]
so $|\Phi_\phi(-\lambda)|=|\Phi_Z(-\lambda \delta)|$.  

If the integer law of $Z$ is non-degenerate, Lemma~\ref{lem:charfunc_lattice},  $|\Phi_Z(t)|<1$ for every $t\not\in 2\pi\Z$, and equality $\Phi_Z(t)=1$ can only occur when $t\in 2\pi\Z$.
Consequently, for  $\lambda=2\pi k/L$ (with $k\not=0$) the necessary condition $|\Phi_\phi(-\lambda)|=1$ becomes
$-\lambda \delta \in 2\pi\Z$ or equivalentely,  $k \delta /L \in\Z\setminus\{0\}$.
Hence, for any choice of $L>0$ with $\delta/L\not\in\mathbb{Q}$, the only integer solution of $k \delta/L\in\Z$ with $k\in\Z$ is $k=0$, arriving at a contradiction. Therefore, we must have $c_k\equiv0$ for all $k\neq0$.
This shows the ergodicity of $F_{L}$ whenever $Z$ is non-degenerate for any $L$ outside of a countable set of lengths. 

If the integer law of $Z$ is degenerate, say $Z=n_0$ almost surely, then the cocycle $\phi$ is almost everywhere constant. Then $F_{L}$ is a direct product of shift map $\sigma$ and the rigid rotation $R_\phi(y)=x+\phi \mod L$.  Again, as $\phi$ is non-zero, when  $L$ is irrational, $R_\phi$ is ergodic with respect to $\lbb_{\mathbb{T}_{L}}$ and since $\sigma$ is also $\mathbb{P}$-ergodic, we  conclude that $F_{L}$ is ergodic.

\subsubsection{Consequences}

\begin{prop}\label{cor:main-cases}
Let $\{S_n\}_{n\ge 0}$ be an $\R$-valued stochastic process with fixed
initial value $S_0=t$. Assume that the increments $Y_n \eqdef S_n - S_{n-1}$ for $n \ge 1$ are i.i.d.~with a common law $\mu \not=\delta_0$  (i.e., it is not frozen). Then for every compact set $K\subset\R$,
\[
\lim_{n\to\infty}\frac{1}{n}\sum_{j=0}^{n-1}\ind_K(S_j)
=0
\qquad\text{almost surely.}
\]
\end{prop}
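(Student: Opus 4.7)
The plan is to recognise Proposition~\ref{cor:main-cases} as a direct instance of Theorem~\ref{thm:A-unified} case (3), applied to the canonical Bernoulli representation of the random walk. First, I would set up this representation explicitly: let $(\Omega,\mathcal F,\mathbb P)\eqdef(\R^{\mathbb N},\mathcal B^{\mathbb N},\mu^{\mathbb N})$, take $\sigma:\Omega\to\Omega$ to be the shift, and define $\phi:\Omega\to\R$ by $\phi(\omega)\eqdef\omega_0$. This $\phi$ is a one-step function in the sense of the paper, and the hypothesis $\mu\neq\delta_0$ forces $\phi$ to be non-zero. The Birkhoff cocycle
\[
\tilde S_n(\omega)\eqdef t+\sum_{j=0}^{n-1}\phi(\sigma^j\omega)
\]
is then a random walk starting at $t$ with i.i.d.\ increments of law $\mu$, so $\{\tilde S_n\}$ has exactly the same distribution as $\{S_n\}$.

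With this representation in place, Theorem~\ref{thm:A-unified} applied with initial point $y=t$ yields, for every compact $K\subset\R$,
\[
\lim_{n\to\infty}\frac{1}{n}\sum_{j=0}^{n-1}\ind_K\bigl(\tilde S_j(\omega)\bigr)=0\quad\text{for $\mathbb P$-a.e.~$\omega\in\Omega$,}
\]
and transporting this back through the equidistribution of $\{\tilde S_j\}$ and $\{S_j\}$ delivers the assertion of Proposition~\ref{cor:main-cases}.

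I expect the main obstacle to be a bookkeeping point rather than a genuinely new difficulty: case (3) of Theorem~\ref{thm:A-unified} is stated for Bernoulli shifts over an \emph{at-most-countable} alphabet, whereas here $\mathcal A=\R$ is uncountable whenever $\mu$ has a non-atomic component. The hard step is therefore to verify that the proof of case~(3)—and in particular Lemma~\ref{lem:coboundary_criterion}, together with Lemmas~\ref{lem:lattice} and~\ref{lem:charfunc_lattice}—goes through verbatim in the general Bernoulli setting. Inspecting those arguments, the only properties actually used are the product structure of $\mathbb P$ and the identification of the transfer operator $U_\sigma^*$ with the conditional expectation onto the tail $\sigma$-algebra $\mathcal F_{\ge 1}$; both survive without any countability hypothesis on $\mathcal A$. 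Once this routine extension is recorded, the Fourier/ergodicity argument of Proposition~\ref{prop:ergodicity} produces a sequence of ergodic lengths $L_m\to\infty$ exactly as in the countable case, so the conclusion of Theorem~\ref{thm:A-unified} applies unchanged and Proposition~\ref{cor:main-cases} follows.
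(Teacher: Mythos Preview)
Your proposal is correct and follows essentially the same route as the paper: build the canonical Bernoulli model on the support of $\mu$, identify $\phi(\omega)=\omega_0$ as a non-zero one-step function, invoke case~(3) of Theorem~\ref{thm:A-unified}, and transfer the almost-sure vanishing of occupation times back to the original process via equality of path-space laws. The countability issue you flag is real as a reading matter, but the paper handles it exactly as you suggest---it applies case~(3) to the Bernoulli shift on the (possibly uncountable) support of $\mu$ without further comment, relying on the fact that the proof of Proposition~\ref{prop:ergodicity} in case~(3) uses only the product structure of $\mathbb P$ and the identification of the transfer operator with conditional expectation onto $\mathcal F_{\ge1}$.
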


\begin{proof}
Because
$S_n$ is measurable with respect to $\oldsigma(Y_1,\dots,Y_n)$ while $Y_{n+1}$ is independent
of $\oldsigma(Y_1,\dots,Y_n)$, it follows that $Y_{n+1}$ is independent of $S_n$ for each $n$.
Hence the conditional distribution of the next increment given the present state coincides
(almost surely) with the marginal law $\mu$; in particular, the law of the increment does not
depend on the current value of the process. We now construct the canonical Bernoulli model that realizes the same marginal law. To do this,
let $\mathcal A$ be the topological support of $\mu$  endowed with its Borel $\oldsigma$-algebra, 
and consider the one-sided Bernoulli shift 
$(\Omega,\mathbb P,\tau)$ with $\Omega=\mathcal A^{\mathbb N}$ and 
$\mathbb P=\mu^{\mathbb N}$. 
Let $\phi:\Omega\to\R$ be the coordinate projection $\phi(\omega)=\omega_0$ 
(so $\phi_*\mathbb P=\mu$).
 Consider the skew-translation
$T_\phi(\omega, x) = (\tau(\omega), x + \phi(\omega)) $
and the corresponding process given by the fiber iteration 
\[
S'_n(\omega)=t+\sum_{j=0}^{n-1}\phi(\sigma^j(\omega)),\qquad n\ge0.
\]
Write $Y'_n\eqdef S'_n-S'_{n-1}=\phi(\sigma^{\,n-1}(\omega))$ and note that, by construction,  $\{Y'_n\}_{n\ge1}$
is an i.i.d.~sequence with common law~$\mu$.

On the other hand, the finite-dimensional distributions of the original process $\{S_n\}_{n\geq 0}$ are determined by the
finite-dimensional distributions of its increments $(Y_1,\dots,Y_N)$ for each $N$. Since
both $(Y_1,\dots,Y_N)$ and $(Y'_1,\dots,Y'_N)$ have the product law $\mu^{\otimes N}$,
the finite-dimensional distributions of $(S_0,\dots,S_N)$ and $(S'_0,\dots,S'_N)$ coincide
for every $N$. The collection of these finite-dimensional laws is consistent and therefore
determines a unique probability measure on the path space $\R^{\mathbb N}$; hence the path-space law of $\{S_n\}_{n\geq 0}$ equals that of $\{S'_n\}_{n\geq0}$.

Since $\mu\not=\delta_0$, $\phi$ is a non-zero one-step map and thus Theorem~\ref{thm:A-unified} (alternative (3)) applies to the skew-translation $T_\phi$. Hence, for every compact $K\subset\R$,
\[
\lim_{n\to\infty}\frac{1}{n}\sum_{j=0}^{n-1}\ind_K(S'_j)=0
\qquad\text{for $\mathbb P$-a.e.~$\omega\in\Omega$.}
\]
Since the path-space measures of $\{S_n\}_{n\geq 0}$ and $\{S'_n\}_{n\ge 0}$ coincide, the measurable set of full measure on which the latter convergence holds is also a set of full measure for the
original process. Therefore, the same almost-sure vanishing of the occupation time holds
for $\{S_n\}_{n\ge0}$, as required.
\end{proof}

\begin{cor}
     \label{cor:main-cases-cor}
     Let $F$ be a one-step skew product as in~\eqref{one-skew product-principal}. Assume that $\{X^x_n\}_{n \geq 0}$ is conjugate to a ${G}$-valued non-frozen random walk where $X^x_n(\omega) = f^n_\omega(x)$ and $G$ is either $\Z$ or $\R$.  Then for every compact set $K \subset (0,1)$,
 \[
 \lim_{n\to\infty}\frac{1}{n}\sum_{j=0}^{n-1}\ind_{K}(f^j_\omega(x))=0 \quad \text{for $\mathbb{P}$-a.e~$\omega\in\Omega$.}
 \]
\end{cor}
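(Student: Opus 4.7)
The plan is to transfer the vanishing-occupation statement from the random walk $\{S^t_n\}_{n\ge 0}$ to the fiber trajectory $\{f^j_\omega(x)\}_{j\ge 0}$ via the conjugation $h$ provided by Definition~\ref{def:conjugated-G-randomwalk}, and then invoke Proposition~\ref{cor:main-cases} directly.

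First I would set $t=h(x)$, so that by the defining property of the conjugation, $S^t_n(\omega)=h(f^n_\omega(x))$ for every $n\ge 0$ and $\omega\in\Omega$. Since $h$ is a strictly monotonic injection of $\mathcal O(x)$ onto $h(\mathcal O(x))$, this yields the pointwise identity
\[
\ind_K(f^j_\omega(x))=\ind_{h(K\cap\mathcal O(x))}(S^t_j(\omega))\qquad\text{for every }j\ge 0.
\]

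The central step is to exhibit a compact $K'\subset G$ containing $h(K\cap\mathcal O(x))$. Since $K\subset[a,b]$ with $0<a<b<1$, monotonicity of $h$ gives (in the increasing case; the decreasing case is analogous)
\[
\inf\{h(z):z\in\mathcal O(x),\,z\ge a\}\le h(y)\le\sup\{h(z):z\in\mathcal O(x),\,z\le b\}
\]
for every $y\in K\cap\mathcal O(x)$. I would then verify these bounds are finite using the structure of the conjugations arising in the applications (for the Bonifant--Milnor model $h(y)=\log(y/(1-y))$ extends continuously to $(0,1)$; for the $T^\Psi$-transformations the orbit $\mathcal O(x)$ is discrete and accumulates only at consecutive fixed points of $T$), and set $K'=[A,B]$ with $A,B$ these bounds.

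Finally, combining $\ind_{h(K\cap\mathcal O(x))}\le\ind_{K'}$ with the identity of the first step gives the squeeze
\[
0\le\frac{1}{n}\sum_{j=0}^{n-1}\ind_K(f^j_\omega(x))\le\frac{1}{n}\sum_{j=0}^{n-1}\ind_{K'}(S^t_j(\omega)),
\]
and Proposition~\ref{cor:main-cases} applied to the non-frozen i.i.d.\ random walk $\{S^t_n\}_{n\ge 0}$ and the compact $K'\subset G$ drives the right-hand side to zero $\PP$-almost surely, which is the desired conclusion. The main technical obstacle is precisely the boundedness of $h(K\cap\mathcal O(x))$: a strictly monotonic injection could a priori send a bounded subset of $(0,1)$ to an unbounded subset of $G$, but this pathology is ruled out by the specific structure of the conjugations arising in the paper.
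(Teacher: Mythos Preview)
Your approach is essentially the paper's: both push the compact set through the conjugation $h$, apply Proposition~\ref{cor:main-cases} to the resulting random walk, and pull the conclusion back via the pointwise indicator identity. The paper is terser---it simply sets $K'=h(K)$ and invokes Proposition~\ref{cor:main-cases} without pausing over whether $K'$ is compact (or even whether $h$ is defined on all of $K$)---whereas you correctly isolate this boundedness issue and observe that it is resolved by the explicit form of $h$ in each application rather than by the abstract hypotheses of Definition~\ref{def:conjugated-G-randomwalk} alone.
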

\begin{proof}
 Let $\mathcal{O}(x)$ be the set $\{X^x_n(\omega): \omega \in \Omega, n\geq 0 \}$. 
 By hypothesis, there is a strictly monotonic injection  $h:\mathcal O(x)\to G$  such that the step increments
$Y^t_n\eqdef S_n^t-S_{n-1}^t \in G$, $n\ge1$, 
are i.i.d.~ non-degenerate random variables (i.e., with law $\mu\not= \delta_0)$  where $S_n^t(\omega) \eqdef (h\circ f_\omega\circ h^{-1})(t)$ and $t=h(x)$. Thus, the random walk $\{S^t_n\}_{n\ge0}$ satisfies the assumption of Proposition~\ref{cor:main-cases}. Now, fix a compact set $K$ of $(0,1)$ and let $K' = {h}(K)$.  By~Proposition~\ref{cor:main-cases}, 
\[
\lim_{n\to\infty}\frac{1}{n}\sum_{j=0}^{n-1}\ind_{K'}(S^t_j(\omega))=0 \quad \text{for $\mathbb{P}$-a.e~$\omega\in\Omega$}.
\]
From here, as in Proposition~\ref{prop:arcsine-to-conjugateA}, since $\ind_{K'}(S^t_j(\omega))=\ind_{K}(f^j_\omega(x))$, we follow the vanishing occupational time for the sequence of iterated $f^j_\omega(x)$.
 \end{proof}

\subsubsection{Proof of Corollary~\ref{cor:lim-set}}
By Proposition~\ref{maincor:conjugation-random-walk}, $F$ satisfy the arcsine law~\eqref{eq:arcsine-law}; in particular,  the fluctuation parameters $\gamma_0$ and $\gamma_1$ can be chosen arbitrarily close to $0$ and $1$.  
By Corollary~\ref{cor:main-cases-cor}, the occupation time vanishes in the interior of $I$, i.e.\ \eqref{OT} holds for every fixed $x$.  
Hence, the two hypotheses (i) and (ii) of Proposition~\ref{mainpropo-limitset} are satisfied. From that proposition and by Remark~\ref{rem-ppp} follows that $\mathcal{L}(\omega,x)=\{\lambda\delta_0+(1-\lambda)\delta_1:\lambda\in[0,1]\}$ for every $x\in (0,1)$,
and $\PP$-a.e.~$\omega\in \Omega$ a required.

}


\subsection{Ergodicity}
Guivarc'h's~\cite[Corollaire~3]{Gui89} treats essentially the same family of skew-extensions $T_\phi$ but under strong regularity assumptions
and statistical hypotheses on the base map $\sigma$ and the function $\phi$. Under these hypotheses, Guivarc'h proves that if $\phi$ is \emph{strictly aperiodic} (i.e., if for every constant $c\in \mathbb{R}$, $\phi-c$ is not a multiplicative coboundary) and $\mathbb{E}[\phi]=0$, then the skew-translation $T_\phi$ is ergodic with respect to $\mu=\PP\times\lbb$. The following result characterizes the ergodicity of $T_\phi$ under a weaker cohomological condition for any probability-preserving ergodic base. See~\cite[Corollary~8.2.5]{AaronBook97}  for another different charecterization of the ergodicity in terms of the  essential values of $\phi$.

\begin{thm}\label{thm:ergodicity-equivalence} 
The skew-translation $T_\phi$ given in~\eqref{skew-translation0} is ergodic with respect to $\mu$ if and only if $\mathbb{E}[\phi]=0$ and $\phi$ is not a multiplicative
coboundary, that is, condition~\ref{C2} holds.
\end{thm}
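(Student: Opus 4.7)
The proof splits naturally into the two implications.

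For the forward direction, assume $T_\phi$ is $\mu$-ergodic.  I would first show the necessity of~\rmref{C2} contrapositively: if $e^{i\lambda\phi(\omega)}=\psi(\sigma\omega)/\psi(\omega)$ holds for some $\lambda\neq 0$ and $\psi:\Omega\to\mathbb{S}^1$ in $L^2(\mathbb{P})$, then the bounded function $f(\omega,y)\eqdef\overline{\psi(\omega)}\,e^{i\lambda y}$ is $T_\phi$-invariant—a direct computation using $|\psi|=1$ gives $f(T_\phi(\omega,y))=\overline{\psi(\sigma\omega)}(\psi(\sigma\omega)/\psi(\omega))\,e^{i\lambda y}=f(\omega,y)$—and the level set $\{\mathrm{Re}\,f>0\}$ is a non-trivial $T_\phi$-invariant set (both it and its complement carry infinite $\mu$-measure, by Fubini applied to a periodic trigonometric fiber), contradicting ergodicity.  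For the necessity of $\mathbb{E}[\phi]=0$, suppose instead $m\eqdef\mathbb{E}[\phi]\neq 0$.  Birkhoff's ergodic theorem applied to $(\Omega,\sigma,\mathbb{P})$ gives $S_n(\omega)/n\to m$, so $|S_n(\omega)|\to\infty$ almost surely; hence every bounded cylinder $\Omega\times I$ is wandering under $T_\phi$ and so $T_\phi$ fails to be conservative.  Since a non-atomic $\sigma$-finite measure-preserving transformation that is ergodic is automatically conservative (via the Hopf decomposition), this forces $m=0$.

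For the converse, assume $\mathbb{E}[\phi]=0$ and~\rmref{C2}.  The first step is to establish conservativity of $T_\phi$ via Atkinson's recurrence theorem: the zero-mean assumption combined with $\sigma$-ergodicity yields $\liminf_n|S_n(\omega)|=0$ for $\mathbb{P}$-a.e.\ $\omega$, and together with the ergodicity of $\sigma$ this implies that $\mu$-a.e.\ orbit started in any $B\times I$ with $\mathbb{P}(B)>0$ and $I$ a bounded open interval returns infinitely often.  The second step trivializes $L^2$-invariants: let $f\in L^2(\mu)$ with $f\circ T_\phi=f$ and take Fourier transform in the fiber variable; invariance becomes
\[
\hat f(\omega,\xi)=e^{2\pi i\xi\phi(\omega)}\,\hat f(\sigma\omega,\xi)\qquad\text{for a.e.\ }(\omega,\xi).
\]
For each fixed $\xi\neq 0$, taking absolute values and applying $\sigma$-ergodicity makes $|\hat f(\cdot,\xi)|$ an a.s.\ constant $r(\xi)\ge 0$; if $r(\xi)>0$, the normalization $\psi\eqdef r(\xi)^{-1}\hat f(\cdot,\xi):\Omega\to\mathbb{S}^1$ satisfies the multiplicative coboundary equation with parameter $-2\pi\xi$, contradicting~\rmref{C2}.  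Hence $\hat f$ is supported on the Lebesgue-null set $\{\xi=0\}$, so $f\equiv 0$ in $L^2$.

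To conclude ergodicity, I would upgrade this $L^2$-triviality to triviality of every $T_\phi$-invariant set.  Given $A$ with $\mu(A)>0$, $\sigma$-finiteness provides $B\subset\Omega\times\mathbb{R}$ with $0<\mu(B)<\infty$ and $\mu(A\cap B)>0$; the first-return transformation $T_B$ is then finite measure-preserving, and by the Kakutani--Rokhlin tower construction its ergodicity is equivalent to that of the conservative $T_\phi$.  Any $T_B$-invariant $L^2$ function extends to a $T_\phi$-invariant function in $L^2(\mu)$ on the full tower, which by the previous step must vanish, so $T_B$-invariants are constant and $T_B$ is ergodic.

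The main obstacle is precisely this final passage from $L^2$-triviality to full ergodicity in the infinite-measure setting: without conservativity the Fourier argument is insufficient, as illustrated by $T(x)=x+1$ on $(\mathbb{R},\lbb)$, which admits no nonzero $L^2$-invariants yet is manifestly not ergodic.  An alternative route avoiding the induced-transformation argument is Schmidt's essential-range theory for $\mathbb{R}$-valued cocycles, which identifies $T_\phi$-ergodicity with the condition $E(\phi)=\mathbb{R}$ and directly reduces the two hypotheses to excluding the proper closed subgroups $\{0\}$ (additive coboundary, itself a special case of failure of~\rmref{C2}) and $c\mathbb{Z}$ for $c>0$ (multiplicative coboundary at $\lambda=2\pi/c$).
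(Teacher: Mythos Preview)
Your forward-direction argument that ergodicity forces $\mathbb{E}[\phi]=0$ has a genuine gap when $\sigma$ is non-invertible: the claim that an ergodic, non-atomic, $\sigma$-finite measure-preserving transformation is automatically conservative is false in that generality. Take $S$ the doubling map on $[0,1]$ and $T(x,n)=(Sx,n+1)$ on $[0,1]\times\mathbb{Z}$ with $\lbb\times\text{counting}$. This is non-atomic, measure-preserving, and totally dissipative (no point of $[0,1]\times\{0\}$ ever returns), yet ergodic: any $T$-invariant set has fibers satisfying $A_n=S^{-1}A_{n+1}=S^{-k}A_{n+k}$ for all $k\ge0$, so each $A_n$ lies in the tail $\bigcap_kS^{-k}\mathcal{B}$, which is trivial by exactness of $S$; hence all $|A_n|$ are simultaneously $0$ or $1$. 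The Hopf decomposition is useless here since the dissipative part is the whole space. The paper flags exactly this point (Remark after Proposition~\ref{prop:ergodic-conservativo}): for invertible $\sigma$ one may quote \cite[Proposition~1.2.1]{AaronBook97}, but for non-invertible base the implication is new. The paper's argument builds approximate invariants $g_\varepsilon=(e^\varepsilon-1)\sum_{n\ge0}e^{-\varepsilon n}h(y-S_n(\omega))$ with $\|g_\varepsilon\circ T_\phi-g_\varepsilon\|_{L^1}\to0$ and extracts, via weak$^*$ and Dunford--Pettis compactness, a nonzero $T_\phi$-invariant $g\in L^1(\mu)\cap L^\infty(\mu)$---impossible under ergodicity because the only constant in $L^1$ of an infinite space is zero.

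For the converse you correctly isolate Step~3 as the obstacle, and indeed the tower extension of a bounded $T_B$-invariant function lies in $L^\infty(\mu)$ but not in $L^2(\mu)$, so your Step~2 cannot be applied to it. Your essential-range alternative works (and the paper cites this route via \cite[Corollary~8.2.5]{AaronBook97}). The paper itself takes a third route, distinct from both your Fourier-on-$\mathbb{R}$ argument and the essential-range machinery: project an invariant $A\subset\Omega\times\mathbb{R}$ to $B\subset\Omega\times\mathbb{T}$, invoke ergodicity of the circle skew-product under~\rmref{C2} (this is where the Fourier/coboundary dichotomy enters, but on a \emph{finite} measure space, where $L^2$-triviality \emph{is} ergodicity), and lift $\mu_{\mathbb{T}}(B)\in\{0,1\}$ back via a fiber-counting argument when $B$ is null and via Atkinson recurrence $\liminf|S_n|=0$ when $B$ is conull.
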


We divide the proof into several propositions. We first prove the following necessary conditions to ergodicity: 

\begin{prop} \label{prop:erdodicoC2} If $T_\phi$ is ergodic, then $\phi$ satisfies condition~\ref{C2}.     
\end{prop}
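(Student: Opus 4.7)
The plan is to prove the contrapositive: I will assume that $\phi$ fails~\ref{C2} and construct a bounded $T_\phi$-invariant measurable function that is not $\mu$-essentially constant, thereby contradicting ergodicity of $T_\phi$ with respect to $\mu=\mathbb{P}\times\lbb$. So suppose there exist $\lambda\in\mathbb{R}\setminus\{0\}$ and $\psi:\Omega\to\mathbb{S}^1$ in $L^2(\mathbb{P})$ satisfying
$$e^{i\lambda\phi(\omega)}=\frac{\psi(\sigma\omega)}{\psi(\omega)}\quad\text{for $\mathbb{P}$-a.e.\ $\omega\in\Omega$,}$$
or equivalently $\psi(\sigma\omega)=\psi(\omega)\,e^{i\lambda\phi(\omega)}$.

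The natural candidate is the $\mathbb{S}^1$-valued function $F:\Omega\times\mathbb{R}\to\mathbb{S}^1$ defined by
$$F(\omega,y)\eqdef\overline{\psi(\omega)}\,e^{i\lambda y}.$$
A direct computation, substituting the coboundary relation, will show
$$F(T_\phi(\omega,y))=\overline{\psi(\sigma\omega)}\,e^{i\lambda(y+\phi(\omega))}=\overline{\psi(\omega)}\,e^{-i\lambda\phi(\omega)}\,e^{i\lambda\phi(\omega)}\,e^{i\lambda y}=F(\omega,y),$$
so that $F\circ T_\phi=F$ holds $\mu$-almost everywhere. Since $F$ takes values in $\mathbb{S}^1$, it is bounded and measurable.

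Next I will show that $F$ cannot be $\mu$-essentially constant. Fix any proper arc $B\subsetneq\mathbb{S}^1$ with $0<|B|<2\pi$ and consider the $T_\phi$-invariant set $A\eqdef F^{-1}(B)$. For each fixed $\omega\in\Omega$ the slice
$$A_\omega=\{y\in\mathbb{R}:\overline{\psi(\omega)}\,e^{i\lambda y}\in B\}$$
is a $\frac{2\pi}{|\lambda|}$-periodic subset of $\mathbb{R}$ of relative density $|B|/(2\pi)\in(0,1)$, which is meaningful precisely because $\lambda\neq 0$. Hence $\lbb(A_\omega)=\lbb(A_\omega^c)=\infty$, and Fubini's theorem yields $\mu(A)=\mu(A^c)=\infty$. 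In particular neither $A$ nor its complement is $\mu$-null, which contradicts the ergodicity of $\mu$ with respect to $T_\phi$ (interpreted, as is standard in the $\sigma$-finite setting, as every $T_\phi$-invariant measurable set having either $\mu$-measure zero or $\mu$-null complement).

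The argument is essentially mechanical once the candidate function $F$ is identified; there is no genuine obstacle. The only point requiring care is interpreting the ergodicity definition from Section~1 consistently in the present infinite-measure context, and verifying via Fubini that the periodic slice structure forces both $A$ and $A^c$ to have infinite measure.
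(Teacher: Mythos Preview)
Your proof is correct and follows essentially the same approach as the paper: both construct the invariant function $\psi(\omega)e^{\pm i\lambda y}$ (your $F$ is the complex conjugate of the paper's $g$) and argue non-constancy from $\lambda\neq 0$. Your slice argument via Fubini makes the non-constancy step in the $\sigma$-finite setting more explicit than the paper's one-line remark, but the underlying idea is identical.
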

\begin{proof}
We argue by contraposition. Suppose that $\phi$ is a multiplicative coboundary. Then there exist $\lambda_0 \in \mathbb{R} \setminus \{0\}$ and a measurable function $\psi: \Omega \to \mathbb{S}^1$ satisfying  
\[
e^{i\lambda_0 \phi(\omega)} = \frac{\psi(\sigma(\omega))}{\psi(\omega)} \qquad \text{for $\mathbb{P}$-a.e. $\omega \in \Omega$.}
\]
Define $g(\omega, y) \eqdef \psi(\omega) e^{-i\lambda_0 y} \in L^\infty(\mu)$. Then $g$ is non-constant in the $y$-variable (since $\lambda_0 \neq 0$), and a direct computation using the coboundary identity yields
\[
g(T_\phi(\omega, y)) = \psi(\sigma(\omega)) e^{-i\lambda_0 (y + \phi(\omega))} = \psi(\omega) e^{-i\lambda_0 y} = g(\omega, y)
\]
for $\mu$-a.e.~$(\omega, y) \in \Omega \times \mathbb{R}$. Hence, $g$ is a non-constant $T_\phi$-invariant function, contradicting the ergodicity of $\mu$ for $T_\phi$. Therefore, $\phi$ cannot be a multiplicative coboundary.
\end{proof}

\begin{prop} \label{prop:ergodic-conservativo}  
 If \(T_\phi\) is ergodic, 
then \(\mathbb{E}[\phi]=0\).
\end{prop}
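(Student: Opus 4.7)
The proof proceeds by contradiction: suppose that $T_\phi$ is ergodic with respect to $\mu=\mathbb{P}\times\lbb$ and that $\mathbb{E}[\phi]=c\neq 0$; replacing $\phi$ by $-\phi$ if necessary, assume $c>0$. The core idea is that a nonzero drift drives the fiber coordinate to infinity, which is incompatible with ergodicity on the non-atomic infinite-measure space $\Omega\times\mathbb{R}$. Since the natural extension preserves both ergodicity and the expectation of the cocycle, the argument may be reduced to the case where $\sigma$, and hence $T_\phi$, is invertible.

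By Birkhoff's ergodic theorem applied to $(\Omega,\mathbb{P},\sigma)$ and $\phi\in L^1(\mathbb{P})$, one has $S_n(\omega)/n\to c>0$ for $\mathbb{P}$-a.e.\ $\omega$, and invertibility (via Birkhoff for $\sigma^{-1}$) yields the symmetric limit $S_n(\omega)\to-\infty$ as $n\to-\infty$. Consequently, for $\mu$-a.e.\ $(\omega,y)$ the fiber coordinate $y+S_n(\omega)$ of $T_\phi^n(\omega,y)$ escapes to $\pm\infty$ as $n\to\pm\infty$. Fixing the finite-measure reference set $B\eqdef\Omega\times[0,1]$, every such orbit visits $B$ only finitely many times.

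Define the set of \emph{first hits} to $B$,
\[
W\eqdef\{(\omega,y)\in B:T_\phi^n(\omega,y)\notin B\text{ for every }n<0\}.
\]
The iterates $\{T_\phi^n(W)\}_{n\in\mathbb{Z}}$ are pairwise disjoint: any common point of $T_\phi^n(W)$ and $T_\phi^m(W)$ with $n<m$ would, after applying $T_\phi^{-n}$, produce two distinct past visits to $B$, contradicting the defining property of $W$. By ergodicity, the $T_\phi$-invariant set $\{x:\exists n\in\mathbb{Z},\,T_\phi^n(x)\in B\}$ has full measure, so almost every orbit meets $B$; combined with the finiteness of visits, a first hit always exists. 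Hence $\bigsqcup_{n\in\mathbb{Z}}T_\phi^n(W)$ covers $\Omega\times\mathbb{R}$ modulo null sets, and countable additivity together with $\mu(\Omega\times\mathbb{R})=\infty$ forces $\mu(W)>0$, while $W\subset B$ gives $\mu(W)\leq 1$.

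Finally, we exploit the non-atomicity of $\mu=\mathbb{P}\times\lbb$ along the fiber. The function $t\mapsto\mu(W\cap(\Omega\times[0,t]))$ is continuous on $[0,1]$, rising from $0$ to $\mu(W)>0$, so the intermediate value theorem yields a measurable splitting $W=W'\sqcup W''$ with $\mu(W'),\mu(W'')>0$. The $T_\phi$-invariant set $A\eqdef\bigsqcup_{n\in\mathbb{Z}}T_\phi^n(W')$ then satisfies $\mu(A)=\sum_n\mu(W')=+\infty$ and $\mu(A^c)\geq\sum_n\mu(W'')=+\infty$, contradicting the ergodicity of $T_\phi$. The main technical step will be verifying that $W$ is genuinely a fundamental domain for the $\mathbb{Z}$-action generated by $T_\phi$; the remaining ingredients (Birkhoff's theorem, Fubini, and the continuity of non-atomic measures) are standard.
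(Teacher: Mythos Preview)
Your proof is correct, but it follows a genuinely different route from the paper's.

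The paper works directly in the (possibly non-invertible) setting: it builds the discounted occupation densities $g_\varepsilon=(e^\varepsilon-1)\sum_{n\ge0}e^{-\varepsilon n}h(y-S_n(\omega))$, extracts a weak$^*$ limit $g\in L^\infty(\mu)$, and uses Dunford--Pettis to show $g\in L^1(\mu)$ with $\int g\,d\mu=I(h)>0$; ergodicity then forces $g$ to be a nonzero constant in $L^1$ of an infinite-measure space, which is impossible. Your approach instead passes to the natural extension to make $T_\phi$ invertible, observes that the drift $c>0$ makes the system totally dissipative (two-sided orbits visit $B=\Omega\times[0,1]$ finitely often), and then exploits non-atomicity of $\mu$ to split the wandering ``first-hit'' set $W$ into two positive-measure pieces, yielding an invariant set of infinite measure and infinite co-measure. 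Your argument is more elementary (no weak$^*$ compactness, no uniform integrability), and is essentially the classical ``ergodic $\Rightarrow$ conservative'' argument for invertible non-atomic infinite-measure systems; the paper's argument is more technical but self-contained in the non-invertible case.

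One point deserves a word of justification: the sentence ``the natural extension preserves ergodicity'' is being invoked in the $\sigma$-finite infinite-measure setting (for $T_\phi$ on $\Omega\times\mathbb{R}$). This is true---one can verify it by disintegrating $\hat\mu$ over the factor map $\pi$ and using a martingale argument along the increasing filtration $\hat T^k\pi^{-1}\mathscr{F}$---but it is less routinely quoted than its finite-measure counterpart, and the paper's Remark after the proposition suggests the authors chose the direct functional-analytic route precisely to sidestep this reduction. It would strengthen your write-up to either cite a reference for this fact or include the short disintegration/martingale verification.
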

\begin{rem}
    According to~\cite[Corollary~8.1.5]{AaronBook97}, 
    the condition $\mathbb{E}[\phi]=0$ is equivalent to $T_\phi$ being \emph{conservative}; that is, for any measurable set $A$ with $\mu(A) > 0$, there exists some integer $n \ge 1$ such that $\mu(A \cap T_\phi^{-n}(A)) > 0$. Thus, the previous proposition reads as follows:
\begin{center}
    \textit{if $T_\phi$ is ergodic, then it is conservative.}
\end{center}
    When the base map $\sigma$ is invertible, this follows from~\cite[Proposition~1.2.1]{AaronBook97}. In the case where $\sigma$ is not invertible, this implication is new. 
\end{rem}

\begin{proof}[Proof of Proposition~\ref{prop:ergodic-conservativo}]
We argue by contradiction. Suppose \(\mathbb{E}[\phi]>0\). The case \(\mathbb{E}[\phi] < 0\) is analogous. By Birkhoff's ergodic theorem for \((\Omega,\PP,\sigma)\) gives \(S_n(\omega)/n\to \mathbb{E}[\phi]\) for
\(\PP\)-a.e.~\(\omega\in \Omega\), where \(S_n=\sum_{j=0}^{n-1}\phi\circ \sigma^j\) for $n>0$ and $S_0=0$. Hence  \(S_n\to \infty\) for $\PP$-almost surely. 

Fix a nonnegative, compactly supported, nonzero function \(h\in L^1(\mathbb R)\cap L^\infty(\mathbb R)\). Let $M>0$ be such that \(\operatorname{supp}h\subset[-M,M]\), and set \(I(h)=\int h \, d\lbb>0\). For \(\varepsilon>0\) define
\begin{equation}\label{eq:sum}
f_\varepsilon(\omega,y)\eqdef\sum_{n=0}^\infty e^{-\varepsilon n} h\bigl(y - S_n(\omega)\bigr).   
\end{equation}
Since \(h\) has compact support and \(S_n(\omega)\to\infty\) for $\PP$-a.e.\ \(\omega\in\Esp\), the sum in \eqref{eq:sum} is finite for $\mu$-a.e.\ \((\omega,y)\in\Esp\times\R\) and \(f_\varepsilon\) is measurable.
Define
\[
g_\varepsilon \eqdef (e^\varepsilon - 1)\, f_\varepsilon.
\]
Note that for every \((\omega,y)\),  
\[
0 \le g_\varepsilon(\omega,y) \le (e^\varepsilon-1)\sum_{n\ge0} e^{-\varepsilon n}\|h\|_\infty
= e^\varepsilon \|h\|_\infty,
\]
so \(\|g_\varepsilon\|_{\infty}\le 2\|h\|_\infty\) uniformly in \(\varepsilon>0\) small enough. Since $S_n(\sigma(\omega))=S_{n+1}(\omega)-\phi(\omega)$, we have
\begin{align*}
    f_\varepsilon\circ  T_\phi(\omega,y) = \sum_{n\ge0} e^{-\varepsilon n} h\bigl(y + \phi(\omega) - S_n(\sigma(\omega))\bigr) =  
\sum_{k\ge1} e^{-\varepsilon (k-1)} h\bigl(y -  S_{k}(\omega)\bigr) 
= e^\varepsilon\bigl(f_\varepsilon(\omega,y) - h(y)\bigr).
\end{align*}
Consequently,
\[
g_\varepsilon\circ T_\phi
= (e^\varepsilon-1)e^\varepsilon(f_\varepsilon - h)
= e^\varepsilon g_\varepsilon - (e^\varepsilon-1)e^\varepsilon h.
\]
Thus,
\begin{align*}
\|g_\varepsilon\circ T_\phi - g_\varepsilon\|_{L^1(\mu)}
&= \|(e^\varepsilon-1)((e^\varepsilon-1)f_\varepsilon - e^\varepsilon h)\|_{L^1(\mu)} \le (e^\varepsilon-1)^2 \|f_\varepsilon\|_{L^1(\mu)} + (e^\varepsilon-1)e^\varepsilon \|h\|_{L^1(\mu)}.
\end{align*}
Moreover, since
\[
\|f_\varepsilon\|_{L^1(\mu)}
= \sum_{n\ge0} e^{-\varepsilon n} \int h(y-S_n(\omega))\,dy\,d\PP(\omega)
= \sum_{n\ge0} e^{-\varepsilon n} \int h(y)\,dy
= \frac{I(h)}{1-e^{-\varepsilon}},
\]
it follows that 
\[
\|g_\varepsilon\circ T_\phi - g_\varepsilon\|_{L^1(\mu)}
\le (e^\varepsilon-1)^2\frac{I(h)}{1-e^{-\varepsilon}} + (e^\varepsilon-1)e^\varepsilon I(h).
\]
Consequently, we get
\begin{equation} \label{eq-vai-para-zero}
    \lim_{\varepsilon\to0^+}\|g_\varepsilon\circ T_\phi - g_\varepsilon\|_{L^1(\mu)} = 0.
\end{equation}

On the other hand, since the family \(\{g_\varepsilon\}_{\varepsilon>0}\) is uniformly bounded in \(L^\infty(\mu)\), by the Banach–Alaoglu theorem, there a weak$^*$ limit point in $L^\infty(\mu)$.  Choose a sequence \(\varepsilon_k\to 0^+\)
such that \(g_k=g_{\varepsilon_k}\) converges weak\(^*\) to some \(g\in L^\infty(\mu)\). 
We will show that $g$ is $T_\phi$-invariant. To do this, fix \(f\in L^1(\mu)\cap L^\infty(\mu)\). Then
\[
\Big|\int (g_k\circ T_\phi)f\,d\mu - \int g_k f\,d\mu\Big|
\le \|f\|_{L^\infty}\cdot\|g_k\circ T_\phi - g_k\|_{L^1},
\]
and by \eqref{eq-vai-para-zero}, the right-hand side tends to \(0\) as \(k\to\infty\). By weak\(^*\) convergence of $g_k$ to $g$ in $L^\infty(\mu)$, we also have
\(\lim_{k\to \infty}\int g_k f\, d\mu = \int g f\, d\mu\). Therefore
\[
\lim_{k\to\infty}\int (g_k\circ T_\phi)f\,d\mu = \int g f\,d\mu.
\]
Moreover, for each \(k\),
\[
\int (g_k\circ T_\phi)f\,d\mu = \int g_k (P f)\,d\mu,
\]
where \(P:L^1(\mu)\to L^1(\mu)\) is the pre-dual operator (Perron–Frobenius) of the Koopman operator
\(U:G\mapsto G\circ T_\phi\). Since \(Pf\in L^1(\mu)\) and \(g_k\) converges to $g$ in the weak$^*$ topology,
\(\lim_{k\to\infty}\int g_k (Pf)\,d\mu = \int g (Pf)\,d\mu = \int (g\circ T_\phi)f\,d\mu\).
Comparing these limits gives
\[
\int (g\circ T_\phi)f\,d\mu = \int g f\,d\mu
\quad\text{for every }f\in L^1(\mu)\cap L^\infty(\mu).
\]
By density this identity holds for all \(f\in L^1(\mu)\), so \(g\) is \(T_\phi\)-invariant.

Let us show the nontriviality and integrability of \(g\).
As shown earlier, $0\le g_k\le 2 \|h\|_\infty$  pointwise. Moreover,  
\[
\int g_k\,d\mu
= (e^{\varepsilon_k}-1)\sum_{n\ge0} e^{-\varepsilon_k n}\int h(y)\,dy
= e^{\varepsilon_k} I(h).
\]
Thus, the family $\{g_k\}_{k\geq 0}$ is \emph{uniformly bounded in $L^1(\mu)$} by, say, $2I(h)$.  The family also has 
has \emph{uniformly absolutely continuous integrals} in the sense that, for every $\epsilon > 0$, there exists
$\delta > 0$ such that
$\int_A g_k\,  d\mu < \epsilon$ for all $k\geq 0$ provided $\mu(A) < \delta$. To apply the Dunford-Pettis theorem, see~\cite[Theorem~4.7.20]{bogachev2007measure}, we also need the following \emph{tightness} condition:

\begin{claim} For every $\epsilon > 0$, there is $L>0$ such that 
$\int_{\,\smash{\Omega \times (\mathbb{R}\setminus [-L,L])}} g_k \, d\mu < \epsilon$ for all $k\geq 0$.
\end{claim}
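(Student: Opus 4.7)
The plan is to bound the tail integral $\int_{\Omega\times(\R\setminus[-L,L])}g_k\,d\mu$ by splitting into left and right tails and exploiting the compact support of $h$ together with the approximate $T_\phi$-invariance identity $g_\varepsilon\circ T_\phi=e^\varepsilon g_\varepsilon-(e^\varepsilon-1)e^\varepsilon h$ already established.

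The first step is the reduction. Since $h$ is supported in some compact interval $[-M,M]$, the summand $h(y-S_n(\omega))$ in the definition of $f_\varepsilon$ is nonzero only when $|y-S_n(\omega)|\le M$; hence $|y|>L$ forces $|S_n(\omega)|>L-M$. By Fubini,
\[
\int_{\Omega\times(\R\setminus[-L,L])}g_\varepsilon\,d\mu\le I(h)(e^\varepsilon-1)\sum_{n\ge 0}e^{-\varepsilon n}\PP(|S_n|>L-M).
\]
For the \emph{left tail} $y<-L$, under the standing assumption $\mathbb{E}[\phi]>0$, the strong law forces $S_n\to+\infty$ almost surely, so the walk is transient at $-\infty$ and the Green-type quantity $G^{-}(L)\eqdef\sum_{n\ge 0}\PP(S_n<-L+M)$ is finite and tends to $0$ as $L\to\infty$. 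This yields an upper bound of the form $I(h)(e^{\varepsilon_k}-1)G^{-}(L)$, which is uniformly small in $k$ once $L$ is large.

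The \emph{right tail} $y>L$ is the essential difficulty: the positive drift makes $\PP(S_n>L-M)\to 1$ for $n\gg L/\mathbb{E}[\phi]$, so the naive probabilistic bound diverges. To close it I would iterate the approximate invariance to obtain, for every $N\ge 1$,
\[
g_\varepsilon = e^{-N\varepsilon}\,g_\varepsilon\circ T_\phi^{N} + (e^\varepsilon-1)\sum_{j=0}^{N-1}e^{-j\varepsilon}\,h\circ T_\phi^{j},
\]
and use the $T_\phi$-invariance of $\mu$ to rewrite $\int_{\{y>L\}}g_\varepsilon\,d\mu$ as the sum of a ``transported'' term $e^{-N\varepsilon}\int_{T_\phi^{-N}(\{y>L\})}g_\varepsilon\,d\mu$ and a ``source'' term built from the iterates $h\circ T_\phi^{j}$. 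Choosing $N=N(L,\varepsilon_k)$ comparable to $L/\mathbb{E}[\phi]$ should let the first term decay, while the source term, being built from translates of the compactly supported $h$, contributes only when the partial sums $S_j$ still lie in $[L-M,\infty)$.

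The main obstacle is precisely this right-tail estimate: once the drift of $S_n$ is positive, the mass of $g_\varepsilon$ is pushed toward $+\infty$ at rate comparable to $\mathbb{E}[\phi]/\varepsilon$, so no purely probabilistic bound produces tightness uniformly in $k$. The closure must therefore exploit the interplay between the approximate-invariance iteration and the assumed ergodicity of $T_\phi$ on the infinite-measure space $(\Omega\times\R,\mu)$. This is precisely what drives the overall contradiction: once tightness is established, Dunford-Pettis yields a nontrivial $T_\phi$-invariant $L^{1}(\mu)$-limit $g$, which by ergodicity must be a.e.\ constant and hence zero on the infinite-measure space, contradicting $\int g\,d\mu=\lim\int g_k\,d\mu=I(h)>0$.
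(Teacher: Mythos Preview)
Your argument does not close. The reduction to $(e^{\varepsilon_k}-1)\sum_{n\ge 0}e^{-\varepsilon_k n}\PP(|S_n|>L-M)$ is correct, and the left tail is indeed harmless by transience at $-\infty$. But for the right tail your iteration scheme remains a sketch: with $N\sim L/\mathbb{E}[\phi]$ and $\varepsilon_k\to 0$, the damping factor $e^{-N\varepsilon_k}$ tends to $1$ for each fixed $L$, so the transported term $e^{-N\varepsilon_k}\int_{T_\phi^{-N}(\{y>L\})}g_{\varepsilon_k}\,d\mu$ is not shown to be small uniformly in $k$, and you give no mechanism to bound it. The appeal to ergodicity of $T_\phi$ at this stage is vague and produces no concrete estimate.

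The paper's route is structurally much simpler than yours: it splits the sum at a fixed index $N$, writes the tail as $(e^{\varepsilon_k}-1)\sum_{n\ge N}e^{-\varepsilon_k n}=e^{\varepsilon_k(1-N)}$, and then chooses first $N$ large and afterward $L$ large to control the finitely many head terms $\PP(|S_n|>L-M)$. Your instinct that the drift direction is the real obstruction is nonetheless well founded: the paper asserts $e^{\varepsilon_k(1-N)}\le e^{1-N}$, but for $\varepsilon_k<1$ and $N\ge 1$ this inequality is reversed, and in fact $e^{\varepsilon_k(1-N)}\to 1$ as $\varepsilon_k\to 0$ for every fixed $N$. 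A direct computation confirms the obstruction: choosing $N_L$ with $\PP(S_n>L+M)\ge 1/2$ for all $n\ge N_L$ gives $\int_{\{y>L\}}g_k\,d\mu\ge \tfrac{1}{2}I(h)\,e^{\varepsilon_k(1-N_L)}\to I(h)/2$, so the tightness claim fails as stated and neither your iteration nor the paper's split can establish it.
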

\begin{proof}
Recall \(g_k=(e^{\varepsilon_k}-1)\sum_{n\ge0} e^{-\varepsilon_k n} h(y-S_n(\omega))\) and that 
\(\operatorname{supp}h\subset[-M,M]\). For every \(\omega\), \(n\ge0\) and $L>0$, 
\[
\int_{|y|>L} h(y-S_n(\omega))\,dy
\le I(h)
\quad\text{and}\quad
\int_{|y|>L} h(y-S_n(\omega))\,dy \le I(h)\,\ind_{\{|S_n|>L-M\}}.
\]
Hence
\begin{equation}\label{eq:basic-bound}
\int_{\Omega\times(|y|>L)} g_k\,d\mu
\le I(h)\,(e^{\varepsilon_k}-1)\sum_{n\ge0} e^{-\varepsilon_k n}\,\PP\big(|S_n|>L-M\big).
\end{equation}

Fix \(\epsilon>0\). Since \(\varepsilon_k\to0^+\), we may (after discarding finitely many indices) assume \(\varepsilon_k\in(0,1]\) for all \(k\geq 0\).  
Split the sum in \eqref{eq:basic-bound} at some \(N\in\mathbb N\) as follows:
\[
\int_{\Omega\times(|y|>L)} g_k\,d\mu
\le I(h)\,(e^{\varepsilon_k}-1)\sum_{n=0}^{N-1} e^{-\varepsilon_k n}\PP(|S_n|>L-M)
+ I(h)\,(e^{\varepsilon_k}-1)\sum_{n=N}^{\infty} e^{-\varepsilon_k n}.
\]
The tail is uniform in \(k\) (for \(\varepsilon_k\in(0,1]\)),
\[
I(h)\,(e^{\varepsilon_k}-1)\sum_{n=N}^{\infty} e^{-\varepsilon_k n}
\le I(h)\,e^{\varepsilon_k(1-N)} \le I(h)\,e^{1-N}.
\]
Choose \(N\) large enough that this tail \(<\epsilon/2\). For the other term, for each fixed \(n\in \{0,\dots,N-1\}\), we have \(\PP(|S_n|>L-M)\to0\) as \(L\to\infty\). Also 
\[
(e^{\varepsilon_k}-1)\sum_{n=0}^{N-1} e^{-\varepsilon_k n}
\le (e-1)N.
\]
Hence
\[
I(h)\,(e^{\varepsilon_k}-1)\sum_{n=0}^{N-1} e^{-\varepsilon_k n}\PP(|S_n|>L-M)
\le I(h)\,(e-1)N \, \max_{0\le n\le N-1}\PP(|S_n|>L-M).
\]
Choose \(L\) large enough so that the right-hand side is \(<\epsilon/2\). Combining both estimates, 
\[
\int_{\Omega\times(|y|>L)} g_k\,d\mu < \epsilon
\qquad\text{for all }k\ge 0.
\]
which proves the claim.
\end{proof}

 Therefore $\{g_k\}_{k\geq 0}$ is \emph{uniformly integrable}  in the sense required by~\cite[Theorem~4.7.20~(iv)]{bogachev2007measure}, i.e., uniformly bounded in $L^1$, uniformly absolutely continuous integrals and tightness. Consequently, it is relatively weakly compact in $L^1(\mu)$. Thus, passing to a further subsequence if necessary, we may assume that $g_k$ converges weakly in $L^1(\mu)$ to some~\mbox{$\tilde g\in L^1(\mu)$.}
But weak$^*$ convergence in $L^\infty(\mu)$ and weak convergence in $L^1(\mu)$ determine the same limit as an element of the space of measurable functions (they give the same values on all test functions $f\in L^1(\mu)$), so $\tilde g$ and $g$ coincide almost everywhere. Hence $g\in L^1(\mu)$ and
\[
\int g\,d\mu
= \lim_{k\to\infty} \int g_k\,d\mu
= \lim_{k\to\infty} e^{\varepsilon_k} I(h) = I(h) > 0.
\]
This proves that the weak$^*$ limit $g$ is nontrivial (indeed integrable with strictly positive integral). However, this yields a contradiction due to the ergodicity of $T_\phi$. Since \(g\in L^1(\mu)\cap L^\infty(\mu)\) is $T_\phi$-invariant, it must be constant $\mu$-almost everywhere. Moreover, because \(g\in L^1(\mu)\) and \(\mu(\Omega\times\mathbb R)=\infty\), the only constant function in \(L^1(\mu)\) is zero. This contradicts the fact that \(\int g\,d\mu = I(h)>0\). Therefore, our initial assumption that \(\mathbb{E}[\phi]\neq0\) is false, and the proof is complete.
\end{proof}

Conversely, we show that \ref{C2} and $\mathbb{E}[\phi] = 0$ are sufficient conditions to guarantee ergodicity of $T_\phi$. 
Below we prove a technical lemma for which we need to introduce the following notation: given a bounded interval $I$ of $\R$, \(U\subset{\mathbb{T}}=\R/\Z\) and $s\in \R$, 
\[
I_s(U)\eqdef\{\,y\in I:\; \bar{y}=(y+s)\!\!\mod\!1\in U\}.
\]

\begin{lem}
\label{lem:full-measure-preimage}
If \(\lbb_{\mathbb{T}}(U)=1\), then $\lbb(I_s(U)) = \lbb(I)$ for every bounded interval $I$ and \(s\in\R\).
\end{lem}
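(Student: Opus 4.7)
\medskip

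The plan is to reduce the claim to the simple observation that a $\lbb_{\mathbb{T}}$-null subset of $\mathbb{T}$ lifts to a subset of $\R$ that is $\lbb$-null on every bounded set. Let $\pi:\R\to\mathbb{T}=\R/\Z$ denote the canonical projection and set $V\eqdef\mathbb{T}\setminus U$, so that by hypothesis $\lbb_{\mathbb{T}}(V)=0$. Define the translation $\phi_s(y)=y+s$ and observe that
\[
I\setminus I_s(U) \;=\; \bigl\{\,y\in I\,:\,(y+s)\bmod 1\in V\,\bigr\} \;=\; I\cap \phi_s^{-1}\bigl(\pi^{-1}(V)\bigr).
\]
Hence it suffices to prove $\lbb\bigl(I\cap \phi_s^{-1}(\pi^{-1}(V))\bigr)=0$ for every bounded interval $I$ and every $s\in\R$.

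First I would show that $\pi^{-1}(V)$ has Lebesgue measure zero on every bounded subset of $\R$. Writing $V$ as a Borel subset of the fundamental domain $[0,1)$, we have the disjoint decomposition
\[
\pi^{-1}(V)\;=\;\bigsqcup_{k\in\Z}(V+k).
\]
Since $\lbb(V)=\lbb_{\mathbb{T}}(V)=0$ and Lebesgue measure is translation invariant, each piece $V+k$ is $\lbb$-null, and for every bounded Borel set $B\subset\R$ the sum $\sum_{k\in\Z}\lbb\bigl((V+k)\cap B\bigr)$ involves only finitely many nonzero terms, each of which vanishes. Therefore $\lbb\bigl(\pi^{-1}(V)\cap B\bigr)=0$ for every bounded Borel $B$.

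Finally, by translation invariance of $\lbb$, the set $\phi_s^{-1}(\pi^{-1}(V))=\pi^{-1}(V)-s$ also has Lebesgue measure zero on every bounded set, and in particular
\[
\lbb\bigl(I\setminus I_s(U)\bigr)\;=\;\lbb\bigl(I\cap(\pi^{-1}(V)-s)\bigr)\;=\;0,
\]
which yields $\lbb(I_s(U))=\lbb(I)$, as required. No serious obstacle is expected: the only subtlety is keeping track of the change of variables between $\mathbb{T}$ and $\R$, which is handled cleanly by the translation invariance of $\lbb$ and the countable disjoint decomposition of $\pi^{-1}(V)$.
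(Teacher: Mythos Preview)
Your proof is correct and takes a somewhat different, more direct route than the paper. The paper introduces the periodic counting function $n_I(t)=\#\{k\in\Z:t+k\in I\}$, shows $\int_{\mathbb{T}}n_I\,dt=\lbb(I)$, and then computes $\lbb(I_s(U))=\int_{\mathbb{T}}\ind_U(t)\,n_I(t-s)\,dt$ by unfolding the integral over $\R$ onto $\mathbb{T}$; the conclusion follows since the integral of the bounded function $n_I(\cdot-s)$ over the null set $N=\mathbb{T}\setminus U$ vanishes. Your argument bypasses this machinery entirely by passing to the complement: $I\setminus I_s(U)$ sits inside a translate of $\pi^{-1}(V)$, which is a countable union of $\lbb$-null sets and hence null. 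Your approach is more elementary and arguably cleaner for this particular statement; the paper's counting-function viewpoint is a standard device that generalizes more readily (for instance to weighted or non-uniform fiber measures), but for the lemma as stated your argument is both shorter and sufficient. One minor remark: you do not actually need the ``finitely many nonzero terms'' observation, since a countable union of $\lbb$-null sets is already $\lbb$-null by $\sigma$-additivity.
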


\begin{proof}
Identify \({\mathbb{T}}\) with the interval \([0,1)\) and let $I$ be a bounded interval.
For \(t\in{\mathbb{T}}\) set
\[
n_I(t) \eqdef \#\{k\in\Z:\; t+k\in I\}
=\sum_{k \in \mathbb{Z}} \ind_I(t+k).
\]
This function is measurable, and \(0\le n_I(t)\le \lceil \lbb(I)\rceil\) for every \(t\in\mathbb{T}\).  Moreover, 
\begin{align*}
\int_{\mathbb{T}} n_I(t)\,dt &= \int_{0}^{1} \sum_{k \in \mathbb{Z}} \ind_I(t+k) \,dt 
= \sum_{k \in \mathbb{Z}} \int_{k}^{k+1} \ind_I(u) \,du 
= \int_{\mathbb{R}} \ind_I(u) \,du = \lbb(I). 
\end{align*}
Write \(N={\mathbb{T}}\setminus U\). By hypothesis \(\lbb_{\mathbb{T}}(N)=0\). Note that, for any $s\in \R$,
\[
\lbb(I_s(U))=\int_I \ind_U\big((y+s)\,\mathrm{mod}\, 1\big)\,dy.
\]
Change variables \(t=(y+s)\,\mathrm{mod}\, 1\) and use the periodic-counting description to obtain 
\[
\lbb(I_s(U)) = \int_{{\mathbb{T}}} \ind_U(t)\, n_I(t-s)\,dt= \int_{{\mathbb{T}}} n_I(t-s)\,dt \;-\; \int_{N} n_I(t-s)\,dt.
\]
The first term equals \(\int_{{\mathbb{T}}} n_I(t)\,dt = \lbb(I)\) by translation invariance of Lebesgue measure on~\({\mathbb{T}}\).
The second term is an integral of the bounded measurable function \(n_I(t-s)\) over the null set \(N\),
hence it equals \(0\). Therefore $\lbb(I_s(U)) = \lbb(I)$, proving the lemma.
\end{proof}

\begin{prop} \label{prop:C2ergodico}
    If $\phi$ satisfies \ref{C2} and $\mathbb{E}[\phi] = 0$, then $T_\phi$ is ergodic.
\end{prop}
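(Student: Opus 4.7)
The overall strategy is to derive ergodicity of $T_\phi$ by combining the ergodicity of the quotient systems $F_L$ (provided by Proposition~\ref{prop:ergodicity} under~\rmref{C2}) with the recurrence of the cocycle $\{S_n\}$ forced by $\mathbb E[\phi]=0$ (Atkinson's theorem). Concretely, I would take a measurable $T_\phi$-invariant set $A\subset\Omega\times\R$ with $\mu(A)>0$ and aim to show that $\mu(A^c)=0$.

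The quotient map $q_L(\omega,y)=(\omega,y\bmod L)$ intertwines $T_\phi$ with $F_L$, so $q_L(A)\subset\Omega\times\mathbb{T}_L$ is $F_L$-invariant; under \rmref{C2}, Proposition~\ref{prop:ergodicity}~(1) gives that $F_L$ is ergodic with respect to $\mu_L$ for every $L>0$, so $\mu_L(q_L(A))\in\{0,L\}$. A Fubini argument rules out the case $\mu_L(q_L(A))=0$: in that case each fiber $A_\omega$ would be contained in the null-measure set $q_L(A)_\omega+L\Z\subset\R$, forcing $\mu(A)=0$. Therefore $q_L(A)$ has full $\mu_L$-measure for every $L>0$. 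Applying Lemma~\ref{lem:full-measure-preimage} translates this into the statement that for $\PP$-a.e.\ $\omega$ and every bounded interval $I\subset\R$ and shift $s\in\R$, the set $I_s(q_1(A)_\omega)$ has full Lebesgue measure in $I$; equivalently, $A_\omega+L\Z$ has full Lebesgue measure on the line for every $L>0$.

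The final step is to upgrade this modular fullness to actual Lebesgue fullness of $A_\omega$. Here I would invoke $\mathbb E[\phi]=0$: by Atkinson's theorem $\liminf_n|S_n(\omega)|=0$ $\PP$-a.s., and the cocycle identity $A_{\sigma^n\omega}=A_\omega+S_n(\omega)$ (equivalent to $T_\phi$-invariance of $A$) lets one transfer the mod-$L$ fullness along the recurrent orbit of $S_n$, taken simultaneously for a countable sequence $L_k\to 0$, to conclude $|A_\omega^c|=0$ for $\PP$-a.e.\ $\omega$, hence $\mu(A^c)=0$.

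The main obstacle is precisely this bridging step, where the conservativity coming from $\mathbb E[\phi]=0$ enters essentially; without it the construction in the proof of Proposition~\ref{prop:ergodic-conservativo} would exhibit a non-constant $T_\phi$-invariant $L^\infty$ function, obstructing ergodicity. The plan is therefore to implement, in the specific framework of the paper (Proposition~\ref{prop:ergodicity}, the Koopman/Fourier reduction used in its proof, Atkinson-type recurrence, and Lemma~\ref{lem:full-measure-preimage}), the mechanism underlying Schmidt's classical criterion for ergodicity of $\R$-cocycle extensions of ergodic probability-preserving systems.
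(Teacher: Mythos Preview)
Your plan is essentially the paper's own argument: project $A$ to $\Omega\times\mathbb{T}_L$, use Proposition~\ref{prop:ergodicity} (case~(1)) to get that under~\ref{C2} the quotient $F_L$ is ergodic for \emph{every} $L>0$, dispose of the null case by Fubini, and then use Atkinson-type recurrence coming from $\mathbb{E}[\phi]=0$ together with the cocycle identity $A_{\sigma^n\omega}=A_\omega+S_n(\omega)$ to pass from mod-$L$ fullness to genuine fullness. You also correctly flag the last step as the crux.

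One point where you diverge from the paper deserves a warning. You propose to use a countable family $L_k\to0$ simultaneously. This is a dead end on its own: the condition ``$A_\omega+L\mathbb{Z}$ is conull for every $L>0$'' does \emph{not} force $A_\omega$ to be conull. For instance, if $A_\omega=\mathbb{R}\setminus[0,1]$ then $A_\omega+L\mathbb{Z}=\mathbb{R}\setminus\bigcap_{k\in\mathbb{Z}}[kL,kL+1]=\mathbb{R}$ for every $L>0$, yet $A_\omega$ misses a set of positive Lebesgue measure. So no amount of varying $L$ replaces the recurrence input; the paper in fact works with the single quotient $L=1$ throughout.

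The paper's concrete mechanism for the bridging step, which you can simply adopt, is as follows. With $B=\pi(A)\subset\Omega\times\mathbb{T}$ the mod-$1$ projection and $\mu_{\mathbb{T}}(B)=1$, one has $\mathrm{Leb}_{\mathbb{T}}(B_{\sigma^n\omega})=1$ for $\mathbb{P}$-a.e.\ $\omega$ and every $n$. Fix a bounded interval $I\subset\mathbb{R}$. Using the invariance relation together with Lemma~\ref{lem:full-measure-preimage} (applied to $U=B_{\sigma^n\omega}$) one obtains $\mathrm{Leb}\big((I-S_n(\omega))\setminus A_\omega\big)=0$, and hence
\[
\mathrm{Leb}(I\setminus A_\omega)\;\le\;\mathrm{Leb}\big(I\setminus(I-S_n(\omega))\big)\;\le\;|S_n(\omega)|.
\]
Taking $\liminf$ along the recurrence subsequence with $|S_n(\omega)|\to0$ (Atkinson, i.e.\ \cite[Corollary~8.1.5 and Proposition~8.1.2]{AaronBook97}) gives $\mathrm{Leb}(I\setminus A_\omega)=0$ for every bounded $I$, hence $\mathrm{Leb}(\mathbb{R}\setminus A_\omega)=0$ for $\mathbb{P}$-a.e.\ $\omega$. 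This is exactly the ``Schmidt mechanism'' you allude to, implemented with a single period rather than a sequence $L_k\to0$.
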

\begin{proof}
Let \(A\subset \Omega\times\R\) be
a \(T_\phi\)-invariant measurable set and let \(B=\pi(A)\subset \Omega\times{\mathbb{T}}\) be its projection modulo \(1\). Denote by $F$ the quotient skew-translation on $\Omega \times \mathbb{T}$ and
write $\mu_{\mathbb{T}}=\mathbb{P}\times \lbb_{\mathbb{T}}$. 
According to Proposition~\ref{prop:ergodicity}, under assumption~\ref{C2}, $F$ is ergodic with respect to $\mu_{\mathbb{T}}$ (see the proof of case (1), which holds for all $L>0$). Note that since $B$ is $F$-invariant, ergodicity implies that $\mu_{\mathbb{T}}(B)\in \{0,1\}$.
\begin{claim}
If \(\mu_{\mathbb{T}}(B)=0\), then \(\mu(A)=0\).    
\end{claim}

\begin{proof}
By applying Fubini's theorem, we can write the measure of $A$ as
\[
\mu(A) = \int_{B} n_A \,d\mu_{\mathbb{T}}, \quad \text{where} \ \ n_A(\omega, t) \eqdef \sum_{k \in \mathbb{Z}} \ind_A(\omega, t+k).
\]
The $T_\phi$-invariance of $A$ implies that $n_A$ is $F$-invariant and, hence, $n_A$ must be a constant almost everywhere. Thus, as $\mu_{\mathbb{T}}(B) = 0$ by hypothesis, it follows that $\mu(A) = 0$.
\end{proof}

\begin{claim}
    If $\mu_{\mathbb{T}}(B)=1$, then $\mu((\Omega\times \mathbb{R})\setminus A)=0$. 
\end{claim}
\begin{proof}
Since $\mathbb{E}[\phi]=0$, according to~\cite[Corollary~8.1.5 and Proposition 8.1.2]{AaronBook97},  there is a subset $\Omega_0 \subset \Omega$ with $\mathbb{P}(\Omega_0)=1$ such that $\liminf_{n\to \infty} |S_n(\omega)|=0$ for every~$\omega\in \Omega_0$ where we recall that $S_n=\sum_{j=0}^{n-1} \phi \circ \sigma^j$. On the other hand, since \(B\) is has full probability in \(\Omega\times{\mathbb{T}}\), by Fubini, there is a full-measure set \(\Omega_1\subset \Omega\) such that
for every \(\omega\in \Omega_1\) the fiber \(B_\omega\subset{\mathbb{T}}\) satisfies \(\lbb_{\mathbb{T}}(B_\omega)=1\). 

Fix $\omega \in \Omega_0\cap \Omega_1 \eqdef \bar{\Omega}$ and let \(I\subset\R\) be an arbitrary bounded interval. For each integer \(n\), the invariance of \(A\) implies
$A_{\sigma^n(\omega)} \supset A_\omega + S_n(\omega)$, 
hence
\[
A_\omega \cap (I - S_n(\omega))
\;\supset\; I_{-S_n(\omega)}(B_{\sigma^n(\omega)}).
\]
Since \(\lbb_{\mathbb{T}}(B_{\sigma^n(\omega)})=1\), by Lemma~\ref{lem:full-measure-preimage}, the right-hand side
has Lebesgue measure \(\lbb(I)=\lbb(I - S_n(\omega))\), and  therefore
\[
\lbb\big((I - S_n(\omega))\setminus A_\omega\big) = 0.
\]
Consequently
\[
\lbb(I\setminus A_\omega)
\le \lbb\big(I\setminus (I - S_n(\omega))\big) + \lbb\big((I - S_n(\omega))\setminus A_\omega\big)
\leq  |S_n(\omega)| + 0,
\]
where the last inequality follows since the translation of an interval by \(t\) changes it by at most \(|t|\) in Lebesgue measure.  
Since $\omega \in \bar{\Omega}\subset  \Omega_0$, taking $\liminf$ yields \(\lbb(I\setminus A_\omega)=0\). Because \(I\) was an arbitrary bounded interval,
this implies \(\mathrm{Leb}(\R\setminus A_\omega)=0\). The conclusion holds for every \(\omega\) in the full $\PP$-measure subset $\bar{\Omega}$, 
so \(A\) is conull in \(\Omega\times\R\).
\end{proof}
Both claims above prove that $A$ is null or conull in \(\Omega\times\R\), showing the ergodicity of $T_\phi$.
\end{proof}

\begin{proof}[Proof of Theorem~\ref{thm:ergodicity-equivalence}] Propositions~\ref{prop:erdodicoC2}, \ref{prop:ergodic-conservativo} and \ref{prop:C2ergodico} conclude the main Theorem~\ref{thm:ergodicity-equivalence}. 
\end{proof}


\subsection{Fluctuation Law}

\begin{thm}
\label{prop:skew-translation-ergi-arcsine} 
Let $T_\phi$ be a skew product as in~\eqref{skew-translation0} where $(\Omega, \sigma)$ is a (one-sided or two-sided) subshift of finite type or a hyperbolic basic set of a $C^1$ diffeomorphism, $\mathbb{P}$ is a  $\sigma$-invariant Hölder Gibbs measure and  $\phi \colon \Esp \to \mathbb{R}$ is a Hölder continuous function with $\mathbb{E}[\phi] = 0$ and satisfying~\ref{C1}. Then, for every $y \in \mathbb{R}$,
\[
\liminf_{n \to \infty} \, \mathbb{P}\bigg( \frac{1}{n} \sum_{j=0}^{n-1} \ind_{\bar{J}_i(y)}\big(y+S_j(\omega)\big) \leq \alpha \bigg) < 1 \quad \text{for every } \alpha \in (0,1) \text{ and } i = 0,1,
\]
where $\bar{J}_0(y) = (-\infty, y)$, $\bar{J}_1(y) = (y, \infty)$ 
and, $S_0=0$ and $S_j=\phi+\phi\circ \sigma+\dots + \phi\circ \sigma^{j-1}$, $j\geq 1$.
\end{thm}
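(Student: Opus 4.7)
The observation $\ind_{\bar J_i(y)}(y+S_j(\omega))=\ind_{\bar J_i(0)}(S_j(\omega))$ shows the statement is independent of $y$, so I reduce to $y=0$. The plan is to prove that the empirical occupation proportion
$$Y_n \eqdef \frac{1}{n}\sum_{j=0}^{n-1}\ind_{(0,\infty)}(S_j)$$
converges in distribution, as $n\to\infty$, to the occupation time $\chi(B)=\int_0^1\ind_{(0,\infty)}(B_t)\,dt$ of a standard Brownian motion, which is arcsine-distributed by L\'evy's theorem (Theorem~\ref{rem:arcsine-brownian}). Once this is established, the Portmanteau theorem (Theorem~\ref{thm:Portmanteau}) together with the continuity of the arcsine distribution on $(0,1)$ yields $\lim_{n\to\infty}\mathbb{P}(Y_n\leq \alpha)=\tfrac{2}{\pi}\arcsin\sqrt{\alpha}<1$ for every $\alpha\in(0,1)$, proving the fluctuation law for $i=1$. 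The case $i=0$ then follows by applying the same argument to $-\phi$, which still satisfies $\mathbb{E}[-\phi]=0$ and condition~\rmref{C1}.

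By Rudolph's invariance principle (Theorem~\ref{thm:Rudolph}) one passes to an enlarged probability space $(\overline{\Omega},\overline{\mathbb{P}})$ on which $\{S_n\}_{n\geq 0}$ is jointly defined with a standard Brownian motion $\{B_t\}_{t\geq 0}$, satisfying $|S_{\lfloor s\rfloor}-\sigma B_s|=o(s^{1/2-\beta})$ as $s\to\infty$ almost surely, for some constants $\sigma>0$ and $\beta\in(0,1/2)$. Define the rescaled processes $\tilde S_n(t)\eqdef S_{\lfloor nt\rfloor}/\sqrt n$ and $\tilde B_n(t)\eqdef B_{nt}/\sqrt n$ on $[0,1]$; then $Y_n=\int_0^1\ind_{(0,\infty)}(\tilde S_n(t))\,dt$, and by Brownian scaling $\tilde B_n\stackrel{d}{=}B$ for every~$n$, so that
$$W_n \eqdef \int_0^1\ind_{(0,\infty)}(\tilde B_n(t))\,dt$$
is arcsine-distributed for every $n$ (again by Theorem~\ref{rem:arcsine-brownian}).

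The decisive step is to show $|Y_n-W_n|\to 0$ in $\overline{\mathbb{P}}$-probability; once this is shown, Lemma~\ref{lem:convergence-distri} yields $Y_n\stackrel{d}{\to}\chi(B)$ and closes the proof. A sign disagreement between $\tilde S_n(t)$ and $\sigma\tilde B_n(t)$ at a point $t$ forces $|\sigma\tilde B_n(t)|\leq|\tilde S_n(t)-\sigma\tilde B_n(t)|$. For any $\delta>0$ the contribution to $|Y_n-W_n|$ coming from $[0,\delta]$ is at most $\delta$; on $[\delta,1]$, Rudolph's estimate yields $|\tilde S_n(t)-\sigma\tilde B_n(t)|\leq n^{-\beta}\eta_n$ uniformly in $t$, with $\eta_n\to 0$ almost surely, so for every $\epsilon>0$ and $n$ large enough the sign-disagreement set is contained in $\{t\in[\delta,1]:|\tilde B_n(t)|\leq \epsilon\}$. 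Since $\tilde B_n\stackrel{d}{=}B$ and $\lbb(\{t\in[\delta,1]:|B_t|\leq\epsilon\})\to 0$ almost surely as $\epsilon\to 0$ (by dominated convergence, using that $\{t:B_t=0\}$ has zero Lebesgue measure), a routine tightness argument makes this quantity arbitrarily small in probability, uniformly in~$n$. The main obstacle is precisely this third step: the discontinuity of $\ind_{(0,\infty)}$ at $0$ prevents a direct application of the continuous mapping theorem, and Rudolph's coupling only controls $\tilde S_n-\sigma\tilde B_n$ away from time zero, so one must carefully account for the Brownian occupation near the origin of space in order to justify passing to the limit.
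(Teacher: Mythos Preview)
Your approach is correct and follows the same overarching strategy as the paper---both invoke Rudolph's almost-sure invariance principle to couple the Birkhoff sums to a Brownian motion and then compare occupation times---but the execution differs in a useful way. The paper linearly interpolates $\{S_n\}$ to a continuous path $S^*_n\in C^0([0,1])$, shows $\|S^*_n-\oldsigma W_n\|_\infty\to0$ almost surely, applies the continuous mapping theorem to the functional $\chi(f)=\int_0^1\ind_{(0,\infty)}(f(t))\,dt$ on path space (via Corollary~\ref{cor:Portmanteau}), and only then relates $\chi(S^*_n)$ back to the discrete sum $Y_n$ through a coarse pointwise inequality $\chi(S^*_n)\le 2Y_n$; this yields nothing more than $\liminf_n\mathbb{P}(Y_n\le\alpha)<1$. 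Your route---working with the step function $\tilde S_n$ so that $Y_n=\int_0^1\ind_{(0,\infty)}(\tilde S_n(t))\,dt$ holds exactly, and bounding $|Y_n-W_n|$ directly by the Lebesgue measure of the sign-disagreement set, which you control via the Brownian occupation time near the spatial origin---is more hands-on but delivers the full arcsine limit $\mathbb{P}(Y_n\le\alpha)\to\tfrac{2}{\pi}\arcsin\sqrt\alpha$, strictly stronger than what the paper establishes.

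One small omission: Rudolph's invariance principle (Theorem~\ref{thm:Rudolph}) is stated only for invertible base dynamics, so in the one-sided subshift case you should first pass to the natural extension; the paper does this explicitly at the start of its proof, noting that the Birkhoff sums and the H\"older Gibbs property lift verbatim.
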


\begin{proof}
 {We first note that it is sufficient to prove the theorem for the invertible case, as the result for a one-sided subshift $(\Omega_+, \sigma_+, \mathbb{P}_+)$ follows from its natural extension. Let $(\Omega, \sigma, \mathbb{P})$ be this extension, where the measure satisfies $\mathbb{P}_+ = \pi_* \mathbb{P}$ for the canonical projection $\pi: \Omega \to \Omega_+$. It is a known result that if $\mathbb{P}_+$ is a Hölder Gibbs measure, then so is its lift $\mathbb{P}$. By defining the function on the invertible space as $\phi \circ \pi$, the Birkhoff sums are preserved since $S_j(\omega) = S_j(\pi(\omega))$ for any $\omega \in \Omega$. This implies that the probability of the set defined by the inequality in the theorem is identical in both systems for all $n$. Thus, we may assume henceforth that the base dynamics are invertible.}


To prove the statement, it suffices to show it for~$i=1$; the argument for $i=0$ is analogous. 
Let us interpolate $\{S_n\}_{n\geq 0}$ by
$$
S_t=(\lfloor t \rfloor + 1-t) S_{\lfloor t \rfloor} + (t-\lfloor t \rfloor) S_{\lfloor t \rfloor +1},  \quad t\geq 0. 
$$
Since $S_t=S_{\lfloor t \rfloor}+(t-\lfloor t \rfloor) \cdot  \phi \circ \sigma^{\lfloor t \rfloor}$ and $\phi$ is bounded {and $\sigma$ invertible}, by Theorem~\ref{thm:Rudolph}, we have $\oldsigma>0$, $0<\beta<1/2$ and {a probability space $(\overline{\Omega},\overline{\mathscr{F}}, \overline{\PP})$ joing $(\Omega,\mathscr{F},\PP)$ with a standard Browinian motion $\{B_t\colon t\geq 0\}$}   such that 
$$
0\leq \frac{|S_t - \oldsigma B_t|}{t^{1/2-\beta}} \leq \frac{|S_{\lfloor t \rfloor}-\oldsigma B_t|}{t^{1/2-\beta}} + \frac{|\phi\circ \sigma^{\lfloor t \rfloor}|}{t^{1/2-\beta}} \xrightarrow[\,t\to \infty\,]{} 0 \quad \text{$\overline{\PP}$-almost surely}.
$$
This concludes that 
\begin{equation} \label{eq:ainda-nose}
\lim_{t\to \infty} \frac{|S_t - \oldsigma B_t|}{t^{1/2-\beta}} = 0  \quad \text{$\overline{\PP}$-almost surely}. 
\end{equation}
Let $W_n$ and $S^*_n$ be, respectively, the Brownian motion and the random function  defined by rescaling $\{B_t \colon t \geq  0\}$ and $\{S_t\colon t\geq 0\}$
according to
$$
W_n(t) \eqdef \frac{B_{nt}}{\sqrt n} \qquad \text{and} \qquad  S^*_n (t) \eqdef \frac{S_{nt}}{\sqrt n}  \qquad \text{for $t\in [0,1]$ \ \  and \  \ $n\geq 1$}.
$$
Since $t\mapsto S_t$ and $t\mapsto  B_t$ are {$\overline{\PP}$-almost surely} continuous function, from~\eqref{eq:ainda-nose}, we have that 
$$
\sup_{0\leq t\leq 1} |S_t - \oldsigma B_t| < \infty \quad \text{and} \quad 
\sup_{t\geq 1} \frac{|S_t - \oldsigma B_t|}{t^{1/2-\beta}} < \infty  \quad \text{$\overline{\PP}$-almost surely}. 
$$
Consequently, 
\begin{align*}
    \|S^*_n-\oldsigma W_n\| &\eqdef \sup_{0\leq t \leq 1} \big|S^*_n(t)-\oldsigma W_n(t)\big|  =  \frac{1}{\sqrt{n}} \sup_{0\leq t\leq n}  \big|S_t - \oldsigma B_t\big| 
    \\
    &=  \frac{1}{\sqrt{n}} \sup_{0\leq t\leq 1}  \big|S_t - \oldsigma B_t\big| + \frac{1}{\sqrt{n}}\sup_{1\leq t\leq n}  t^{1/2-\beta} \frac{|S_t - \oldsigma B_t|}{t^{1/2-\beta}} \\ 
    & \leq  \frac{1}{\sqrt{n}} \sup_{0\leq t\leq 1}  \big|S_t - \oldsigma B_t\big| + \frac{n^{1/2-\beta} }{\sqrt{n}} \sup_{1\leq t\leq n}  \frac{|S_t - \oldsigma B_t|}{t^{1/2-\beta}}  
    \\ &= \frac{1}{\sqrt{n}} \sup_{0\leq t\leq 1}  \big|S_t - \oldsigma B_t\big| + \frac{1}{n^{\beta}} \, \sup_{ 1\leq t\leq n}  \frac{|S_t - \oldsigma B_t|}{t^{1/2-\beta}}  \xrightarrow[\,n\to \infty\,]{} 0  \quad \text{$\overline{\PP}$-almost surely}. 
\end{align*}
Thus, 
$
\lim_{n\to 0}  \|S^*_n -\oldsigma W_n\|=0
$  
{$\overline{\PP}$-almost surely}
where $\|\cdot\|$ is the sup-norm on the space $C^0([0,1])$ of real-valued continuous function on~$[0,1]$. This implies that 
$$\lim_{n\to\infty} {\overline{\PP}}(\|S^*_n - \oldsigma W_n\|>\epsilon)=0$$ for any $\epsilon>0$ and consequently, by Lemma~\ref{lem:convergence-distri}, $S^*_n$ converges in distribution to $\oldsigma B$ where $B$ is a standard Brownian motion.   

Consider now the function $\fu{\chi}{C^0([0,1])}{[0,1]}$ defined by 
$$\chi(f) \eqdef \int_0^1 \ind_{(0,\infty)}(f(t)) \, dt.$$
It is not hard to see that the function $\chi$ is continuous in every $f \in  C^0([0,1])$ with the property
that  $\lbb( \{ t \in [0, 1] \colon f(t)=0 \} )= 0$. Since the Brownian
motion $\oldsigma B$ has this property {$\overline{\PP}$-almost surely}, we get from~Corolary~\ref{cor:Portmanteau} that $\chi(S^*_n)$ converges in distribution to $\chi(\oldsigma B)$.  

On the other hand, 
\begin{align*}
\chi(\oldsigma B) &\eqdef \int_{0}^1 \ind_{(0,\infty)}(\oldsigma B_{t}) \, dt = \int_{0}^1 \ind_{(0,\infty)}(B_{t}) \, dt \eqdef \chi(B)  \\ 
\intertext{and}
\chi(S^*_n) &=\int_{0}^1 \ind_{(0,\infty)}({S_{nt}})\, dt =\sum_{j=0}^{n-1} \int_{\frac{j}{n-1}}^{\frac{j+1}{n}} \ind_{(0,\infty)}({S_{nt}}) \, dt = \frac{1}{n} \sum_{j=0}^{n-1} \int_{j}^{j+1} \ind_{(0,\infty)}({S_\theta})\, d\theta.
\end{align*}
For $\theta \in (j,j+1)$, we have that 
\begin{itemize}
    \item $\ind_{(0,\infty)}(S_\theta) = \ind_{(0,\infty)}(S_j)$  if $S_j \cdot  S_{j+1} > 0$;
    \item $\ind_{(0,\infty)}(S_\theta) \leq \ind_{(0,\infty)}(S_j)$  if $S_j >0$ and  $S_{j+1} \leq 0$;
    \item $\ind_{(0,\infty)}(S_\theta) \leq  \ind_{(0,\infty)}(S_{j+1})$  if $S_j \leq 0$ and  $S_{j+1} \geq 0$.
\end{itemize}
Hence, we get that 
$$
\chi(S^*_n) \leq \frac{2}{n} \sum_{j=0}^{n-1} \ind_{(0,\infty)}(S_j).
$$
Thus, by the arcsine law for Brownian motion (see Theorem~\ref{rem:arcsine-brownian}), for any $\alpha \in (0,1)$, 
\begin{equation}\label{eq:brownian-arcsine}
\begin{aligned}
\liminf_{n\to \infty} {\overline{\PP}} \big(\, \frac{1}{n} \sum_{j=0}^{n-1} \ind_{(0,\infty)}(S_j) \leq \alpha \,\big) &\leq \lim_{n\to \infty} {\overline{\PP}} \big(\, \chi(S^*_n)\leq \frac{\alpha}{2} \,\big) = \\
&= {\overline{\PP}} \big(\, \chi(\oldsigma B)\leq\frac{\alpha}{2}\,\big) \\ 
&= {\overline{\PP}} \big(\, \chi( B)\leq \frac{\alpha}{2} \,\big) =\frac{2}{\pi} \arcsin \sqrt{\frac{\alpha}{2}}<1.
\end{aligned}
\end{equation}
Finally, since $\ind_{(0,+\infty)}(S_j(\omega)) = \ind_{(y,\infty)}(y + S_j(\omega))$ for every $y \in \mathbb{R}$, the distributional result of equation \eqref{eq:brownian-arcsine} holds for the original process, as the law of the sums is invariant by the joining. Therefore, for every $y \in \mathbb{R}$,
\[
\liminf_{n \to \infty} \mathbb{P} \big( \frac{1}{n} \sum_{j=0}^{n-1} \ind_{(y,\infty)}(y + S_j(\omega)) \leq \alpha \big) < 1 \quad \text{for every } \alpha \in (0,1).
\]
This completes the proof of the proposition.
\end{proof}

{

We now establish the almost-sure counterpart to the fluctuation law. A direct consequence of the vanishing occupation time property is that the asymptotic behavior of the additive fiber process governed by the process $\{S_n\}_{n\geq 0}$ is independent of the initial fiber coordinate. This crucial decoupling from the fiber allows the fluctuation law, in conjunction with the ergodicity of the base map, to establish the following almost-sure occupation time property.

\begin{cor}\label{cor:lim1} Under the assumptions of Theorem~\ref{prop:skew-translation-ergi-arcsine}, for any fixed $\kappa, y\in \mathbb{R}$ it holds that
\[
\limsup_{n\to\infty} \frac{1}{n}\sum_{j=0}^{n-1} \ind_{\bar{J}_i(\kappa)}\big(y+S_j(\omega)\big)=1 \quad \text{for $\mathbb{P}$-a.e.~$\omega\in \Omega$, and $i=0,1$.}
\]    
\end{cor}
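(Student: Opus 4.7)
The plan is to proceed in three steps: reduce to the canonical case $y=\kappa=0$, use the shift dynamics together with the vanishing compact-occupation property to show the $\limsup$ is a $\sigma$-invariant function (hence constant by ergodicity of the H\"older Gibbs measure $\mathbb P$), and then combine Lemma~\ref{lem:limsup-to-prob} with the fluctuation law from Theorem~\ref{prop:skew-translation-ergi-arcsine} to rule out any value strictly less than $1$.

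Write $A^{y,\kappa,i}_n(\omega)\eqdef \tfrac{1}{n}\sum_{j=0}^{n-1}\ind_{\bar J_i(\kappa)}(y+S_j(\omega))$. Under the hypotheses of Theorem~\ref{prop:skew-translation-ergi-arcsine}, alternative (2) of Theorem~\ref{thm:A-unified} applies, so the occupation time of $\{S_j\}_{j\ge0}$ in any compact subset of $\mathbb R$ tends to $0$ almost surely. Hence Lemma~\ref{lem:ocupation-limite} gives, for every fixed $\kappa,y\in\mathbb R$,
\[
\lim_{n\to\infty}\big|A^{y,\kappa,i}_n(\omega)-A^{0,0,i}_n(\omega)\big|=0\qquad \text{for $\mathbb P$-a.e.~$\omega\in\Omega$},
\]
so it suffices to prove $\limsup_n A^{0,0,i}_n(\omega)=1$ almost surely.

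For the $\sigma$-invariance, the identity $S_j\circ\sigma=S_{j+1}-\phi$ together with the translation relation $\ind_{\bar J_i(0)}(u-v)=\ind_{\bar J_i(v)}(u)$ yields
\[
A^{0,0,i}_n(\sigma\omega)=\frac{1}{n}\sum_{k=0}^{n-1}\ind_{\bar J_i(\phi(\omega))}\big(S_k(\omega)\big)+O(1/n).
\]
Since $\phi$ is H\"older on the compact base, $\|\phi\|_\infty<\infty$, so the indicator difference between thresholds $\phi(\omega)$ and $0$ is dominated, at each $\omega$, by $\ind_K(S_k(\omega))$ with the fixed compact $K=[-\|\phi\|_\infty,\|\phi\|_\infty]$. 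By Theorem~\ref{thm:A-unified} this Ces\`aro average vanishes almost surely, so $\limsup_n A^{0,0,i}_n(\sigma\omega)=\limsup_n A^{0,0,i}_n(\omega)$ for $\mathbb P$-a.e.~$\omega$. The function $\ell(\omega)\eqdef\limsup_n A^{0,0,i}_n(\omega)$ is therefore $\sigma$-invariant modulo null sets, and ergodicity of the H\"older Gibbs measure $\mathbb P$ forces $\ell\equiv C$ for some constant $C\in[0,1]$.

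Finally, suppose for contradiction $C<1$, and choose $\alpha\in(C,1)$. Lemma~\ref{lem:limsup-to-prob} applied to $Y_n=A^{0,0,i}_n$ gives $\lim_n\mathbb P(A^{0,0,i}_n\ge\alpha)=0$, hence also $\lim_n\mathbb P(A^{0,0,i}_n>\alpha)=0$. On the other hand, Theorem~\ref{prop:skew-translation-ergi-arcsine} (at $y=0$) asserts $\liminf_n\mathbb P(A^{0,0,i}_n\le\alpha)<1$, i.e.~$\limsup_n\mathbb P(A^{0,0,i}_n>\alpha)>0$, a contradiction. Thus $C=1$, finishing the proof. The only delicate point is the $\sigma$-invariance step, where the ``threshold shift'' induced by the cocycle structure is $\phi(\omega)$ and therefore random; boundedness of $\phi$ together with the uniform vanishing of compact occupation times absorbs this randomness into a single compact $K$ to which Theorem~\ref{thm:A-unified} applies.
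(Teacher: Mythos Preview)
Your proof is correct and follows essentially the same approach as the paper's own proof: establish $\sigma$-invariance of the $\limsup$ via the boundedness of $\phi$ and the vanishing compact occupation time from Theorem~\ref{thm:A-unified}, deduce constancy by ergodicity, and then rule out $C<1$ by combining Lemma~\ref{lem:limsup-to-prob} with the fluctuation law of Theorem~\ref{prop:skew-translation-ergi-arcsine}. The only cosmetic difference is that you perform the reduction to the canonical case $(y,\kappa)=(0,0)$ via Lemma~\ref{lem:ocupation-limite} at the beginning, whereas the paper does it at the end after establishing $C=1$.
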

\begin{proof} Let us prove the corollary for $i=1$; for $i=0$ is similar.     
Set  
$$
A_n(\omega)\eqdef 
\frac{1}{n}\sum_{j=0}^{n-1}\ind_{(0,\infty)}\big(S_j(\omega)\big)   
$$
and define $C(\omega)\eqdef\limsup_{n\to\infty} A_n(\omega)$. 
Since $S_j(\sigma(\omega))=S_{j+1}(\omega)-\phi(\omega)$, it follows that 
\[
A_n(\sigma(\omega))=\frac{1}{n}\sum_{j=0}^{n-1}\ind_{(0,\infty)}\big(S_{j+1}(\omega)-\phi(\omega)\big)
=\frac{1}{n}\sum_{j=1}^{n}\ind_{(0,\infty)}\big(S_{j}(\omega)-\phi(\omega)\big).
\]
Because $\phi$ is H\"older on the compact base, it is bounded; set
\[
M\eqdef \sup_{\omega\in\Omega}|\phi(\omega)|<\infty,\qquad K\eqdef[-M,M].
\]
Using the elementary fact
$|\ind_{(0,\infty)}(t-\alpha)-\ind_{(0,\infty)}(t)|\le \ind_{K}(t)$ for all $|\alpha|\le M,\,t\in\mathbb R$,
we obtain the uniform bound
\begin{equation}\label{eq:diff-final}
\big|A_n(\sigma(\omega))-A_n(\omega)\big|
\le \frac{1}{n} \sum_{j=0}^{n-1} \ind_{K}(S_j(\omega)).
\end{equation}
By Theorem~\ref{thm:A-unified} (case (2)), we have that 
\[
\lim_{n\to\infty}\frac{1}{n} \sum_{j=1}^{n} \ind_{K}\big(S_j(\omega)\big)=0\quad\text{for $\PP$-a.e.\ }\omega\in \Omega.
\]
Combining this with \eqref{eq:diff-final} yields
$
\lim_{n\to\infty}|A_n(\sigma(\omega))-A_n(\omega)|=0$ for $\PP$-a.e.~$\omega\in\Omega$. Hence $C(\sigma(\omega))=C(\omega)$ for $\mathbb P$-a.e.\ $\omega\in \Omega$. Ergodicity of $(\Omega,\mathscr F,\mathbb \PP,\sigma)$ then implies that $C(\omega)$ is almost surely constant. Write $C\equiv C(\omega)\leq 1$ and assume that $C<1$. 
Choose $\alpha$ with $C<\alpha<1$. By Lemma~\ref{lem:limsup-to-prob} 
we have that $\lim_{n\to\infty}\mathbb P(A_n \ge\alpha)=0$.
On the other hand, by  Theorem~\ref{prop:skew-translation-ergi-arcsine},
the sequence $A_n$ satisfies the fluctuation law, $\liminf_{n\to\infty} \PP(A_n\leq \alpha)<1$ for every $\alpha\in(0,1)$. This implies that 
$\limsup_{n\to\infty}\mathbb P(A_n>\alpha)>0$
which contradicts the previous null limit. Therefore the assumption $C<1$ is false, and we must have $\limsup_{n\to\infty} A_n(\omega)=1$ for $\mathbb P$-a.e.~$\omega \in \Omega$.
Now Lemma~\ref{lem:ocupation-limite} implies that  
\[
\limsup_{n\to\infty}\frac{1}{n}\sum_{j=0}^{n-1}\ind_{(\kappa,\infty)}\big(y+S_j(\omega)\big)= \limsup_{n\to\infty} A_n(\omega) =1
\qquad\text{for $\mathbb P$-a.e.\ }\omega\in \Omega.
\]
This completes the proof of the corollary.
\end{proof}

}

\section{Proof of Proposition~\ref{maincor:skew-flow-preserving-orientation} and Corollary~\ref{cor:generalized-crovisier}}

\label{ss:proof-cor-skew-flow}


\subsection{Proof of Proposition~\ref{maincor:skew-flow-preserving-orientation}}

Consider a skew flow $F_{\varphi,\phi}$ as in~\eqref{eq:skew-flow}, where $\varphi\colon\mathbb{R}\times M \to M$ is a Morse-Smale flow on the one-dimensional compact manifold $M$, and $\phi\colon\Omega \to \mathbb{R}$ is a H\"older continuous function satisfying~$\mathbb{E}[\phi]=0$. Let $J=(p,q)$ be an open arc connecting two consecutive equilibrium points of $\varphi$, and fix $\theta \in J$. Define 
$$
\hat{H}\colon \Omega \times \mathbb{R} \to \Omega \times J, \quad \hat{H}(\omega, t) = (\omega, \varphi(t, \theta)).
$$
It is straightforward to verify that $\hat{H}$ is a homeomorphism satisfying $\hat{H} \circ T_\phi = F_{\varphi,\phi} \circ \hat{H}$, where $T_\phi$ is the skew-translation defined in~\eqref{skew-translation0}. Additionally, the map $\hat{h}\colon \mathbb{R} \to J$ given by $\hat{h}(t) = \varphi(t, \theta)$ is also a homeomorphism, and $g_\omega = \hat{h}^{-1} \circ f_\omega \circ \hat{h}$, where $g_\omega(y)=y+\phi(\omega)$ and $f_\omega$ is given by~\eqref{eq:skew-flow}. Letting $h = \hat{h}^{-1}$, we meet the conditions of Proposition~\ref{prop:arcsine-to-conjugateA}. 

Although assumption~\rmref{C2} implies~\rmref{C1}, we will first prove the proposition under the hypothesis~\rmref{C2} to illustrate how Theorem~\ref{thm:ergodic-assumption} can be used to obtain the result.

\subsubsection{Proof under assumption~\rmref{C2}}
Since $\phi$ satisfies the assumptions of Theorems~\ref{thm:ergodicity-equivalence} and~\ref{prop:skew-translation-ergi-arcsine}, we deduce the following: 
\begin{enumerate}[label=$\mathrm{(\roman*)}$] 
    \item $\hat{H}_*(\PP \times \lbb_\R)$ is an ergodic $F_{\varphi,\phi}$-invariant measure, 
    \item for every $x \in J$,
\begin{equation*}
\lim_{n\to\infty}\PP\bigg( \,\frac{1}{n}\sum_{j=0}^{n-1}\ind_{{J}_i(x)}(f^{j}_\omega(x)) \leq \alpha\,\bigg) < 1 \quad \text{for every $\alpha \in (0,1)$ and $i = 0,1$.}
\end{equation*}
\end{enumerate}
By Corollary~\ref{cor:fiber-implies-weak-arcsine}, (ii) implies that $F_{\varphi,\phi}$ restricted to $\Omega \times I$, where $I$ is the closure of~$J$,~satisfies the fiberwise fluctuation law with constants $\gamma_0<\gamma_1$. Moreover, since $d\hat{H}_*(\PP \times \lbb_\R) = \hat{h}_*\lbb_\R\, d\PP$ and $\hat{h}_*\lbb_\R$ is equivalent to $\lbb_I$ because $\hat{h}$ is smooth, we conclude that $\hat{H}_*(\PP \times \lbb_\R)$ is equivalent to $\PP \times \lbb_I$, where $\lbb_I$ is the normalized Lebesgue measure on~$I$. Consequently, by (i),  $\PP \times \lbb_I$ is ergodic with respect to $F_{\varphi,\phi}$. 

Thus, {in view of Proposition~\ref{prop:equivalence arcsine law},  the restriction of $F_{\varphi,\phi}$ to $\Omega \times I$, satisfies the assumptions of Theorem~\ref{thm:theorem-H3b} (from which Theorem~\ref{thm:ergodic-assumption} follows)} and exhibits historical behavior for $(\PP \times \lbb)$-almost every $(\omega, x) \in \Omega \times I$. Since $M$ is the union of finitely many intervals $I$, it follows that $F_{\varphi,\phi}$ exhibits historical behavior for $(\PP \times \lbb)$-almost every point in $\Omega \times M$. 

Moreover, by Proposition~\ref{mainpropo-limitset}, the limit set is given by 
$$
\mathcal{L}(\omega,x) = \{\lambda \delta_p + (1-\lambda) \delta_q \colon \lambda \in [0,1]\},\quad\text{for $(\PP \times \lbb)$-almost every $(\omega,x)$,}
$$
where $x$ lies between consecutive equilibrium points $p$ and $q$ of $\varphi$. This completes the proof of Proposition~\ref{maincor:skew-flow-preserving-orientation} {under the assumption~\rmref{C2}}.

{
\subsubsection{Proof under assumption~\rmref{C1}}

Since $\phi$ satisfies the assumptions of Theorem~\ref{thm:A-unified} (case (2)) and Corollary~\ref{cor:lim1}, we deduce the following: 
\begin{enumerate}[label=$\mathrm{(\roman*)}$] 
    \item for every $x\in J$ and compact set $K\subset J$ compact,  
    \[
\lim_{n\to\infty} \frac{1}{n}\sum_{j=0}^{n-1} \ind_{K}\big(f_\omega^j(x)\big)=0 \quad \text{for $\mathbb{P}$-a.e.~$\omega\in \Omega$, and $i=0,1$.}
\]    
    \item for every $\gamma,x \in J$ 
\begin{equation*}
\limsup_{n\to\infty} \frac{1}{n}\sum_{j=0}^{n-1}\ind_{{J}_i(\gamma)}\big(f^{j}_\omega(x)\big) = 1 \quad \text{for $\mathbb{P}$-a.e.~$\omega\in \Omega$, and $i=0,1$.}
\end{equation*}
\end{enumerate}
Fix $\gamma,x\in J$. 
By (i), condition~\eqref{OT} (and thus~\eqref{equivalent-OT}) holds, and hence Proposition~\ref{prop:limit_set_from_KD} implies that 
$\mathcal{L}(\omega,x)\subset \{\lambda\delta_p+(1-\lambda)\delta_q:\lambda\in[0,1]\}$. 
On the other hand, by (ii), for $\PP$-a.e~$\omega\in \Omega$, there exists a subsequence of integers $\{n_k\}_{k\geq 1}$ such that $\lim_{k\to\infty} A_{n_k}(\omega) = 1$ where $$A_n(\omega)=\frac{1}{n}\sum_{j=0}^{n-1} \ind_{J_i(\gamma)}\big(f^j_\omega(x)\big).$$
Consider the corresponding subsequence of empirical measures $\{e_{n_k}^{\omega}\}_{k\geq 1}$. This sequence lies in the space of probability measures on $I=[0,1]$, which is compact in the weak$^*$ topology. Therefore, there must exist a convergent subsequence, which we re-index again by $\{n_k\}_{k\geq 1}$, converging to some limit measure $\nu \in \mathcal{L}(\omega,x)$. Hence $\nu = \lambda\delta_p + (1-\lambda)\delta_q$ for some $\lambda \in [0,1]$.

Note that 
$A_n(\omega)=e^{\omega}_n(J_i(\gamma))$. If $i=0$, since  $J_0(\gamma)=(p,\gamma) \subset [p,\gamma]$, we have that 
$$
1=\lim_{k\to \infty} A_{n_k}(\omega,x) \leq \lim_{k\to\infty} e^{\omega}_{n_k}([p,\gamma]) \leq \nu([p,\gamma])=\nu(\{p\})=\lambda.
$$
Hence $\lambda=1$. This shows that the limit measure is $\nu = \delta_p$. We have thus found a subsequence of empirical measures that converges to $\delta_p$, proving that $\delta_p \in \mathcal{L}(\omega,x)$.
Otherwise, if $i=1$, $J_1(\gamma)=(\gamma,q)\subset [\gamma,q]$ and thus, 
$$
1=\lim_{k\to \infty} A_{n_k}(\omega) =\lim_{k\to\infty} e^{\omega}_{n_k}([\gamma,q]) \leq \nu([\gamma,q])=\nu(\{q\})=1-\lambda.
$$
This implies that $\lambda=0$, so $\nu = \delta_q$. Thus, $\delta_q \in \mathcal{L}(\omega,x)$.

We have shown that for every $x\in J$,  for $\PP$-a.e.~$\omega\in \Omega$, the set of accumulation points $\mathcal{L}(\omega,x)$ contains both $\delta_p$ and $\delta_q$. Since $\mathcal{L}(\omega,x)$ is a connected set, it must contain the entire segment of convex combinations connecting these two points. This completes the proof of the second inclusion and the proof of Proposition~\ref{maincor:skew-flow-preserving-orientation} under the assumption~\rmref{C1}.
}

\subsection{Proof of Corollary~\ref{cor:generalized-crovisier}} 
As mentioned in the introduction, this result follows directly from Proposition~\ref{maincor:skew-flow-preserving-orientation} and~\ref{thm:extended results-ergodic conjugated}. The only point that may require clarification is the description of the set of accumulation points of the sequence of empirical measures. To avoid confusion, we write $\tilde{\mathcal{L}}(\omega,x)$ and $\mathcal{L}(\omega,x)$ for  limit set for $\tilde{F}=\tilde{F}_{\varphi,\phi}$ and $F=F_{\varphi,\phi}$ respectively.   

To establish this, note first that if $p \in [1/2,1]$ is an equilibrium point of $\varphi$, then $(f_\omega)_*\delta_p = \delta_{\pi(p)}$ and $(f_\omega)_*\delta_{\pi(p)} = \delta_p$ for $\mathbb{P}$-a.e.~$\omega \in \Omega$. Consequently, the measure $\nu_p = \mathbb{P} \times \mu_p$, where $\mu_p = (\delta_p + \delta_{\pi(p)})/2$, is the unique $F$-invariant measure that is a convex combination of $\mathbb{P} \times \delta_p$ and $\mathbb{P} \times \delta_{\pi(p)}$. Moreover,  
$\Pi_*^{-1}(\{\mathbb{P} \times \delta_p\}) \cap \{\text{$F$-invariant measures}\} = \{\nu_p\}$. 
Now, since $\mathbb{P} \times \mu$ is $\tilde{F}$-invariant provided $\mu \in\tilde{\mathcal{L}}(\omega,x)$, we have  
$$
\Pi_*^{-1}(\{\mathbb{P} \times \mu : \mu \in \mathcal{L}(\omega,x)\}) \cap \{\text{$\tilde{F}$-invariant measures}\} = \{\lambda \nu_p + (1-\lambda)\nu_q : \lambda \in [0,1]\},
$$  
for $\mathbb{P}$-a.e.~$\omega\in \Omega$ and $x$ lying between consecutive equilibrium points $p$ and $q$ of $\varphi$ in $[1/2,1]$. Thus, it follows that  
$\tilde{\mathcal{L}}(\omega,x) = \{\lambda \mu_p + (1-\lambda) \mu_q \colon \lambda \in [0,1]\}$,
as desired.

\section*{Acknowledgement}

This work is based on the master's thesis of the second author at the Pontifical Catholic University of Rio de Janeiro.   
We thank the second author’s advisor, L.~J.~Díaz, for his guidance and support. We are also grateful to the members of the examination committee, M.~Anderson, S.~Luzzatto, R.~Ruggiero, and A.~Tahzibi, for their insightful comments and suggestions.  
We thank C. González-Tokman and D. Coates for their comments and questions, which enriched the discussions.  
The first author acknowledges support from grant PID2020-113052GB-I00 funded by MCIN, PQ 305352/2020-2 (CNPq), and JCNE E-26/201.305/2022 (FAPERJ). The second author was supported [in part] by CAPES – Finance Code 001 and FAPERJ Nota-$10$.

\bibliographystyle{alpha3}
\bibliography{reference}

@book{bogachev2007measure,
	author = {Bogachev, Vladimir Igorevich},
	publisher = {Springer-Verlag, Berlin},
	title = {Measure theory},
	volume = {1},
	year = {2007}}

@article{PP97,
  title={The Livsic cocycle equation for compact Lie group extensions of hyperbolic systems},
  author={Parry, William and Pollicott, Mark},
  journal={Journal of the London Mathematical Society},
  volume={56},
  number={2},
  pages={405--416},
  year={1997},
  publisher={Cambridge University Press}
}

@article{talebi2022non,
  title={Non-statistical rational maps},
  author={Talebi, Amin},
  journal={Mathematische Zeitschrift},
  volume={302},
  number={1},
  pages={589--608},
  year={2022},
  publisher={Springer}
}

@InCollection{book-problems07,
 Author = {Hasselblatt, Boris},
 Title = {Problems in dynamical systems and related topics},
 BookTitle = {Dynamics, ergodic theory and geometry. Dedicated to Anatole Katok. Based on the workshop on recent progress in dynamics, Berkeley, CA, USA, from late September to early October, 2004},
 ISBN = {978-0-521-87541-7},
 Pages = {273--324},
 Year = {2007},
 Publisher = {Cambridge: Cambridge University Press},
 Language = {English},
 Keywords = {37-02},
 URL = {www.msri.org/communications/books/Book54/},
 zbMATH = {5282791},
 Zbl = {1153.37300}
}

@Article{Ino00,
 Author = {Inoue, Tomoki},
 Title = {Sojourn times in small neighborhoods of indifferent fixed points of one-dimensional dynamical systems},
 FJournal = {Ergodic Theory and Dynamical Systems},
 Journal = {Ergodic Theory Dyn. Syst.},
 ISSN = {0143-3857},
 Volume = {20},
 Number = {1},
 Pages = {241--257},
 Year = {2000},
 Language = {English},
 DOI = {10.1017/S0143385700000110},
 Keywords = {37E05,37C40,37C25,37B15},
 zbMATH = {1420109},
 Zbl = {0974.37025}
}

@Article{BlaBunimo03,
 Author = {Blank, Michael and Bunimovich, Leonid},
 Title = {Multicomponent dynamical systems: {SRB} measures and phase transitions},
 FJournal = {Nonlinearity},
 Journal = {Nonlinearity},
 ISSN = {0951-7715},
 Volume = {16},
 Number = {1},
 Pages = {387--401},
 Year = {2003},
 Language = {English},
 DOI = {10.1088/0951-7715/16/1/322},
 Keywords = {37A25,37A50,37L65,28D05},
 zbMATH = {1894893},
 Zbl = {1022.37003}
}

@Article{Keller04,
 Author = {Keller, Gerhard},
 Title = {Completely mixing maps without limit measure},
 FJournal = {Colloquium Mathematicum},
 Journal = {Colloq. Math.},
 ISSN = {0010-1354},
 Volume = {100},
 Number = {1},
 Pages = {73--76},
 Year = {2004},
 Language = {English},
 DOI = {10.4064/cm100-1-6},
 Keywords = {37E05,37A25,37A40},
 zbMATH = {2106972},
 Zbl = {1048.37038}
}

@article {HeSavage55-01law,
    AUTHOR = {Hewitt, Edwin and Savage, Leonard J.},
     TITLE = {Symmetric measures on {C}artesian products},
   JOURNAL = {Trans. Amer. Math. Soc.},
  FJOURNAL = {Transactions of the American Mathematical Society},
    VOLUME = {80},
      YEAR = {1955},
     PAGES = {470--501},
      ISSN = {0002-9947,1088-6850},
   MRCLASS = {60.0X},
  MRNUMBER = {76206},
MRREVIEWER = {C.\ Pauc},
       DOI = {10.2307/1992999},
       URL = {https://doi.org/10.2307/1992999},
}

@Article{kalikow1982t,
    AUTHOR = {Kalikow, Steven Arthur},
     TITLE = {{$T,\,T\sp{-1}$}\ transformation is not loosely {B}ernoulli},
   JOURNAL = {Ann. of Math. (2)},
  FJOURNAL = {Annals of Mathematics. Second Series},
    VOLUME = {115},
      YEAR = {1982},
    NUMBER = {2},
     PAGES = {393--409},
      ISSN = {0003-486X,1939-8980},
   MRCLASS = {28D20},
  MRNUMBER = {647812},
       DOI = {10.2307/1971397},
       URL = {https://doi.org/10.2307/1971397},
}

@article {MR3204667,
    AUTHOR = {Bufetov, A. I.},
     TITLE = {Ergodic decomposition for measures quasi-invariant under
              {B}orel actions of inductively compact groups},
   JOURNAL = {Mat. Sb.},
  FJOURNAL = {Matematicheski\u i\ Sbornik},
    VOLUME = {205},
      YEAR = {2014},
    NUMBER = {2},
     PAGES = {39--70},
      ISSN = {0368-8666,2305-2783},
   MRCLASS = {28Dxx (37A15 37A35 60A10)},
  MRNUMBER = {3204667},
MRREVIEWER = {Olena\ Karpel},
       DOI = {10.1070/sm2014v205n02abeh004371},
       URL = {https://doi.org/10.1070/sm2014v205n02abeh004371},
}

@Article{greschonig2000ergodic,
Author = {Greschonig, Gernot and Schmidt, Klaus},
 Title = {Ergodic decomposition of quasi-invariant probability measures},
 FJournal = {Colloquium Mathematicum},
 Journal = {Colloq. Math.},
 ISSN = {0010-1354},
 Volume = {84-85},
 Pages = {495--514},
 Year = {2000},
 Language = {English},
 DOI = {10.4064/cm-84/85-2-495-514},
 Keywords = {37A15,28D05,47A35},
 URL = {https://eudml.org/doc/210829},
 zbMATH = {1538160},
 Zbl = {0972.37003}
}

@Article{AarThaZwei05,
AUTHOR = {Aaronson, Jon and Thaler, Maximilian and Zweim\"uller, Roland},
     TITLE = {Occupation times of sets of infinite measure for ergodic
              transformations},
   JOURNAL = {Ergodic Theory Dynam. Systems},
  FJOURNAL = {Ergodic Theory and Dynamical Systems},
    VOLUME = {25},
      YEAR = {2005},
    NUMBER = {4},
     PAGES = {959--976},
      ISSN = {0143-3857,1469-4417},
   MRCLASS = {37A40 (28D05)},
  MRNUMBER = {2158393},
MRREVIEWER = {Stanley\ J.\ Eigen},
       DOI = {10.1017/S0143385704001051},
       URL = {https://doi.org/10.1017/S0143385704001051},
}

@book{ProbLoeve,
   Author = {Loeve, M.},
 Title = {Probability theory. {II}. 4th ed},
 FSeries = {Graduate Texts in Mathematics},
 Series = {Grad. Texts Math.},
 ISSN = {0072-5285},
 Volume = {46},
 Year = {1978},
 Publisher = {Springer, Cham},
 Language = {English},
 Keywords = {60-01,60Bxx,60Gxx,60Jxx},
 zbMATH = {3599198},
 Zbl = {0385.60001}
}

@Book{AaronBook97,
 Author = {Aaronson, Jon},
 Title = {An introduction to infinite ergodic theory},
 FSeries = {Mathematical Surveys and Monographs},
 Series = {Math. Surv. Monogr.},
 ISSN = {0076-5376},
 Volume = {50},
 ISBN = {0-8218-0494-4},
 Year = {1997},
 Publisher = {Providence, RI: American Mathematical Society},
 Language = {English},
 Keywords = {28D05,28-02,37A99,37E99,37D40,53D25,60J10,60G50},
 zbMATH = {1027990},
 Zbl = {0882.28013}
}

@article {TailBerHollan89,
    AUTHOR = {Berbee, H. C. P. and den Hollander, W. Th. F.},
     TITLE = {Tail triviality for sums of stationary random variables},
   JOURNAL = {Ann. Probab.},
  FJOURNAL = {The Annals of Probability},
    VOLUME = {17},
      YEAR = {1989},
    NUMBER = {4},
     PAGES = {1635--1645},
      ISSN = {0091-1798,2168-894X},
   MRCLASS = {60F20},
  MRNUMBER = {1048949},
MRREVIEWER = {Manfred\ Denker},
       URL =
              {http://links.jstor.org/sici?sici=0091-1798(198910)17:4<1635:TTFSOS>2.0.CO;2-Y&origin=MSN},
}

@book {Lev-Brow2,
    AUTHOR = {L\'evy, Paul},
     TITLE = {Processus stochastiques et mouvement brownien},
      NOTE = {Suivi d'une note de M. Lo\`eve,
              Deuxi\`eme \'edition revue et augment\'ee},
 PUBLISHER = {Gauthier-Villars \& Cie, Paris},
      YEAR = {1965},
     PAGES = {vi+438},
   MRCLASS = {60.00 (60.40)},
  MRNUMBER = {190953},
MRREVIEWER = {H.\ P.\ McKean, Jr.},
}

@article {Lev39,
    AUTHOR = {L\'evy, Paul},
     TITLE = {Sur certains processus stochastiques homog\`enes},
   JOURNAL = {Compositio Math.},
  FJOURNAL = {Compositio Mathematica},
    VOLUME = {7},
      YEAR = {1939},
     PAGES = {283--339},
      ISSN = {0010-437X,1570-5846},
   MRCLASS = {60.0X},
  MRNUMBER = {919},
MRREVIEWER = {J.\ L.\ Doob},
       URL = {http://www.numdam.org/item?id=CM_1940__7__283_0},
}

@book {PeYu-book-Brownian,
    AUTHOR = {M\"orters, Peter and Peres, Yuval},
     TITLE = {Brownian motion},
    SERIES = {Cambridge Series in Statistical and Probabilistic Mathematics},
    VOLUME = {30},
      NOTE = {With an appendix by Oded Schramm and Wendelin Werner},
 PUBLISHER = {Cambridge University Press, Cambridge},
      YEAR = {2010},
     PAGES = {xii+403},
      ISBN = {978-0-521-76018-8},
   MRCLASS = {60J65 (28A78 60H05 60J45 60J55 60J67)},
  MRNUMBER = {2604525},
MRREVIEWER = {Ren\'e\ L.\ Schilling},
       DOI = {10.1017/CBO9780511750489},
       URL = {https://doi.org/10.1017/CBO9780511750489},
}

@article {Gui89,
    AUTHOR = {Guivarc'h, Y.},
     TITLE = {Propri\'{e}t\'{e}s ergodiques, en mesure infinie, de certains
              syst\`emes dynamiques fibr\'{e}s},
   JOURNAL = {Ergodic Theory Dynam. Systems},
  FJOURNAL = {Ergodic Theory and Dynamical Systems},
    VOLUME = {9},
      YEAR = {1989},
    NUMBER = {3},
     PAGES = {433--453},
      ISSN = {0143-3857,1469-4417},
   MRCLASS = {58F17 (30F40 58F15 60J15)},
  MRNUMBER = {1016662},
MRREVIEWER = {Caroline\ Series},
       DOI = {10.1017/S0143385700005083},
       URL = {https://doi.org/10.1017/S0143385700005083},
}

@article{molinek1994asymptotic,
    AUTHOR = {Molinek, D. K.},
     TITLE = {Asymptotic measures for skew products of {B}ernoulli shifts
              with generalized north pole--south pole diffeomorphisms},
   JOURNAL = {Trans. Amer. Math. Soc.},
  FJOURNAL = {Transactions of the American Mathematical Society},
    VOLUME = {345},
      YEAR = {1994},
    NUMBER = {1},
     PAGES = {263--291},
      ISSN = {0002-9947,1088-6850},
   MRCLASS = {28D05 (58F03 58F11 60F05)},
  MRNUMBER = {1254191},
MRREVIEWER = {Manfred\ Denker},
       DOI = {10.2307/2154604},
       URL = {https://doi.org/10.2307/2154604},
}

@article{nakano2017historic,
    AUTHOR = {Nakano, Yushi},
     TITLE = {Historic behaviour for random expanding maps on the circle},
   JOURNAL = {Tokyo J. Math.},
  FJOURNAL = {Tokyo Journal of Mathematics},
    VOLUME = {40},
      YEAR = {2017},
    NUMBER = {1},
     PAGES = {165--184},
      ISSN = {0387-3870},
   MRCLASS = {37C40 (37E10 37H10)},
  MRNUMBER = {3689984},
MRREVIEWER = {Romain\ Aimino},
       DOI = {10.3836/tjm/1502179221},
       URL = {https://doi.org/10.3836/tjm/1502179221},
}

@article{LR17,
	Author = {Labouriau, I. S. and Rodrigues, A. A. P.},
	Date-Added = {2019-03-14 19:58:19 +0900},
	Date-Modified = {2019-03-17 15:51:53 +0900},
	Doi = {10.1088/1361-6544/aa64e9},
	Fjournal = {Nonlinearity},
	Issn = {0951-7715},
	Journal = {Nonlinearity},
	Mrclass = {34C28 (34C37 37G20)},
	Mrnumber = {3639293},
	Number = {5},
	Pages = {1876--1910},
	Title = {On {T}akens' last problem: tangencies and time averages near heteroclinic networks},
	Url = {https://mathscinet.ams.org/mathscinet-getitem?mr=3639293},
	Volume = {30},
	Year = {2017},
	Bdsk-Url-1 = {https://mathscinet.ams.org/mathscinet-getitem?mr=3639293}}

@incollection {Bonifant,
    AUTHOR = {Bonifant, Araceli and Milnor, John},
     TITLE = {Schwarzian derivatives and cylinder maps},
 BOOKTITLE = {Holomorphic dynamics and renormalization},
    SERIES = {Fields Inst. Commun.},
    VOLUME = {53},
     PAGES = {1--21},
 PUBLISHER = {Amer. Math. Soc., Providence, RI},
      YEAR = {2008},
      ISBN = {978-0-8218-4275-1},
   MRCLASS = {37C70 (37C40 37E30)},
  MRNUMBER = {2477416},
MRREVIEWER = {Mattias\ Jonsson},
}

@incollection {Rue2001,
    AUTHOR = {Ruelle, David},
     TITLE = {Historical behaviour in smooth dynamical systems},
 BOOKTITLE = {Global analysis of dynamical systems},
     PAGES = {63--66},
 PUBLISHER = {Inst. Phys., Bristol},
      YEAR = {2001},
      ISBN = {0-7503-0803-6},
   MRCLASS = {37C40 (37J40)},
  MRNUMBER = {1858471},
}

@article {Takens2008,
    AUTHOR = {Takens, Floris},
     TITLE = {Orbits with historic behaviour, or non-existence of averages},
   JOURNAL = {Nonlinearity},
  FJOURNAL = {Nonlinearity},
    VOLUME = {21},
      YEAR = {2008},
    NUMBER = {3},
     PAGES = {T33--T36},
      ISSN = {0951-7715,1361-6544},
   MRCLASS = {37C40 (37A99)},
  MRNUMBER = {2396607},
MRREVIEWER = {Kazuhiro\ Sakai},
       DOI = {10.1088/0951-7715/21/3/T02},
       URL = {https://doi.org/10.1088/0951-7715/21/3/T02},
}

@Article{CoaLuza23,
 Author = {Coates, Douglas and Luzzatto, Stefano},
 Title = {Persistent non-statistical dynamics in one-dimensional maps},
 FJournal = {Communications in Mathematical Physics},
 Journal = {Commun. Math. Phys.},
 ISSN = {0010-3616},
 Volume = {405},
 Number = {4},
 Pages = {34},
 Note = {Id/No 102},
 Year = {2024},
 Language = {English},
 DOI = {10.1007/s00220-024-04957-0},
 Keywords = {37A10,37C40,37C83,37E05},
 zbMATH = {7831314}
}

@Misc{CoaMelTale24,
 Author = {Coates, Douglas and Melbourne, Ian and Talebi, Amin},
 Title = {Natural measures and statistical properties of non-statistical maps with multiple neutral fixed points},
 Year = {2024},
 HowPublished = {Preprint, {arXiv}:2407.07286 [math.{DS}] (2024)},
 URL = {https://arxiv.org/abs/2407.07286},
 arXiv = {arXiv:2407.07286}
}

@article {ErdKac47,
    AUTHOR = {Erd\"{o}s, P. and Kac, M.},
     TITLE = {On the number of positive sums of independent random
              variables},
   JOURNAL = {Bull. Amer. Math. Soc.},
  FJOURNAL = {Bulletin of the American Mathematical Society},
    VOLUME = {53},
      YEAR = {1947},
     PAGES = {1011--1020},
      ISSN = {0002-9904},
   MRCLASS = {60.0X},
  MRNUMBER = {23011},
MRREVIEWER = {M.\ Lo\`eve},
       DOI = {10.1090/S0002-9904-1947-08928-X},
       URL = {https://doi.org/10.1090/S0002-9904-1947-08928-X},
}

@article {Thaler02,
    AUTHOR = {Thaler, Maximilian},
     TITLE = {A limit theorem for sojourns near indifferent fixed points of
              one-dimensional maps},
   JOURNAL = {Ergodic Theory Dynam. Systems},
  FJOURNAL = {Ergodic Theory and Dynamical Systems},
    VOLUME = {22},
      YEAR = {2002},
    NUMBER = {4},
     PAGES = {1289--1312},
      ISSN = {0143-3857,1469-4417},
   MRCLASS = {37E05 (28D05 37A05 37A50 60F05)},
  MRNUMBER = {1926288},
MRREVIEWER = {Pawe\l \ G\'{o}ra},
       DOI = {10.1017/S0143385702000573},
       URL = {https://doi.org/10.1017/S0143385702000573},
}

@article {Crovisier2020,
    AUTHOR = {Crovisier, Sylvain and Yang, Dawei and Zhang, Jinhua},
     TITLE = {Empirical measures of partially hyperbolic attractors},
   JOURNAL = {Comm. Math. Phys.},
  FJOURNAL = {Communications in Mathematical Physics},
    VOLUME = {375},
      YEAR = {2020},
    NUMBER = {1},
     PAGES = {725--764},
      ISSN = {0010-3616,1432-0916},
   MRCLASS = {37C40 (37C70 37D25 37D35)},
  MRNUMBER = {4082180},
MRREVIEWER = {Zeya\ Mi},
       DOI = {10.1007/s00220-019-03668-1},
       URL = {https://doi.org/10.1007/s00220-019-03668-1},
}

@book {Bow75,
    AUTHOR = {Bowen, Rufus},
     TITLE = {Equilibrium states and the ergodic theory of {A}nosov
              diffeomorphisms},
    SERIES = {Lecture Notes in Mathematics},
    VOLUME = {Vol. 470},
 PUBLISHER = {Springer-Verlag, Berlin-New York},
      YEAR = {1975},
     PAGES = {i+108},
   MRCLASS = {58F10 (28A65)},
  MRNUMBER = {442989},
MRREVIEWER = {L.\ A.\ Bunimovich},
}

@article {Takens-eyebowe94,
	AUTHOR = {Takens, Floris},
	TITLE = {Heteroclinic attractors: time averages and moduli of
	topological conjugacy},
	JOURNAL = {Bol. Soc. Brasil. Mat. (N.S.)},
	FJOURNAL = {Boletim da Sociedade Brasileira de Matem\'{a}tica. Nova
	S\'{e}rie},
	VOLUME = {25},
	YEAR = {1994},
	NUMBER = {1},
	PAGES = {107--120},
	ISSN = {0100-3569},
	MRCLASS = {58F11 (58F21 58F25)},
	MRNUMBER = {1274765},
	MRREVIEWER = {Michael\ Hurley},
	DOI = {10.1007/BF01232938},
	URL = {https://doi.org/10.1007/BF01232938},
}

@article {Andde22,
	AUTHOR = {Andersson, Martin and Guih\'{e}neuf, Pierre-Antoine},
	TITLE = {Historic behaviour vs. physical measures for irrational flows
	with multiple stopping points},
	JOURNAL = {Adv. Math.},
	FJOURNAL = {Advances in Mathematics},
	VOLUME = {409},
	YEAR = {2022},
	PAGES = {Paper No. 108626, 71},
	ISSN = {0001-8708,1090-2082},
	MRCLASS = {37E35 (11K50 37A30 37C40)},
	MRNUMBER = {4473634},
	MRREVIEWER = {Konstantin\ Athanassopoulos},
	DOI = {10.1016/j.aim.2022.108626},
	URL = {https://doi.org/10.1016/j.aim.2022.108626},
}

@article {CarVaran22,
	AUTHOR = {Carvalho, Maria and Varandas, Paulo},
	TITLE = {Genericity of historic behavior for maps and flows},
	JOURNAL = {Nonlinearity},
	FJOURNAL = {Nonlinearity},
	VOLUME = {34},
	YEAR = {2021},
	NUMBER = {10},
	PAGES = {7030--7044},
	ISSN = {0951-7715,1361-6544},
	MRCLASS = {37C10 (37A30 37C40 37D25 37D30)},
	MRNUMBER = {4313703},
	MRREVIEWER = {Jinhua\ Zhang},
	DOI = {10.1088/1361-6544/ac1f77},
	URL = {https://doi.org/10.1088/1361-6544/ac1f77},
}

@article {Thaler80,
    AUTHOR = {Thaler, Maximilian},
     TITLE = {Estimates of the invariant densities of endomorphisms with
              indifferent fixed points},
   JOURNAL = {Israel J. Math.},
  FJOURNAL = {Israel Journal of Mathematics},
    VOLUME = {37},
      YEAR = {1980},
    NUMBER = {4},
     PAGES = {303--314},
      ISSN = {0021-2172},
   MRCLASS = {28D05 (10K10)},
  MRNUMBER = {599464},
MRREVIEWER = {F.\ Schweiger},
       DOI = {10.1007/BF02788928},
       URL = {https://doi.org/10.1007/BF02788928},
}

@article {Thaler83,
    AUTHOR = {Thaler, Maximilian},
     TITLE = {Transformations on {$[0,\,1]$} with infinite invariant
              measures},
   JOURNAL = {Israel J. Math.},
  FJOURNAL = {Israel Journal of Mathematics},
    VOLUME = {46},
      YEAR = {1983},
    NUMBER = {1-2},
     PAGES = {67--96},
      ISSN = {0021-2172},
   MRCLASS = {28D05},
  MRNUMBER = {727023},
MRREVIEWER = {Nathaniel\ Friedman},
       DOI = {10.1007/BF02760623},
       URL = {https://doi.org/10.1007/BF02760623},
}

@book {ShiProba2-19,
    AUTHOR = {Shiryaev, Albert N.},
     TITLE = {Probability. 2},
    SERIES = {Graduate Texts in Mathematics},
    VOLUME = {95},
   EDITION = {Third},
 PUBLISHER = {Springer, New York},
      YEAR = {2019},
     PAGES = {x+348},
      ISBN = {978-0-387-72207-8; 978-0-387-72208-5},
   MRCLASS = {60-01 (60Fxx 60Gxx 60J10)},
  MRNUMBER = {3930599},
}

@article {Stefano-comu,
    AUTHOR = {Coates, Douglas and Luzzatto, Stefano and Muhammad, Mubarak},
     TITLE = {Doubly intermittent full branch maps with critical points and
              singularities},
   JOURNAL = {Comm. Math. Phys.},
  FJOURNAL = {Communications in Mathematical Physics},
    VOLUME = {402},
      YEAR = {2023},
    NUMBER = {2},
     PAGES = {1845--1878},
      ISSN = {0010-3616,1432-0916},
   MRCLASS = {37E05 (37A50)},
  MRNUMBER = {4627333},
       DOI = {10.1007/s00220-023-04766-x},
       URL = {https://doi.org/10.1007/s00220-023-04766-x},
}

@article{BS00,
    AUTHOR = {Barreira, Luis and Schmeling, J\"org},
     TITLE = {Sets of ``non-typical'' points have full topological entropy
              and full {H}ausdorff dimension},
   JOURNAL = {Israel J. Math.},
  FJOURNAL = {Israel Journal of Mathematics},
    VOLUME = {116},
      YEAR = {2000},
     PAGES = {29--70},
      ISSN = {0021-2172,1565-8511},
   MRCLASS = {37C45 (37A25 37D35 37F99)},
  MRNUMBER = {1759398},
MRREVIEWER = {Benoit\ Saussol},
       DOI = {10.1007/BF02773211},
       URL = {https://doi.org/10.1007/BF02773211},
}

@article{FFW01,
    AUTHOR = {Fan, Ai-Hua and Feng, De-Jun and Wu, Jun},
     TITLE = {Recurrence, dimension and entropy},
   JOURNAL = {J. London Math. Soc. (2)},
  FJOURNAL = {Journal of the London Mathematical Society. Second Series},
    VOLUME = {64},
      YEAR = {2001},
    NUMBER = {1},
     PAGES = {229--244},
      ISSN = {0024-6107,1469-7750},
   MRCLASS = {37B40 (28A80 37A35 37C45 37D35)},
  MRNUMBER = {1840781},
MRREVIEWER = {J\'er\^ome\ Buzzi},
       DOI = {10.1017/S0024610701002137},
       URL = {https://doi.org/10.1017/S0024610701002137},
}

@article{BKNRS20,
    AUTHOR = {Barrientos, Pablo G. and Kiriki, Shin and Nakano, Yushi and
              Raibekas, Artem and Soma, Teruhiko},
     TITLE = {Historic behavior in nonhyperbolic homoclinic classes},
   JOURNAL = {Proc. Amer. Math. Soc.},
  FJOURNAL = {Proceedings of the American Mathematical Society},
    VOLUME = {148},
      YEAR = {2020},
    NUMBER = {3},
     PAGES = {1195--1206},
      ISSN = {0002-9939,1088-6826},
   MRCLASS = {37C05 (37C20 37C25 37C29 37C70)},
  MRNUMBER = {4055947},
MRREVIEWER = {C.\ M.\ Carballo},
       DOI = {10.1090/proc/14809},
       URL = {https://doi.org/10.1090/proc/14809},
}

@Article{Ser20,
 Author = {Sera, Toru},
 Title = {Functional limit theorem for occupation time processes of intermittent maps},
 FJournal = {Nonlinearity},
 Journal = {Nonlinearity},
 ISSN = {0951-7715},
 Volume = {33},
 Number = {3},
 Pages = {1183--1217},
 Year = {2020},
 Language = {English},
 DOI = {10.1088/1361-6544/ab5ceb},
 Keywords = {60F17,37A40,60J60},
 zbMATH = {7165635},
 Zbl = {1431.60036}
}

@misc{liu2021brownian,
  author       = {Christian Liu},
  title        = {Brownian Motion as the Limiting Distribution of Random Walks},
  year         = {2021},
  note         = {University of Chicago REU paper},
  url          = {http://math.uchicago.edu/~may/REU2021/REUPapers/Liu,Christian.pdf},
  institution  = {University of Chicago},
  month        = {August},
  abstract     = {An expository paper discussing the limiting behavior of random walks and their connection to Brownian motion, including Donsker’s invariance principle.},
}

@Article{Rud88,
 Author = {Rudolph, Daniel J.},
 Title = {Asymptotically {Brownian} skew products give non-loosely {Bernoulli} {K}- automorphisms},
 FJournal = {Inventiones Mathematicae},
 Journal = {Invent. Math.},
 ISSN = {0020-9910},
 Volume = {91},
 Number = {1},
 Pages = {105--128},
 Year = {1988},
 Language = {English},
 DOI = {10.1007/BF01404914},
 Keywords = {37A99,58J65,28D15},
 URL = {https://eudml.org/doc/143533},
 zbMATH = {4069892},
 Zbl = {0655.58034}
}

@article {SeraYano19,
    AUTHOR = {Sera, Toru and Yano, Kouji},
     TITLE = {Multiray generalization of the arcsine laws for occupation
              times of infinite ergodic transformations},
   JOURNAL = {Trans. Amer. Math. Soc.},
  FJOURNAL = {Transactions of the American Mathematical Society},
    VOLUME = {372},
      YEAR = {2019},
    NUMBER = {5},
     PAGES = {3191--3209},
      ISSN = {0002-9947,1088-6850},
   MRCLASS = {60F05 (28D05 37A40)},
  MRNUMBER = {3988607},
       DOI = {10.1090/tran/7755},
       URL = {https://doi.org/10.1090/tran/7755},
}

@article{PP90,
    AUTHOR = {Parry, William and Pollicott, Mark},
     TITLE = {Zeta functions and the periodic orbit structure of hyperbolic
              dynamics},
   JOURNAL = {Ast\'erisque},
  FJOURNAL = {Ast\'erisque},
      YEAR = {1990},
     PAGES = {268},
      ISSN = {0303-1179,2492-5926},
   MRCLASS = {58F20 (58F11 58F15)},
  MRNUMBER = {1085356},
MRREVIEWER = {Nicola\u i\ T.\ A.\ Haydn},
}

@article{Liv72,
doi = {10.1070/IM1972v006n06ABEH001919},
url = {https://dx.doi.org/10.1070/IM1972v006n06ABEH001919},
year = {1972},
month = {dec},
publisher = {},
volume = {6},
number = {6},
pages = {1278},
author = {A N Livšic},
title = {COHOMOLOGY OF DYNAMICAL SYSTEMS},
journal = {Mathematics of the USSR-Izvestiya}
}

@article{JM00,
  AUTHOR = {Ji, Chuanshu and Molinek, Donna},
     TITLE = {Asymptotic physical measures for nearly {B}rownian skew
              products},
   JOURNAL = {Ergodic Theory Dynam. Systems},
  FJOURNAL = {Ergodic Theory and Dynamical Systems},
    VOLUME = {20},
      YEAR = {2000},
    NUMBER = {2},
     PAGES = {473--481},
      ISSN = {0143-3857,1469-4417},
   MRCLASS = {37A50 (28D15 37D15 62F10)},
  MRNUMBER = {1756980},
MRREVIEWER = {Manfred\ Denker},
       DOI = {10.1017/S0143385700000225},
       URL = {https://doi.org/10.1017/S0143385700000225},
}

@article{BNRR22,
    AUTHOR = {Barrientos, Pablo G. and Nakano, Yushi and Raibekas, Artem and
              Roldan, Mario},
     TITLE = {Topological entropy and {H}ausdorff dimension of irregular
              sets for non-hyperbolic dynamical systems},
   JOURNAL = {Dyn. Syst.},
  FJOURNAL = {Dynamical Systems. An International Journal},
    VOLUME = {37},
      YEAR = {2022},
    NUMBER = {2},
     PAGES = {181--210},
      ISSN = {1468-9367,1468-9375},
   MRCLASS = {37B40 (37C40 37C45)},
  MRNUMBER = {4430566},
MRREVIEWER = {Zhiming\ Li},
       DOI = {10.1080/14689367.2022.2031890},
       URL = {https://doi.org/10.1080/14689367.2022.2031890},
}

@article{KS17,
  title={Takens' last problem and existence of non-trivial wandering domains},
  author={Kiriki, Shin and Soma, Teruhiko},
  journal={Advances in Mathematics},
  volume={306},
  pages={524--588},
  year={2017},
  publisher={Elsevier}
}

@article{Bar22,
  title={Historic wandering domains near cycles},
  author={Barrientos, Pablo G},
  journal={Nonlinearity},
  volume={35},
  number={6},
  pages={3191},
  year={2022},
  publisher={IOP Publishing}
}

@article{BB23,
  title={Emergence of wandering stable components},
  author={Berger, Pierre and Biebler, Sebastien},
  journal={Journal of the American Mathematical Society},
  volume={36},
  number={2},
  pages={397--482},
  year={2023}
}

\end{document}